\newcommand\blfootnote[1]{%
  \begingroup
  \renewcommand\thefootnote{}\footnote{#1}%
  \addtocounter{footnote}{-1}%
  \endgroup
}
\newlength\knuthian@fdfive
\def\mathpal@save#1{\let\was@math@style=#1\relax}
\def\utilde#1{\mathpalette\mathpal@save
              {\setbox124=\hbox{$\was@math@style#1$}%
\setbox125=\hbox{$\fam=3\global\knuthian@fdfive=\fontdimen5\font$}
\setbox125=\hbox{$\widetilde{\vrule height 0pt depth 0pt width \wd124}$}%
               \baselineskip=1pt\relax
               \lineskiplimit=\z@\relax
               \lineskip=1pt\relax
               \vtop{\copy124\copy125\vskip -\knuthian@fdfive}}}
\newtheorem{theorem}{Theorem}[section]
\newtheorem{lemma}[theorem]{Lemma}
\newtheorem{corollary}[theorem]{Corollary}
\newtheorem{proposition}[theorem]{Proposition}
\newtheorem{question}[theorem]{Question}
\newtheorem*{claim}{Claim}
\theoremstyle{definition}
\newtheorem{definition}[theorem]{Definition}
\theoremstyle{remark}
\newtheorem{remark}[theorem]{Remark}
\newcommand{\Ref}[2]{\mathsf{RFN}_{#1}}
\newbox\gnBoxA
\newdimen\gnCornerHgt
\newdimen\gnArgHgt
\def\gnmb #1{%
\setbox\gnBoxA=\hbox{$#1$}%
\gnArgHgt=\ht\gnBoxA%
\ifnum     \gnArgHgt<\gnCornerHgt \gnArgHgt=0pt%
\else \advance \gnArgHgt by -\gnCornerHgt%
\fi \raise\gnArgHgt\hbox{$\ulcorner$} \box\gnBoxA %
\raise\gnArgHgt\hbox{$\urcorner$}}
\title{Reflection ranks and ordinal analysis}
\author{Fedor Pakhomov}\thanks{The first author is supported in part by Young Russian Mathematics award}
\address{Steklov Mathematical Institute, Moscow\vspace{-8pt}}
\address{Institute of Mathematics of the Czech Academy of Sciences}
\email{pakhfn@mi-ras.ru}
\author{James Walsh}
\address{Group in Logic and the Methodology of Science, University of California, Berkeley}
\email{walsh@math.berkeley.edu}
\begin{document}

\begin{abstract}
It is well-known that natural axiomatic theories are well-ordered by consistency strength. However, it is possible to construct descending chains of artificial theories with respect to consistency strength. We provide an explanation of this well-orderedness phenomenon by studying a coarsening of the consistency strength order, namely, the $\Pi^1_1$ reflection strength order. We prove that there are no descending sequences of $\Pi^1_1$ sound extensions of $\mathsf{ACA}_0$ in this ordering. Accordingly, we can attach a rank in this order, which we call reflection rank, to any $\Pi^1_1$ sound extension of $\mathsf{ACA}_0$. We prove that for any $\Pi^1_1$ sound theory $T$ extending $\mathsf{ACA}_0^+$, the reflection rank of $T$ equals the proof-theoretic ordinal of $T$. We also prove that the proof-theoretic ordinal of $\alpha$ iterated $\Pi^1_1$ reflection is $\varepsilon_\alpha$. Finally, we use our results to provide straightforward well-foundedness proofs of ordinal notation systems based on reflection principles.

\end{abstract}

\blfootnote{Thanks to Lev Beklemishev and Antonio Montalb\'{a}n for helpful discussion and to an anonymous referee for many useful comments and suggestions.}

\maketitle

\section{Introduction}

It is a well-known empirical phenomenon that \emph{natural} axiomatic theories are well-ordered\footnote{Of course, by \emph{well-ordered} here we mean \emph{pre-well-ordered}.} according to many popular metrics of proof-theoretic strength, such as consistency strength. This phenomenon is manifest in \emph{ordinal analysis}, a research program wherein recursive ordinals are assigned to theories to measure their proof-theoretic strength. However, these metrics of proof-theoretic strength do \emph{not} well-order axiomatic theories \emph{in general}. For instance, there are descending chains of sound theories, each of which proves the consistency of the next. However, all such examples of ill-foundedness make use of unnatural, artificial theories. Without a mathematical definition of ``natural,'' it is unclear how to provide a general mathematical explanation of the apparent well-orderedness of the hierarchy of natural theories.

In this paper we introduce a metric of proof-theoretic strength and prove that it is immune to these pathological instances of ill-foundedness. Recall that a theory $T$ is $\Pi^1_1$ sound just in case every $\Pi^1_1$ theorem of $T$ is true. The $\Pi^1_1$ soundness of $T$ is expressible in the language of second-order arithmetic by a formula $\mathsf{RFN}_{\Pi^1_1}(T)$. The formula $\mathsf{RFN}_{\Pi^1_1}(T)$ is also known as the \emph{uniform $\Pi^1_1$ reflection principle for $T$.}

\begin{definition}
For theories $T$ and $U$ in the language of second-order arithmetic we say that $T\prec_{\Pi^1_1}U$ if $U$ proves the $\Pi^1_1$ soundness of $T$.
\end{definition}

This metric of proof-theoretic strength is coarser than consistency strength, but, as we noted, it is also more robust. In practice, when one shows that $U$ proves the consistency of $T$, one often also establishes the stronger fact that $U$ proves the $\Pi^1_1$ soundness of $T$. Our first main theorem is the following.

\begin{theorem}
\label{first}
The restriction of $\prec_{\Pi^1_1}$ to the $\Pi^1_1$-sound extensions of $\mathsf{ACA}_0$ is well-founded.
\end{theorem}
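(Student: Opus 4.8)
The plan is to reduce the well-foundedness of $\prec_{\Pi^1_1}$ to the well-foundedness of the ordinals, by attaching to each $\Pi^1_1$-sound extension $T$ of $\mathsf{ACA}_0$ a proof-theoretic ordinal and showing this assignment is strictly $\prec_{\Pi^1_1}$-monotone. For such a $T$, define $|T| := \sup\{o(\prec) : \prec \text{ a primitive recursive linear order and } T \vdash \mathsf{WO}(\prec)\}$, where $o(\prec)$ is the order type. The hypothesis of $\Pi^1_1$-soundness is exactly what legitimizes this definition: every $\prec$ with $T \vdash \mathsf{WO}(\prec)$ is then genuinely well-founded in the standard model, so $o(\prec)$ is a bona fide countable ordinal and $|T|$ is an honest ordinal rather than $\omega_1^{\mathrm{CK}}$ or undefined. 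Granting strict monotonicity, any putative descending chain $T_0 \succ_{\Pi^1_1} T_1 \succ_{\Pi^1_1} \cdots$ yields a descending sequence of ordinals $|T_0| > |T_1| > \cdots$, which is absurd; this is the whole of Theorem~\ref{first} modulo the monotonicity lemma.

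The heart of the argument is the lemma: if $T$ and $U$ are $\Pi^1_1$-sound extensions of $\mathsf{ACA}_0$ and $U \vdash \mathsf{RFN}_{\Pi^1_1}(T)$, then $|T| < |U|$. I would prove this by exhibiting, provably in $U$, a single well-ordering that dominates every $T$-provable well-ordering at once. Working in $U$, consider the primitive recursive order $\mathsf{W}$ on pairs $(p, x)$ where $p$ codes a $T$-proof of some statement $\mathsf{WO}(\prec_{e})$ — so $p$ determines the index $e = e(p)$ — and $x$ lies in the field of $\prec_{e(p)}$, ordered lexicographically: first by the numerical size of the proof code $p$, then within a block by $\prec_{e(p)}$. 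This is a lexicographic sum, indexed along $\omega$ in the order of proof codes, of precisely the orders whose well-foundedness $T$ proves.

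The crucial point is that \emph{uniform} $\Pi^1_1$ reflection — rather than merely reflection for individual sentences — lets $U$ certify all these blocks simultaneously. Using the $\Pi^1_1$ truth predicate, $\mathsf{RFN}_{\Pi^1_1}(T)$ gives, provably in $U$, the single sentence $\forall p\,\big(p \text{ codes a } T\text{-proof of } \mathsf{WO}(\prec_{e(p)}) \to \mathsf{WO}(\prec_{e(p)})\big)$, so $U$ proves that every block of $\mathsf{W}$ is well-founded. Since an $\omega$-indexed lexicographic sum of well-orders is a well-order — a fact provable in $\mathsf{ACA}_0$ by extracting, via arithmetical comprehension, either an infinite descent among the proof-code coordinates (impossible, as these are decreasing naturals) or an infinite descent inside a single block (impossible) — we get $U \vdash \mathsf{WO}(\mathsf{W})$, and hence $U \vdash \mathsf{WO}(\mathsf{W} + 1)$. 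By construction $o(\mathsf{W}) \ge \sup\{o(\prec) : T \vdash \mathsf{WO}(\prec)\} = |T|$, whence $|U| \ge o(\mathsf{W}) + 1 > |T|$, giving strict monotonicity.

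I expect the main obstacle to be the bookkeeping of the formalization rather than a conceptual gap: setting up the $\Pi^1_1$ truth predicate and the precise form of $\mathsf{RFN}_{\Pi^1_1}(T)$ so that the uniform step goes through, and verifying inside $\mathsf{ACA}_0$ that $\mathsf{W}$ is provably well-ordered with the claimed order type. The one genuinely delicate design choice is upstream: recognizing that the abstract order $\prec_{\Pi^1_1}$ should be transported to the numerical invariant $|T|$, and that $\Pi^1_1$-soundness is needed not for the monotonicity computation itself but to guarantee that $|T|$ is a real ordinal in the first place, so that the descending-chain argument can legitimately appeal to the well-foundedness of $\mathrm{Ord}$.
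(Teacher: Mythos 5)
Your proposal is correct in its essential content, but it is a genuinely different argument from the one in the paper. The paper proves Theorem \ref{first} (as Theorem \ref{well-foundedness_reflection}) by a self-referential argument in the style of H.~Friedman's proof of Steel's theorem: one formalizes the assertion $\mathsf{DS}$ that a $\prec_{\Pi^1_1}$-descending sequence of $\Pi^1_1$-sound extensions of $\mathsf{ACA}_0$ exists, observes that the statement describing the tail of such a sequence is $\Sigma^1_1$, and uses $\mathsf{RFN}_{\Pi^1_1}(T_0)$ (in the form ``$T_0$ is consistent with every true $\Sigma^1_1$ sentence'') to show that $\mathsf{ACA}_0+\mathsf{DS}$ proves its own consistency; G\"odel's second incompleteness theorem then refutes $\mathsf{DS}$. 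You instead transport $\prec_{\Pi^1_1}$ to the ordinals via $T\mapsto|T|_{\mathsf{WO}}$ and prove strict monotonicity by the $\omega$-indexed sum $\mathsf{W}$ of all $T$-provably well-founded orders, certified well-founded in $U$ by a single instance of uniform reflection applied to the formula $\mathsf{WO}(\prec_e)$ with $e$ free --- this is sound, and it is close in spirit to material the paper develops only later (Lemma \ref{wo_to_iterated_general}, Lemma \ref{it_ref_to_wo}, and Theorem \ref{omega_model_reflection_rank} relating reflection rank to $|T|_{\mathsf{WO}}$). What each approach buys: yours is semantic and transparent and yields, as a byproduct, the strict monotonicity of $|\cdot|_{\mathsf{WO}}$ along $\prec_{\Pi^1_1}$; the paper's argument is entirely internal --- Theorem \ref{well-foundedness_reflection} is proved \emph{inside} $\mathsf{ACA}_0$, which your route cannot match, since $\mathsf{ACA}_0$ cannot define order types or compare well-orderings, and this internality is exploited later (e.g.\ the $\mathsf{ACA}_0$-verifiable well-foundedness of Beklemishev's notation system in \textsection\ref{ref_not_sect}). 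The paper's method also transfers essentially verbatim to settings where no ordinal assignment is available or well-behaved (Theorem \ref{RCA_0_reflection} over $\mathsf{RCA}_0$, and Theorem \ref{2Con} for $\Pi_3$ reflection over $\mathsf{EA}$), whereas your argument is tied to the $\Pi^1_1$ ordinal.

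Two points in your construction do need repair, though both are of the bookkeeping kind you anticipate. First, $\mathsf{W}$ as you describe it is not primitive recursive: there is no universal primitive recursive relation, so evaluating $x\prec_{e(p)}y$ uniformly in the extracted index is only recursive. Either define $|T|$ via recursive orders (the paper notes in \textsection\ref{proof-theoretic_ordinal} that the two suprema coincide for $T\sqsupseteq\mathsf{RCA}_0$), or scan only for conclusions of the literal syntactic form $\mathsf{WO}(\psi)$ with $\psi\in\Delta^0_0$ and evaluate blocks through a partial truth definition for $\Delta^0_0$, which also makes the single uniform-reflection instance match the theorems $T$ actually proves. Second, $T$ may prove $\mathsf{WO}(\prec)$ for relations that are not (provably, or even actually) linear, and a non-linear block would spoil the claim that $o(\mathsf{W})$ is an order type and that $\mathsf{W}+1$ is a linear order witnessing $|U|>|T|$; this is fixed by restricting each block to the primitive recursive set of $x$ such that all pairs of elements $\leq x$ are $\prec_{e(p)}$-comparable, which leaves genuinely linear blocks unchanged (so $o(\mathsf{W})\geq|T|$ survives) while keeping every block provably linear and provably well-founded from $\mathsf{WO}(\prec_{e(p)})$, since minimality is preserved under passing to a subrelation. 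With these emendations your monotonicity lemma, and hence the theorem, goes through.
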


Accordingly, we can attach a well-founded rank---\emph{reflection rank}---to $\Pi^1_1$ sound extensions of $\mathsf{ACA}_0$ in the $\prec_{\Pi^1_1}$ ordering.

\begin{definition} 
The \emph{reflection rank} of $T$ is the rank of $T$ in the ordering $\prec_{\Pi^1_1}$ restricted to $\Pi^1_1$ sound extensions of $\mathsf{ACA}_0$. We write $|T|_{\mathsf{ACA}_0}$ to denote the reflection rank of $T$.
\end{definition}

What is the connection between the reflection rank of $T$ and the $\Pi^1_1$ proof-theoretic ordinal of $T$? Recall that the $\Pi^1_1$ \emph{proof-theoretic ordinal} $|T|_{\mathsf{WO}}$ of a theory $T$ is the supremum of the order-types of $T$-provably well-founded primitive recursive linear orders. We will show that the reflection ranks and proof-theoretic ordinals of theories are closely connected. Recall that $\mathsf{ACA}_0^+$ is axiomatized over $\mathsf{ACA}_0$ by the statement ``for every $X$, the $\omega^{\mathrm{th}}$ jump of $X$ exists.''

\begin{theorem}
\label{second}
For any $\Pi^1_1$-sound extension $T$ of $\mathsf{ACA}^+_0$, $|T|_{\mathsf{ACA}_0}=|T|_{\mathsf{WO}}$.
\end{theorem}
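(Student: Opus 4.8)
The plan is to establish the two inequalities $|T|_{\mathsf{ACA}_0}\le |T|_{\mathsf{WO}}$ and $|T|_{\mathsf{ACA}_0}\ge |T|_{\mathsf{WO}}$, the second by transfinite induction along $\prec_{\Pi^1_1}$ (legitimate by Theorem~\ref{first}). Two standing facts would drive everything. First, $\mathsf{RFN}_{\Pi^1_1}(U)$ is itself a $\Pi^1_1$ sentence. Second, over $\mathsf{ACA}_0$ every $\Pi^1_1$ sentence is provably equivalent to $\mathsf{WO}(\prec)$ for a primitive recursive $\prec$; this is the bridge between the reflection order and proof-theoretic ordinals.

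For the upper bound I would prove strict monotonicity: if $U\prec_{\Pi^1_1}T$ then $|U|_{\mathsf{WO}}<|T|_{\mathsf{WO}}$. Indeed, from $T\vdash\mathsf{RFN}_{\Pi^1_1}(U)$ one gets $T\vdash\mathsf{WO}(\prec)$ for every $\prec$ with $U\vdash\mathsf{WO}(\prec)$, so $|U|_{\mathsf{WO}}\le|T|_{\mathsf{WO}}$; strictness comes from the fact that $T$ proves the extra $\Pi^1_1$ sentence $\mathsf{RFN}_{\Pi^1_1}(U)$, which (via the bridge, together with $\mathsf{ACA}_0$'s closure of well-foundedness under sums) forces a $T$-provable well-order strictly longer than anything $U$ proves, while $U$ cannot prove its own reflection. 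Since $|\cdot|_{\mathsf{WO}}$ is then a strictly order-preserving map from $\prec_{\Pi^1_1}$ into the ordinals, $|T|_{\mathsf{ACA}_0}=\mathrm{rank}(T)\le|T|_{\mathsf{WO}}$; this is essentially the mechanism behind Theorem~\ref{first} as well.

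For the lower bound the crucial ingredient is the converse, which is where $\mathsf{ACA}_0^+$ enters: for $T\supseteq\mathsf{ACA}_0^+$ and any $\Pi^1_1$-sound r.e.\ $U\supseteq\mathsf{ACA}_0$, if $|U|_{\mathsf{WO}}<|T|_{\mathsf{WO}}$ then $T\vdash\mathsf{RFN}_{\Pi^1_1}(U)$, i.e.\ $U\prec_{\Pi^1_1}T$. Granting this, fix for each $\beta<|T|_{\mathsf{WO}}$ a $T$-provable primitive recursive well-order $\prec_\beta$ of type $\beta$ and set $U_\beta=\mathsf{ACA}_0^++\mathsf{WO}(\prec_\beta)$. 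Then $\beta\le|U_\beta|_{\mathsf{WO}}<|T|_{\mathsf{WO}}$ (the right inequality because $T$ proves everything $U_\beta$ does yet strictly more), each $U_\beta$ extends $\mathsf{ACA}_0^+$, and by the converse $U_\beta\prec_{\Pi^1_1}T$. The induction hypothesis gives $|U_\beta|_{\mathsf{ACA}_0}=|U_\beta|_{\mathsf{WO}}$, and since the $|U_\beta|_{\mathsf{WO}}$ are cofinal in $|T|_{\mathsf{WO}}$ we obtain $|T|_{\mathsf{ACA}_0}=\mathrm{rank}(T)\ge\sup_{\beta}(|U_\beta|_{\mathsf{WO}}+1)=|T|_{\mathsf{WO}}$.

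The hard part is the converse direction of the key lemma: extracting $\mathsf{RFN}_{\Pi^1_1}(U)$ from the mere comparison $|U|_{\mathsf{WO}}<|T|_{\mathsf{WO}}$. The difficulty is that $T$ must prove well-foundedness of the specific primitive recursive order coding $\mathsf{RFN}_{\Pi^1_1}(U)$, not merely of some order of the same length, so a bare ordinal comparison does not suffice. Here I expect to use the $\omega$-th jump guaranteed by $\mathsf{ACA}_0^+$ to internalize a $\Pi^1_1$-truth predicate and to run, along a $T$-provable well-order dominating the proofs of $U$, a formalized reflection argument verifying that every $\Pi^1_1$ theorem of $U$ is true. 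This internalization is exactly what fails over $\mathsf{ACA}_0$ alone, and is the reason the theorem is stated for extensions of $\mathsf{ACA}_0^+$.
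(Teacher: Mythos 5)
Your decomposition is sound in outline (the upper bound $|T|_{\mathsf{ACA}_0}\le|T|_{\mathsf{WO}}$ holds over $\mathsf{ACA}_0$ alone; $\mathsf{ACA}_0^+$ is needed only for $|T|_{\mathsf{WO}}\le|T|_{\mathsf{ACA}_0}$), but the lower-bound half rests on a key lemma that is false. Take $T=\mathsf{ACA}_0^+$ and $U=\mathsf{ACA}_0+\mathsf{Con}(\mathsf{ZFC})$: this $U$ is a $\Pi^1_1$-sound r.e.\ extension of $\mathsf{ACA}_0$ with $|U|_{\mathsf{WO}}=|\mathsf{ACA}_0|_{\mathsf{WO}}=\boldsymbol\varepsilon_0<|T|_{\mathsf{WO}}$ (Kreisel: adding a true $\Sigma^1_1$ sentence does not move the proof-theoretic ordinal), yet $T\vdash\mathsf{RFN}_{\Pi^1_1}(U)$ would give $T\vdash\mathsf{Con}(U)$ and hence, by formalized $\Sigma_1$-completeness, $T\vdash\mathsf{Con}(\mathsf{ZFC})$ --- absurd. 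The point is structural: $|\cdot|_{\mathsf{WO}}$ is blind to true $\Sigma^1_1$ axioms while $\prec_{\Pi^1_1}$ is not, so no bare ordinal comparison can yield provable reflection, and no amount of internalized $\Pi^1_1$-truth via the $\omega$-jump repairs this, since reflection for $U$ depends on $U$'s axioms, not on order types. Your scaffolding also collapses where you apply it: since $U_\beta=\mathsf{ACA}_0^++\mathsf{WO}(\prec_\beta)\sqsupseteq\mathsf{ACA}_0^+$, the relation $U_\beta\prec_{\Pi^1_1}T$ forces $T\vdash\mathsf{Con}(\mathsf{ACA}_0^+)$, which is impossible by G\"odel's second incompleteness theorem in the case $T=\mathsf{ACA}_0^+$ itself --- a case the theorem must cover; and weakening to $U_\beta=\mathsf{ACA}_0+\mathsf{WO}(\prec_\beta)$ puts $U_\beta$ outside the scope of your inductive hypothesis (which requires containment of $\mathsf{ACA}_0^+$), where the identity you would invoke is false anyway: $|\mathsf{ACA}_0|_{\mathsf{ACA}_0}=0$ while $|\mathsf{ACA}_0|_{\mathsf{WO}}=\boldsymbol\varepsilon_0$. (Your step ``$T$ proves strictly more than $U_\beta$, hence $|U_\beta|_{\mathsf{WO}}<|T|_{\mathsf{WO}}$'' is invalid for the same blindness reason.)

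The paper's actual mechanism replaces arbitrary theories below $T$ by the canonical hierarchy $\mathbf{R}^\alpha$: Lemma~\ref{wo_to_iterated_general} shows, by a formalized $\omega$-model argument over a \emph{finitely axiomatized} base $T_0$, that $T_0^+\vdash\forall\alpha\,\big(\mathsf{WO}(\alpha)\to\mathsf{RFN}_{\Pi^1_1(\Pi^0_3)}(\mathbf{R}^\alpha_{\Pi^1_1(\Pi^0_3)}(T_0))\big)$ --- this is the only place $\mathsf{ACA}_0^+$ enters --- and Proposition~\ref{rank_of_iteration} pins down $|\mathbf{R}^\alpha_{\Pi^1_1(\Pi^0_3)}(T_0)|_{T_0}=|\alpha|$; together these give $|T|_{\mathsf{WO}}\le|T|_{\mathsf{ACA}_0}$ directly, with no transfinite induction along $\prec_{\Pi^1_1}$ and no appeal to Theorem~\ref{first}. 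The theories witnessing cofinally many ranks below $T$ are thus the $\mathbf{R}^\alpha$'s, whose rank is computable, not base-plus-$\mathsf{WO}$ theories, whose rank is not. For the other inequality the paper uses Lemma~\ref{reflection_of_iteration}: if $|T|_{\mathsf{ACA}_0}>|\alpha|$ then $T+\varphi\vdash\mathsf{RFN}_{\Pi^1_1(\Pi^0_3)}(\mathbf{R}^\alpha_{\Pi^1_1(\Pi^0_3)}(\mathsf{RCA}_0))$ for some \emph{true} $\Sigma^1_1(\Pi^0_2)$ sentence $\varphi$, whence $T+\varphi\vdash\mathsf{WO}(\alpha)$ by Lemma~\ref{it_ref_to_wo} and $|T|_{\mathsf{WO}}=|T+\varphi|_{\mathsf{WO}}>|\alpha|$ by Kreisel's theorem. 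Your strict-monotonicity route for that half is salvageable in principle --- from $T\vdash\mathsf{RFN}_{\Pi^1_1}(U)$ one can have $T$ prove well-foundedness of the sum of all $U$-provably well-founded notations, whose type is at least $|U|_{\mathsf{WO}}$ by soundness of $U$ --- but the mechanism you actually offer, that the single sentence $\mathsf{RFN}_{\Pi^1_1}(U)$ rewritten as $\mathsf{WO}(\prec)$ yields $|\prec|\ge|U|_{\mathsf{WO}}$, is unsubstantiated: nothing bounds the Kleene--Brouwer type of the tree of a true $\Pi^1_1$ sentence from below without the very machinery at issue. (Also, Theorem~\ref{first} is proved by a self-referential G\"odel-style argument, not by a rank function into the ordinals, so it is not ``the same mechanism.'')
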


In general, if $|T|_{\mathsf{ACA}_0} = \boldsymbol\alpha$ then $|T|_{\mathsf{WO}} \geq \boldsymbol{\varepsilon_\alpha}$. We provide examples of theories such that $|T|_{\mathsf{ACA}_0} = \boldsymbol\alpha$ and $|T|_{\mathsf{WO}} >\boldsymbol{\varepsilon_\alpha}$. Nevertheless for many theories $T$ with $|T|_{\mathsf{ACA}_0} = \boldsymbol\alpha$ we have $|T|_{\mathsf{WO}} = \boldsymbol{\varepsilon_\alpha}$.

To prove these results, we extend techniques from the proof theory of iterated reflection principles to the second-order context. In particular, we focus on iterated $\Pi^1_1$ reflection. Roughly speaking, the theories $\mathbf{R}^{\alpha}_{\Pi^1_1}(T)$ of $\alpha$-iterated $\Pi^1_1$-reflection over $T$ are defined as follows
\begin{flalign*}
\mathbf{R}^0_{\Pi^1_1}(T) &: = T \\
\mathbf{R}^\alpha_{\Pi^1_1}(T) &: = T + \bigcup_{\beta\prec\alpha}\mathsf{RFN}_{\Pi^1_1}\big(\mathbf{R}^\beta_{\Pi^1_1}(T)\big)\textrm{ for $\alpha\succ 0$.}
\end{flalign*}
The formalization of this definition in arithmetic requires some additional efforts; see \textsection{2} for details.

Iterated reflection principles have been used previously to calculate proof-theoretic ordinals. For instance, Schmerl \cite{schmerl1979fine} used iterated reflection principles to establish bounds on provable arithmetical transfinite induction principles for fragments of $\mathsf{PA}$. Beklemishev \cite{beklemishev2003proof} has also calculated proof-theoretic ordinals of subsystems of $\mathsf{PA}$ via iterated reflection. These results differ from ours in two important ways. First, these results concern only theories in the language of first-order arithmetic, and hence do not engender calculations of $\Pi^1_1$ proof-theoretic ordinals. Second, these results are notation-dependent, i.e., they involve the calculation of proof-theoretic ordinals \emph{modulo} the choice of a particular (natural) ordinal notation system. We are concerned with $\Pi^1_1$ reflection. Hence, in light of Theorem \ref{first}, we are able to calculate proof-theoretic ordinals in a manner that is not sensitive to the choice of a particular ordinal notation system. %Note that the following theorem is independent of the choice of any particular ordinal notation.

\begin{theorem}
\label{iterated}Let $\alpha$ be an ordinal notation system with the order type $|\alpha|=\boldsymbol\alpha$. Then 
$|\mathbf{R}^\alpha_{\Pi^1_1}(\mathsf{ACA}_0)|_{\mathsf{ACA}_0}=\boldsymbol\alpha$ and $|\mathbf{R}^\alpha_{\Pi^1_1}(\mathsf{ACA}_0)|_{\mathsf{WO}}=\boldsymbol{\varepsilon_\alpha}$.
\end{theorem}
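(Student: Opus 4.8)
The plan is to prove Theorem~\ref{iterated} by reducing both claimed equalities to the two main theorems already established, namely Theorem~\ref{second} and Theorem~\ref{first}, and then supplying the one genuinely new ingredient: that iterated $\Pi^1_1$ reflection increments the reflection rank in the expected way. Concretely, the first equality $|\mathbf{R}^\alpha_{\Pi^1_1}(\mathsf{ACA}_0)|_{\mathsf{ACA}_0}=\boldsymbol\alpha$ I would obtain by transfinite induction on the notation $\alpha$, proving the sharper statement that for every notation $\beta$ the reflection rank of $\mathbf{R}^\beta_{\Pi^1_1}(\mathsf{ACA}_0)$ is exactly $|\beta|$. The second equality $|\mathbf{R}^\alpha_{\Pi^1_1}(\mathsf{ACA}_0)|_{\mathsf{WO}}=\boldsymbol{\varepsilon_\alpha}$ should then follow almost immediately from Theorem~\ref{second}, provided the iterated reflection theory extends $\mathsf{ACA}_0^+$ and is $\Pi^1_1$ sound: once the reflection rank is known to be $\boldsymbol\alpha$, Theorem~\ref{second} identifies it with the proof-theoretic ordinal, and the expected value $\boldsymbol{\varepsilon_\alpha}$ must emerge from the specific arithmetic of the rank-to-ordinal correspondence recorded in the surrounding discussion (where it is noted that rank $\boldsymbol\alpha$ generically yields ordinal $\boldsymbol{\varepsilon_\alpha}$).

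First I would record the base and successor facts about how $\prec_{\Pi^1_1}$ behaves under a single reflection step. For the rank computation the crucial lemma is that $T \prec_{\Pi^1_1} \mathbf{R}^{\beta+1}_{\Pi^1_1}(T)$ holds whenever $T = \mathbf{R}^{\beta}_{\Pi^1_1}(\mathsf{ACA}_0)$, so that the rank strictly increases, together with the matching upper bound that $\mathbf{R}^{\beta+1}_{\Pi^1_1}(\mathsf{ACA}_0)$ is $\prec_{\Pi^1_1}$-minimal among theories strictly above all the $\mathbf{R}^\gamma_{\Pi^1_1}(\mathsf{ACA}_0)$ for $\gamma \preceq \beta$. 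The lower bound on the rank is the easy direction: by the very definition of the iterated theories, $\mathbf{R}^\alpha_{\Pi^1_1}(\mathsf{ACA}_0)$ proves $\mathsf{RFN}_{\Pi^1_1}(\mathbf{R}^\beta_{\Pi^1_1}(\mathsf{ACA}_0))$ for each $\beta \prec \alpha$, which by the induction hypothesis furnishes theories of every rank $|\beta| < |\alpha|$ strictly below it, forcing $|\mathbf{R}^\alpha_{\Pi^1_1}(\mathsf{ACA}_0)|_{\mathsf{ACA}_0} \geq |\alpha|$.

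The main obstacle is the matching upper bound, i.e.\ showing $|\mathbf{R}^\alpha_{\Pi^1_1}(\mathsf{ACA}_0)|_{\mathsf{ACA}_0} \leq |\alpha|$. This requires showing that there is no $\Pi^1_1$ sound extension $U$ of $\mathsf{ACA}_0$ with $U \prec_{\Pi^1_1} \mathbf{R}^\alpha_{\Pi^1_1}(\mathsf{ACA}_0)$ whose rank is $\geq |\alpha|$; equivalently, that any such $U$ below the iterated theory already sits at a rank below $|\alpha|$. The natural strategy is to exploit a conservativity or reflection-closure property: a single application of $\mathsf{RFN}_{\Pi^1_1}$ should raise the rank by exactly one, and at limit notations the iterated theory should be exactly the supremum (join) of the earlier stages with no overshoot. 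I would verify this by establishing, inside $\mathsf{ACA}_0$, the key proof-theoretic fact that $\mathbf{R}^\alpha_{\Pi^1_1}(\mathsf{ACA}_0)$ proves only those $\Pi^1_1$ reflection statements that are provable from the union of reflection principles for strictly earlier stages—so that anything the iterated theory can reflect over is already handled at a lower rank. The delicate point throughout is the formalization of the transfinite iteration along a notation system (flagged in the excerpt as requiring the machinery of \textsection 2): I must ensure that the recursion $\mathbf{R}^\alpha_{\Pi^1_1}$ is carried out uniformly so that the provability facts used in the induction are themselves available inside $\mathsf{ACA}_0$, and that each stage remains $\Pi^1_1$ sound (this last point being needed both to apply Theorem~\ref{first} and to invoke Theorem~\ref{second}).

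Finally, with the rank equality in hand, I would close out the proof-theoretic ordinal claim. Since each $\mathbf{R}^\alpha_{\Pi^1_1}(\mathsf{ACA}_0)$ extends $\mathsf{ACA}_0^+$ (the single reflection step over $\mathsf{ACA}_0$ already yields this, as $\mathsf{RFN}_{\Pi^1_1}(\mathsf{ACA}_0)$ dominates the $\omega$-jump axiom) and is $\Pi^1_1$ sound, Theorem~\ref{second} applies and gives $|\mathbf{R}^\alpha_{\Pi^1_1}(\mathsf{ACA}_0)|_{\mathsf{WO}} = |\mathbf{R}^\alpha_{\Pi^1_1}(\mathsf{ACA}_0)|_{\mathsf{ACA}_0}$. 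It remains only to compute the latter as an ordinal, which is where the factor $\boldsymbol{\varepsilon_\alpha}$ appears: the reflection rank measured as an ordinal and the proof-theoretic ordinal are related through the $\varepsilon$-function, so substituting rank $\boldsymbol\alpha$ produces $\boldsymbol{\varepsilon_\alpha}$ exactly, matching the general discussion preceding the statement. I expect this last step to be routine once Theorem~\ref{second} is invoked, with all the real work concentrated in the rank computation and its formalization.
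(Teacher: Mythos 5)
Your lower-bound computation for the rank matches the paper's (the paper phrases it as a homomorphism of the lower $\prec$-cone of $\alpha$ into the cone below $\mathbf{R}^\alpha_{\Pi^1_1}(\mathsf{ACA}_0)$), but the two remaining steps of your outline contain genuine gaps. For the upper bound $|\mathbf{R}^\alpha_{\Pi^1_1}(\mathsf{ACA}_0)|_{\mathsf{ACA}_0}\le|\alpha|$, the ``reflection-closure'' property you propose --- that the iterated theory proves only those reflection statements provable from reflection for strictly earlier stages --- is just a restatement of its axiomatization, and it gives no control over the \emph{ranks} of the theories $U$ with $\mathbf{R}^\alpha_{\Pi^1_1}(\mathsf{ACA}_0)\vdash\mathsf{RFN}_{\Pi^1_1}(U)$: the rank is computed in the order $\prec_{\Pi^1_1}$ on \emph{all} r.e.\ extensions of $\mathsf{ACA}_0$, the cone below such a $U$ need not consist of theories of the form $\mathbf{R}^\beta_{\Pi^1_1}(\mathsf{ACA}_0)$, and so ``anything the iterated theory can reflect over is already handled at a lower rank'' is exactly the conclusion to be proved, not an available lemma. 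Likewise, ``a single application of $\mathsf{RFN}_{\Pi^1_1}$ raises the rank by exactly one'' is the hard successor case, not a recorded fact. The paper's argument (Lemma \ref{reflection_of_iteration} plus Proposition \ref{rank_of_iteration}) is of a different kind: one proves by transfinite induction on $|\alpha|$ that if $|U|_{T_0}>|\alpha|$ then $U$ \emph{augmented by a true $\Sigma^1_1(\Pi^0_2)$ sentence} $\varphi$ proves $\mathsf{RFN}_{\Pi^1_1(\Pi^0_3)}\big(\mathbf{R}^\alpha_{\Pi^1_1(\Pi^0_3)}(T_0)\big)$; the oracle sentence is indispensable, since it packages the infinitely many inductively obtained witnesses $\varphi_\beta$, $\beta\prec\alpha$, into a single $\Sigma^1_1$ statement that $U$ itself need not prove. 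Applying this to $U=\mathbf{R}^\alpha_{\Pi^1_1(\Pi^0_3)}(T_0)$ itself, a rank exceeding $|\alpha|$ would make $\mathbf{R}^\alpha_{\Pi^1_1(\Pi^0_3)}(T_0)+\varphi$ prove its own soundness and hence its own consistency, contradicting G\"odel's second incompleteness theorem, while Lemmas \ref{first principle} and \ref{second principle} guarantee its consistency. Your outline has no counterpart of this self-referential step, and without it the rank upper bound is not established.

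The second half of your proposal rests on a false premise and is moreover internally inconsistent. $\mathbf{R}^\alpha_{\Pi^1_1}(\mathsf{ACA}_0)$ does \emph{not} extend $\mathsf{ACA}_0^+$: each instance of $\mathsf{RFN}_{\Pi^1_1}(\cdot)$ is a $\Pi^1_1$ sentence, true $\Pi^1_1$ sentences are downward absolute to $\omega$-submodels, and hence the $\omega$-model of the arithmetically definable sets satisfies all of $\mathbf{R}^\alpha_{\Pi^1_1}(\mathsf{ACA}_0)$ while refuting ``the $\omega^{\mathrm{th}}$ jump of $\emptyset$ exists.'' So Theorem \ref{second} does not apply; and if it did, it would yield $|\mathbf{R}^\alpha_{\Pi^1_1}(\mathsf{ACA}_0)|_{\mathsf{WO}}=|\mathbf{R}^\alpha_{\Pi^1_1}(\mathsf{ACA}_0)|_{\mathsf{ACA}_0}=\boldsymbol\alpha$, flatly contradicting the target value $\boldsymbol{\varepsilon_\alpha}>\boldsymbol\alpha$. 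Your closing appeal to an ``$\varepsilon$-function conversion'' is incompatible with the equality of rank and ordinal you invoke one sentence earlier: there is no residual conversion left once Theorem \ref{second} is applied. In the paper, the gap between rank $\boldsymbol\alpha$ and ordinal $\boldsymbol{\varepsilon_\alpha}$ is precisely the content of the conservation theorem (Theorem \ref{schmerl_ACA_0_RCA_0}), $\mathbf{R}^\alpha_{\Pi^1_1}(\mathsf{ACA}_0)\equiv_{\Pi^1_1(\Pi^0_3)}\mathbf{R}^{\varepsilon_\alpha}_{\Pi^1_1(\Pi^0_3)}(\mathsf{RCA}_0)$, obtained via $\mathsf{PA}(X)$ and the Schmerl-style formula; the lower bound on $|\cdot|_{\mathsf{WO}}$ then comes from Lemma \ref{it_ref_to_wo}, and the upper bound from Lemma \ref{wo_to_iterated_special_case} together with Proposition \ref{rank_of_iteration} computed over the base theory $\mathsf{RCA}_0$ (this is Theorem \ref{ACA_0_ordinal_analysis}). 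Note finally that the logical traffic runs opposite to your plan: Theorem \ref{second} is itself \emph{derived} from this machinery in \textsection\ref{proof-theoretic_ordinal}, so it cannot be used to bypass it.
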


It is possible to prove Theorem \ref{second} and Theorem \ref{iterated} by formalizing infinitary derivations in $\mathsf{ACA}_0$ and appealing to cut-elimination, and in an early draft of this paper we did just that. Lev Beklemishev suggested that it might be possible to prove these results with methods from the proof theory of iterated reflection principles, namely conservation theorems in the style of Schmerl \cite{schmerl1979fine}. Though these methods have become quite polished for studying subsystems of first-order arithmetic, they have not yet been extended to $\Pi^1_1$ ordinal analysis. Thus, we devote a section of the paper to developing these techniques in the context of second-order arithmetic. We thank Lev for encouraging us to pursue this approach. Our main result in this respect is the following conservation theorem, where $\Pi^1_1(\Pi^0_3)$ denotes the complexity class consisting of formulas of the form $\forall X \;\mathsf{F}$ where $\mathsf{F}\in\Pi^0_3$.

\begin{theorem}
\label{main tool}
$\mathbf{R}^\alpha_{\Pi^1_1}(\mathsf{ACA}_0)$ is $\Pi^1_1(\Pi^0_3)$ conservative over $\mathbf{R}^{\varepsilon_\alpha}_{\Pi^1_1(\Pi^0_3)}(\mathsf{RCA}_0)$. 
\end{theorem}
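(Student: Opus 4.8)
The plan is to argue by transfinite induction on the ordinal notation $\alpha$, with the combinatorial engine being a Schmerl-style reduction lemma that trades a single level of the arithmetical hierarchy appearing in the matrix of a $\Pi^1_1$ reflection principle for an $\omega$-exponential increase in the length of iteration. Since a $\Pi^1_1$ sentence has the form $\forall X\,\theta(X)$ with $\theta$ arithmetical, and full uniform $\Pi^1_1$ reflection is the union over $n$ of the restricted principles $\mathsf{RFN}_{\Pi^1_1(\Pi^0_n)}$, everything must be carried out uniformly relative to an arbitrary set parameter $X$: the reduction lemmas are really statements about reflection for $\Pi^0_{n+1}$ formulas-with-oracle and the relativized arithmetical hierarchy. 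The floor at $\Pi^0_3$ is where the requisite partial truth definitions and the provable reflection transformations bottom out over $\mathsf{RCA}_0$.

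The central lemma I would prove is the analogue, in second-order arithmetic and relative to a set parameter, of Beklemishev's reduction trading one level of complexity for an exponential jump in iteration length: for a suitable base theory $U$ and any notation $\beta$,
\[
\mathbf{R}^{\beta}_{\Pi^1_1(\Pi^0_{n+1})}(U)\ \text{is}\ \Pi^1_1(\Pi^0_{n})\text{-conservative over}\ \mathbf{R}^{\omega^{\beta}}_{\Pi^1_1(\Pi^0_{n})}(U).
\]
Granting this, the descent from level $n$ down to level $3$ replaces a $\beta$-length iteration by one of length $\omega_{n-3}(\beta)$, the $\omega$-tower of height $n-3$ over $\beta$. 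Taking the union over all $n$ — which is exactly what one step of full $\Pi^1_1$ reflection provides — replaces the single iteration by the supremum of these towers, and the identity $\varepsilon_{\beta+1}=\sup_n\omega_n(\varepsilon_\beta+1)$ identifies this supremum as the next $\varepsilon$-number. This is precisely the source of the $\varepsilon$ operation in the statement.

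With the lemma in hand the induction runs as follows. The base case $\alpha=0$ asserts that $\mathsf{ACA}_0$ is $\Pi^1_1(\Pi^0_3)$-conservative over $\mathbf{R}^{\varepsilon_0}_{\Pi^1_1(\Pi^0_3)}(\mathsf{RCA}_0)$; here one first reduces the arithmetical comprehension of $\mathsf{ACA}_0$ to the $\omega$-many levels of arithmetical reflection over $\mathsf{RCA}_0$, and the descent to $\Pi^0_3$ then produces exactly $\varepsilon_0$ iterations. At a successor stage I would write $\mathbf{R}^{\alpha+1}_{\Pi^1_1}(\mathsf{ACA}_0)=\mathbf{R}^{\alpha}_{\Pi^1_1}(\mathsf{ACA}_0)+\mathsf{RFN}_{\Pi^1_1}(\mathbf{R}^{\alpha}_{\Pi^1_1}(\mathsf{ACA}_0))$, invoke the induction hypothesis to replace $\mathbf{R}^{\alpha}_{\Pi^1_1}(\mathsf{ACA}_0)$ by $\mathbf{R}^{\varepsilon_\alpha}_{\Pi^1_1(\Pi^0_3)}(\mathsf{RCA}_0)$ up to $\Pi^1_1(\Pi^0_3)$-conservativity, and then apply the central lemma to the single extra full-reflection step, the tower-supremum carrying $\varepsilon_\alpha$ to $\varepsilon_{\alpha+1}$. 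Limit stages are handled by taking unions and using the continuity of $\beta\mapsto\varepsilon_\beta$ together with the fact that iterated reflection commutes with the relevant suprema.

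The main obstacle is the central reduction lemma: formalizing it inside second-order arithmetic requires partial truth predicates for arithmetical formulas with a set oracle, a provably correct transformation of derivations that lowers matrix complexity at the cost of a longer reflection iteration, and uniform bookkeeping of the ordinal notations along the transfinite iteration so that the composite growth is exactly the $\omega$-tower and not merely a bound. Two further points demand care: the passage between the base theories $\mathsf{ACA}_0$ and $\mathsf{RCA}_0$ must be reconciled with the reflection iteration, so that the comprehension strength of $\mathsf{ACA}_0$ is accounted for precisely by the $\varepsilon_0$ worth of iterated $\Pi^1_1(\Pi^0_3)$ reflection; and every reduction must respect the outer set quantifier, proceeding relative to an arbitrary $X$, since it is the relativized arithmetical hierarchy that is being descended.
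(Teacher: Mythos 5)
Your numerology is right---the $\varepsilon$ operation does arise from trading one level of the matrix hierarchy for an $\omega$-exponential in iteration length, with $\varepsilon_{\beta+1}=\sup_n\omega_n(\varepsilon_\beta+1)$ absorbing the towers---but the proposal has a genuine gap: your ``central lemma,'' a Schmerl-style reduction for the classes $\Pi^1_1(\Pi^0_{n+1})$ over a genuinely second-order base, is not an available result and is essentially the theorem in disguise; you name its ingredients (truth predicates with oracle, a provably correct transformation of derivations, ordinal bookkeeping) but give no argument, and those ingredients point toward the infinitary-derivation/cut-elimination route that the authors explicitly abandoned. The paper never proves a second-order reduction of this kind directly. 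Instead it exits second-order arithmetic altogether: it proves (i) $\mathbf{R}^\alpha_{\Pi^1_1}(\mathsf{ACA}_0)\equiv_{\mathbf{\Pi}^0_\infty}\mathbf{R}^\alpha_{\mathbf{\Pi}^0_\infty}(\mathsf{PA}(X))$ via a formalized model-theoretic conservation of $\mathsf{ACA}_0+\forall X\,\varphi(X)$ over $\mathsf{PA}(X)$ plus all substitution instances $\varphi[\theta]$ (Lemma \ref{conservativity}, using weak models inside $\mathsf{ACA}_0$), together with the fact that the iterated reflection theories prove their own substitution instances; (ii) $\mathbf{R}^{\beta}_{\Pi^1_1(\Pi^0_3)}(\mathsf{RCA}_0)\equiv_{\mathbf{\Pi}^0_\infty}\mathbf{R}^{1+\beta}_{\mathbf{\Pi}^0_3}(\mathsf{EA}^+(X))$ via closure under the $\mathbf{\Delta}^0_1$ substitution rule and the $\omega$-interpretation of $\mathsf{RCA}_0$ into $\mathsf{I\Sigma}_1(X)$ (Theorem \ref{rca theorem}); and only then (iii) cites the known free-set-variable Schmerl formula $\mathbf{R}^\alpha_{\mathbf{\Pi}^0_\infty}(\mathsf{PA}(X))\equiv_{\mathbf{\Pi}^0_3}\mathbf{R}^{\varepsilon_\alpha}_{\mathbf{\Pi}^0_3}(\mathsf{EA}^+(X))$, where all the $\varepsilon$-bookkeeping lives. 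The idea missing from your plan is substitution closure: provability of $\forall X\,\psi(X)$ in a theory with comprehension is strictly stronger than provability of $\psi(X)$ with $X$ free, since comprehension instantiates $X$ by definable sets; conservation between the two settings holds exactly when the free-variable theory proves its own substitution instances, and verifying this for the iterated reflection hierarchies (Lemmas \ref{closure 2}, \ref{closure 1}, \ref{pure logic}) is the real new content of the paper's proof. Your remark that ``everything must be carried out uniformly relative to $X$'' names the issue but not the mechanism.

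A second, structural problem: your induction scheme would fail. The theorem is stated for an arbitrary ordinal notation $\alpha$, and the paper's notion of notation does not require well-foundedness, so an external transfinite induction with successor and limit cases is not available; all three constituent results are proved by Schmerl's reflexive induction inside $\mathsf{EA}^+$ (or $\mathsf{ACA}_0$), which yields the conservation \emph{provably} and uniformly in $\alpha$. This formalized form is also what your own successor step secretly requires: to replace $\mathbf{R}^{\alpha}_{\Pi^1_1}(\mathsf{ACA}_0)$ by $\mathbf{R}^{\varepsilon_\alpha}_{\Pi^1_1(\Pi^0_3)}(\mathsf{RCA}_0)$ \emph{inside} the reflection principle $\mathsf{RFN}_{\Pi^1_1}(\cdot)$, mere true mutual conservation is not enough---the equivalence $\mathsf{RFN}_{\Gamma}(T)\leftrightarrow\mathsf{RFN}_{\Gamma}(U)$ needs the mutual $\Gamma$-conservation of $T$ and $U$ to be provable in the ambient base, which is exactly how (\ref{mainconservation}) is extracted from the reflexive induction hypothesis in Lemma \ref{key conservation lemma}. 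With only an external induction hypothesis, your passage from $\mathsf{RFN}_{\Pi^1_1}\big(\mathbf{R}^{\alpha}_{\Pi^1_1}(\mathsf{ACA}_0)\big)$ to reflection over the $\mathsf{RCA}_0$-side theory does not go through, and the limit-stage appeal to ``continuity of $\beta\mapsto\varepsilon_\beta$'' has no formal counterpart.
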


To prove this result, we establish connections between $\Pi^1_1$ reflection over second-order theories and reflection over arithmetical theories with free set variables.
 
Finally, we demonstrate that Theorem \ref{first} could be used for straightforward well-foundedness proofs for certain ordinal notation systems. A recent development in ordinal analysis is the use of ordinal notation systems that are based on reflection principles. Roughly, the elements of such notation systems are reflection principles and they are ordered by proof-theoretic strength. Such notation systems have been extensively studied since Beklemishev \cite{beklemishev2004provability} endorsed their use as an approach to the canonicity problem for ordinal notations. See \cite{fernandez2016worms} for a survey of such notation systems.  We prove the well-foundedness of Beklemishev's reflection notation system for $\varepsilon_0$ using the well-foundedness of the $\prec_{\Pi^1_1}$-order. Previously, Beklemishev proved the well-foundedness of this system by constructing the isomorphism with Cantor's ordinal notation system for $\varepsilon_0$. We expect that our techniques---or extensions thereof---could be used to prove the well-foundedness of ordinal notation systems for stronger axiomatic theories.

Here is our plan for the rest of the paper. In \textsection{\ref{definitions}} we fix our notation and introduce some key definitions. In \textsection{\ref{dssection}} we present our technique for showing that certain classes of theories are well-founded (or nearly well-founded) according to various notions of proof-theoretic strength. Our first application of this technique establishes Theorem \ref{first}. In \textsection{\ref{dssection}} we prove various conservation results that connect iterated reflection principles with transfinite induction. The theorems in \textsection{\ref{dssection}} extend results of Schmerl from first-order theories to pseudo $\Pi^1_1$ theories, i.e., to theories axiomatized by formulas with at most free set variables, and to second-order theories. We conclude with a proof of Theorem \ref{main tool}. In \textsection{\ref{conservativity_section}} we establish connections between the reflection ranks and proof-theoretic ordinals of theories, including proofs of Theorem \ref{second} and Theorem \ref{iterated}. In \textsection{\ref{proof-theoretic_ordinal}} we show how to use our results to prove the well-foundedness of ordinal notation systems based on reflection principles. In \textsection\ref{ref_not_sect} we present an explicit example by proving the well-foundedness of Beklemishev's notation system for $\varepsilon_0$.

%%% Local Variables:
%%% TeX-master: "well-foundedness"
%%% End:

\section{Definitions and notation}
\label{definitions}

In this section we describe and justify our choice of meta-theory. We then fix some notation and present some key definitions. Finally, we describe a proof-technique that we will use repeatedly throughout the paper, namely, Schmerl's technique of reflexive induction.

\subsection{Treatment of theories}

Recall that $\mathsf{EA}$ is a finitely axiomatizable theory in the language of arithmetic with the exponential function, i.e., in the signature $(0,1,+,\times,2^x,\leq)$. $\mathsf{EA}$ is characterized by the standard recursive axioms for addition, multiplication, and exponentiation as well as the induction schema for $\Delta_0$ formulas. Note that by $\Delta_0$ formulas we mean $\Delta_0$ formulas in the language with exponentiation. $\mathsf{EA}$ is strong enough to facilitate typical approaches to arithmetization of syntax. Moreover, $\mathsf{EA}$ proves its own $\Sigma_1$ completeness.

We will also be interested in $\mathsf{EA}^+$. $\mathsf{EA}^+$ is a theory in the language of $\mathsf{EA}$. $\mathsf{EA}^+$ extends $\mathsf{EA}$ by the additional axiom ``superexponentiation is total.'' By superexponentiation, we mean the function $2^x_x$ where $2^x_0=x$ and $2^x_{y+1}=2^{2^x_y}$. $\mathsf{EA}^+$ is the weakest extension of $\mathsf{EA}$ in which the cut-elimination theorem is provable. Indeed, the cut-elimination theorem is equivalent to the totality of superexponentiation over $\mathsf{EA}$. See \cite{beklemishev2005reflection} for details on $\mathsf{EA}$ and $\mathsf{EA}^+$; see also \cite{hajek2017metamathematics} for details on $\mathsf{EA}$ and $\mathsf{EA}^+$ in a slightly different formalism without an explicit symbol for exponentiation. We will use $\mathsf{EA}^+$ as a meta-theory for proving many of our results.

In this paper we will examine theories in three different languages. First the language of first-order arithmetic, i.e., the language of $\mathsf{EA}$. Second the language of first-order arithmetic extended with one additional free set variable $X$; we also call this the \emph{pseudo-$\Pi^1_1$ language}. And finally the language of second-order arithmetic. The language of first-order arithmetic of course is a sublanguage of the other two languages. And we consider the pseudo-$\Pi^1_1$ language to be a sublanguage of the language of second-order arithmetic by identifying each pseudo-$\Pi^1_1$ sentence $\mathsf{F}$ with the second-order sentence $\forall X \;\mathsf{F}$. %In both contexts we are in interested in restricted classes of formulas.

In the first-order context we are interested in the standard arithmetical complexity classes $\Pi_n$ and $\Sigma_n$. We write $\Pi_\infty$ to denote the class of all arithmetical formulas. %However, we are also interested in classes of arithmetical formulas that contain a free set variable $X$.
We write $\mathbf{\Pi}^0_n$ to denote the class of formulas that are just like $\Pi_n$ formulas except that their formulas (potentially) contain a free set variable $X$. Formulas in the complexity class $\mathbf{\Pi}^0_n$ \emph{cannot} have set quantifiers, and so contain \emph{only} free set variables. Of course, the class $\mathbf{\Sigma}^0_n$ is defined dually to the class $\mathbf{\Pi}^0_n$. We write $\mathbf{\Pi}^0_\infty$ to denote the class of boldface arithmetical formulas, i.e., the class of arithmetical formulas (potentially) with a free set variable.

In the second-order context we are mostly interested in the standard analytical complexity classes $\Pi^1_1$ and $\Sigma^1_1$. However, we will also use other complexity classes. Suppose $\mathcal{C}\subset \mathcal{L}_2$ is one of the following classes of formulas: $\Pi^0_m$ or $\Sigma^0_m$, for $m\ge 1$. Then we denote by $\Pi^1_n(\mathcal{C})$ the class of all the formulas of the form $\forall X_1\exists X_2\ldots Q X_n \; \mathsf{F}$, where $\mathsf{F}\in\mathcal{C}$. We define $\Sigma^1_n(\mathcal{C})$ dually.

% We will consider theories in the first-order language of arithmetic augmented with one free set variable $X$ but with no bound set variables. Formulas without bound set variables are known as \emph{pseudo $\Pi^1_1$ formulas.} By a pseudo $\Pi^1_1$ theory we mean a theory axiomatized by pseudo $\Pi^1_1$ formulas. See \cite{pohlers2008proof} for details.
For a first-order theory $T$, we use $T(X)$ to denote the pseudo $\Pi^1_1$ pendant of $T$. For example, the theory $\mathsf{PA}(X)$ contains (i) the axioms of $\mathsf{PA}$ and (ii) induction axioms for all formulas in the language, including those with free set variables. The theories $\mathsf{EA}(X)$, $\mathsf{EA}^+(X)$, and $\mathsf{I\Sigma}_1(X)$ are defined analogously, i.e., their induction axioms are extended to include formulas with the free set variable $X$.

Formulas in any of the three languages we are working with can naturally be identified with words in a suitable finite alphabet, which, in turn, are naturally one-to-one encoded by numbers. Accordingly, we can fix a G\"{o}del numbering of these languages. We denote the G\"{o}del number of an expression $\tau$ by $\ulcorner \tau \urcorner$. Many natural syntactic relations ($x$ is a logical axiom, $z$ the result of applying Modus Ponens to $x$ and $y$, $x$ encodes a $\Pi_n$ formula, etc.) are elementary definable and their simplest properties can be verified within $\mathsf{EA}$. We also fix a one-to-one elementary coding of finite sequences of natural numbers. $\langle x_1,...,x_n\rangle$ denotes the code of a sequence $x_1,...,x_n$ and, for any fixed $n$, is an elementary function of $x_1,...,x_n$.

We are concerned with recursively enumerable theories. Officially, a theory $T$ is a $\Sigma_1$ formula $\mathsf{Ax}_{T}(x)$ that is understood as a formula defining the (G\"{o}del numbers of) axioms of $T$ in the standard model of arithmetic, i.e., the set of axioms of $T$ is $\{\varphi : \mathbb{N}\vDash \mathsf{Ax}_{T}(\varphi) \}$. Thus, we are considering theories \emph{intensionally}, via their axioms, rather than as deductively closed sets of formulas. 

Since our base theory $\mathsf{EA}$ is fairly weak, we have to be careful with our choice of formalizations of proof predicates. Namely, we want our provability predicate to be $\Sigma_1$. And due to this we can't use the straightforwardly defined predicates $\mathsf{PrfNat}_{T}(x,y)$: $x$ is a Hilbert-style proof of $y$, where all axioms are either axioms of first-order logic or axioms of $T$. The predicates $\mathsf{PrfNat}_{T}(x,y)$ are equivalent to $\forall^b\Sigma_1$-formulas over $\mathsf{EA}$ ($\forall^b\Sigma_1$-formulas are the formulas starting with a bounded universal quantifier followed by $\Sigma_1$-formula). However, $\mathsf{EA}$ is too weak to equivalently transform $\forall^b\Sigma_1$-formulas to $\Sigma_1$-formulas; for this one needs the collection scheme $\mathsf{B}\Sigma_1$, which isn't provable in $\mathsf{EA}$. We note that this doesn't affect most natural theories $T$, in particular, for any $T$ with $\Delta_0$ formula $\mathsf{Ax}_T$, the predicate $\mathsf{PrfNat}_{T}(x,y)$ is equivalent to a $\Sigma_1$ formula over $\mathsf{EA}$.

Nevertheless, to avoid this issue, we work with proof predicates that are forced to be $\Sigma_1$ in $\mathsf{EA}$, which are sometimes called  \emph{smooth proof} predicates. In the definition of the smooth proof predicate, a ``proof'' is a pair consisting of an actual Hilbert style proof and a uniform bound for witnesses to the facts that axioms in the proof indeed are axioms. We simply write $\mathsf{Prf}_{T}(x,y)$ to formalize that $x$ is a ``smooth proof'' of $y$ in theory $T$. The predicates $\mathsf{Prf}_{T}(x,y)$ are  $\Delta_0$-formulas. The predicate $\mathsf{Pr}_T(y)$ is shorthand for $\exists x \mathsf{Prf}_T(x,y)$. We use the predicate $\mathsf{Con}(T)$ as shorthand for $\neg \mathsf{Pr}_T(\bot)$, where we fix $\bot$ to be some contradictory sentence.

The closed term $1+1+...+1$ ($n$ times) is the numeral of $n$ and is denoted $\underline{n}$. We often omit the bar when no confusion can occur. We also often omit the corner quotes from G\"{o}del numbers when no confusion can occur. For instance, we can encode the notion of a formula $\varphi$ being provable in a theory $T$, by saying that there is a $T$-proof (a sequence subject to certain constraints) the last element of which is the numeral of the G\"{o}del number of $\varphi$. However, instead of writing $\mathsf{Pr}_T(\underline{\ulcorner \varphi \urcorner})$ to say that $\varphi$ is provable we simply write $\mathsf{Pr}_T( \varphi )$.

Suppose $T$ and $U$ are recursively enumerable theories in the same language. We write $T\sqsubseteq U$ if $T$ is a subtheory of $U$; we can formalize the claim that $T\sqsubseteq U$ in arithmetic with the formula $\forall \varphi \big(\mathsf{Pr}_{T}(\varphi) \rightarrow \mathsf{Pr}_U(\varphi)\big)$. We write $T \equiv U$ if $T\sqsupseteq U$ and $U\sqsupseteq T$. For a class $\mathcal{C}$ of sentences of the language of $T$ we write $T\sqsubseteq_{\mathcal{C}} U$ if the set of $\mathcal{C}$-theorems of $T$ is a subset of $\mathcal{C}$-theorems of $\mathsf{U}$; this could be naturally formalized in arithmetic with the formula $\forall \varphi \in \mathcal{C} \big(\mathsf{Pr}_{T}(\varphi) \rightarrow \mathsf{Pr}_U(\varphi)\big)$.  We write $T\equiv_{\mathcal{C}} U$ if $T\sqsubseteq_{\mathcal{C}} U$ and $U\sqsubseteq_{\mathcal{C}} T$.

We will be interested in partial truth-definitions for various classes of formulas for which we could prove Tarski's bi-conditionals. For a class $\mathcal{C}$ of formulas we call a formula $\mathsf{Tr}_{\mathcal{C}}(x)$ a partial truth definition for $\mathcal{C}$ over a theory $T$, if $\mathsf{Tr}_{\mathcal{C}}(x)$ is from the class $\mathcal{C}$ and $$T\vdash \varphi(\vec{x})\leftrightarrow \mathsf{Tr}_{\mathcal{C}}(\varphi(\vec{x}))\mbox{, for all $\varphi(\vec{x})$ from $\mathcal{C}$.}$$
Moreover, we will work only with truth definitions such that the above property is provable in $\mathsf{EA}$.

In the book by  H\'ajek and Pudl\'ak \cite[\textsection I.1(d)]{hajek2017metamathematics} there is a construction of partial truth definitions for classes $\Pi_n$ and $\Sigma_n$, $n\ge 1$, over $\mathsf{I}\Sigma_1$. However, we will use a sharper construction of partial truth definitions for classes $\Pi_n$ and $\Sigma_n$, $n\ge 1$, over $\mathsf{EA}$ which  could be found in \cite[Appendix~A]{beklemishev2019reflection}.
And we will use truth definitions for classes $\mathbf{\Pi}_n$ and $\mathbf{\Sigma}_n$,  $n\ge 1$, over $\mathsf{EA}(X)$ that as well were constructed in \cite[Appendix~A]{beklemishev2019reflection}.

In the case of second-order arithmetic there are partial truth definitions for  classes $\Pi^1_n(\Pi^0_m)$, $\Sigma^1_n(\Sigma^0_m)$, $\Sigma^1_n(\Pi^0_m)$, and $\Pi^1_n(\Sigma^0_m)$, where $m\ge 1$, over $\mathsf{RCA}_0$. One could easily construct this partial truth definitions from the partial  truth definitions for classes $\mathbf{\Pi}_n$ and $\mathbf{\Sigma}_n$ over $\mathsf{EA}(X)$. However, over $\mathsf{ACA}_0$ it is possible to construct partial truth definitions for the classes $\Pi^1_n$ and $\Sigma^1_n$, $n\ge 1$. Let $\mathbf{\Sigma}^1_1$ be the class of $\Sigma^1_1$-formulas with a set parameter $X$. It is easy to construct partial truth definitions for  classes $\Pi^1_n$ and $\Sigma^1_n$, $n\ge 1$, from a partial truth definition for $\mathbf{\Sigma}^1_1$. Simpson \cite[Lemma~V.1.4]{simpson2009subsystems} proves that for each $\Sigma^1_1$ formula $\varphi(X)$ there exists a $\Delta^0_0$ formula $\theta(x,y)$ such that
$$\mathsf{ACA}_0\vdash \forall X\big(\varphi(X)\mathrel{\leftrightarrow} (\exists f\colon \mathbb{N}\to\mathbb{N})\;\forall m\; \theta_{\varphi}(X\upharpoonright m,f \upharpoonright m)\big).$$
Here $X\upharpoonright m$ is the natural number encoding the finite set $X\cap \{0,\ldots,m-1\}$ and $f\upharpoonright m$ is the code of the finite sequence $\langle f(0),\ldots,f(m-1)\rangle$. From Simpson's proof it is easy to extract a Kalmar elementary algorithm for constructing the formula $\theta_{\varphi}$ from a formula $\varphi$. And by the same argument as Simpson we show that the $\Sigma^1_1$-formula $(\exists f\colon \mathbb{N}\to\mathbb{N})\;\forall m\; \mathsf{Tr}_{\Pi^0_1}(\theta_{x}(X\upharpoonright m,f \upharpoonright m))$ is a partial truth definition $\mathsf{Tr}_{\mathbf{\Sigma}^1_1}(X,x)$  for the class $\mathbf{\Sigma}^1_1$ over $\mathsf{ACA}_0$.

%We can formalize these notions (theory containment, deductive equivalence, and conservation) in $\mathsf{EA}^+$ by using formalized proof predicates. 

%\subsection{Classes of formulas}

\subsection{Ordinal notations}\label{ordinal_notations}

There are many ways of treating ordinal notations in arithmetic. We choose one specific method that will be suitable when we work in the theory $\mathsf{EA}^+$ (and its extensions). Our results will be valid for other natural choices of treatment of ordinal notations, but some of the proofs would have to be tweaked slightly.

Often we will use ordinal notation systems within formal theories that couldn't prove (or even express) the well-foundedness of the notation systems. Also, most of our results are intensional in nature and don't require the notation system to be well-founded from an external point of view. Due to this, our definition of an ordinal notation system does not require it to be well-founded. 

Officially, an ordinal notation $\alpha$ is a tuple $\langle \varphi(x),\psi(x,y), p, n \rangle $ where $\varphi,\psi \in \Delta_0$, $\varphi(n)$ is true according to $\mathsf{Tr}_{\Sigma_1}$, and $p$ is an $\mathsf{EA}$ proof of the fact that on the set $\{x\mid \varphi(x)\}$ the order $$x\prec_{\alpha}y\stackrel{\mbox{\footnotesize \textrm{def}}}{\iff} \psi(x,y)$$ is a strict linear order. More formally $p$ is an $\mathsf{EA}$ proof of the conjunction of the following sentences:
\begin{enumerate}
\item \label{not_prop_1} $\forall x,y,z \big(\varphi(x)\land \varphi(y)\land\varphi(z)\land \psi(x,y)\land \psi(y,z)\to \psi(x,z)\big)$ (Transitivity);
\item \label{not_prop_2}$\forall x \big(\varphi(x)\to \neg \psi(x,x)\big)$ (Irreflexivity);
\item \label{not_prop_3}$\forall x,y \Big(\varphi(x)\land \varphi(y)\land x\ne y \to \big(\psi(x,y)\land \neg \psi(y,x)\big)\lor \big(\psi(y,x)\land \neg \psi(x,y)\big)\Big)$ (Antisymmetry).
\end{enumerate}

We now define a partial order $\prec$ on the set of all notation systems. Any tuples $\alpha=\langle \varphi,\psi,p,n\rangle$ and $\alpha'=\langle \varphi',\psi',p',n\rangle$ are $\prec$-incomparable if either $\varphi\ne\varphi'$, or $\psi\ne\psi'$, or $p\ne p'$. If $\alpha,\beta$ are of the form $\alpha=\langle\varphi,\psi,p,n\rangle$ and $\beta=\langle \varphi,\psi,p,m\rangle$, we put $\alpha \prec \beta$ if $\mathsf{Tr}_{\Sigma_1} \big( \psi(n,m) \big)$ but $\mathsf{Tr}_{\Sigma_1} \big(\neg \psi(m,n) \big)$. 

Clearly the relation $\prec$ and the property of being an ordinal notation system are expressible by $\Sigma_1$-formulas. In $\mathsf{EA}^+$ we could expand the language by a definable superexponentiation function $2^{x}_y$. Since the superexponentiation function is $\mathsf{EA}^+$ provably monotone, by a standard technique one could show that $\mathsf{EA}^+$ proves induction for the class $\Delta_0(2^x_y)$ of formulas with bounded quantifiers in the expanded language. It is easy to show that over $\mathsf{EA}^+$ the truth of $\Delta_0$-formulas according to the $\Sigma_1$-truth predicate could be expressed by a $\Delta_0(2^x_y)$ formula. Thus, the order $\prec$ and the property of being an ordinal notation system are expressible by $\Delta_0(2^x_y)$ formulas, which allows us to reason about them in $\mathsf{EA}^+$ is a straightforward manner.

Let us show that $\mathsf{EA}^+$ proves that $\prec$ is a disjoint union of linear orders. First we note that the theory $\mathsf{EA}^+$ proves the $\Pi_2$ soundness of $\mathsf{EA}$ (i.e.  $\mathsf{RFN}_{\Pi_2}(\mathsf{EA})$, see section below). And we note that for any $\alpha=\langle \varphi(x),\psi(x,y), p, n \rangle $ the conclusion of $p$ (conjunction of sentences (\ref{not_prop_1})--(\ref{not_prop_3})) is $\mathsf{EA}$-provably equivalent to a $\Pi_1$ sentence. Hence for any notation system $\alpha=\langle \varphi(x),\psi(x,y), p, n \rangle $ the theory $\mathsf{EA}^+$ proves that the corresponding conjunction of sentences (\ref{not_prop_1})--(\ref{not_prop_3}) is true. Using this we easily prove in $\mathsf{EA}^+$ that $\prec$ is a linear ordering, when restricted to the tuples that share the same first three components.

For an ordinal notation $\alpha$ the value of $|\alpha|$ is either an ordinal or $\infty$. If the lower cone $(\{\beta\mid \beta\prec \alpha\},\prec)$ is well-founded, then $|\alpha|$ is the ordinal isomorphic to the well-ordering $(\{\beta\mid \beta\prec \alpha\},\prec)$. Otherwise,  $|\alpha|=\infty$. In other words, $|\alpha|$ is the well-founded rank of $\alpha$ in the $\prec$-order.

An alternative (more standard) approach to treating ordinal notations in arithmetic is to fix an elementary ordinal notation up to some ordinal $\boldsymbol \alpha$. This is a fixed linear order $\mathbf{L}=(\mathcal{D}_{\mathbf{L}},\prec_{\mathbf{L}})$, where both $\mathcal{D}_{\mathbf{L}}\subseteq \mathbb{N}$ and $\prec_{\mathbf{L}} \subseteq \mathbb{N}\times \mathbb{N}$ are given by $\Delta_0$ formulas such that (i) $\mathbf{L}$ is provably linear in $\mathsf{EA}$, (ii) $\mathbf{L}$ is  well-founded, and (iii) the order type of $\mathbf{L}$ is $\boldsymbol \alpha$. It has been empirically observed that the ordinal notation systems that arise in ordinal analysis results in proof theory are of this kind; see, e.g., \cite{rathjen1999realm}. Note that from any $\mathbf{L}$ of this sort we could easily form an ordinal notation (in our sense) $\alpha$ such that there is a Kalmar elementary isomorphism $f$ between $\mathbf{L}$ and $(\{\beta\mid \beta\prec\alpha\},\prec)$; moreover, the latter is provable in $\mathsf{EA}^+$. %Here for $\alpha=\langle \varphi(x,y),p,n\rangle$ we could let $\varphi$ be some naturally formed $\Delta_0$ presentation of $\mathbf{L}+1$, $p$ be a $\mathsf{EA}$ proof of linearity of $\varphi$ (constructed from the proof of linearity of $\mathbf{L}$) and $n$ be the greatest element in the order $\varphi$.

Further we will work with ordinal notation systems that are given by some combinatorially defined system of terms and order on them. The standard example of such a system is the Cantor ordinal notation system up to $\varepsilon_0$. For the notations that we will consider it will be always possible to formalize in $\mathsf{EA}$ the definition and proof that the order is linear. Thus, as described above, we will be able to form an ordinal notation $\alpha$ such that there will be a natural isomorphism between $(\{\beta\mid \beta\prec\alpha\},\prec)$ and the initial combinatorially defined ordinal notation system. We will make transitions from combinatorial definitions of notation systems to ordinal notation systems in our sense without any further comments.

Moreover, we will use expressions like $\omega^{\alpha}$ and $\varepsilon_{\alpha}$, where $\alpha$ is some ordinal notation system. Let us consider a notation system $\alpha=\langle \varphi(x),\psi(x,y),p,n\rangle$ and define the notation system $\omega^{\alpha}=\langle \varphi'(x),\psi'(x,y),p',n'\rangle$. We want the order $\prec_{\omega^\alpha}$ to be the order on the terms $\omega^{a_1}+\ldots+\omega^{a_k}$, where $a_1\succeq_{\alpha}\ldots\succeq_{\alpha}a_k$. And the order $\prec_{\omega^\alpha}$ is defined as the usual order on Cantor normal forms, where we compare $a_i$ by the order $\prec_{\alpha}$. By arithmetizing this definition of $\prec_{\omega^{\alpha}}$ we get $\varphi'$, $\psi'$, and $p'$. We put $n'$ to be the number encoding the term $\omega^{n}$. Note that, according to this definition, $\alpha$ and $\omega^{\alpha}$ are $\prec$-incomparable. However, if $\alpha\prec \beta$, then $\omega^{\alpha}\prec \omega^{\beta}$.

The definition of the notation system $\varepsilon_{\alpha}$ is similar to that of $\omega^{\alpha}$. The system of terms for $\varepsilon_{\alpha}$ consists of nested Cantor normal forms built up from $0$ and elements $\varepsilon_{a}$, for $a\in \mathsf{dom}(\prec_{\alpha})$. The comparission of nested Cantor normal forms is defined in the standard fashion, where we compare elements $\varepsilon_{a}$ and $\varepsilon_b$ as $a\prec_{\alpha} b$.

\subsection{Reflection principles}

 Suppose $\mathcal{C}$ is some class of formulas in one of the languages that we consider and $T$ is a theory in the same language. The uniform $\mathcal{C}$ reflection principle $\mathsf{RFN}_\mathcal{C}(T)$ over $T$ is the schema
$$ \forall\vec{x} \big( \mathsf{Pr}_T(\varphi(\vec{x})) \rightarrow \varphi(\vec{x}) \big) $$
for all $\varphi\in\mathcal{C}$, where $\vec{x}$ are free number variables and $\varphi(\vec{x})$ contains no other variables.

In those cases for which we have a truth-definition for $\mathcal{C}$ in $T$ the scheme $\mathsf{RFN}_\mathcal{C}(T)$ can be axiomatized by the single sentence $$\forall \varphi \in \mathcal{C} \big (\mathsf{Pr}_T (\varphi) \rightarrow \mathsf{Tr}_{\mathcal{C}}(\varphi)\big).$$

Given an ordinal notation system $\prec $, we informally define the operation $\mathbf{R}_{\mathcal{C}}^{\cdot}(\cdot)$ of iterated $\mathcal{C}$ reflection along $\prec$ as follows.
\begin{flalign*}
\mathbf{R}^0_\mathcal{C}(T) &: = T \\
\mathbf{R}^\alpha_\mathcal{C}(T) &: = T + \bigcup_{\beta\prec\alpha}\mathsf{RFN}_\mathcal{C}\big(\mathbf{R}^\beta_\mathcal{C}(T)\big)\textrm{ for $\alpha\succ 0$.}
\end{flalign*}

More formally, we appeal to G\"{o}del's fixed point lemma in $\mathsf{EA}$.  We fix a formula $\mathsf{RFN}\mbox{-}\mathsf{Inst}_{\mathcal{C}}(U,x)$, where $U$ and $x$ are first-order variables, that formalizes the fact that $x$ is an instance of the scheme $\mathsf{RFN}_{\mathcal{C}}(U)$. We now want to define a $\Sigma_1$ formula $\mathsf{Ax}_{\mathbf{R}^\alpha_\mathcal{C}(T)}(x)$ (note that $\alpha$, $T$, and $x$ are arguments of the formula) that defines the set of axioms of the theories $\mathbf{R}^\alpha_\mathcal{C}(T)$. We define the formula as a fixed point:
$$\mathsf{EA}\vdash \mathsf{Ax}_{\mathbf{R}^\alpha_\mathcal{C}(T)}(x)\leftrightarrow  \big( \mathsf{Ax}_{T}(x) \vee \exists \beta \prec \alpha \;\mathsf{RFN}\mbox{-}\mathsf{Inst}_{\mathcal{C}}(\mathbf{R}^\beta_\mathcal{C}(T),x)) \big) ,$$
note that when we substitute $\mathbf{R}^\beta_\mathcal{C}(T)$ in $\mathsf{RFN}\mbox{-}\mathsf{Inst}_{\mathcal{C}}$ we actually substitute (the G\"{o}del number of) $\mathsf{Ax}_{\mathbf{R}^\beta_\mathcal{C}(T)}$.

Beklemishev introduced this approach to defining progressions of iterated reflection in \cite{beklemishev1995iterated}; the reader can find a more modern version of this approach in \cite{beklemishev2017reflection}. It is easy to prove that this definition of progressions of iterated reflection provides a unique (up to $\mathsf{EA}$ provable deductive equivalence) definition of the theories $\mathsf{RFN}^\alpha_\mathcal{C}(T)$. 

\subsection{Reflexive induction}

We often employ Schmerl's technique of \emph{reflexive induction}. Reflexive induction is a way of simulating large amounts of transfinite induction in weak theories. The technique is facilitated by the following theorem; we include the proof of the theorem, which is very short.

\begin{theorem}[Schmerl]\label{reflexive induction}
 Let $T$ be a recursively axiomatized theory (in one of the languages that we consider) that contains $\mathsf{EA}$. Suppose $$T \vdash \forall\alpha \Big(\mathsf{Pr}_{T} \big(\forall\beta \prec \alpha \; \varphi(\beta) \big) \rightarrow \varphi(\alpha)\Big).$$ Then $T \vdash \forall\alpha \;\varphi (\alpha)$.\footnote{Schmerl proved this result over the base theory $\mathsf{PRA}$. Beklemishev \cite{beklemishev2003proof} weakened the base theory to $\mathsf{EA}$.}
\end{theorem}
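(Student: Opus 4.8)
The plan is to deduce the theorem from a single application of L\"ob's theorem. Write $S$ for the sentence $\forall\alpha\,\varphi(\alpha)$ whose provability in $T$ we want, and recall that since $T\supseteq\mathsf{EA}$ and we are using the smooth ($\Sigma_1$) provability predicate, $\mathsf{Pr}_T$ satisfies the Hilbert--Bernays--L\"ob derivability conditions over $\mathsf{EA}$. It therefore suffices to establish the single local reflection instance $T\vdash\mathsf{Pr}_T(S)\to S$: L\"ob's theorem then delivers $T\vdash S$ at once, which is exactly the conclusion. Should one prefer to keep the argument self-contained, one can replace the black-box appeal to L\"ob by fixing, via the fixed-point lemma available in $\mathsf{EA}$, a sentence $\psi$ with $\mathsf{EA}\vdash\psi\leftrightarrow(\mathsf{Pr}_T(\psi)\to S)$ and running the usual L\"ob derivation by hand; the combinatorics are identical.

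Everything thus reduces to proving $T\vdash\mathsf{Pr}_T(S)\to S$ from the hypothesis of the theorem, which I will call $(*)$. I would reason inside $T$ as follows. Assume $\mathsf{Pr}_T(S)$ and fix an arbitrary $\alpha$. The implication $S\to\forall\beta\prec\alpha\,\varphi(\beta)$ is a trivial logical truth---it merely restricts the universal quantifier of $S$ to the $\prec$-predecessors of $\alpha$---and, crucially, it possesses a $T$-proof produced uniformly as an elementary function of $\alpha$. Hence $\mathsf{EA}\vdash\forall\alpha\,\mathsf{Pr}_T(S\to\forall\beta\prec\alpha\,\varphi(\beta))$, and distribution of $\mathsf{Pr}_T$ over implication converts the assumption $\mathsf{Pr}_T(S)$ into $\mathsf{Pr}_T(\forall\beta\prec\alpha\,\varphi(\beta))$. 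Now $(*)$, instantiated at $\alpha$, licenses the passage from $\mathsf{Pr}_T(\forall\beta\prec\alpha\,\varphi(\beta))$ to $\varphi(\alpha)$. Since $\alpha$ was arbitrary we obtain $S$, and discharging the assumption yields $\mathsf{Pr}_T(S)\to S$, as needed.

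The one step that requires genuine care---and which I expect to be the only real obstacle---is the \emph{uniform} formalized quantifier restriction $\mathsf{EA}\vdash\forall\alpha\,\mathsf{Pr}_T(S\to\forall\beta\prec\alpha\,\varphi(\beta))$. This must not be conflated with $T$ proving its own soundness, which would be illegitimate: it holds only because a single proof schema, with the numeral of $\alpha$ substituted, witnesses the implication simultaneously for every $\alpha$, and $\mathsf{EA}$ verifies the correctness of the resulting elementary proof-function by its provable $\Sigma_1$-completeness. Here one must stay mindful of the weakness of the base theory and of the specific smooth proof predicate fixed earlier, so that necessitation and distribution are genuinely available over $\mathsf{EA}$ rather than only over a stronger theory such as $\mathsf{I}\Sigma_1$. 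Everything else---instantiating $(*)$, generalizing on $\alpha$, and invoking L\"ob---is routine. Finally, I note that well-foundedness of $\prec$ is never used anywhere, which is as it should be, since the notation systems are treated intensionally and $\prec$ is not assumed to be well-founded.
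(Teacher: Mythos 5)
Your proposal is correct and is essentially the paper's own proof unfolded: the paper likewise reduces everything to the single local reflection instance $T\vdash\mathsf{Pr}_T(\forall\alpha\,\varphi(\alpha))\rightarrow\forall\alpha\,\varphi(\alpha)$ and concludes by L\"ob's theorem, passing through $T\vdash\forall\alpha\,\mathsf{Pr}_T(\forall\beta\prec\alpha\,\varphi(\beta))\rightarrow\forall\alpha\,\varphi(\alpha)$ exactly as you do. The only difference is expository: you make explicit the uniform formalized quantifier-restriction step (the elementary proof-function verified via $\mathsf{EA}$'s provable $\Sigma_1$-completeness) that the paper compresses into the phrase ``whence it follows that,'' and your remark that well-foundedness of $\prec$ is never used is accurate.
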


\begin{proof} Suppose that $T \vdash \forall\alpha \Big(\mathsf{Pr}_{T} \big(\forall\beta \prec \alpha \; \varphi(\beta) \big) \rightarrow \varphi(\alpha)\Big).$
We infer that 
$$T \vdash \forall\alpha \mathsf{Pr}_T\big(\forall \beta \prec\alpha\; \varphi(\beta)\big) \rightarrow \forall\alpha\;\varphi(\alpha),$$
whence it follows that
$$T\vdash  \mathsf{Pr}_T\big(\forall\alpha \;\varphi(\alpha)\big) \rightarrow\forall\alpha \;\varphi(\alpha).$$
L\"{o}b's theorem then yields $T\vdash\forall\alpha \;\varphi(\alpha)$.
\end{proof}

Accordingly, to prove claims of the form $T \vdash \forall\alpha \;\varphi (\alpha)$, we often prove that $T \vdash \forall\alpha \Big(\mathsf{Pr}_{T} \big(\forall\beta \prec \alpha\; \varphi(\beta) \big) \rightarrow \varphi(\alpha)\Big)$ and infer the desired claim by Schmerl's Theorem. While working inside $T$, we refer to the assumption $\mathsf{Pr}_{T} \big(\forall\beta \prec \alpha\; \varphi(\beta) \big)$ as the \emph{reflexive induction hypothesis}.

%%% Local Variables:
%%% TeX-master: "well-foundedness"
%%% End:

\section{Well-foundedness and reflection principles}
\label{dssection}

In this section we develop a technique for showing that certain orders on axiomatic theories exhibit a well-foundeness like properties. The coarsest order that we will consider is $\Pi^1_1$ reflection order for which we will prove that its restriction to $\Pi^1_1$ sound theories is well-founded. For weaker reflection and consistency orders we will prove only some well-foundedness like properties. Also we note that the same technique is used in \cite[Theorem~3.2]{enayat2019truth} to prove certain facts about axiomatic theories of truth and in \cite[Theorem~1.1]{lutz2019incompleteness} to prove a recursion-theoretic result concerning the hyper-degrees.

Our technique is inspired by H. Friedman's \cite{friedman1976uniformly} proof of the following result originally due to Steel \cite{steel1975descending}; recall that $\leq_T$ denotes Turing reducibility.

\begin{theorem}
Let $P\subset \mathbb{R}^2$ be arithmetic. Then there is no sequence $(x_n)_{n<\omega}$ of reals such that for every $n$, both $x_n \geq_T x'_{n+1}$ and also $x_{n+1}$ is the unique real $y$ such that $P(x_n,y)$.
\end{theorem}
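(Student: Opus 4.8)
The plan is to prove the theorem by a diagonalization argument that exploits the uniqueness clause in the hypothesis. Suppose toward a contradiction that such a sequence $(x_n)_{n<\omega}$ exists. The key observation is that the uniqueness of the real $y$ satisfying $P(x_n,y)$ means that $x_{n+1}$ is \emph{definable} from $x_n$ via the arithmetic predicate $P$; more precisely, $x_{n+1}$ is the unique solution of an arithmetic relation with parameter $x_n$. Combined with the reducibility requirement $x_n \geq_T x'_{n+1}$ (note the jump: $x_n$ computes the Turing \emph{jump} of $x_{n+1}$), this gives us a strong effectivity condition that lets us recover the whole tail of the sequence uniformly. First I would fix the arithmetic formula $\psi(u,v)$ defining $P$ and analyze its complexity, say $P \in \Pi^0_k$ for some fixed $k$, since this uniform bound on complexity across all pairs $(x_n, x_{n+1})$ is what makes the diagonalization go through.

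The heart of the argument is to show that from $x_0$ alone one can, with finitely many jumps, compute each $x_n$, and that this leads to a contradiction with the condition $x_n \geq_T x'_{n+1}$. Concretely I would argue as follows. Because $x_{n+1}$ is the \emph{unique} $y$ with $P(x_n,y)$, and $P$ is arithmetic of fixed complexity, the relation ``$m \in x_{n+1}$'' is arithmetic in $x_n$ uniformly: one can express $m \in x_{n+1}$ as $\exists y\,(P(x_n,y) \wedge m \in y)$ and also as $\forall y\,(P(x_n,y) \rightarrow m \in y)$, so by uniqueness $x_{n+1}$ is $\Delta$-definable over $x_n$ at a fixed level of the arithmetic hierarchy, hence $x_{n+1} \leq_T x_n^{(j)}$ for some fixed finite $j$ independent of $n$. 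Iterating, $x_n \leq_T x_0^{(jn)}$, so the whole sequence is hyperarithmetic in $x_0$, indeed lies within the first $\omega$ jumps at uniform rate. I would then derive the contradiction from the combination of this upward computability with the hypothesis $x_n \geq_T x'_{n+1}$: the latter forces a strictly \emph{descending} pattern in a well-founded sense (each $x_n$ computes the jump of its successor, so the $x_n$ have strictly decreasing ``arithmetic rank''), which cannot coexist with an infinite sequence.

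The cleanest way to package the contradiction, and the step I would expect to be the technical crux, is to build a single real or a single arithmetic predicate that simultaneously encodes the diagonal behavior of all the $x_n$ and then apply an effective well-foundedness fact — essentially a recursion-theoretic analogue of the reflection-rank method developed later in the paper, specializing the general descending-sequence lemma to the Turing-jump setting. The delicate point is handling the uniformity: one must ensure the definability of $x_{n+1}$ from $x_n$ is witnessed by a \emph{single} formula (with the index $n$ as a parameter), not merely formula-by-formula, so that the diagonalization can quantify over all $n$ at once. This is exactly where the hypothesis that $P$ is a fixed arithmetic set (rather than merely each section being arithmetic) is indispensable, and mishandling it is the most likely source of a gap.

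The main obstacle, then, is not the individual reductions but assembling them into one effective object that the jump operator acts on in a provably well-founded way; I would spend most of the effort making the uniformity explicit and then invoking the fact that there is no infinite $\leq_T$-descending sequence of the relevant effectively-defined reals, which is the recursion-theoretic shadow of Theorem~\ref{first}.
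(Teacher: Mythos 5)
Your proposal breaks down at both of its load-bearing steps, and the second failure is the fatal one. First, the definability claim: the displayed equivalences $m \in x_{n+1} \leftrightarrow \exists y\,(P(x_n,y) \wedge m \in y) \leftrightarrow \forall y\,(P(x_n,y) \rightarrow m \in y)$ quantify over \emph{reals}, so they exhibit $x_{n+1}$ as $\Delta^1_1(x_n)$, not as arithmetic in $x_n$ at a fixed jump level. Being the unique solution of an arithmetic predicate does not imply being arithmetic in the parameter: the truth set $0^{(\omega)}$ of first-order arithmetic is the unique solution of a $\Pi^0_2$ predicate (the Tarski recursion conditions) yet is not arithmetic, so your inference ``$x_{n+1} \leq_T x_n^{(j)}$ for fixed $j$, hence $x_n \leq_T x_0^{(jn)}$'' is unjustified and false in general. (In the present setting mere computability of the next term is trivial anyway, since $x_{n+1} \leq_T x_{n+1}' \leq_T x_n$; the real work the uniqueness clause does is elsewhere.) Second, and decisively, your contradiction is circular: you conclude by ``invoking the fact that there is no infinite $\leq_T$-descending sequence of the relevant effectively-defined reals,'' but that fact \emph{is} Steel's theorem --- exactly the statement to be proved --- and the unrestricted version is false. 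There is no well-founded ``arithmetic rank'' decreasing under the jump: sequences with $x_n \geq_T x'_{n+1}$ for all $n$ do exist when no uniform definability is imposed (e.g., along a descending sequence in Harrison's recursive pseudo-well-ordering, or inside an ill-founded $\omega$-model of $\mathsf{ACA}_0$). This is the recursion-theoretic analogue of the Friedman--Smorynski--Solovay descending consistency sequences discussed in the paper's remark following the consistency-order theorem, and it is precisely why the uniqueness hypothesis cannot be cashed out as a rank argument.

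The missing idea is a G\"odel-style self-reference argument, which is what Friedman's proof does and what the paper's Theorem \ref{well-foundedness_reflection} transplants to theories (the paper itself cites \cite{friedman1976uniformly} and \cite{lutz2019incompleteness} rather than reproving the statement). Schematically: formalize in second-order arithmetic the statement $\mathsf{DS}$ asserting that such a sequence exists; given a witness $(x_n)$, the shifted tail $(x_{n+1})$ witnesses a $\Sigma^1_1$ statement $\mathsf{F}$ with parameter deleted; the hypothesis $x_0 \geq_T x_1'$, iterated using the \emph{uniform} arithmetic definability supplied by uniqueness, shows that $x_0$ computes the jump hierarchy along the tail uniformly, hence can verify the soundness/consistency of the relevant system together with $\mathsf{F}$. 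One concludes that the formal system augmented by $\mathsf{DS}$ proves its own consistency, contradicting G\"odel's second incompleteness theorem. Your instinct that uniformity is the crux and that the hypothesis that $P$ is a single arithmetic set is indispensable is correct, but uniformity is needed to feed an incompleteness argument, not to certify a well-foundedness principle that, stated without the uniformity, simply fails.
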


Friedman and Steel were not directly investigating the well-foundedness of axiomatic systems, but rather an analogous phenomenon from recursion theory, namely, the well-foundedness of natural Turing degrees under Turing reducibility. The adaptability of Friedman's proof arguably strengthens the analogy between these phenomena.

In this section we study both first and second order theories. The first theory that we treat with our technique is $\mathsf{ACA}_0$, a subsystem of second-order arithmetic that has been widely studied in reverse mathematics. $\mathsf{ACA}_0$ is arithmetically conservative over $\mathsf{PA}$. We then turn to other applications of our technique. We consider $\mathsf{RCA}_0$, another subsystem of second-order arithmetic and familiar base theory from reverse mathematics. $\mathsf{RCA}_0$ is conservative over $\mathsf{I\Sigma}_1$. We then turn to first-order theories, and we study elementary arithmetic $\mathsf{EA}$ as our object theory.

\subsection{$\Pi^1_1$-Reflection}

In this subsection we examine the ordering $\prec_{\Pi^1_1}$ on r.e. extensions of $\mathsf{ACA}_0$, where
$$T\prec_{\Pi^1_1}U\stackrel{\mbox{\footnotesize \textrm{def}}}{\iff}  U\vdash \mathsf{RFN}_{\Pi^1_1}(T).$$
We will show that there are no infinite $\prec_{\Pi^1_1}$ descending sequences of $\Pi^1_1$ sound extensions of $\mathsf{ACA}_0$. We recall that, provably in $\mathsf{ACA}_0$, a theory $T$ is $\Pi^1_1$ sound if and only if $T$ is consistent with any true $\Sigma^1_1$ statement.

\begin{theorem}\label{well-foundedness_reflection}$(\mathsf{ACA}_0)$ The restriction of the order $\prec_{\Pi^1_1}$ to $\Pi^1_1$-sound r.e. extensions of $\mathsf{ACA}_0$ is well-founded. 
\end{theorem}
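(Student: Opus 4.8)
The plan is to argue by contradiction inside $\mathsf{ACA}_0$, following the self-referential strategy of Friedman and Steel. Suppose $\prec_{\Pi^1_1}$ were ill-founded on the $\Pi^1_1$-sound extensions of $\mathsf{ACA}_0$; then there is a set coding an infinite sequence $\langle T_n \rangle_{n<\omega}$ of such theories with $T_n \vdash \mathsf{RFN}_{\Pi^1_1}(T_{n+1})$ for every $n$. The first observation I would record is that, over $\mathsf{ACA}_0$, the sentence $\mathsf{RFN}_{\Pi^1_1}(U)$ is equivalent to a $\Pi^1_1$ sentence: using the partial truth definition $\mathsf{Tr}_{\Pi^1_1}$ constructed in \textsection\ref{definitions}, it has the form $\forall \varphi\,(\mathsf{Pr}_U(\varphi)\to \mathsf{Tr}_{\Pi^1_1}(\varphi))$, whose only second-order quantifier is the universal one hidden inside $\mathsf{Tr}_{\Pi^1_1}$. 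Consequently each reflection statement lies in the very class being reflected, and since each $T_n$ is $\Pi^1_1$-sound, $\mathsf{RFN}_{\Pi^1_1}(T_{n+1})$ is \emph{true}; in particular every $T_n$ is genuinely consistent.

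The engine of the proof is a fixed point. Using the sequence as a set parameter, I would apply the arithmetized fixed point lemma to manufacture a $\Sigma^1_1$ sentence $\sigma$ that speaks about its own provability along the chain---informally, $\sigma$ asserts the existence of a finite $\prec_{\Pi^1_1}$-descending run through $\langle T_n\rangle$ terminating in a theory that proves $\neg\sigma$. The search for such a run, together with the witnessing proof, is $\Sigma^1_1$, so $\sigma$ is a legitimate $\Sigma^1_1$ sentence and $\neg\sigma$ is $\Pi^1_1$. The diagonal then forces a dilemma. If $\sigma$ is true, some $T_k$ proves $\neg\sigma$, which is then a false $\Pi^1_1$ theorem of $T_k$, contradicting the $\Pi^1_1$-soundness of $T_k$ (equivalently, $T_k$ is consistent with the true $\Sigma^1_1$ sentence $\sigma$, as in the reformulation recalled just above the statement). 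Hence $\sigma$ is false and $\neg\sigma$ is a \emph{true} $\Pi^1_1$ sentence that no theory in the chain proves.

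To complete the contradiction I would use the reflection relation to propagate provability of $\neg\sigma$ downward: because $T_n \vdash \mathsf{RFN}_{\Pi^1_1}(T_{n+1})$ and $\neg\sigma \in \Pi^1_1$, any $T_{n+1}$-proof of $\neg\sigma$ is converted by $T_n$ into a $T_n$-proof of $\neg\sigma$. I would tune the fixed point defining $\sigma$ so that this conversion, iterated finitely many times, is exactly what is needed to exhibit a descending run ending in a proof of $\neg\sigma$---thereby making $\sigma$ true after all, against the previous paragraph. Equivalently, the construction can be arranged so that some theory $S$ reducible to the sound $T_0$ proves $\mathsf{RFN}_{\Pi^1_1}(S)$, hence $\mathsf{Con}(S)$, which is impossible for a consistent $S \supseteq \mathsf{ACA}_0$ by G\"{o}del's second incompleteness theorem (or directly by L\"{o}b's theorem, in keeping with the reflexive-induction theme of \textsection\ref{definitions}).

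The step I expect to be the main obstacle is precisely the calibration of the self-reference in the previous paragraph: arranging the content of $\sigma$ (or of the auxiliary theory $S$) so that the downward cascade of proofs provably terminates and genuinely produces the run that $\sigma$ denies, while simultaneously checking that the whole argument---the fixed point with a set parameter, the availability of $\mathsf{Tr}_{\Pi^1_1}$, and the implication $\mathsf{RFN}_{\Pi^1_1}(S)\vdash\mathsf{Con}(S)$---is formalizable in $\mathsf{ACA}_0$. The role of each hypothesis should be transparent in the finished proof: extending $\mathsf{ACA}_0$ supplies the $\Pi^1_1$ truth predicate and the arithmetization of syntax, $\Pi^1_1$-soundness furnishes both the consistency needed for the G\"{o}del/L\"{o}b step and the ``no false $\Pi^1_1$ theorem'' half of the dilemma, and the descending reflection relation drives the cascade.
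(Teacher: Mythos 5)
Your argument has a genuine gap, and it sits exactly where you flagged it: the second horn of your dilemma does not close. If $\sigma$ is true you do contradict the $\Pi^1_1$-soundness of some $T_k$; but if $\sigma$ is false, all you obtain is a true $\Pi^1_1$ sentence $\neg\sigma$ that no $T_n$ proves --- and that is no absurdity, since the $T_n$ are not $\Pi^1_1$-complete. Your proposed repair, cascading $T_{n+1}$-proofs of $\neg\sigma$ into $T_n$ via $\mathsf{RFN}_{\Pi^1_1}(T_{n+1})$, is sound as an inference but has nothing to act on: no theory in the chain proves $\neg\sigma$, so the cascade never starts, and no ``tuning'' of the fixed point as described supplies a first proof (a cross-theory L\"ob argument also fails here, because L\"ob's theorem is about a single theory's own provability predicate). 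There is a second, more formal defect: you build $\sigma$ by the fixed-point lemma \emph{with the sequence as a set parameter}, but a formula with a free set parameter $E$ is not a sentence of the theories' language, so ``$T_k$ proves $\neg\sigma$'' does not typecheck; to fix this you must existentially quantify the witnessing sequence inside the sentence itself --- at which point you are no longer diagonalizing against a fixed external chain.

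The paper's proof dispenses with any explicit fixed point and instead lets G\"odel's second incompleteness theorem do the diagonalization. It forms the single sentence $\mathsf{DS}$ asserting outright that a descending chain exists, demanding $\mathsf{RFN}_{\Pi^1_1}(T_0)$ \emph{only for the head}, with the rest of the chain linked by provable reflection (an arithmetic condition on the coding set $E$); it then shows $\mathsf{ACA}_0+\mathsf{DS}$ proves its own consistency, hence is inconsistent. The consistency proof is the regeneration step you left uncalibrated: the tail statement $\mathsf{F}$ (``there is such a chain starting at $T_1$'') is true and, precisely because it omits any truth or reflection clause, genuinely $\Sigma^1_1$; $\mathsf{RFN}_{\Pi^1_1}(T_0)$ means $T_0$ is consistent with every true $\Sigma^1_1$ sentence, giving $\mathsf{Con}(T_0+\mathsf{F})$; then $\mathsf{Pr}_{T_0}\big(\mathsf{RFN}_{\Pi^1_1}(T_1)\big)$ and $T_0\sqsupseteq\mathsf{ACA}_0$ yield $\mathsf{Con}\big(\mathsf{ACA}_0+\mathsf{RFN}_{\Pi^1_1}(T_1)+\mathsf{F}\big)$, and $\mathsf{RFN}_{\Pi^1_1}(T_1)+\mathsf{F}$ implies $\mathsf{DS}$ over $\mathsf{ACA}_0$. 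Your closing ``equivalently'' remark --- find a consistent $S\sqsupseteq\mathsf{ACA}_0$ proving its own consistency --- names the right target, with $S=\mathsf{ACA}_0+\mathsf{DS}$; but the substance of the theorem is the complexity bookkeeping (head-soundness placed so that the tail remains $\Sigma^1_1$) that makes the regeneration go through, not a bespoke diagonal sentence.
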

\begin{proof} In order to prove the result in $\mathsf{ACA}_0$ we show the inconsistency of the theory $\mathsf{ACA}_0$ plus the following statement $\mathsf{DS}$, which says that there \emph{is} a descending sequence of $\Pi^1_1$ sound extensions of $\mathsf{ACA}_0$ in the $\prec_{\Pi^1_1}$ ordering:
  $$\mathsf{DS}:=\exists E\colon \langle T_i\mid i\in \mathbb{N}\rangle (\mathsf{RFN}_{\Pi^1_1}(T_{0})\land \forall x \;\mathsf{Pr}_{T_x}(\mathsf{RFN}_{\Pi^1_1} (T_{x+1}))\wedge \forall x (T_x \sqsupseteq \mathsf{ACA}_0))$$
Note that $E\colon \langle T_i\mid i\in \mathbb{N}\rangle$ is understood to mean that $E$ is a set encoding a sequence $\langle T_0,T_1,T_2,\ldots \rangle$ of r.e. theories.
  
  If we prove that $\mathsf{ACA}_0+\mathsf{DS}$ proves its own consistency, then the inconsistency of $\mathsf{ACA}_0+\mathsf{DS}$ follows from G\"{o}del's second incompleteness theorem.  We reason in $\mathsf{ACA}_0+\mathsf{DS}$ to to prove consistency of $\mathsf{ACA}_0+\mathsf{DS}$.

  Let $E\colon \langle T_i\mid i\in \mathbb{N}\rangle $ be a sequence of theories witnessing the truth of $\mathsf{DS}$. Let us consider the sentence $\mathsf{F}$
  $$\exists U\colon  \langle S_i\mid i\in \mathbb{N}\rangle (S_0=T_1\land \forall x \;\mathsf{Pr}_{S_x}(\mathsf{RFN}_{\Pi^1_1}(S_{x+1})) \wedge\forall x (S_x \sqsupseteq \mathsf{ACA}_0)).$$ The sentence $\mathsf{F}$ is true since we could take $\langle T_{i+1}:i\in\mathbb{N}\rangle$ as $U$. It is easy to observe that $\mathsf{F}$ is $\Sigma^1_1$.
    
  From $\mathsf{RFN}_{\Pi^1_1}(T_{0})$ we get that $T_{0}$ is consistent with any true $\Sigma^1_1$ statement. Thus, we infer that $$\mathsf{Con}(T_{0} + \mathsf{F}).$$ Now using the fact that $\mathsf{Pr}_{T_0} \big(\mathsf{RFN}_{\Pi^1_1}(T_{1})\big)$ and that $T_0\sqsupseteq \mathsf{ACA}_0$ we conclude,
  $$\mathsf{Con}( \mathsf{ACA}_0 + \mathsf{RFN}_{\Pi^1_1}(T_{1}) + \mathsf{F}).$$
But it is easy to see that $\mathsf{RFN}_{\Pi^1_1}(T_{1}) + \mathsf{F}$ implies $\mathsf{DS}$ in $\mathsf{ACA}_0$. In particular, we may take $\langle T_1, T_2, ...\rangle$ as our new witness to $\mathsf{DS}$. Thus, we conclude that $\mathsf{Con}(\mathsf{ACA}_0+\mathsf{DS})$.  
\end{proof}

We now observe that a similar result holds over $\mathsf{RCA}_0$. To do so, we consider formulas from the complexity class $\Pi^1_1(\Pi^0_3)$ (see \textsection{2.4}). It is easy to see that the proof of Theorem \ref{RCA_0_reflection} remains valid if we replace the theory $\mathsf{ACA}_0$ with $\mathsf{RCA}_0$, the complexity class $\Pi^1_1$ with $\Pi^1_1(\Pi^0_3)$, and the complexity class $\Sigma^1_1$ with $\Sigma^1_1(\Pi^0_2)$. Thus, we also infer the following.

\begin{theorem} \label{RCA_0_reflection} 
$(\mathsf{RCA}_0)$ The restriction of the order $\prec_{\Pi^1_1(\Pi^0_3)}$ to $\Pi^1_1(\Pi^0_3)$-sound r.e. extensions of $\mathsf{RCA}_0$ theories is well-founded.
\end{theorem}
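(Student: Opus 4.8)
The plan is to observe that Theorem \ref{RCA_0_reflection} is the exact analogue of Theorem \ref{well-foundedness_reflection} with three uniform substitutions: the base theory $\mathsf{ACA}_0$ becomes $\mathsf{RCA}_0$, the reflection complexity class $\Pi^1_1$ becomes $\Pi^1_1(\Pi^0_3)$, and the complexity class of the auxiliary ``ill-foundedness'' formula $\mathsf{F}$ changes from $\Sigma^1_1$ to $\Sigma^1_1(\Pi^0_2)$. So I would prove it by replaying the argument of Theorem \ref{well-foundedness_reflection} verbatim with these replacements, checking that each inference survives. Concretely, reasoning inside $\mathsf{RCA}_0 + \mathsf{DS}'$ (where $\mathsf{DS}'$ is the statement asserting the existence of a $\prec_{\Pi^1_1(\Pi^0_3)}$-descending sequence of $\Pi^1_1(\Pi^0_3)$-sound extensions of $\mathsf{RCA}_0$), I would take a witnessing sequence $\langle T_i \mid i \in \mathbb{N}\rangle$, form the shifted-sequence statement $\mathsf{F}'$ asserting that $\langle T_{i+1} \mid i \in \mathbb{N}\rangle$ is itself such a descending sequence, and then derive $\mathsf{Con}(\mathsf{RCA}_0 + \mathsf{DS}')$ so that G\"{o}del's second incompleteness theorem yields inconsistency.

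The crux is to verify that the two complexity-class facts actually go through. First, I need the analogue of ``$\Pi^1_1$ soundness implies consistency with any true $\Sigma^1_1$ sentence,'' namely that $\mathsf{RCA}_0$ proves $\mathsf{RFN}_{\Pi^1_1(\Pi^0_3)}(T)$ implies the consistency of $T$ together with any true $\Sigma^1_1(\Pi^0_2)$ sentence. The point is that $\Sigma^1_1(\Pi^0_2)$ is precisely the dual of $\Pi^1_1(\Sigma^0_2) \subseteq \Pi^1_1(\Pi^0_3)$, so the soundness principle for $\Pi^1_1(\Pi^0_3)$ furnishes exactly the reflection needed to adjoin a true $\Sigma^1_1(\Pi^0_2)$ statement consistently. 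Second, and this is what I expect to be the main obstacle, I must check that the auxiliary statement $\mathsf{F}'$ really lands in $\Sigma^1_1(\Pi^0_2)$. In the $\mathsf{ACA}_0$ proof, $\mathsf{F}$ is visibly $\Sigma^1_1$: one existentially quantifies over a set coding the sequence, and the matrix---a conjunction involving $\mathsf{Pr}_{S_x}(\cdots)$ and $S_x \sqsupseteq \mathsf{RCA}_0$---is arithmetical. Here I must confirm that this arithmetical matrix is (provably in $\mathsf{RCA}_0$) equivalent to a $\Pi^0_2$ formula with the set parameter: the provability predicate contributes a $\Sigma^0_1$ clause, the subtheory clause $\forall \varphi(\mathsf{Pr}_{S_x}(\varphi) \to \mathsf{Pr}_{\mathsf{RCA}_0}(\varphi))$ contributes a $\Pi^0_2$ clause, and the universal quantifier over $x$ is absorbed into the $\Pi^0_2$ prefix, leaving a single existential set quantifier in front---hence $\Sigma^1_1(\Pi^0_2)$.

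The remaining steps are then routine transcriptions. From the truth of $\mathsf{F}'$ and the consistency-with-true-$\Sigma^1_1(\Pi^0_2)$-sentences principle applied to $T_0$, I obtain $\mathsf{Con}(T_0 + \mathsf{F}')$; using $\mathsf{Pr}_{T_0}(\mathsf{RFN}_{\Pi^1_1(\Pi^0_3)}(T_1))$ and $T_0 \sqsupseteq \mathsf{RCA}_0$ I upgrade this to $\mathsf{Con}(\mathsf{RCA}_0 + \mathsf{RFN}_{\Pi^1_1(\Pi^0_3)}(T_1) + \mathsf{F}')$, and finally I observe that $\mathsf{RFN}_{\Pi^1_1(\Pi^0_3)}(T_1) + \mathsf{F}'$ implies $\mathsf{DS}'$ over $\mathsf{RCA}_0$ (taking $\langle T_1, T_2, \ldots \rangle$ as the new witness), giving $\mathsf{Con}(\mathsf{RCA}_0 + \mathsf{DS}')$ and the contradiction. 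The one genuine subtlety worth flagging is that $\mathsf{RCA}_0$ is weaker than $\mathsf{ACA}_0$, so I should confirm it suffices for the finitely many uses of arithmetical comprehension implicit in forming the truth-predicate-based reflection manipulations; since all the syntactic coding and the relevant partial truth definitions for $\Pi^1_1(\Pi^0_3)$ and $\Sigma^1_1(\Pi^0_2)$ are already available over $\mathsf{RCA}_0$ (as noted in \textsection\ref{definitions}), this causes no difficulty, which is precisely why the authors can assert that the earlier proof ``remains valid'' under the substitution.
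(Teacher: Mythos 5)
Your proposal is correct and follows exactly the paper's route: the paper proves Theorem \ref{RCA_0_reflection} by observing that the proof of Theorem \ref{well-foundedness_reflection} goes through verbatim under the substitutions $\mathsf{ACA}_0 \mapsto \mathsf{RCA}_0$, $\Pi^1_1 \mapsto \Pi^1_1(\Pi^0_3)$, $\Sigma^1_1 \mapsto \Sigma^1_1(\Pi^0_2)$, and your two verification points (that $\Pi^1_1(\Sigma^0_2) \subseteq \Pi^1_1(\Pi^0_3)$ furnishes consistency with true $\Sigma^1_1(\Pi^0_2)$ sentences, and that the shifted witness statement $\mathsf{F}'$ has a $\Pi^0_2$ matrix) are precisely what makes that remark legitimate. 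One harmless slip: the subtheory clause $S_x \sqsupseteq \mathsf{RCA}_0$ should read $\forall\varphi\big(\mathsf{Pr}_{\mathsf{RCA}_0}(\varphi)\to\mathsf{Pr}_{S_x}(\varphi)\big)$, not the reverse implication you wrote, though this does not affect the $\Pi^0_2$ complexity count or the argument.
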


\subsection{$\Pi_3$ soundness}

%For the rest of the section we will focus our attention on first-order theories extending $\mathsf{EA}$. 

In this subsection we study the complexity of descending sequences of r.e. theories with respect to $\Pi_3$ soundness. We recall that (provably in $\mathsf{EA}$) a theory $T$ is $\Pi_3$ sound just in case $T$ is 2-consistent, i.e., just in case $T$ is consistent with any true $\Pi_2$ sentence. 

\begin{theorem}
\label{2Con}
There is no recursively enumerable sequence $(T_n)_{n<\omega}$ of r.e.\ extensions of $\mathsf{EA}$ such that $T_0$ is $\Pi_3$ sound and such that for every $n$, $ T_n \vdash \mathsf{RFN}_{\Pi_3}(T_{n+1})$.
\end{theorem}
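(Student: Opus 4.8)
The plan is to adapt the proof of Theorem \ref{well-foundedness_reflection} from the second-order setting to the first-order arithmetical setting, replacing $\Pi^1_1$ soundness with $\Pi_3$ soundness throughout. The essential observation making the earlier argument work was that $\Pi^1_1$ soundness of a theory is equivalent (provably in the base theory) to consistency with any true $\Sigma^1_1$ sentence, and that the statement asserting the existence of a suitable descending \emph{tail} of the sequence was itself $\Sigma^1_1$. The arithmetical analogue I would exploit is the fact, recalled in the text, that $\Pi_3$ soundness of $T$ is (provably in $\mathsf{EA}$) equivalent to $2$-consistency, i.e.\ consistency with any true $\Pi_2$ sentence. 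So the class $\Sigma^1_1$ gets replaced by $\Pi_2$ (or more precisely by a $\Pi_2$-completeness-style witness statement), and the class $\Pi^1_1$ by $\Pi_3$.

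The difficulty here, and the reason the statement is phrased differently from Theorem \ref{well-foundedness_reflection}, is that in the first-order setting the sequence $(T_n)_{n<\omega}$ cannot be encoded by a single set as it was by the set $E$ in the $\mathsf{DS}$ formula over $\mathsf{ACA}_0$. Instead the sequence is \emph{recursively enumerable}, so it is given by some fixed $\Sigma_1$ predicate uniformly enumerating the axioms of all the $T_n$ at once. Accordingly I would not formalize a single self-referential sentence $\mathsf{DS}$ and derive a contradiction via G\"odel's second incompleteness theorem in one step; rather I would argue externally (in the metatheory, with $\mathsf{EA}^+$ available) for a genuine nonexistence statement. First I would suppose toward a contradiction that such an r.e.\ sequence $(T_n)$ exists, with $T_0$ $\Pi_3$ sound and $T_n \vdash \mathsf{RFN}_{\Pi_3}(T_{n+1})$ for all $n$. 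The key move is to form, for each $n$, the $\Pi_2$ (dually, $\Sigma_2$) sentence $\mathsf{F}_n$ expressing the existence of the tail $(T_m)_{m \geq n+1}$ together with its reflection-chain property; because the whole sequence is enumerated by a fixed $\Sigma_1$ predicate, each $\mathsf{F}_n$ has bounded complexity and is true.

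The inductive heart of the argument then mirrors the earlier proof exactly. From the $\Pi_3$ soundness of $T_0$ and the equivalence with $2$-consistency I would get that $T_0$ is consistent with the true $\Pi_2$ (or appropriately $\Sigma_2$) sentence $\mathsf{F}_0$; then using $T_0 \vdash \mathsf{RFN}_{\Pi_3}(T_1)$ and $T_0 \sqsupseteq \mathsf{EA}$ I would conclude $\mathsf{Con}(\mathsf{EA} + \mathsf{RFN}_{\Pi_3}(T_1) + \mathsf{F}_0)$, and observe that $\mathsf{RFN}_{\Pi_3}(T_1) + \mathsf{F}_0$ suffices to establish, over $\mathsf{EA}$, that $T_1$ is again $\Pi_3$ sound with the same tail property. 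This propagates $\Pi_3$ soundness down the sequence: $T_n$ is $\Pi_3$ sound for every $n$. The payoff is that $\Pi_3$ soundness of $T_0$ forces $\mathsf{Con}(T_0)$, but the chain of reflection instances lets $T_0$ prove $\mathsf{Con}(T_1)$, and the self-similar structure yields $\mathsf{Pr}_{T_0}(\mathsf{Con}(T_0))$ in contradiction with G\"odel's second incompleteness theorem applied to the consistent theory $T_0$.

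The step I expect to be the main obstacle is verifying the complexity bookkeeping: one must check that the tail-existence sentence $\mathsf{F}_n$ really lands in the intended class ($\Pi_2$ or $\Sigma_2$ as needed so that $\Pi_3$ reflection/$2$-consistency applies to it), given that the sequence is only r.e.\ rather than $\Delta_0$-encoded by a set. This requires care because the predicate enumerating the theories is $\Sigma_1$ and the chain condition $\forall x\, \mathsf{Pr}_{T_x}(\mathsf{RFN}_{\Pi_3}(T_{x+1}))$ is itself $\Pi_2$; combining these while keeping the whole statement within the reflected class is the delicate point. Everything else — the equivalence of $\Pi_3$ soundness with $2$-consistency over $\mathsf{EA}$, the absorption of $\mathsf{RFN}_{\Pi_3}(T_{n+1})$ into a reconstituted soundness statement, and the final appeal to L\"ob/G\"odel — is routine once the complexity of $\mathsf{F}_n$ is pinned down.
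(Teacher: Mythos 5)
There is a genuine gap, and it sits exactly where you abandoned the paper's strategy. You assert that in the first-order setting the sequence ``cannot be encoded by a single set'' and that one therefore cannot formalize a single self-referential sentence $\mathsf{DS}$; but the hypothesis that the sequence is \emph{recursively enumerable} is there precisely so that it \emph{can} be coded by a single natural number, namely an index $e$ for a Turing machine enumerating $\langle T_0, T_1,\ldots\rangle$. The paper's proof defines the arithmetical sentence $\mathsf{DS} := \exists e\colon\langle T_i\mid i\in\mathbb{N}\rangle\,\big(\mathsf{RFN}_{\Pi_3}(T_0)\wedge \forall x\,\mathsf{Pr}_{T_x}(\mathsf{RFN}_{\Pi_3}(T_{x+1}))\big)$, shows inside $\mathsf{EA}+\mathsf{DS}$ that the shifted index $e'$ (dropping $T_0$) again witnesses $\mathsf{DS}$ in a theory consistent by the $2$-consistency of $T_0$ --- here your complexity bookkeeping is in fact unproblematic, since the chain condition $\forall x\,\mathsf{Pr}_{T'_x}(\mathsf{RFN}_{\Pi_3}(T'_{x+1}))$ is $\Pi_2$ because $\mathsf{Pr}$ is $\Sigma_1$ --- and concludes $\mathsf{EA}+\mathsf{DS}\vdash \mathsf{Con}(\mathsf{EA}+\mathsf{DS})$, whence G\"odel's second incompleteness theorem is applied to the single theory $\mathsf{EA}+\mathsf{DS}$, not to any $T_n$.

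Your external replacement does not close the loop. Propagating $\Pi_3$ soundness down the chain yields no contradiction: ``every $T_n$ is $\Pi_3$ sound'' is exactly what one would expect if such a sequence existed, not an absurdity. The final step, where you claim ``the self-similar structure yields $\mathsf{Pr}_{T_0}(\mathsf{Con}(T_0))$,'' is a non sequitur: $T_0$ proves $\mathsf{Con}(T_1)$ (indeed $\mathsf{RFN}_{\Pi_3}(T_1)$), but $T_1$ is a different theory, and nothing in your argument makes $T_0$ prove its \emph{own} consistency. That this cannot be repaired by pure downward propagation is shown by the Friedman--Smorynski--Solovay result quoted in the paper: there \emph{is} a recursive sequence of consistent theories each proving the consistency of the next, so a chain of soundness claims together with ``each proves a consistency-like statement about the next'' is not contradictory by itself. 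The self-reference has to live at the level of the sentence $\mathsf{DS}$, whose witnessing class is closed under passing to tails; internalizing the $\exists e$ quantifier is the one idea your proposal is missing, and without it the proof does not go through.
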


\begin{proof}
If the theorem fails, then the following sentence is true,
$$\mathsf{DS}:= \exists e\colon\langle T_i \mid i\in \mathbb{N}\rangle \; \Big(\mathsf{RFN}_{\Pi_3}(T_0) \wedge \forall x \;\mathsf{Pr}_{T_x} \big(\mathsf{RFN}_{\Pi_3}(T_{x+1}) \big) \Big)$$
where $\exists e:\langle T_i : i\in\mathbb{N}\rangle$ is understood to mean that $e$ is an index for a Turing machine enumerating the sequence $\langle T_0, T_1, ...\rangle$.

We show that $\mathsf{EA}+\mathsf{DS}$ proves its own consistency, whence, by G\"{o}del's second incompleteness theorem, $\mathsf{EA}+\mathsf{DS}$ is inconsistent and hence $\mathsf{DS}$ is false.

\textbf{Work in} $\mathsf{EA}+\mathsf{DS}$. Since $\mathsf{DS}$ is true, it has some witness $e\colon\langle T_i \mid i\in \mathbb{N}\rangle$. We now consider the sequence $e'$ that results from omitting $T_0$ from $e$. More formally, we consider the sequence $e'\colon\langle T'_i\mid i\in \mathbb{N}\rangle,$ which is numerated by the Turing functional $\{e'\}:x\mapsto \{e\}(x+1)$. That is, for each $i$, $T_i'=T_{i+1}$.

From $\mathsf{DS}$ we infer that for all $x$, $T_{x+1} \vdash \mathsf{RFN}_{\Pi_3}(T_{x+2})$. Thus, for every $x$, $T'_x \vdash \mathsf{RFN}_{\Pi_3}(T'_{x+1})$ by the definition of $e'$.

From the first conjunct of $\mathsf{DS}$ we infer that $\mathsf{RFN}_{\Pi_3}(T_0)$. That is, $T_0$ is consistent with any $\Pi_2$ truth. Thus, we infer that 
$$T_0+\forall x \;\mathsf{Pr}_{T'_x} \big(\mathsf{RFN}_{\Pi_3}(T'_{x+1})\big)$$ 
is consistent. 

On the other hand, from $\mathsf{DS}$ we infer that $T_0$ \emph{proves} the $\Pi_3$ soundness of $T'_0$. So it is consistent that $e'$ witnesses $\mathsf{DS}$.
\end{proof}

\subsection{Consistency}

In this subsection we provide a new proof of a theorem independently due to H. Friedman, Smorynski, and Solovay  (see \cite{lindstrom2017aspects,smorynski2012self}). Before stating the theorem we recall that, $\mathsf{EA}$ proves the equivalence of, the consistency sentences $\mathsf{Con}(T)$ and the $\Pi_1$-reflection principle $\mathsf{RFN}_{\Pi_1}(T)$.

\begin{theorem}
There is no recursively enumerable sequence $(T_n)_{n<\omega}$ of r.e.\ extensions of $\mathsf{EA}$ such that $T_0$ is consistent and such that $\mathsf{EA}\vdash \forall x \;\mathsf{Pr}_{T_x} \big(\mathsf{Con}(T_{x+1})\big)$.
\end{theorem}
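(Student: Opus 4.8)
The plan is to mimic the self-reference argument used for the two preceding theorems. Assuming the theorem fails, I would write down a true sentence $\mathsf{DS}$ expressing that such a descending sequence exists, show that $\mathsf{EA}+\mathsf{DS}$ proves its own consistency, and then conclude by G\"{o}del's second incompleteness theorem that $\mathsf{EA}+\mathsf{DS}$ is inconsistent; since $\mathsf{DS}$ is true this is absurd, so no such sequence exists. The feature that separates this case from the $\Pi_3$-soundness case of Theorem~\ref{2Con} is that here the only reflection available is $\Pi_1$ reflection: $\mathsf{Con}(T_0)$, being equivalent to $\mathsf{RFN}_{\Pi_1}(T_0)$ over $\mathsf{EA}$, tells us only that $T_0$ is consistent with every true $\Sigma_1$ sentence, not with every true $\Pi_2$ sentence. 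This is exactly why the hypothesis is strengthened to $\mathsf{EA}\vdash\forall x\,\mathsf{Pr}_{T_x}(\mathsf{Con}(T_{x+1}))$: wrapping the otherwise $\Pi_2$ descending condition inside $\mathsf{Pr}_{\mathsf{EA}}$ turns it into a single $\Sigma_1$ fact, which a consistent $T_0$ can tolerate.

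Concretely, I would take $\mathsf{DS}$ to be $\exists e\colon\langle T_i\mid i\in\mathbb{N}\rangle$ asserting the conjunction of $\mathsf{Con}(T_0)$, the \emph{genuine} conditions $\forall x\,\mathsf{Pr}_{T_x}(\mathsf{Con}(T_{x+1}))$ and $\forall x\,(T_x\sqsupseteq\mathsf{EA})$, and their $\mathsf{EA}$-provable shadows $\mathsf{Pr}_{\mathsf{EA}}\big(\forall x\,\mathsf{Pr}_{T_x}(\mathsf{Con}(T_{x+1}))\big)$ and $\mathsf{Pr}_{\mathsf{EA}}\big(\forall x\,(T_x\sqsupseteq\mathsf{EA})\big)$. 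The failing sequence witnesses $\mathsf{DS}$: the two shadows hold by the strengthened hypothesis and the uniform presentation of the $T_x$ as extensions of $\mathsf{EA}$, while the genuine conjuncts hold because $\mathsf{EA}$ is sound. Reasoning in $\mathsf{EA}+\mathsf{DS}$ with a witness $e$, I pass to the tail $e'$, where $T'_x=T_{x+1}$. The genuine descending condition yields the single-provability fact $\mathsf{Pr}_{T_0}(\mathsf{Con}(T_1))$, i.e.\ $T_0\vdash\mathsf{Con}(T'_0)$, and the genuine $\forall x\,(T_x\sqsupseteq\mathsf{EA})$ yields $\mathsf{EA}\sqsubseteq T_0$. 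Shifting the two shadows under $\mathsf{Pr}_{\mathsf{EA}}$ by the elementary reindexing $T'_x=T_{x+1}$ gives the tail shadows; using $\mathsf{EA}\sqsubseteq T_0$ I promote these to honest $T_0$-proofs of $\forall x\,\mathsf{Pr}_{T'_x}(\mathsf{Con}(T'_{x+1}))$ and $\forall x\,(T'_x\sqsupseteq\mathsf{EA})$.

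Let $\mathsf{F}$ be the true $\Sigma_1$ sentence $\mathsf{Pr}_{\mathsf{EA}}\big(\forall x\,\mathsf{Pr}_{T'_x}(\mathsf{Con}(T'_{x+1}))\big)\wedge\mathsf{Pr}_{\mathsf{EA}}\big(\forall x\,(T'_x\sqsupseteq\mathsf{EA})\big)$. Since $T_0$ is consistent and $\mathsf{F}$ is $\Sigma_1$, the theory $T_0+\mathsf{F}$ is consistent, and it extends $\mathsf{EA}$. By the previous paragraph $T_0+\mathsf{F}$ proves every conjunct of the matrix of $\mathsf{DS}$ for the witness $e'$ --- the first from $T_0\vdash\mathsf{Con}(T'_0)$, the two genuine conjuncts from the promoted $T_0$-proofs, and the two shadows from $\mathsf{F}$ itself --- so $T_0+\mathsf{F}\vdash\mathsf{DS}$. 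As $T_0+\mathsf{F}$ is thus a consistent extension of $\mathsf{EA}+\mathsf{DS}$, we get $\mathsf{Con}(\mathsf{EA}+\mathsf{DS})$, which is the self-consistency we wanted.

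The main obstacle is precisely the bookkeeping of the provability operators forced by the weakness of $\Pi_1$ reflection. I must keep the descending condition in unwrapped form in order to read off the genuine instance $\mathsf{Pr}_{T_0}(\mathsf{Con}(T_1))$, yet I may only absorb into $T_0$ the $\Sigma_1$, $\mathsf{Pr}_{\mathsf{EA}}$-wrapped form; I cannot simply strip the outer $\mathsf{Pr}_{\mathsf{EA}}$, since that would require $\Sigma_1$ reflection for $\mathsf{EA}$, which is strictly stronger than $\mathsf{Con}(\mathsf{EA})$ and is unavailable. Carrying both the genuine conditions and their $\mathsf{EA}$-provable shadows, and using $\mathsf{EA}\sqsubseteq T_0$ to turn $\mathsf{EA}$-proofs of the tail conditions into $T_0$-proofs, is what allows the shift-and-absorb step to succeed with mere consistency of $T_0$ in place of the stronger soundness exploited in Theorem~\ref{2Con}.
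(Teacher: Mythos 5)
Your proof is correct and takes essentially the same route as the paper's: the same self-referential sentence $\mathsf{DS}$ carrying both the genuine descending condition and its $\mathsf{Pr}_{\mathsf{EA}}$-wrapped $\Sigma_1$ shadow, the same passage to the tail sequence $e'$, the same appeal to G\"{o}del's second incompleteness theorem, and the same exploitation of the fact that a consistent extension of $\mathsf{EA}$ tolerates true $\Sigma_1$ sentences (i.e., is $\Pi_1$ sound). The only differences are harmless bookkeeping: you carry the conjunct $\forall x\,(T_x\sqsupseteq\mathsf{EA})$ and its shadow explicitly where the paper leaves this implicit in the coding of the sequence, and you adjoin the tail shadows to $T_0$ as a true $\Sigma_1$ sentence $\mathsf{F}$ (checking $\mathsf{Con}(T_0+\mathsf{F})$) rather than deriving them inside $T_0$ via formalized $\Sigma_1$-completeness as the paper does.
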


\begin{proof}
Suppose, toward a contradiction, that there is a recursively enumerable sequence $(T_n)_{n<\omega}$ of r.e.\ extensions of $\mathsf{EA}$ such that $T_0$ is consistent and such that 
$$\mathsf{EA}\vdash \forall x \; \mathsf{Pr}_{T_x} \big( \mathsf{Con}(T_{x+1}) \big).$$ 
Since $\mathsf{EA}$ is sound, we also infer that for every $n$, $T_n \vdash \mathsf{Con}(T_{n+1})$.
Thus the following sentence is true.
$$\mathsf{DS}:= \exists e\colon\langle T_i \mid i\in \mathbb{N}\rangle \; \Big(\mathsf{Con}(T_0) \wedge \mathsf{Pr}_{\mathsf{EA}} \big( \forall x \; \mathsf{Pr}_{T_x} (\mathsf{Con}(T_{x+1})) \big) \wedge \forall x \;\mathsf{Pr}_{T_x} \big(\mathsf{Con}(T_{x+1}) \big) \Big)$$
where $\exists e:\langle T_i : i\in\mathbb{N}\rangle$ is understood to mean that $e$ is an index for a Turing machine enumerating the sequence $\langle T_0, T_1, ...\rangle$.

We show that $\mathsf{EA}+\mathsf{DS}$ proves its own consistency, whence, by G\"{o}del's second incompleteness theorem, $\mathsf{EA}+\mathsf{DS}$ is inconsistent and hence $\mathsf{DS}$ is false.

\textbf{Work in} $\mathsf{EA}+\mathsf{DS}$. Since $\mathsf{DS}$ is true, it has some witness $e\colon\langle T_i \mid i\in \mathbb{N}\rangle$. We consider the sequence $e'\colon\langle T'_i \mid i\in \mathbb{N}\rangle$ that results from dropping $T_0$ from the sequence produced by $e$. More formally, we consider the sequence $e'$ which is numerated by the Turing functional $\{e'\}:x\mapsto \{e\}(x+1)$.

\begin{claim}
$e'$ is provably a witness to $\mathsf{DS}$ in $T_0$. 
\end{claim}

To see that $e'$ provably witnesses the third conjunct of $\mathsf{DS}$ in $T_0$, we reason as follows.
\begin{flalign*}
\mathsf{EA}\vdash&\forall x \; \mathsf{Pr}_{T_{x+1}} \mathsf{Con}(T_{x+2}) \textrm{ by $\mathsf{DS}$.}\\
\mathsf{EA}\vdash&\forall x \; \mathsf{Pr}_{T'_x} \mathsf{Con}(T'_{x+1}) \textrm{ since $T'_x=T_{x+1}$ by definition of $e'$.} \\
T_0\vdash&\forall x \;\mathsf{Pr}_{T'_x} \mathsf{Con}(T'_{x+1}) \textrm{ since $T_0$ extends $\mathsf{EA}$.}
\end{flalign*}

To see that $e'$ provably witnesses the second conjunct of $\mathsf{DS}$ in $T_0$, we reason as follows.
\begin{flalign*}
\mathsf{EA}\vdash&\forall x \;\mathsf{Pr}_{T'_x} \mathsf{Con}(T'_{x+1}) \textrm{ as above.}\\
\mathsf{EA}\vdash&\mathsf{Pr}_\mathsf{EA} \big(\forall x \;\mathsf{Pr}_{T'_x} \mathsf{Con}(T'_{x+1})\big) \textrm{ by the $\Sigma_1$ completeness of $\mathsf{EA}$.}\\
T_0\vdash&\mathsf{Pr}_\mathsf{EA} \big(\forall x \;\mathsf{Pr}_{T'_x} \mathsf{Con}(T'_{x+1})\big) \textrm{ since $T_0$ extends $\mathsf{EA}$.}
\end{flalign*}

We now show that $e'$ provably witnesses the first conjunct of $\mathsf{DS}$ in $T_0$. From the first conjunct of $\mathsf{DS}$ we infer that $\mathsf{Con}(T_0)$. It follows that $T_0$ is $\Pi_1$ sound. We reason as follows.
\begin{flalign*}
&T_{0} \vdash \mathsf{Con}(T_{1}) \textrm{ by $\mathsf{DS}$.}\\
&T_0 \vdash \mathsf{Con}(T'_0) \textrm{ since provably $T'_0=T_1$.}
\end{flalign*}
We then infer that $\mathsf{Con}(T'_0)$ by the $\Pi_1$ soundness of $T_0$. So $e'$ is provably a witness to $\mathsf{DS}$ in a consistent theory. Therefore $\mathsf{EA}+\mathsf{DS}$ is consistent.
\end{proof}

\begin{remark}
Note that we just proved the non-existence of $\mathsf{EA}$-\emph{provably} descending r.e. sequences. Without the condition of $\mathsf{EA}$ provability such descending sequences \emph{do} exist. H. Friedman, Smorynski, and Solovay independently proved that there is a recursive sequence $\langle T_0, T_1, ...\rangle$ of consistent extensions of $\mathsf{EA}$ such that for all $n$, $T_n \vdash \mathsf{Con}(T_{n+1})$, answering a question of Gaifman; see \cite{smorynski2012self} for details.
\end{remark}

\subsection{$\Pi_2$ soundness}

We now know that there are \emph{no} recursive descending sequences of $\Pi_3$ sound theories with respect to the $\Pi_3$ reflection order, but there \emph{are} recursive descending sequences of consistent theories with respect to consistency strength. In this subsection we treat the remaining case, namely, $\Pi_2$ soundness. We prove that there \emph{is} an infinite sequences $\langle T_0, T_1, ...\rangle$ of $\Pi_2$ sound extensions of $\mathsf{EA}$ such that for all $n$, $T_n \vdash \mathsf{RFN}_{\Pi_2}(T_{n+1})$. In this sense, Theorem \ref{2Con} is best possible.

%We will talk about finite iterations of reflection principles. Suppose $\mathsf{R}(\varphi)$ is some formula. We define Kalmar elementary function $\mathtt{it}_{\mathsf{R}}(x,\varphi)$: we put $\mathtt{it}_{\mathsf{R}}(0,\varphi)=\varphi$ and $\mathtt{it}_{\mathsf{R}}(n+1,\varphi)$ to be equal to the G\"{o}del number of the formula $\mathsf{R}(\underline{\mathtt{it}(n,\varphi)})$. Then the expression $\mathsf{R}^{(x)+1}(\varphi)$ is a shorthand for $\mathsf{R}(\underline{\mathtt{it}_{\mathsf{R}}(x,\varphi)}).$

%By $\mathsf{Prf}_{T}(x,\varphi)$ we denote the standard formalization of the fact that $x$ is the G\"{o}del number of a proof of $\varphi$.
In the section, for technical reasons it will be useful for us to impose some natural conditions on our proof predicate.
We make sure that any proof in our proof system has only one conclusion, whence $$\mathsf{EA}\vdash \forall x,y_1,y_2 \Big( \big(\mathsf{Prf}_{T}(x,y_1)\land \mathsf{Prf}_{T}(x,y_2) \big) \to y_1=y_2 \Big).$$ Moreover, we arrange the proof system so that indices for statements are less than or equal to the indices for their proofs, i.e.,
\begin{equation}\label{proof_greater_than_theorem}\mathsf{EA}\vdash\forall x,y(\mathsf{Prf}_{T}(x,y)\to y\le x).\end{equation}
Note that the conclusions of the theorems in our paper are not sensitive to the choice of proof predicate as long as the resulting provability predicates are $\mathsf{EA}$-provably equivalent. And it is easy to see that even if our initial choice of $\mathsf{Prf}_{T}(x,\varphi)$ didn't satisfied the mentioned conditions, it is easy to modify it to satisfy the conditions, while preserving the provability predicate $\mathsf{Pr}_{T}(\varphi)$ up to $\mathsf{EA}$-provable equivalence.

Before proving the theorem we make a few more remarks preliminary remarks. We use the symbol $\dotdiv$ to denote the truncated subtraction function, i.e., $n\dotdiv m = n-m$ if $n>m$ and $0$ otherwise. We remind the reader that, provably in $\mathsf{EA}$, a theory is $\Sigma_1$ sound if and only if it is $\Pi_2$ sound. We also pause to make the following remark, which will invoke in the proof of the theorem.

\begin{remark}\label{HBL}
For any $\Pi_2$ sound extension $T$ of $\mathsf{EA}$, the theory $T+ \neg \mathsf{RFN}_{\Pi_2}(T)$ is $\Pi_2$ sound. This is actually an instance G\"odel's second incompleteness theorem that is applied to $1$-provability rather than the ordinary provability. Recall that $1$-provability predicate $1\mbox{-}\mathsf{Pr}_T(\varphi)$ for a theory $T$ is
\begin{equation}
  \exists \psi\in \Sigma_2\big(\mathsf{Tr}_{\Sigma_2}(\psi)\land \mathsf{Pr}_T(\psi\to\varphi)\big).
\end{equation}
The consistency notion that corresponds to $1$-provability is precisely $\Pi_2$-soundness:
\begin{equation}
  \mathsf{EA}\vdash \forall \varphi\;\big(\lnot 1\mbox{-}\mathsf{Pr}_T(\lnot \varphi)\mathrel{\leftrightarrow}\mathsf{RFN}_{\Pi_2}(T+\varphi)\big).
\end{equation}
It is easy to see that $1$-provability predicate for a theory $T$ satisfies the usual Hilbrt-Bernays-L\"ob derivability conditions. Thus G\"odel's second incompleteness theorem for it states that if a theory $T\sqsupseteq \mathsf{EA}$ is $\Pi_2$-sound, then $\mathsf{RFN}_{\Pi_2}(T)$ is not $1$-provable in $T$. And the latter is equivalent to $\Pi_2$-soundness of $T+\lnot\mathsf{RFN}_{\Pi_2}(T)$.
\end{remark}

We are now ready for the proof of the theorem.

\begin{theorem} \label{Pi_2_descending} There is a recursive sequence $(\varphi_n)_{n<\omega}$ of $\Pi_2$-sound sentences such that, for each $n$, $\mathsf{EA}+\varphi_n\vdash \Ref{\Pi_2}{\mathsf{EA}}(\mathsf{EA}+\varphi_{n+1})$.
\end{theorem}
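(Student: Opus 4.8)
The plan is to build the sequence by transferring, to the setting of $\Pi_2$ soundness, the Friedman--Smory\'nski--Solovay construction of a descending chain of consistent theories (see \cite{smorynski2012self,lindstrom2017aspects}), using the $1$-provability predicate $1\mbox{-}\mathsf{Pr}_{\mathsf{EA}}$ of Remark~\ref{HBL} in place of ordinary provability. The whole point of that remark is that $1\mbox{-}\mathsf{Pr}_{\mathsf{EA}}$ is a L\"ob provability predicate: it satisfies the Hilbert--Bernays--L\"ob derivability conditions, and G\"odel's second incompleteness theorem holds for it, with $\Pi_2$ soundness playing exactly the role that consistency plays for ordinary provability. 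The first step is to rewrite the target requirement in these terms. By equation~(2) of Remark~\ref{HBL} with $T=\mathsf{EA}$ we have $\mathsf{EA}\vdash \mathsf{RFN}_{\Pi_2}(\mathsf{EA}+\varphi_{n+1})\leftrightarrow \neg 1\mbox{-}\mathsf{Pr}_{\mathsf{EA}}(\neg\varphi_{n+1})$, so the condition $\mathsf{EA}+\varphi_n\vdash\mathsf{RFN}_{\Pi_2}(\mathsf{EA}+\varphi_{n+1})$ is equivalent to $\mathsf{EA}+\varphi_n\vdash\neg 1\mbox{-}\mathsf{Pr}_{\mathsf{EA}}(\neg\varphi_{n+1})$. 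Moreover each $\varphi_n$ will be (provably) equivalent to a $\Pi_2$ sentence, so, since $\mathsf{EA}$ is true, the $\Pi_2$ soundness of $\mathsf{EA}+\varphi_n$ is equivalent to the plain truth of $\varphi_n$.

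Next I would fix, by the recursion theorem, a single recursive function $n\mapsto\ulcorner\varphi_n\urcorner$ whose values are defined self-referentially in terms of later stages of the very sequence being defined; this is legitimate because the whole sequence is numerated by one index that the fixed point is allowed to mention. The naive choice $\mathsf{EA}\vdash\varphi_n\leftrightarrow\neg 1\mbox{-}\mathsf{Pr}_{\mathsf{EA}}(\neg\varphi_{n+1})$ already secures the reflection step for free, since then $\mathsf{EA}+\varphi_n\vdash\varphi_n\leftrightarrow\mathsf{RFN}_{\Pi_2}(\mathsf{EA}+\varphi_{n+1})$. The trouble is that this choice does not pin down the truth values. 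Indeed, the equivalence forces all the $\varphi_n$ to share one common truth value (each $\varphi_n$ is true iff $\mathsf{EA}+\varphi_{n+1}$ is $\Pi_2$ sound iff $\varphi_{n+1}$ is true), and this is equally consistent with every $\varphi_n$ being false: if $\varphi_{n+1}$ is false then $\neg\varphi_{n+1}$ is a true $\Sigma_2$ sentence, which is automatically $1$-provable in $\mathsf{EA}$, making $\varphi_n$ false as well. Thus the bare fixed point may realize the pathological solution in which every $\mathsf{EA}+\varphi_n$ is $\Pi_2$ \emph{unsound}, and the self-reference must instead be arranged so as to actively force soundness.

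The crux, and the main obstacle, is therefore to guarantee that each $\mathsf{EA}+\varphi_n$ is $\Pi_2$ sound \emph{unconditionally}, rather than merely reducing the soundness of stage $n$ to that of stage $n+1$ (every symmetric construction I have tried founders on this ungrounded circularity). I would resolve it exactly as in the consistency case, by a Rosser-style witness comparison: define $\varphi_n$ to compare, in the elementary proof ordering normalized by condition~(\ref{proof_greater_than_theorem}), the $1$-proofs in $\mathsf{EA}$ of $\neg\varphi_n$ against those of $\neg\varphi_{n+1}$. Such a comparison sentence still yields $\neg 1\mbox{-}\mathsf{Pr}_{\mathsf{EA}}(\neg\varphi_{n+1})$ over $\mathsf{EA}+\varphi_n$, preserving the reflection step, but it also prevents unsoundness from arising spuriously: were some $\mathsf{EA}+\varphi_n$ $\Pi_2$ unsound, a least witnessing $1$-proof would, through the comparison, contradict G\"odel's second incompleteness theorem for $1$-provability, i.e.\ the $\Pi_2$ soundness of $T+\neg\mathsf{RFN}_{\Pi_2}(T)$ furnished by Remark~\ref{HBL}. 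Verifying that this witness comparison does its job uniformly in $n$ is the delicate part; everything else is a routine transcription of the standard L\"ob-predicate derivations.

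Finally I would record that the resulting sequence meets the statement: each $\varphi_n$ is true, hence each $\mathsf{EA}+\varphi_n$ is $\Pi_2$ sound, and $\mathsf{EA}+\varphi_n\vdash\mathsf{RFN}_{\Pi_2}(\mathsf{EA}+\varphi_{n+1})$ by construction. This exhibits $\Pi_2$ soundness as the exact borderline: by Theorem~\ref{2Con} no such descending chain exists at the level of $\Pi_3$ soundness, whereas the present construction---like the Friedman--Smory\'nski--Solovay chains at the level of plain consistency, i.e.\ $\Pi_1$ soundness---produces one for $\Pi_2$ soundness.
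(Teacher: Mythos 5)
Your framing is correct as far as it goes: Remark \ref{HBL} is indeed the engine of the paper's proof, and your diagnosis of the naive fixed point $\varphi_n\leftrightarrow\neg 1\mbox{-}\mathsf{Pr}_{\mathsf{EA}}(\neg\varphi_{n+1})$ --- reflection step for free, truth values ungrounded --- is accurate. The genuine gap is the Rosser-style repair, which you never write down and which you yourself flag as ``the delicate part''; on inspection the two requirements pull in opposite directions, and no orientation of the comparison meets both. For the reflection step you need $\mathsf{EA}+\varphi_n\vdash\neg 1\mbox{-}\mathsf{Pr}_{\mathsf{EA}}(\neg\varphi_{n+1})$ outright. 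If instead you guard it, say $\varphi_n:=\forall z\,\big(z \mbox{ is a $1$-proof of } \neg\varphi_{n+1}\to\exists y\le z\ (y \mbox{ is a $1$-proof of } \neg\varphi_n)\big)$, then this orientation does ground soundness (a globally least $1$-proof of some $\neg\varphi_n$ is contradicted much as you suggest, using the soundness of $\mathsf{EA}$), but the reflection step fails: inside $\mathsf{EA}+\varphi_n$, a hypothetical $1$-proof of $\neg\varphi_{n+1}$ yields only $1\mbox{-}\mathsf{Pr}_{\mathsf{EA}}(\neg\varphi_n)$, and there is no way to detach $\neg\varphi_n$ and reach a contradiction, because $\mathsf{EA}+\varphi_n$ has no reflection for $\mathsf{EA}$. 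Conversely, any $\varphi_n$ that does provably imply $\neg 1\mbox{-}\mathsf{Pr}_{\mathsf{EA}}(\neg\varphi_{n+1})$ is at least as strong as the naive fixed point, so the guard has bought nothing toward soundness. There are further technical obstacles you elide: a ``$1$-proof'' is a pair $(\psi,p)$ whose correctness includes the \emph{truth} of the $\Sigma_2$ sentence $\psi$, so ``$x$ is a $1$-proof of $y$'' is a $\Sigma_2$ predicate, not an elementary one; condition (\ref{proof_greater_than_theorem}) normalizes smooth proofs, not such certificates; least-witness arguments over a $\Sigma_2$ predicate are unavailable in $\mathsf{EA}$, which has only $\Delta_0$-induction; and the resulting comparison sentences are $\Sigma_3/\Pi_3$, so your claim that each $\varphi_n$ is provably equivalent to a $\Pi_2$ sentence fails (truth of $\varphi_n$ of any complexity does give $\Pi_2$ soundness of $\mathsf{EA}+\varphi_n$, but the biconditional you assert does not hold).

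The paper resolves the groundedness problem in the opposite way: it makes the $\varphi_n$ \emph{false} rather than true. Its $\varphi_n$ asserts that $\mathsf{I}\Sigma_2$ proves a false $\Sigma_1$ sentence via some proof $p$ --- a falsehood in $\mathbb{N}$, so any such $p$ is nonstandard --- together with $\mathsf{RFN}_{\Pi_2}\big(\mathbf{R}^{p\dotdiv n}_{\Pi_2}(\mathsf{EA})\big)$. Remark \ref{HBL} then enters not as a L\"ob calculus for fixed points but as a supplier of a single $\Pi_2$-sound theory, $\mathsf{I}\Sigma_2+\neg\mathsf{RFN}_{\Pi_2}(\mathsf{I}\Sigma_2)$, which proves every $\varphi_n$ (using $\mathsf{I}\Sigma_2\vdash\forall x\,\mathsf{RFN}_{\Pi_2}(\mathbf{R}^x_{\Pi_2}(\mathsf{EA}))$), whence each $\mathsf{EA}+\varphi_n$ is $\Pi_2$ sound; the descent step is arithmetic on the nonstandard $p$: Lemma \ref{standard} gives $p>n$, so $p\dotdiv n=(p\dotdiv(n+1))+1$, and unfolding one iteration of $\mathbf{R}^{p\dotdiv n}_{\Pi_2}(\mathsf{EA})$ yields $\mathsf{RFN}_{\Pi_2}(\mathsf{EA}+\varphi_{n+1})$. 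Note also that the Friedman--Smory\'nski--Solovay construction you invoke is itself of this ``nonstandard proof, iterate up to $p\dotdiv n$'' form, so transferring it faithfully leads to the paper's argument, not to a Rosser argument. Your additional demand that the $\varphi_n$ be true is very strong: if they were true and $\Pi_2$, then over $\mathsf{EA}$ each $\varphi_n$ would imply $\mathsf{RFN}_{\Pi_2}(\mathsf{EA}+\varphi_{n+1})$ and hence $\varphi_{n+1}$ itself, so the chain would be a self-supporting ill-founded hierarchy of true reflection statements --- nothing in your sketch produces such a thing, and the theorem does not require it. To repair your proposal you would have to either exhibit a comparison sentence verifying both properties (which the analysis above indicates the symmetric form cannot do) or drop truth and, as the paper does, locate a $\Pi_2$-sound theory proving all the $\varphi_n$ at once.
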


\begin{proof}
For each $n\in\mathbb{N}$, we define the sentence $\varphi_n$ as follows:
$$ \varphi_n : = \exists \psi \in \Sigma_1 \exists p \Big( \mathsf{Prf}_{\mathsf{I}\Sigma_2}(p,\psi) \wedge \neg \mathsf{True}_{\Sigma_1}(\psi) \wedge \mathsf{RFN}_{\Pi_2}\big(\mathbf{R}_{\Pi_2}^{p\dotdiv n}(\mathsf{EA})\big) \Big)$$
That is, $\varphi_n$ expresses ``$\mathsf{I}\Sigma_2$ proves a false $\Sigma_1$ sentence via a proof $p$, and $\Pi_2$ reflection for $\mathsf{EA}$ can be iterated up to $p\dotdiv n$.''

The motivation for picking that individual formula is as follows: To find a descending sequence, we will iterate $\Pi_2$ reflection up to some non-standard number. So we need to make sure that our formula forces a certain number to be non-standard but without implying any false $\Pi_2$ sentences. The way we do that is by saying that $\mathsf{I}\Sigma_2$ proves a false $\Sigma_1$ sentence. This has (we will show) no false $\Pi_2$ consequences. However, (the code of) any proof witnessing a failure of $\Sigma_1$ soundness in $\mathsf{I}\Sigma_2$ must be non-standard. We find our descending sequence by iterating $\Pi_2$ reflection up to this non-standard number.

Now the formal details start. We need to check that $\varphi_n$ is $\Pi_2$ sound for each $n$, and that $\mathsf{EA}+ \varphi_n \vdash \mathsf{RFN}_{\Pi_2}(\mathsf{EA}+\varphi_{n_1})$.

\begin{claim}
$\mathsf{EA} + \varphi_n$ is $\Pi_2$ sound for each $n$.
\end{claim}

The first thing to note is that
\begin{equation}
\mathsf{I}\Sigma_2 \vdash \forall x \;\mathsf{RFN}_{\Pi_2}\big(\mathbf{R}_{\Pi_2}^x(\mathsf{EA})\big),
\end{equation}
where the $\mathsf{I}\Sigma_2$-proof is the induction on $x$. Recall that $\mathsf{I}\Sigma_2 \equiv I\Pi_2$ and  $\forall x\; \mathsf{RFN}_{\Pi_2}\big(\mathbf{R}_{\Pi_2}^x(\mathsf{EA})\big)$ is a $\Pi_2$-formula, hence $\mathsf{I}\Sigma_2$ could formalize the necessary induction. Also it is known that $\mathsf{I}\Sigma_2\sqsupseteq\mathsf{I}\Sigma_1\equiv \mathsf{EA}+\mathsf{RFN}_{\Pi_3}(\mathsf{EA})$ and that
$$\mathsf{EA}+\mathsf{RFN}_{\Pi_3}(\mathsf{EA})\vdash \psi\to \mathsf{RFN}_{\Pi_2}(\mathsf{EA}+\psi),$$ for any $\Pi_2$-formula $\psi$. This allows us to verify the base and step of the induction in $\mathsf{I}\Sigma_2$.

The second thing to note is that, since $\Pi_2$ reflection is provably equivalent (in $\mathsf{EA}$) to $\Sigma_1$ reflection, it follows that: 
\begin{equation}
\mathsf{I}\Sigma_2 + \neg \mathsf{RFN}_{\Pi_2}(\mathsf{I}\Sigma_2) \vdash \exists \psi \in \Sigma_1 \exists p \big( \mathsf{Prf}_{\mathsf{I}\Sigma_2}(p,\psi) \wedge \neg \mathsf{True}_{\Sigma_1}(\psi) \big)
\end{equation}

Putting these two observations together, we infer that, for each standard $n\in\mathbb{N}$,
\begin{equation}
\mathsf{I}\Sigma_2 + \neg \mathsf{RFN}_{\Pi_2}(\mathsf{I}\Sigma_2) \vdash \exists \psi \in \Sigma_1 \exists p \big( \mathsf{Prf}_{\mathsf{I}\Sigma_2}(p,\psi) \wedge \neg \mathsf{True}_{\Sigma_1}(\psi) \wedge \mathsf{RFN}_{\Pi_2}^{p\dotdiv n}(\mathsf{EA}) \big)
\end{equation}
which is just to say that for each standard $n\in\mathbb{N}$, $\mathsf{I}\Sigma_2 + \neg \mathsf{RFN}_{\Pi_2}(\mathsf{I}\Sigma_2) \vdash \varphi_n$. Thus, to see that $\mathsf{EA}+\varphi_n$ is $\Pi_2$ sound, it suffices to observe that $\mathsf{I}\Sigma_2 + \neg \mathsf{RFN}_{\Pi_2}(\mathsf{I}\Sigma_2)$ is $\Pi_2$ sound. The latter claim follows immediately from Remark \ref{HBL}.

Before checking that $\mathsf{EA}+ \varphi_n \vdash \mathsf{RFN}_{\Pi_2}(\mathsf{EA}+\varphi_{n+1})$, we will establish the following lemma:

\begin{lemma}\label{standard}
For all standard $n\in\mathbb{N}$, $$\mathsf{EA} \vdash \forall p \forall \psi\in\Sigma_1 \Big( \big( \mathsf{Prf}_{\mathsf{I}\Sigma_2}(p,\psi) \wedge \neg \mathsf{True}_{\Sigma_1}(\psi) \big) \rightarrow p>n  \Big).$$
\end{lemma}

\begin{proof}
The first thing to note is that (by the $\Sigma_1$ soundness of $\mathsf{I}\Sigma_2$ and the $\Sigma_1$ completeness of $\mathsf{EA}$) for any $\psi\in\Sigma_1$, if $\mathsf{I}\Sigma_2\vdash\psi$ then also $\mathsf{EA}\vdash \psi$. Now, for any standard $p\in\mathbb{N}$, $\mathsf{EA}$ can check whether $p$ constitutes an $\mathsf{I}\Sigma_2$ proof of a $\Sigma_1$ sentence $\psi$, and if $p$ does constitute such a proof, then $\mathsf{EA}$ will prove $\psi$ as well. That is, for each standard $p\in\mathbb{N}$:
$$\mathsf{EA} \vdash \forall \psi \in \Sigma_1 \Big( \mathsf{Prf}_{\mathsf{I}\Sigma_2}(p,\psi) \rightarrow \mathsf{True}_{\Sigma_1}(\psi) \Big)$$
It follows that for each standard $n\in\mathbb{N}$:
$$\mathsf{EA} \vdash \forall p\leq n \forall \psi \in \Sigma_1 \Big( \mathsf{Prf}_{\mathsf{I}\Sigma_2}(p,\psi) \rightarrow \mathsf{True}_{\Sigma_1}(\psi) \Big)$$
Whence for each standard $n\in\mathbb{N}$:
$$\mathsf{EA} \vdash \forall p \forall \psi \in \Sigma_1 \Big(  \big(\mathsf{Prf}_{\mathsf{I}\Sigma_2}(p,\psi) \wedge \neg \mathsf{True}_{\Sigma_1}(\psi) \big) \rightarrow p>n \Big)$$
This completes the proof of the lemma.
\end{proof}

With the lemma on board, we are now ready to verify the following claim:

\begin{claim}
For each $n\in\mathbb{N}$, $$\mathsf{EA}+ \varphi_n \vdash \mathsf{RFN}_{\Pi_2}(\mathsf{EA}+\varphi_{n+1}).$$
\end{claim}

Let's fix an $n\in\mathbb{N}$ and \textbf{reason in $\mathsf{EA}+\varphi_n$:}

According to $\varphi_n$, there is an $\mathsf{I}\Sigma_2$ proof $p$ of a false $\Sigma_1$ sentence $\psi$ and  $\mathsf{RFN}_{\Pi_2}(\mathbf{R}_{\Pi_2}^{p\dotdiv n} (\mathsf{EA}))$ is $\Pi_2$-sound. From Lemma \ref{standard} we infer that $p>n$. It follows that $p\dotdiv n >0$, whence  $p\dotdiv n = (p\dotdiv (n+1))+1$. Hence
\begin{equation}
  \mathbf{R}_{\Pi_2}^{p\dotdiv n} (\mathsf{EA})\equiv \mathbf{R}_{\Pi_2}^{(p\dotdiv (n+1))+1} (\mathsf{EA})\equiv \mathsf{EA}+\mathsf{RFN}_{\Pi_2} \big( \mathbf{R}_{\Pi_2}^{p\dotdiv (n+1)} (\mathsf{EA}) \big).
\end{equation}
Thus
$$  \mathsf{RFN}_{\Pi_2}\Big(\mathsf{EA}+\mathsf{RFN}_{\Pi_2} \big( \mathbf{R}_{\Pi_2}^{p\dotdiv (n+1)} (\mathsf{EA}) \big)\Big)$$
Since $\mathsf{Prf}_{\mathsf{I}\Sigma_2}(p,\psi)$ is a true $\Sigma_1$ sentence and $\psi$ is a false $\Sigma_1$ sentence we infer that
$$\mathsf{RFN}_{\Pi_2} \Big( \mathsf{EA} + \mathsf{Prf}_{\mathsf{I}\Sigma_2}(p,\psi) + \neg\mathsf{True}_{\Sigma_1}(\psi) + \mathsf{RFN}_{\Pi_2} \big( \mathbf{R}_{\Pi_2}^{p\dotdiv (n+1)} (\mathsf{EA}) \big) \Big)$$
Which straightforwardly implies $\mathsf{RFN}_{\Pi_2} ( \mathsf{EA} + \varphi_{n+1})$. This completes the proof of the theorem.
\end{proof}

\begin{question} In Theorem \ref{2Con} and Theorem \ref{Pi_2_descending} we studied how strong reflection principles should be to guarantee that there are no \emph{recursive} descending sequences in the corresponding reflection order. It is natural to ask how this result could be generalized to higher Turing degrees.

  Let $n$ be a natural number. For which $m$ is there a sequence $\langle T_i \mid i\in \mathbb{N}\rangle$ recursive in $0^{(n)}$ such that all $T_i$ are $\Pi_m$ sound extensions of $\mathsf{EA}$ and $T_i\vdash \mathsf{RFN}_{\Pi_m}(T_{i+1})$, for all $i$? The same question for $\Sigma_m$?
\end{question}

%%% Local Variables:
%%% TeX-master: "well-foundedness"
%%% End:

\section{Iterated reflection and conservation}
\label{conservativity_section}

In this section we prove a number of conservation theorems relating iterated reflection and transfinite induction.  These results are inspired by the following theorem, which is often known as \emph{Schmerl's formula} \cite{schmerl1979fine}. For an ordinal notation system $\alpha$, $\omega_n^{\alpha}$ is the result of $n$-applications of $\omega$-exponentiation (see  \textsection{\ref{ordinal_notations}}), starting with $\alpha$, i.e., $\omega_0^{\alpha}=\alpha$ and $\omega_{n+1}^\alpha=\omega^{\omega_n^\alpha}$.

\begin{theorem}[Schmerl]
\label{original Schmerl}  Let $n,m$ be natural numbers. In $\mathsf{EA}^+$, for any notation system $\alpha$, $$\mathbf{R}^\alpha_{\Pi^0_{n+m}}(\mathsf{EA}^+) \equiv_{\Pi^0_n} \mathbf{R}^{\omega_m(\alpha)}_{\Pi^0_n}(\mathsf{EA}^+).$$
\end{theorem}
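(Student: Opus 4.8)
The plan is to prove Schmerl's formula $\mathbf{R}^\alpha_{\Pi^0_{n+m}}(\mathsf{EA}^+) \equiv_{\Pi^0_n} \mathbf{R}^{\omega_m(\alpha)}_{\Pi^0_n}(\mathsf{EA}^+)$ by induction on $m$, working throughout inside $\mathsf{EA}^+$ and relying on Schmerl's reflexive induction (Theorem \ref{reflexive induction}) to handle the transfinite iteration. The case $m=0$ is trivial since $\omega_0(\alpha)=\alpha$, so the entire content lies in the inductive step, which reduces to the single crucial identity
$$\mathbf{R}^\alpha_{\Pi^0_{n+1}}(\mathsf{EA}^+) \equiv_{\Pi^0_n} \mathbf{R}^{\omega^\alpha}_{\Pi^0_n}(\mathsf{EA}^+);$$
granting this for the exponent-$1$ case, one iterates and uses the tower identity $\omega_{m+1}(\alpha)=\omega^{\omega_m(\alpha)}$ together with the fact that $\Pi^0_n$-conservativity composes along reflection progressions. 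So the heart of the matter is establishing the two directions of this exponent-$1$ equivalence.

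For the $\sqsupseteq_{\Pi^0_n}$ direction (that iterated $\Pi^0_n$ reflection along $\omega^\alpha$ derives the $\Pi^0_n$-consequences of single-stage $\Pi^0_{n+1}$ reflection along $\alpha$), I would exploit the fact that one step of $\Pi^0_{n+1}$ reflection can be simulated by $\omega$-many steps of $\Pi^0_n$ reflection. The mechanism is a partial-truth-definition argument: a $\Pi^0_{n+1}$ sentence is $\forall x\,\theta(x)$ with $\theta\in\Sigma^0_n$, and reflecting on it amounts to proving each instance $\theta(\underline{k})$ and collecting. Using the truth definitions $\mathsf{Tr}_{\Sigma_n}$ available over $\mathsf{EA}$, one shows that $\mathsf{RFN}_{\Pi^0_{n+1}}(U)$ follows (for $\Pi^0_n$ consequences) from transfinitely iterating $\mathsf{RFN}_{\Pi^0_n}$ a suitable number of further stages, the Cantor normal form bookkeeping being exactly what turns the exponent $\alpha$ into $\omega^\alpha$. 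For the reverse $\sqsubseteq_{\Pi^0_n}$ direction I would run a reflexive induction along $\omega^\alpha$: to verify that $\mathbf{R}^{\omega^\alpha}_{\Pi^0_n}(\mathsf{EA}^+)$ proves no $\Pi^0_n$ sentence beyond those of $\mathbf{R}^\alpha_{\Pi^0_{n+1}}(\mathsf{EA}^+)$, assume the reflexive induction hypothesis $\mathsf{Pr}_T\big(\forall\gamma\prec\omega^\alpha\,\varphi(\gamma)\big)$ and peel off the top term of the Cantor normal form of an ordinal below $\omega^\alpha$, matching each block of $\Pi^0_n$-reflection stages against a single $\Pi^0_{n+1}$-reflection stage over the appropriate smaller exponent.

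The reason $\mathsf{EA}^+$ rather than $\mathsf{EA}$ is needed, and the reason the whole argument can be carried out formally, is that $\mathsf{EA}^+$ proves cut-elimination (equivalently the totality of superexponentiation) and, as recorded in \textsection\ref{ordinal_notations}, renders $\prec$ and the notation-system machinery $\Delta_0(2^x_y)$-expressible, so that transfinite induction along $\prec_\alpha$ and along $\prec_{\omega^\alpha}$ is available in the expanded bounded-quantifier induction. The key derivability facts $\mathsf{RFN}_{\Pi_{n+1}}(U)\vdash \psi\to\mathsf{RFN}_{\Pi_n}(U+\psi)$ for $\psi\in\Pi^0_n$, already used in the proof of Theorem \ref{Pi_2_descending}, are the local engine driving the exponent-$1$ case.

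I expect the main obstacle to be the exponent-$1$ conservativity in its $\sqsubseteq_{\Pi^0_n}$ direction: getting the reflexive-induction argument to correctly track the passage between Cantor normal forms in $\omega^\alpha$ and the single reflection stages over ordinals below $\alpha$, while keeping every step formalizable in $\mathsf{EA}^+$ and uniform in the (possibly nonstandard) notation $\alpha$. The bookkeeping connecting ``adding $\omega^\beta$ to an exponent'' with ``one $\Pi^0_{n+1}$ reflection step at level $\beta$'' is where the real work and the risk of off-by-one or limit-stage errors reside, and it is also where the choice of smooth proof predicates and the monotonicity properties of the $\omega^\alpha$ construction must be invoked carefully.
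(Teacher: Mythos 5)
Your proposal cannot be checked against a proof in the paper, because the paper contains none: Theorem \ref{original Schmerl} is stated with attribution to Schmerl \cite{schmerl1979fine}, and the remark following Theorem \ref{generalized Schmerl} says explicitly that the authors omit proofs of both formulas, pointing to Schmerl's technique and to Beklemishev's \cite{beklemishev2003proof} (a proof of the generalized version appeared in the early preprint \cite{pakhomov2018reflection_v1} and was removed). Measured against that standard route, your architecture is the right one: external induction on $m$ reducing everything to the exponent-one identity $\mathbf{R}^{\alpha}_{\Pi^0_{n+1}}(\mathsf{EA}^+)\equiv_{\Pi^0_n}\mathbf{R}^{\omega^{\alpha}}_{\Pi^0_n}(\mathsf{EA}^+)$, composition of conservation results along the hierarchy, and, for the inclusion $\mathbf{R}^{\omega^{\alpha}}_{\Pi^0_n}(\mathsf{EA}^+)\sqsubseteq\mathbf{R}^{\alpha}_{\Pi^0_{n+1}}(\mathsf{EA}^+)$, reflexive induction (Theorem \ref{reflexive induction}) with Cantor-normal-form bookkeeping driven by the fact $\mathsf{RFN}_{\Pi^0_{n+1}}(U)\vdash\psi\rightarrow\mathsf{RFN}_{\Pi^0_n}(U+\psi)$ for $\Pi^0_n$ sentences $\psi$. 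All of this matches the development in \cite{beklemishev2003proof}.

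The genuine gap is the conservativity direction, which you dispatch with ``reflecting on $\forall x\,\theta(x)$ amounts to proving each instance $\theta(\underline{k})$ and collecting''; that sentence is not an argument, and the entire mathematical content of the theorem lives there. Note that conservativity cannot be obtained by deriving the axioms of the stronger theory inside the weaker one: already $\mathbf{R}^{\omega}_{\Pi^0_n}(U)$ does not prove $\mathsf{RFN}_{\Pi^0_{n+1}}(U)$, for any such proof would lie in a finite stage $\mathbf{R}^{k}_{\Pi^0_n}(U)$, which is $U$ plus a single $\Pi^0_n$ sentence $\psi$; the fact you quoted would then give $\mathbf{R}^{k}_{\Pi^0_n}(U)\vdash\mathsf{RFN}_{\Pi^0_n}(U+\psi)$, hence its own consistency, contradicting G\"odel's second incompleteness theorem. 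So one must transform arbitrary derivations that \emph{use} the uniform reflection schema as a premise, not merely re-derive instances. That is the Reduction Lemma: provably in $\mathsf{EA}^+$ and uniformly in (an index for) $U\sqsupseteq\mathsf{EA}$, the theory $U+\mathsf{RFN}_{\Pi^0_{n+1}}(U)$ is $\Pi^0_n$-conservative over $\bigcup_{k<\omega}\mathbf{R}^{k}_{\Pi^0_n}(U)$. Its known proofs require real proof theory --- Schmerl's semiformal systems with iterated $\omega$-rules and cut elimination, or Beklemishev's finitary Herbrand-style analysis of cut-free derivations together with provable $\Sigma_n$-completeness --- and the $\mathsf{EA}^+$-verifiable uniformity in $U$ is precisely what your reflexive induction needs in order to smear the lemma from $\omega$ up to $\omega^{\alpha}$; you name the right tools (partial truth definitions, cut elimination in $\mathsf{EA}^+$) but supply no argument at the one point where they do the work. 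A smaller, fixable point: the exponent-one identity fails at notations of order type $0$, where the left side is $\mathsf{EA}^+$ and the right side is $\mathsf{EA}^++\mathsf{RFN}_{\Pi^0_n}(\mathsf{EA}^+)$, which proves a new $\Pi^0_n$ sentence; the formula must be read for $|\alpha|\geq 1$ (an off-by-one that the paper's own statement also elides), so your worry about boundary bookkeeping is well placed.
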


%Note that here $\varepsilon_0$ is the standard Cantor notaion up to the ordinal $\boldsymbol \varepsilon_0$ (below we give a relativized version of this notation). For $\alpha\prec \varepsilon_0$, the ordinal notaion $\omega_n(\alpha)\prec \varepsilon_0$ is given by the term $\alpha$ if $n=0$ and the term $\omega^{\omega_{n-1}(\alpha)}$ if $n>0$.

Schmerl's formula is a useful tool for calculating the proof-theoretic ordinals of first-order theories. In this section we will develop tools in the mold of Schmerl's formula for calculating the proof-theoretic ordinals of second-order theories. Throughout this section we will rely on the following analogue of Theorem \ref{original Schmerl} that is also due to Schmerl \cite{schmerl1982iterated}. 

\begin{theorem}[Schmerl]
\label{generalized Schmerl} Provably in $\mathsf{EA}^+$, for any ordinal notation $\alpha$, $$\mathbf{R}^\alpha_{\mathbf{\Pi}^0_\infty}(\mathsf{PA}(X)) \equiv_{\mathbf{\Pi}^0_n} \mathbf{R}^{\varepsilon_\alpha}_{\mathbf{\Pi}^0_n}(\mathsf{EA}^+(X)).$$
\end{theorem}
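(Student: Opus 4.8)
The plan is to prove the equivalence by transfinite recursion on $\alpha$, formalized through Schmerl's reflexive induction (Theorem \ref{reflexive induction}), and to reduce everything to the first-order Schmerl formula (Theorem \ref{original Schmerl}) by relativizing it to a passive free set parameter $X$. The guiding heuristic is that full arithmetical reflection, being the supremum over $k$ of $\mathbf{\Pi}^0_{n+k}$ reflection, corresponds under Schmerl's formula to the supremum $\sup_k\omega_k(\cdot)$, and that this supremum is precisely the $\varepsilon$-operation; this is the source of the jump from $\alpha$ to $\varepsilon_\alpha$.

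First I would record the relativized inputs. Because the set variable $X$ occurs only as a free parameter and never interacts with the arithmetical quantifiers, the partial truth definitions for $\mathbf{\Pi}_n,\mathbf{\Sigma}_n$ over $\mathsf{EA}(X)$ satisfy the same Tarski biconditionals as their first-order analogues, and the entire derivation of Theorem \ref{original Schmerl} goes through with $X$ carried along. This yields the \emph{relativized Schmerl formula} $\mathbf{R}^\beta_{\mathbf{\Pi}^0_{n+k}}(\mathsf{EA}^+(X)) \equiv_{\mathbf{\Pi}^0_n} \mathbf{R}^{\omega_k(\beta)}_{\mathbf{\Pi}^0_n}(\mathsf{EA}^+(X))$, provably in $\mathsf{EA}^+$ and uniformly in $\beta$. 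I would also establish the base case, namely the relativized characterization of the proof-theoretic ordinal of Peano arithmetic, $\mathsf{PA}(X)\equiv_{\mathbf{\Pi}^0_n}\mathbf{R}^{\varepsilon_0}_{\mathbf{\Pi}^0_n}(\mathsf{EA}^+(X))$; this follows by relativizing the fragment-wise reduction of $\mathsf{PA}=\bigcup_k \mathsf{I}\Sigma_k$ to finitely iterated reflection over $\mathsf{EA}^+(X)$ and again invoking the relativized Schmerl formula, the supremum giving $\varepsilon_0=\sup_k\omega_k(1)$.

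The heart of the argument is the successor step, carried out by reflexive induction with a predicate $\varphi(\alpha)$ that bundles the claimed equivalence at all complexity levels simultaneously. Assuming as reflexive induction hypothesis that $\varphi(\beta)$ is provable for all $\beta\prec\alpha$, I would compute, for $\alpha=\beta+1$,
$$\mathbf{R}^{\beta+1}_{\mathbf{\Pi}^0_\infty}(\mathsf{PA}(X))\equiv_{\mathbf{\Pi}^0_n}\mathsf{PA}(X)+\bigcup_k\mathsf{RFN}_{\mathbf{\Pi}^0_{n+k}}\big(\mathbf{R}^{\varepsilon_\beta}_{\mathbf{\Pi}^0_{n+k}}(\mathsf{EA}^+(X))\big),$$
where the hypothesis is used to rewrite the reflected theory at each level $n+k$. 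For each $k$ the $k$-th summand is $\mathbf{\Pi}^0_{n+k}$-equivalent to one further reflection step, $\mathbf{R}^{\varepsilon_\beta+1}_{\mathbf{\Pi}^0_{n+k}}(\mathsf{EA}^+(X))$, which by the relativized Schmerl formula is $\mathbf{\Pi}^0_n$-equivalent to $\mathbf{R}^{\omega_k(\varepsilon_\beta+1)}_{\mathbf{\Pi}^0_n}(\mathsf{EA}^+(X))$. Taking the supremum over $k$ and using the ordinal identity $\sup_k\omega_k(\varepsilon_\beta+1)=\varepsilon_{\beta+1}$ delivers exactly $\mathbf{R}^{\varepsilon_{\beta+1}}_{\mathbf{\Pi}^0_n}(\mathsf{EA}^+(X))$ (the summand $\mathsf{PA}(X)$ being absorbed since $\varepsilon_{\beta+1}\succeq\varepsilon_0$). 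The limit case reduces to matching the two unions via $\varepsilon_\alpha=\sup_{\beta\prec\alpha}\varepsilon_\beta$. Both directions of the $\mathbf{\Pi}^0_n$-equivalence fall out of these component equivalences, with reflexive induction supplying the transfinite bookkeeping that absorbs the reflection steps uniformly in the notation.

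I expect the main obstacle to be the formalization, within the weak base theory $\mathsf{EA}^+$, of the passage from finitely iterated reflection to full $\mathbf{\Pi}^0_\infty$ reflection: the supremum over $k$ together with the ordinal identity $\sup_k\omega_k(\varepsilon_\beta+1)=\varepsilon_{\beta+1}$ must be handled by explicit elementary manipulations of ordinal notations rather than by genuine transfinite induction, and the $\mathbf{\Pi}^0_n$-conservativity has to be maintained uniformly while the complexity level $n+k$ is allowed to grow unboundedly. A secondary, largely bookkeeping, difficulty is checking that the relativization of Theorem \ref{original Schmerl} to the pseudo-$\Pi^1_1$ language is genuinely uniform and faithfully threads the free set parameter $X$ through every instance of cut-elimination and every partial truth definition.
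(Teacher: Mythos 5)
You should know at the outset that the paper contains no proof of this theorem: it is attributed to Schmerl \cite{schmerl1982iterated}, and the authors state explicitly that a proof via exactly the techniques you employ appeared in \textsection 6.2 of an early preprint \cite{pakhomov2018reflection_v1} but was removed because the method was not new. Measured against the standard argument the paper points to, your outline is essentially correct and is precisely ``Beklemishev's technique'' \cite{beklemishev2003proof}: relativize Theorem \ref{original Schmerl} to the passive parameter $X$, establish the base case $\mathsf{PA}(X)\equiv_{\mathbf{\Pi}^0_n}\mathbf{R}^{\varepsilon_0}_{\mathbf{\Pi}^0_n}(\mathsf{EA}^+(X))$, and run a reflexive induction (Theorem \ref{reflexive induction}) in which $\mathsf{RFN}_{\mathbf{\Pi}^0_\infty}$ is decomposed as $\bigcup_k\mathsf{RFN}_{\mathbf{\Pi}^0_{n+k}}$, each level is collapsed by the relativized Schmerl formula, and the identity $\sup_k\omega_k(\varepsilon_\beta+1)=\varepsilon_{\beta+1}$ supplies the $\varepsilon$-jump. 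Two points you flag in passing are exactly where a careless version would fail, so let me make them explicit. First, bundling ``all complexity levels simultaneously'' into a single reflexive-induction predicate is legitimate only because mutual $\mathbf{\Pi}^0_n$-conservativity is a statement about \emph{provability}, hence an arithmetical formula of fixed complexity uniformly in $n$ (with the progressions $\mathbf{R}^{\cdot}_{\mathbf{\Pi}^0_n}$ defined uniformly in $n$ as a parameter of the fixed point); the corresponding truth statements could not be so bundled, and your successor step genuinely needs the hypothesis at all levels $n+k$, so a per-$n$ induction would not close. Second, absorbing the summand $\mathsf{PA}(X)$ is not automatic from $\varepsilon_{\beta+1}\succeq\varepsilon_0$, since $\Gamma$-conservativity is not preserved under adjoining axioms; it goes through by compactness: a $\mathbf{\Pi}^0_n$ theorem of the displayed theory uses only $\mathsf{I\Sigma}_j(X)$ together with a single reflection statement at some level $n+k$ (using provable monotonicity of the progressions in both the iteration ordinal and the level $k$ to merge finitely many such statements), and for $k$ large relative to $j$ the fragment $\mathsf{I\Sigma}_j(X)$ is itself a consequence of $\mathsf{RFN}_{\mathbf{\Pi}^0_{n+k}}\big(\mathsf{EA}^+(X)\big)$ (cf.\ Remark \ref{sigma1equivalence}), so the whole fragment sits inside $\mathbf{R}^{\varepsilon_\beta+1}_{\mathbf{\Pi}^0_{n+k}}\big(\mathsf{EA}^+(X)\big)$ \emph{before} the relativized Schmerl formula is applied, after which everything lands below $\varepsilon_{\beta+1}$ as you claim.
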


Note that the versions of Schmerl's formulas that we give above aren't exactly what Schmerl proved, but rather versions of the formulas that are natural given the notation of our paper. And they could be proved by either application of Schmerl's technique or Beklemishev's technique \cite{beklemishev2003proof}. In fact in a early preprint of this paper \cite[\textsection6.2]{pakhomov2018reflection_v1} we provided a proof of Theorem \ref{generalized Schmerl}, however since the technique that we used wasn't new and the result is just a slight variation of \cite{schmerl1982iterated} we removed it from the paper.

Here is a roadmap for the rest of this section. In \textsection{\ref{RCA_section}} we prove Theorem \ref{rca theorem} that states that $$\mathbf{R}^\alpha_{\Pi^1_1(\Pi^0_3)}(\mathsf{RCA}_0) \equiv_{\boldsymbol\Pi^0_\infty} \mathbf{R}^{1+\alpha}_{\mathbf{\Pi}^0_3}(\mathsf{EA}^+(X)).$$ In \textsection{4.3} we use this result to prove Theorem \ref{main tool}, i.e., that $$\mathbf{R}^\alpha_{\Pi^1_1}(\mathsf{ACA}_0)\equiv_{\Pi^1_1(\Pi^0_3)} \mathbf{R}^{\varepsilon_\alpha}_{\Pi^1_1(\Pi^0_3)}(\mathsf{RCA}_0).$$

In \textsection{\ref{proof-theoretic_ordinal}} we will combine Theorem \ref{main tool} with the results from \textsection{\ref{dssection}} (especially Theorem \ref{well-foundedness_reflection} and Theorem \ref{RCA_0_reflection}) to establish connections between iterated reflection and ordinal analysis. In particular, we will use iterated reflection principles to calculate the proof-theoretic ordinals of a wide range of theories.

Before continuing, we alert the reader that many of the proofs in this section use Schmerl's technique of reflexive induction. For a description of this technique, please see \textsection{2.4}.
 
\subsection{Iterated reflection and recursive comprehension}
\label{RCA_section}

Recall that there are no descending chains in the $\mathsf{RFN}_{\Pi^1_1(\Pi^0_3)}$ ordering of ${\Pi^1_1(\Pi^0_3)}$ sound extensions of $\mathsf{RCA}_0$ (this is Theorem \ref{RCA_0_reflection}). In this subsection we investigate iterated $\Pi^1_1(\Pi^0_3)$ reflection over the theory $\mathsf{RCA}_0$. The main result of this subsection is that $\mathbf{R}^\alpha_{\Pi^1_1(\Pi^0_3)}(\mathsf{RCA}_0)$ is $\Pi^1_1$ conservative over $\mathbf{R}^{1+\alpha}_{\mathbf{\Pi}^0_3}(\mathsf{EA}^+(X))$. This result will be used in the next section to calculate proof-theoretic ordinals of subsystems of second-order arithmetic.

Before proving the theorem we prove a few lemmas. These lemmas concern proof-theoretic properties of theories that are closed under an inference rule that we call the $\mathbf{\Delta}^0_1$ \emph{substitution rule}.

\begin{definition} Suppose  $\varphi$ and $\theta(x)$ are  $\boldsymbol \Pi^0_{\infty}$ formulas that may have other free variables. We denote by $\varphi[\theta(x)]$ the result of substituting the formula $\theta(x)$ in for the free set variable $X$, i.e. to obtain $\varphi[\theta(x)]$ we first rename all the bounded variables of $\varphi$ in order to ensure that there are no clashes with free variables of $\theta$ and then replace each atomic subformula of $\varphi$ of the form $t\in X$ with $\theta(t)$.
\end{definition}

\begin{definition}  
 We write $\mathsf{Subst}_{\mathbf{\Delta}^0_1}[\varphi]$ to denote the formula
$$\forall \theta_1(x)\forall\theta_2(x)\Big(\forall y \big(\mathsf{Tr}_{\mathbf{\Pi}^0_1}(\theta_1(y))\leftrightarrow\mathsf{Tr}_{\mathbf{\Sigma}^0_1}(\theta_2(y))\big) \rightarrow \varphi[\mathsf{Tr}_{\mathbf{\Pi}^0_1}(\theta_1(x))]\Big).$$

A theory $T$ is closed under the $\mathbf{\Delta}^0_1$ \emph{substitution rule} if, for any formula $\psi(X)$, whenever $T\vdash \psi(X)$ then $T\vdash \mathsf{Subst}_{\mathbf{\Delta}^0_1}[\psi]$.
\end{definition}

Recall that there is a translation $\varphi(X)\longmapsto \forall X\;\varphi(X)$ from the set of $\boldsymbol\Pi^0_{\infty}$ sentences to the set of sentences of the language of second order arithmetic. Recall also that we are regarding the pseudo-$\Pi^1_1$ language as a sublanguage of the language of second order arithmetic by identifying each pseudo $\Pi^1_1$ sentence with its translation.

\begin{lemma}
\label{closure 2}
$(\mathsf{EA}^+)$ For each $\boldsymbol \Pi^0_{\infty}$ sentence $\varphi(X)$ the following are equivalent.
\begin{enumerate}
\item $\mathsf{RCA}_0 + \forall X\;\varphi(X)$ is $\mathbf{\Pi}^0_\infty$ conservative over $\mathsf{I\Sigma}_1(X)+\varphi(X)$.
\item $\mathsf{I\Sigma}_1(X)+\varphi(X)$ is closed under the $\mathbf{\Delta}^0_1$ substitution rule.
\item $\mathsf{I\Sigma}_1(X)+\varphi(X)$ proves $\mathsf{Subst}_{\mathbf{\Delta}^0_1}[\varphi]$.
\end{enumerate}
\end{lemma}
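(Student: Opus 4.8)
The plan is to prove Lemma \ref{closure 2} by establishing the cycle of implications $(1)\Rightarrow(2)\Rightarrow(3)\Rightarrow(1)$, reasoning throughout in $\mathsf{EA}^+$. The conceptual content is that closure under the $\mathbf{\Delta}^0_1$ substitution rule is precisely the syntactic trace of the recursive comprehension axiom: the comprehension scheme of $\mathsf{RCA}_0$ lets one instantiate the set variable $X$ by any set $\mathbf{\Delta}^0_1$-defined from parameters, so a set-free theory $\mathsf{I\Sigma}_1(X)+\varphi(X)$ captures the full strength of $\mathsf{RCA}_0+\forall X\,\varphi(X)$ on $\mathbf{\Pi}^0_\infty$ formulas exactly when it already tolerates such substitutions.

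For $(1)\Rightarrow(2)$, I would argue contrapositively through the conservation hypothesis. Suppose $\mathsf{I\Sigma}_1(X)+\varphi(X)\vdash \psi(X)$ for some $\psi$; I want $\mathsf{I\Sigma}_1(X)+\varphi(X)\vdash \mathsf{Subst}_{\mathbf{\Delta}^0_1}[\psi]$. The key observation is that $\mathsf{Subst}_{\mathbf{\Delta}^0_1}[\psi]$ is itself (equivalent to) a $\mathbf{\Pi}^0_\infty$ sentence, since the truth predicates $\mathsf{Tr}_{\mathbf{\Pi}^0_1}$ and $\mathsf{Tr}_{\mathbf{\Sigma}^0_1}$ are available and the substituted formula $\psi[\mathsf{Tr}_{\mathbf{\Pi}^0_1}(\theta_1(x))]$ sits in the boldface arithmetical hierarchy. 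Working in $\mathsf{RCA}_0+\forall X\,\varphi(X)$, whenever the $\mathbf{\Delta}^0_1$ hypothesis $\forall y(\mathsf{Tr}_{\mathbf{\Pi}^0_1}(\theta_1(y))\leftrightarrow\mathsf{Tr}_{\mathbf{\Sigma}^0_1}(\theta_2(y)))$ holds, recursive comprehension furnishes a set $Y=\{y:\mathsf{Tr}_{\mathbf{\Pi}^0_1}(\theta_1(y))\}$, and instantiating $\forall X\,\varphi(X)$ and the derived $\forall X\,\psi(X)$ at $Y$ gives $\psi[\mathsf{Tr}_{\mathbf{\Pi}^0_1}(\theta_1(x))]$; hence $\mathsf{RCA}_0+\forall X\,\varphi(X)\vdash \mathsf{Subst}_{\mathbf{\Delta}^0_1}[\psi]$. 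Since this conclusion is $\mathbf{\Pi}^0_\infty$, conservation (1) pushes it down to $\mathsf{I\Sigma}_1(X)+\varphi(X)$, giving (2). The implication $(2)\Rightarrow(3)$ is immediate: $\varphi(X)$ is itself provable in $\mathsf{I\Sigma}_1(X)+\varphi(X)$, so closure under the rule applied to $\psi:=\varphi$ yields $\mathsf{Subst}_{\mathbf{\Delta}^0_1}[\varphi]$ directly.

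The substantive direction, and the one I expect to be the main obstacle, is $(3)\Rightarrow(1)$: from the single sentence $\mathsf{Subst}_{\mathbf{\Delta}^0_1}[\varphi]$ I must recover full $\mathbf{\Pi}^0_\infty$ conservativity of $\mathsf{RCA}_0+\forall X\,\varphi(X)$ over $\mathsf{I\Sigma}_1(X)+\varphi(X)$. The natural route is a proof-theoretic elimination of comprehension: given a proof in $\mathsf{RCA}_0+\forall X\,\varphi(X)$ of a $\mathbf{\Pi}^0_\infty$ sentence, I would interpret each set quantifier in the derivation by ranging over $\mathbf{\Delta}^0_1$-definable sets (with parameters), translating $t\in Z$ for a comprehension-introduced set $Z$ by its defining $\mathbf{\Delta}^0_1$ formula via the truth predicates. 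One must check that the $\Delta^0_1$ comprehension axioms and the $\Sigma^0_1$ induction of $\mathsf{RCA}_0$ survive this translation inside $\mathsf{I\Sigma}_1(X)$, and crucially that every instantiation of the axiom $\forall X\,\varphi(X)$ at a $\mathbf{\Delta}^0_1$-defined set is licensed precisely by the hypothesis $\mathsf{Subst}_{\mathbf{\Delta}^0_1}[\varphi]$ — this is exactly where (3) is used, and the main technical care is in ensuring that the $\mathbf{\Delta}^0_1$-ness side condition in $\mathsf{Subst}_{\mathbf{\Delta}^0_1}[\varphi]$ is met at each instantiation, which follows because $\mathsf{RCA}_0$ only introduces sets given by provably-equivalent $\Sigma^0_1$ and $\Pi^0_1$ definitions. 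Since a proof mentions only finitely many set variables, one can handle them by a suitable substitution, and the whole interpretation argument is formalizable in $\mathsf{EA}^+$ because it is a bounded manipulation of a given finite derivation together with the available partial truth definitions. The delicate bookkeeping is matching free set parameters across the substitution rule so that nested comprehension instances remain $\mathbf{\Delta}^0_1$ relative to the already-substituted parameters; I would organize this by an induction on the structure of the $\mathsf{RCA}_0$ proof, carrying the invariant that all set variables in play have been replaced by $\mathbf{\Delta}^0_1$ definitions.
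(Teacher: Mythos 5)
Your proposal is correct and takes essentially the same approach as the paper: the implications $(1)\Rightarrow(2)$ and $(2)\Rightarrow(3)$ are argued exactly as in the paper's proof. For $(3)\Rightarrow(1)$ the paper simply cites the formalized $\omega$-interpretation of $\mathsf{RCA}_0$ into $\mathsf{I\Sigma}_1(X)$ in which set variables range over $\mathbf{\Delta}^0_1$-definable ($X$-recursive) sets, from \cite[\textsection IX.1]{simpson2009subsystems}, observing that the image of $\forall X\,\varphi(X)$ under it is precisely $\mathsf{Subst}_{\mathbf{\Delta}^0_1}[\varphi]$ --- your ``comprehension-elimination'' translation is a hand-built reconstruction of that very interpretation, not a different method.
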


\begin{proof}
We work in $\mathsf{EA}^+$ and consider a  $\boldsymbol \Pi^0_{\infty}$ sentence $\varphi(X)$.

$(1)\rightarrow(2)$: Suppose that $\mathsf{RCA}_0 + \forall X\;\varphi(X)$ is $\Pi^1_1$ conservative over $\mathsf{I\Sigma}_1(X)+\varphi(X)$. Suppose that $\mathsf{I\Sigma}_1(X)+\varphi(X) \vdash \psi(X)$. Then $\mathsf{RCA}_0 + \forall X\;\varphi(X) \vdash \psi(X)$. Applying recursive comprehension, we derive $\mathsf{RCA}_0 + \forall X\;\varphi(X) \vdash \mathsf{Subst}_{\mathbf{\Delta}^0_1}[\psi]$. Hence, by $\Pi^1_1$ conservativity, $\mathsf{I\Sigma}_1(X)+\varphi(X) \vdash \mathsf{Subst}_{\mathbf{\Delta}^0_1}[\psi]$.

$(2)\rightarrow(3)$: By application of the $\mathbf{\Delta}^0_1$ substitution rule to $\varphi$.

$(3)\rightarrow(1)$: Suppose that $\mathsf{I\Sigma}_1(X)+\varphi(X)$ proves $\mathsf{Subst}_{\mathbf{\Delta}^0_1}[\varphi]$. We recall the well-known $\omega$-interpretation of $\mathsf{RCA}_0$ into $\mathsf{I\Sigma_1}(X)$ wherein we interpret sets by indices for $X$-recursive sets; see, e.g., \cite[\textsection IX.1]{simpson2009subsystems}. The image of the sentence $\forall X\;\varphi(X)$ under this interpretation is the sentence $\mathsf{Subst}_{\mathbf{\Delta}^0_1}[\varphi]$. This latter sentence is provable in $\mathsf{I\Sigma}_1(X)+\varphi(X)$ by assumption. Thus,  this interpretation actually interprets $\mathsf{RCA}_0 + \forall X\;\varphi(X)$ in  $\mathsf{I\Sigma}_1(X)+\varphi(X)$. Therefore, for any sentence $\psi(X)$, if $\mathsf{RCA}_0 + \forall X\;\varphi(X)$ proves $\forall X\;\psi(X)$, then $\mathsf{I\Sigma}_1(X)+\varphi(X)$ proves $\mathsf{Subst}_{\mathbf{\Delta}^0_1}[\psi]$, which is the image of $\forall X\;\psi(X)$ under the interpretation. Obviously, $\mathsf{I\Sigma}_1(X)+\varphi(X)\vdash \mathsf{Subst}_{\mathbf{\Delta}^0_1}[\psi]\to \psi(X)$, for any $\mathbf{\Pi}^0_{\infty}$ formula $\psi(X)$. Therefore,  $\mathsf{RCA}_0 + \forall X\;\varphi(X)$ is $\mathbf{\Pi}^0_\infty$ conservative over $\mathsf{I\Sigma}_1(X)+\varphi(X)$. 
\end{proof}

\begin{question} Combining Theorem \ref{RCA_0_reflection} and Lemma \ref{closure 2} it is easy to observe that the restriction of the order $\prec_{\boldsymbol\Pi^0_3}$ to $\boldsymbol \Pi^0_3$-sound r.e.\  extensions of $\mathsf{I\Sigma}_1(X)$ that are closed under the $\boldsymbol\Delta^0_1$-substitution rule is well-founded. Could we drop the condition on closure under the $\boldsymbol\Delta^0_1$-substitution rule? For which $n$ is the restriction of the order $\prec_{\boldsymbol\Pi^0_n}$ to $\boldsymbol \Pi^0_n$-sound r.e.  extensions of $\mathsf{I\Sigma}_1(X)$ well-founded? 
\end{question}

\begin{remark}\label{sigma1equivalence}
We recall that $\mathsf{I\Sigma_1}\equiv\mathsf{EA}^+ + \mathsf{RFN}_{\Pi_3}(\mathsf{EA}^+)$. See, e.g., \cite{beklemishev2004provability}. The same argument could be used to show that $\mathsf{I\Sigma_1}(X)\equiv\mathsf{EA}^+(X) + \mathsf{RFN}_{\boldsymbol\Pi^0_3}(\mathsf{EA}^+(X))$.
\end{remark}

\begin{lemma}
\label{closure 1}
$(\mathsf{EA}^+)$ If $\mathsf{EA}^+$ proves ``$T\sqsupseteq\mathsf{I\Sigma}_1(X)$ is closed under the $\mathbf{\Delta}^0_1$ substitution rule,'' then $\mathsf{EA}^+(X)+\mathsf{RFN}_{\mathbf{\Pi^0_3}}(T)$ is closed under the $\mathbf{\Delta}^0_1$ substitution rule.
\end{lemma}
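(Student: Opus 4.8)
The plan is to verify closure under the $\mathbf{\Delta}^0_1$ substitution rule one axiom at a time, exploiting the fact that substituting a fixed $\mathbf{\Delta}^0_1$ class for the set variable commutes with the logical rules. Write $S := \mathsf{EA}^+(X) + \mathsf{RFN}_{\mathbf{\Pi^0_3}}(T)$ and, for codes $\theta_1\in\mathbf{\Pi}^0_1$, $\theta_2\in\mathbf{\Sigma}^0_1$, let $A$ abbreviate the class $\{x : \mathsf{Tr}_{\mathbf{\Pi}^0_1}(\theta_1(x))\}$ and let $\theta_1\equiv\theta_2$ abbreviate the antecedent $\forall y(\mathsf{Tr}_{\mathbf{\Pi}^0_1}(\theta_1(y))\leftrightarrow \mathsf{Tr}_{\mathbf{\Sigma}^0_1}(\theta_2(y)))$. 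The operation $\chi\mapsto\chi[A]$ is elementary and sends a logical derivation to a logical derivation; hence, reasoning in $\mathsf{EA}^+$, if $S$ proves $\mathsf{Subst}_{\mathbf{\Delta}^0_1}[\chi]$ for each axiom $\chi$ of $S$, then substituting $A$ throughout a given $S$-proof of $\psi(X)$ and prepending, under the hypothesis $\theta_1\equiv\theta_2$, $S$-proofs of the finitely many instances $\chi[A]$ produces an $S$-proof of $\psi[A]$; universally generalizing over $\theta_1,\theta_2$ yields $S\vdash \mathsf{Subst}_{\mathbf{\Delta}^0_1}[\psi]$. This reduces the lemma to checking $S\vdash \mathsf{Subst}_{\mathbf{\Delta}^0_1}[\chi]$ for the two kinds of axiom of $S$.

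First I would dispose of the axioms of $\mathsf{EA}^+(X)$. Since $T\sqsupseteq \mathsf{I\Sigma}_1(X)\sqsupseteq \mathsf{EA}^+(X)$, monotonicity of reflection gives $\mathsf{RFN}_{\mathbf{\Pi^0_3}}(T)\vdash\mathsf{RFN}_{\mathbf{\Pi^0_3}}(\mathsf{EA}^+(X))$, so by Remark \ref{sigma1equivalence} we get $S\sqsupseteq \mathsf{EA}^+(X)+\mathsf{RFN}_{\mathbf{\Pi^0_3}}(\mathsf{EA}^+(X))\equiv \mathsf{I\Sigma}_1(X)$. As $\mathsf{I\Sigma}_1(X)$ is closed under the $\mathbf{\Delta}^0_1$ substitution rule (the $\varphi=\top$ case of Lemma \ref{closure 2}, i.e.\ the $\omega$-interpretation conservativity of $\mathsf{RCA}_0$ over $\mathsf{I\Sigma}_1(X)$) and proves each $\mathsf{EA}^+(X)$ axiom $\chi$, it proves $\mathsf{Subst}_{\mathbf{\Delta}^0_1}[\chi]$, and hence so does $S$. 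Note this step uses only $S\sqsupseteq \mathsf{I\Sigma}_1(X)$ and not the hypothesis on $T$; routing through $\mathsf{I\Sigma}_1(X)$ is essential precisely because $S$ need not extend $T$.

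The heart of the argument is the axiom $\mathsf{RFN}_{\mathbf{\Pi^0_3}}(T)$, which (using the $\mathbf{\Pi}^0_3$ truth predicate over $\mathsf{EA}^+(X)$) I take in the single-sentence form $\forall\varphi\in\mathbf{\Pi}^0_3(\mathsf{Pr}_T(\varphi)\to \mathsf{Tr}_{\mathbf{\Pi}^0_3}(\varphi))$. I would reason in $S$ under the assumption $\theta_1\equiv\theta_2$ and establish $\mathsf{RFN}_{\mathbf{\Pi^0_3}}(T)[A]$. Fix $\varphi\in\mathbf{\Pi}^0_3$ with $\mathsf{Pr}_T(\varphi)$. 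The hypothesis that $T$ is closed under the substitution rule yields $\mathsf{Pr}_T(\mathsf{Subst}_{\mathbf{\Delta}^0_1}[\varphi])$; crucially, $\mathsf{Subst}_{\mathbf{\Delta}^0_1}[\varphi]$ is again, provably over $\mathsf{I\Sigma}_1(X)\sqsubseteq S$, a $\mathbf{\Pi}^0_3$ formula, so the genuine reflection principle $\mathsf{RFN}_{\mathbf{\Pi^0_3}}(T)$ applies to it and delivers its truth $\mathsf{Tr}_{\mathbf{\Pi}^0_3}(\mathsf{Subst}_{\mathbf{\Delta}^0_1}[\varphi])$ at the ambient $X$. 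Instantiating the two leading quantifiers of $\mathsf{Subst}_{\mathbf{\Delta}^0_1}[\varphi]$ at our $\theta_1,\theta_2$ and discharging the antecedent with $\theta_1\equiv\theta_2$ gives that $\varphi[A]$ holds at $X$, which is exactly $\mathsf{Tr}_{\mathbf{\Pi}^0_3}(\varphi)[A]$. Hence $\mathsf{RFN}_{\mathbf{\Pi^0_3}}(T)[A]$ holds under $\theta_1\equiv\theta_2$, and generalizing gives $S\vdash\mathsf{Subst}_{\mathbf{\Delta}^0_1}[\mathsf{RFN}_{\mathbf{\Pi^0_3}}(T)]$, completing the reduction.

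I expect the main obstacle to be two bookkeeping points that must be discharged in $\mathsf{EA}^+$. The first is the claim that $\mathsf{Subst}_{\mathbf{\Delta}^0_1}[\varphi]$ is provably $\mathbf{\Pi}^0_3$ when $\varphi\in\mathbf{\Pi}^0_3$: this rests on the absorption of a $\mathbf{\Delta}^0_1$ class through the hierarchy, since bounded quantifiers over $\mathbf{\Delta}^0_1$ matrices stay $\mathbf{\Delta}^0_1$ by the $\mathsf{B}\mathbf{\Sigma}^0_1$ collection available in $\mathsf{I\Sigma}_1(X)$, so substituting $A$ into the $\Delta_0(X)$ matrix of $\varphi$ preserves the $\forall\exists\forall$ shape, while the extra $\forall\theta_1\forall\theta_2$ and the $\mathbf{\Pi}^0_2$ antecedent are absorbed into $\mathbf{\Pi}^0_3$. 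The second is the identification of $\mathsf{Tr}_{\mathbf{\Pi}^0_3}(\varphi)[A]$ with ``$\varphi$ is true when its set variable is interpreted as $A$'', i.e.\ that substituting $A$ into the truth predicate agrees with evaluating the truth predicate with set membership read off $A$; this follows from the Tarski biconditionals for $\mathsf{Tr}_{\mathbf{\Pi}^0_3}$ and is provable in $\mathsf{EA}^+(X)$. Once these are in hand, the substitution-invariance of derivations used in the opening reduction is purely elementary and formalizes without appeal to induction beyond what $\mathsf{EA}^+$ affords.
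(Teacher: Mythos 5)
Your proposal is correct and takes essentially the same route as the paper's proof: both absorb the $\mathsf{EA}^+(X)$ axioms by noting $S\sqsupseteq\mathsf{I\Sigma}_1(X)$ (Remark \ref{sigma1equivalence} plus Lemma \ref{closure 2}), and both handle the reflection axiom by combining the internalized closure of $T$ under the rule---yielding $\mathsf{Pr}_T(\mathsf{Subst}_{\mathbf{\Delta}^0_1}[\,\cdot\,])$---with an application of the genuine principle $\mathsf{RFN}_{\mathbf{\Pi}^0_3}(T)$, justified by the observation that the substituted formula is provably equivalent to a $\mathbf{\Pi}^0_3$ formula. The only (cosmetic) difference is that the paper substitutes into the fixed formula $\mathsf{Tr}_{\mathbf{\Pi}^0_3}(\varphi)$ with $\varphi$ a free number variable, so the complexity bookkeeping you defer to your last paragraph is done once, externally, on a single standard formula, rather than internally for each (possibly nonstandard) $\varphi$.
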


\begin{proof}
Suppose that $\mathsf{EA}^+$ proves ``$T\sqsupseteq\mathsf{I\Sigma}_1(X)$ is closed under the $\mathbf{\Delta}^0_1$ substitution rule.'' Let us use the name $U$ for the theory $\mathsf{EA}^+(X) + \mathsf{RFN}_{\mathbf{\Pi}^0_3}(T)$. We want to show that $U$ is closed under the $\mathbf{\Delta}^0_1$ substitution rule. Note that, by Remark \ref{sigma1equivalence}, $U$ contains $\mathsf{I\Sigma_1}(X)$. That is, $U\equiv\mathsf{I\Sigma_1}(X)+\mathsf{RFN}_{\mathbf{\Pi}^0_3}(T)$. Over $\mathsf{EA}(X)$, the reflection schema $\mathsf{RFN}_{\boldsymbol\Pi^0_3}(T)$ is equivalent to  
$$\forall \varphi\in \mathbf{\Pi}^0_3 \Big(\mathsf{Pr}_T\big(\mathsf{Tr}_{\mathbf{\Pi}^0_3}(\varphi)\big)\rightarrow\mathsf{Tr}_{\mathbf{\Pi}^0_3}(\varphi)\Big).$$
Thus, by Lemma \ref{closure 2}, it suffices to show that $U$ proves  
$$\mathsf{Subst}_{\mathbf{\Delta}^0_1}[\forall \varphi\in \mathbf{\Pi}^0_3 \Big(\mathsf{Pr}_T\big(\mathsf{Tr}_{\mathbf{\Pi}^0_3}(\varphi)\big)\rightarrow\mathsf{Tr}_{\mathbf{\Pi}^0_3}(\varphi)\Big)].$$
But since the formula $\mathsf{Pr}_T(\mathsf{Tr}_{\mathbf{\Pi}^0_3}(\varphi))$  doesn't contain occurences of $X$, we could push $\mathsf{Subst}_{\mathbf{\Delta}^0_1}$ under the quantifier, i.e., it will be sufficient to show that 
 $$U \vdash \forall \varphi\in \mathbf{\Pi}^0_3 \Big(\mathsf{Pr}_T\big(\mathsf{Tr}_{\mathbf{\Pi}^0_3}(\varphi)\big)\rightarrow\mathsf{Subst}_{\mathbf{\Delta}^0_1}[\mathsf{Tr}_{\mathbf{\Pi}^0_3}(\varphi)]\Big).$$

Observe that $\mathsf{Subst}_{\mathbf{\Delta}^0_1}[\mathsf{Tr}_{\mathbf{\Pi}^0_3}(\varphi)]$ is equivalent to a $\mathbf{\Pi}^0_3$ formula over $\mathsf{EA}(X)$. We reason as follows.
\begin{flalign*}
U &\vdash \textrm{ ``$T$ is closed under the $\mathbf{\Delta}^0_1$ substitution rule,'' by assumption.}\\
U &\vdash \forall\varphi\in \mathbf{\Pi}^0_3\Big(\mathsf{Pr}_T\big(\mathsf{Tr}_{\mathbf{\Pi}^0_3}(\varphi)\big)\rightarrow\mathsf{Pr}_T(  \mathsf{Subst}_{\mathbf{\Delta}^0_1}[\mathsf{Tr}_{\mathbf{\Pi}^0_3}(\varphi)])\Big) \\
U &\vdash \forall \varphi\in \mathbf{\Pi}^0_3 \Big(\mathsf{Pr}_T\big(\mathsf{Tr}_{\mathbf{\Pi}^0_3}(\varphi)\big)\rightarrow \mathsf{Subst}_{\mathbf{\Delta}^0_1}[\mathsf{Tr}_{\mathbf{\Pi}^0_3}(\varphi)]\Big) \textrm{ by $\mathsf{RFN}_{\mathbf{\Pi}^0_3}(T)$}. 
\end{flalign*}
This concludes the proof of the lemma.
\end{proof}

With these lemmas on board we are ready for the proof of the main theorem of this subsection.

\begin{theorem}
\label{rca theorem} $(\mathsf{EA}^+)$ For any ordinal notation $\alpha$,
$$\mathbf{R}^\alpha_{\Pi^1_1(\Pi^0_3)}(\mathsf{RCA}_0) \equiv_{\boldsymbol\Pi^0_\infty} \mathbf{R}^{1+\alpha}_{\mathbf{\Pi}^0_3}(\mathsf{EA}^+(X)).$$
\end{theorem}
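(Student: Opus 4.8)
The plan is to argue by reflexive induction on $\alpha$ inside $\mathsf{EA}^+$ (Theorem \ref{reflexive induction}), abbreviating $V_\alpha := \mathbf{R}^\alpha_{\Pi^1_1(\Pi^0_3)}(\mathsf{RCA}_0)$ and $U_\alpha := \mathbf{R}^{1+\alpha}_{\mathbf{\Pi}^0_3}(\mathsf{EA}^+(X))$, and writing $\chi(\alpha)$ for the statement $V_\alpha \equiv_{\boldsymbol\Pi^0_\infty} U_\alpha$. First I would rewrite the right-hand side conveniently: since $\mathbf{R}^1_{\mathbf{\Pi}^0_3}(\mathsf{EA}^+(X)) = \mathsf{EA}^+(X) + \mathsf{RFN}_{\mathbf{\Pi}^0_3}(\mathsf{EA}^+(X)) \equiv \mathsf{I\Sigma}_1(X)$ by Remark \ref{sigma1equivalence}, unwinding the progression and absorbing the first reflection step gives, provably in $\mathsf{EA}^+$, $U_\alpha \equiv \mathsf{I\Sigma}_1(X) + \bigcup_{\beta\prec\alpha}\mathsf{RFN}_{\mathbf{\Pi}^0_3}(U_\beta)$. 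This also explains the $1+\alpha$ shift: the base case $\alpha=0$ is exactly $V_0 = \mathsf{RCA}_0 \equiv_{\boldsymbol\Pi^0_\infty} \mathsf{I\Sigma}_1(X) = U_0$, which is the instance $\varphi = \top$ of Lemma \ref{closure 2}.

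The second ingredient is that each $U_\alpha$ is, provably in $\mathsf{EA}^+$, closed under the $\mathbf{\Delta}^0_1$ substitution rule. I would establish this by a separate reflexive induction: a derivation in $U_\alpha$ uses only finitely many reflection instances $\mathsf{RFN}_{\mathbf{\Pi}^0_3}(U_{\beta_i})$ with $\beta_i \prec \alpha$, and since the progression is monotone, $\mathsf{RFN}_{\mathbf{\Pi}^0_3}(U_{\beta^*})$ with $\beta^* = \max_i \beta_i$ already implies each of them; thus $U_\alpha$ is the directed union of the theories $\mathsf{EA}^+(X) + \mathsf{RFN}_{\mathbf{\Pi}^0_3}(U_\beta)$, each of which is closed under the substitution rule by Lemma \ref{closure 1} together with the boxed induction hypothesis that $U_\beta$ is closed under the rule. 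Closure under the substitution rule is inherited by directed unions (a proof of $\psi(X)$ lives in one member, which then proves $\mathsf{Subst}_{\mathbf{\Delta}^0_1}[\psi]$), so $U_\alpha$ is closed under it. Running the $(3)\Rightarrow(1)$ argument of Lemma \ref{closure 2} through the $\omega$-interpretation of $\mathsf{RCA}_0$ into $\mathsf{I\Sigma}_1(X)$, which goes through verbatim with a theory in place of a single sentence, then yields $U_\alpha \equiv_{\boldsymbol\Pi^0_\infty} \widehat{U}_\alpha$, where $\widehat{U}_\alpha := \mathsf{RCA}_0 + \{\forall X\,\varphi : \varphi \text{ an axiom of } U_\alpha\}$ is the second-order theory obtained by taking universal closures.

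The heart of the step is to match $V_\alpha$ with $\widehat{U}_\alpha$. The reflexive induction hypothesis supplies, under $\mathsf{Pr}_{\mathsf{EA}^+}$, that $\chi(\beta)$ holds for $\beta \prec \alpha$, which unpacks to the provability-predicate equivalence $\mathsf{Pr}_{V_\beta}(\forall X\,\mathsf{F}) \leftrightarrow \mathsf{Pr}_{U_\beta}(\mathsf{F})$ for every $\mathsf{F} \in \mathbf{\Pi}^0_3$, uniformly in any free number parameters. Since a $\Pi^1_1(\Pi^0_3)$ sentence is precisely a formula $\forall X\,\mathsf{F}$ with $\mathsf{F} \in \mathbf{\Pi}^0_3$, the instance $\mathsf{Pr}_{V_\beta}(\forall X\,\mathsf{F}) \to \forall X\,\mathsf{F}$ of $\mathsf{RFN}_{\Pi^1_1(\Pi^0_3)}(V_\beta)$ has the same consequent as, and an $\mathsf{EA}^+$-provably equivalent antecedent to, the universal closure $\mathsf{Pr}_{U_\beta}(\mathsf{F}) \to \forall X\,\mathsf{F}$ of the instance $\mathsf{Pr}_{U_\beta}(\mathsf{F}) \to \mathsf{F}(X)$ of $\mathsf{RFN}_{\mathbf{\Pi}^0_3}(U_\beta)$, where we use that $\mathsf{Pr}_{U_\beta}(\mathsf{F})$ contains no occurrence of $X$. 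Hence the axiomatizations of $V_\alpha$ and $\widehat{U}_\alpha$ are $\mathsf{EA}^+$-provably equivalent, so $V_\alpha \equiv \widehat{U}_\alpha$ and therefore $V_\alpha \equiv_{\boldsymbol\Pi^0_\infty} U_\alpha$, closing the induction.

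The main obstacle is the formalization inside $\mathsf{EA}^+$ and the disciplined use of reflexive induction. The hypothesis $\chi(\beta)$ is available only under the box $\mathsf{Pr}_{\mathsf{EA}^+}(\forall\beta\prec\alpha\,\chi(\beta))$, so the matching of reflection axioms cannot be cited as a plain theorem; instead one splices the boxed $\mathsf{EA}^+$-proofs of the equivalences $\mathsf{Pr}_{V_\beta}(\forall X\,\mathsf{F}) \leftrightarrow \mathsf{Pr}_{U_\beta}(\mathsf{F})$ into the constructed derivations, which is exactly what the reflexive-induction format licenses. The remaining work — checking that each derivation mentions only finitely many $\beta \prec \alpha$, treating free number parameters uniformly, and verifying that the $\omega$-interpretation argument relativizes to the infinite theory $U_\alpha$ — is routine, but must be carried out with the smooth proof predicate so that all the relevant syntactic operations remain $\Sigma_1$ (indeed $\Delta_0(2^x_y)$) and provable in $\mathsf{EA}^+$.
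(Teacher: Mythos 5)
Your proposal is correct and follows essentially the same route as the paper's proof: reflexive induction in $\mathsf{EA}^+$, the unwinding $\mathbf{R}^{1+\alpha}_{\mathbf{\Pi}^0_3}(\mathsf{EA}^+(X)) \equiv \mathsf{I\Sigma}_1(X)+\bigcup_{\beta\prec\alpha}\mathsf{RFN}_{\mathbf{\Pi}^0_3}\big(\mathbf{R}^{1+\beta}_{\mathbf{\Pi}^0_3}(\mathsf{EA}^+(X))\big)$ via Remark \ref{sigma1equivalence}, and the transfer to $\mathsf{RCA}_0$ through closure under the $\mathbf{\Delta}^0_1$ substitution rule (Lemmas \ref{closure 2} and \ref{closure 1}) together with the directed-union/compactness observation and the splicing of the boxed induction hypothesis into derivations over $\mathsf{RCA}_0$. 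The only deviations are organizational and harmless: you obtain $\mathsf{EA}^+$-provable closure of the iterates by a standalone reflexive induction driven by Lemma \ref{closure 1}, where the paper instead extracts it from the main reflexive induction hypothesis via the conservativity-to-closure direction of Lemma \ref{closure 2}, and you package the final comparison through the universally closed theory $\widehat{U}_\alpha$ rather than the paper's chain of equivalences ending in $\mathsf{RCA}_0+\bigcup_{\beta\prec\alpha}\mathsf{RFN}_{\mathbf{\Pi}^0_3}\big(\mathbf{R}^{1+\beta}_{\mathbf{\Pi}^0_3}(\mathsf{EA}^+(X))\big)$.
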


\begin{proof}
We prove the claim by reflexive induction. We reason in $\mathsf{EA}^+$ and assume the reflexive induction hypothesis: provably in $\mathsf{EA}^+$, for any $\beta\prec\alpha$, $$\mathbf{R}^\beta_{\Pi^1_1(\Pi^0_3)}(\mathsf{RCA}_0) \equiv_{\boldsymbol\Pi^0_\infty} \mathbf{R}^{1+\beta}_{\mathbf{\Pi}^0_3}(\mathsf{EA}^+(X)).$$  Of course, since $\mathsf{RCA}_0$ contains $\mathsf{EA}^+$, this also implies that, 
$$\mathsf{RCA}_0 \vdash \forall \beta\prec \alpha \Big( \mathbf{R}^\beta_{\Pi^1_1(\Pi^0_3)}(\mathsf{RCA}_0) \equiv_{\boldsymbol\Pi^0_\infty} \mathbf{R}^{1+\beta}_{\mathbf{\Pi}^0_3}\big(\mathsf{EA}^+(X)\big) \Big) $$

If $\mathsf{RCA}_0$ proves mutual $\Gamma$ conservation of two theories $T$ and $U$, then $\mathsf{RFN}_{\Gamma}(T)$ and $\mathsf{RFN}_{\Gamma}(U)$ are equivalent over $\mathsf{RCA}_0$. Thus, we immediately infer
\begin{equation}\label{mutualconservation}
\mathsf{RCA}_0 \vdash \forall \beta\prec \alpha \Big( \mathsf{RFN}_{\boldsymbol\Pi^0_3} \big(\mathbf{R}^\beta_{\Pi^1_1(\Pi^0_3)}(\mathsf{RCA}_0) \big) \leftrightarrow  \mathsf{RFN}_{\boldsymbol\Pi^0_3} \big(\mathbf{R}^{1+\beta}_{\mathbf{\Pi}^0_3}\big(\mathsf{EA}^+(X)\big)\big) \Big)
\end{equation}

We now reason as follows.
\begin{flalign*}
\mathbf{R}^\alpha_{\Pi^1_1(\Pi^0_3)}(\mathsf{RCA}_0) &\equiv \mathsf{RCA}_0 + \bigcup_{\beta\prec\alpha} \mathsf{RFN}_{\Pi^1_1(\Pi^0_3)}\big(\mathbf{R}^\beta_{\Pi^1_1(\Pi^0_3)}(\mathsf{RCA}_0)\big) \textrm{ by definition.}\\
&\equiv_{\boldsymbol\Pi^0_\infty} \mathsf{RCA}_0 + \bigcup_{\beta\prec\alpha} \mathsf{RFN}_{\boldsymbol\Pi^0_3}\big(\mathbf{R}^\beta_{\Pi^1_1(\Pi^0_3)}(\mathsf{RCA}_0)\big)\\
& \equiv \mathsf{RCA}_0 + \bigcup_{\beta\prec\alpha} \mathsf{RFN}_{\mathbf{\Pi}^0_3}\Big(\mathbf{R}^{1+\beta}_{\mathbf{\Pi}^0_3}\big(\mathsf{EA}^+(X)\big)\Big) \textrm{ by (\ref{mutualconservation}).} 
\end{flalign*}

%We briefly note that $\mathsf{RCA}_0$ proves uniform $\Pi_2$ reflection of $\mathsf{EA}^+$. Therefore, from the assumption that $\mathsf{EA}^+$ \emph{proves} $$\mathbf{R}^\beta_{\Pi^1_1(\Pi^0_3)}(\mathsf{RCA}_0) \equiv_{\boldsymbol\Pi^0_\infty} \mathbf{R}^{1+\beta}_{\mathbf{\Pi}^0_3}(\mathsf{EA}^+(X)),$$ we may infer that $\mathsf{RCA}_0$ \emph{in fact} proves the $\boldsymbol\Pi^0_\infty$ equivalence of $\mathbf{R}^\beta_{\Pi^1_1(\Pi^0_3)}(\mathsf{RCA}_0)$ and $\mathbf{R}^{1+\beta}_{\mathbf{\Pi}^0_3}(\mathsf{EA}^+(X))$.

%Note also that in the second step we are not only changing the theory on the inside of the parentheses, but that we are also moving from $\Pi^1_1(\Pi^0_3)$ reflection to $\mathbf{\Pi}^0_3$ reflection.

Since $\mathbf{R}^1_{\mathbf{\Pi}^0_3}(\mathsf{EA}^+(X))\equiv\mathsf{I\Sigma}_1(X)$, we are able to show that $$\mathbf{R}^{1+\alpha}_{\mathbf{\Pi^0_3}}\big(\mathsf{EA}^+(X)\big)\equiv \mathsf{I\Sigma}_1(X)+\bigcup_{\beta\prec\alpha} \mathsf{RFN}_{\mathbf{\Pi}^0_3}\Big(\mathbf{R}^{1+\beta}_{\mathbf{\Pi^0_3}}\big(\mathsf{EA}^+(X)\big)\Big),$$ by the following argument:
\begin{flalign*}
\mathbf{R}^{1+\alpha}_{\mathbf{\Pi}^0_3}\big(\mathsf{EA}^+(X)\big) &\equiv \mathbf{R}^1_{\mathbf{\Pi}^0_3}\big(\mathsf{EA}^+(X)\big) + \mathbf{R}^{1+\alpha}_{\mathbf{\Pi}^0_3}\big(\mathsf{EA}^+(X)\big) \textrm{ since $1\leq 1+\alpha$.}\\
&\equiv  \mathsf{I\Sigma}_1(X)  + \mathbf{R}^{1+\alpha}_{\mathbf{\Pi}^0_3}\big(\mathsf{EA}^+(X)\big) \textrm{ since $\mathbf{R}^1_{\mathbf{\Pi}^0_3}\big(\mathsf{EA}^+(X)\big)\equiv\mathsf{I\Sigma}_1(X)$.}\\
&\equiv \mathsf{I\Sigma}_1(X)+\bigcup_{\beta\prec\alpha} \mathsf{RFN}_{\mathbf{\Pi}^0_3}\Big(\mathbf{R}^{1+\beta}_{\mathbf{\Pi^0_3}}\big(\mathsf{EA}^+(X)\big)\Big) \textrm{ by definition.}
\end{flalign*}

Hence in order to finish the proof of the lemma it will be enough to show that
$$\mathsf{I\Sigma}_1(X)+\bigcup_{\beta\prec\alpha} \mathsf{RFN}_{\mathbf{\Pi}^0_3}(\mathbf{R}^{1+\beta}_{\mathbf{\Pi^0_3}}(\mathsf{EA}^+(X)))\equiv_{\mathbf{\Pi}^0_\infty} \mathsf{RCA}_0 + \bigcup_{\beta\prec\alpha} \mathsf{RFN}_{\mathbf{\Pi}^0_3}(\mathbf{R}^{1+\beta}_{\mathbf{\Pi}^0_3}(\mathsf{EA}^+(X))),$$

which, by Lemma \ref{closure 2}, can be achieved by proving that $$\mathsf{I\Sigma}_1(X)+\bigcup_{\beta\prec\alpha} \mathsf{RFN}_{\mathbf{\Pi}^0_3}(\mathbf{R}^{1+\beta}_{\mathbf{\Pi^0_3}}(\mathsf{EA}^+(X)))$$ is closed under the $\mathbf{\Delta}^0_1$ substitution rule. We will prove this closedness in the rest of the proof. 

By a usual compactness argument, it will be enough to show that $\mathsf{I\Sigma}_1(X)$ is closed under the $\mathbf{\Delta}^0_1$ substitution rule and that for each $\beta\prec\alpha$ the theories $\mathsf{I\Sigma}_1(X)+ \mathsf{RFN}_{\mathbf{\Pi}^0_3}(\mathbf{R}^{1+\beta}_{\mathbf{\Pi^0_3}}(\mathsf{EA}^+(X)))$ are closed under the $\mathbf{\Delta}^0_1$ substitution rule. Closure of $\mathsf{I\Sigma}_1(X)$ under the $\mathbf{\Delta}^0_1$ substitution rule follows directly from Lemma \ref{closure 2}.

By Lemma \ref{closure 2}, we infer that, for each $\beta\prec\alpha$, $\mathbf{R}^{1+\beta}_{\mathbf{\Pi^0_3}}(\mathsf{EA}^+(X))$ is $\mathsf{EA}^+$ provably closed under the $\mathbf{\Delta}^0_1$ substitution rule. Thus, by Lemma \ref{closure 1}, we infer that  for each $\beta\prec\alpha$, $$\mathsf{EA}^+(X) + \mathsf{RFN}_{\mathbf{\Pi}^0_3}(\mathbf{R}^{1+\beta}_{\mathbf{\Pi^0_3}}(\mathsf{EA}^+(X)))$$ is closed under the $\mathbf{\Delta}^0_1$ substitution rule. Since $\mathsf{EA}^+(X) + \mathsf{RFN}_{\mathbf{\Pi}^0_3}(\mathbf{R}^{1+\beta}_{\mathbf{\Pi^0_3}}(\mathsf{EA}^+(X)))\sqsupseteq \mathsf{I\Sigma}_1(X)$, the theory $\mathsf{I\Sigma}_1(X)+ \mathsf{RFN}_{\mathbf{\Pi}^0_3}(\mathbf{R}^{1+\beta}_{\mathbf{\Pi^0_3}}(\mathsf{EA}^+(X)))$ is closed under the $\mathbf{\Delta}^0_1$ substitution rule. This concludes the proof of the lemma.
\end{proof}

\subsection{Iterated reflection and arithmetical comprehension}

In this subsection we investigate the relationship between iterated $\Pi^1_1$ reflection over $\mathsf{ACA}_0$ and iterated $\Pi^1_1(\Pi^0_3)$ reflection over $\mathsf{RCA}_0$. The main theorem of this subsection is that $\mathbf{R}^\alpha_{\Pi^1_1}(\mathsf{ACA}_0)$ is $\Pi^1_1(\Pi^0_3)$ conservative over $\mathbf{R}^{\varepsilon_\alpha}_{\Pi^1_1(\Pi^0_3)}(\mathsf{RCA}_0)$. The proof of the main theorem of this subsection is similar to the proof of Theorem \ref{rca theorem}. For our first step towards this result, we establish a conservation theorem relating extensions of $\mathsf{ACA}_0$ with extensions of $\mathsf{PA}(X)$.

%The proof theory of $\mathsf{PA}(X)$ is more thoroughly developed than that of $\mathsf{ACA}_0$. Nevertheless, $\mathsf{ACA}_0$ is conservative over $\mathsf{PA}(X)$ with respect to pseudo $\Pi^1_1$ formulas, so it is often possible to work in $\mathsf{PA}(X)$ and transfer results to $\mathsf{ACA}_0$. We are interested not only in $\mathsf{ACA}_0$ but also in extensions of $\mathsf{ACA}_0$ by $\Pi^1_1$ formulas. It will be useful to work with the pseudo $\Pi^1_1$ pendants of these theories. That is, instead of working with theories of the form $\mathsf{ACA}_0+\forall X\;\varphi(X)$, we will often work with theories of the form $\mathsf{PA}(X) +\{ \varphi[\theta]:\theta \textrm{ is pseudo $\Pi^1_1$} \}.$ We need to see that the former is conservative over the latter.

There is a standard semantic argument that $\mathsf{ACA}_0$ is conservative over $\mathsf{PA}$ (see, e.g., \cite[Section~IX.1]{simpson2009subsystems}). We will present a version of this argument for extensions of $\mathsf{ACA}_0$ by $\Pi^1_1$ sentences. Moreover we ensure that this conservation result is provable in  $\mathsf{ACA}_0$. Before presenting the argument, we will say a bit about how we will formalize model theory within $\mathsf{ACA}_0$ for the purposes of our argument.

  We will reason in $\mathsf{ACA}_0$ and use the formalization of model theory from \cite[Section~II.8]{simpson2009subsystems}. Recall that according to formalization from \cite[Section~II.8]{simpson2009subsystems} a model $\mathfrak{M}$ essentially is a set that encodes the domain of $\mathfrak{M}$ (which is by necessity a subset of $\mathbb{N}$) and the full satisfaction relation for $\mathfrak{M}$ (the latter essentially is the elementary diagram of the model $\mathfrak{M}$). Note that if one would require $\mathfrak{M}$ contain information only about the satisfaction of atomic formulas, rather then all formulas, the resulting notion of a model would be weaker. This is due to the fact that in $\mathsf{ACA}_0$, unlike in stronger theories, it is not always possible to recover the elementary diagram of a model from its atomic diagram.

  Due to this limitation, in $\mathsf{ACA}_0$ it is sometimes (including in our proof) useful to employ weak models \cite[Definition~II.8.9]{simpson2009subsystems}. A \emph{weak model} $\mathfrak{M}$ of a theory $T$ is a set that encodes the domain of $\mathfrak{M}$ and a partial satisfaction relation for $\mathfrak{M}$ that is defined only on Boolean combinations of subformulas of formulas used in axioms of $T$ such that all the axioms of $T$ are according to this satisfaction relation. The key fact that we use is that  $\mathsf{ACA}_0$ proves that any theory that has a weak model is consistent \cite[Theorem~II.8.10]{simpson2009subsystems}.

\begin{lemma}
\label{conservativity}
$(\mathsf{ACA}_0)$ Let $\varphi(X), \psi(X)$ be $\mathbf{\Pi}^0_\infty$. If $\mathsf{ACA}_0+\forall X\;\varphi(X) \vdash \forall X\;\psi(X)$ then $\mathsf{PA}(X) + \{ \varphi[\theta] : \theta(x) \textrm{ is } \boldsymbol\Pi^0_\infty  \} \vdash \psi(X)$, where $\theta$ could contain additional variables.
\end{lemma}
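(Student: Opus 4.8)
The plan is to prove the statement by a refutation, reasoning throughout in $\mathsf{ACA}_0$. I would assume both the hypothesis that $\mathsf{ACA}_0+\forall X\,\varphi(X)\vdash\forall X\,\psi(X)$ and, toward a contradiction, that $\mathsf{PA}(X)+\{\varphi[\theta]:\theta\text{ is }\boldsymbol\Pi^0_\infty\}\nvdash\psi(X)$, and derive an inconsistency. The engine of the argument is the weak-model machinery of \cite[II.8]{simpson2009subsystems}: since $\mathsf{ACA}_0$ proves that a theory possessing a weak model is consistent \cite[Theorem~II.8.10]{simpson2009subsystems}, it suffices to manufacture a weak model of a suitable \emph{finite} fragment of $\mathsf{ACA}_0+\forall X\,\varphi(X)+\exists X\,\neg\psi(X)$, which will contradict the assumed proof. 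It is essential to work with a finite fragment, since $\mathsf{ACA}_0$ cannot in general build a full second-order satisfaction relation for all of $\mathsf{ACA}_0$.

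From the second assumption, the first-order theory $T_0:=\mathsf{PA}(X)+\{\varphi[\theta]\}+\neg\psi(X)$, in the language with the distinguished unary predicate $X$, is consistent; hence by the completeness theorem, available already in $\mathsf{WKL}_0$ and a fortiori in $\mathsf{ACA}_0$, there is a model $\mathfrak{M}$ of $T_0$ equipped with a full satisfaction relation in the sense of \cite[II.8]{simpson2009subsystems}. I would then let $\mathcal{S}$ be the collection of subsets of the domain of $\mathfrak{M}$ that are $\mathfrak{M}$-definable by a $\boldsymbol\Pi^0_\infty$ formula (in the language with $X$) using number parameters, and claim that the second-order structure $(\mathfrak{M},\mathcal{S})$ is the desired weak model. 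Note in particular that $X^{\mathfrak{M}}\in\mathcal{S}$, since it is defined by the formula $t\in X$.

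The hypothesis supplies a proof $d$ of $\forall X\,\psi(X)$ from $\forall X\,\varphi(X)$ together with finitely many axioms $\Gamma$ of $\mathsf{ACA}_0$, so I need only equip $(\mathfrak{M},\mathcal{S})$ with a partial satisfaction relation on the Boolean combinations of subformulas of $\Gamma\cup\{\forall X\,\varphi(X),\,\exists X\,\neg\psi(X)\}$. The crucial observation is that, since set variables in $(\mathfrak{M},\mathcal{S})$ range over $\mathfrak{M}$-definable sets, each of the boundedly many set quantifiers occurring in these subformulas can be replaced by a number quantifier over codes of $\boldsymbol\Pi^0_\infty$ formulas together with their parameters. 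Consequently this partial satisfaction relation is \emph{arithmetically} definable from the satisfaction relation of $\mathfrak{M}$, and hence exists as a set by arithmetical comprehension. The verification that the resulting weak model satisfies the relevant axioms then rests on a routine substitution lemma matching $(\mathfrak{M},\mathcal{S})\vDash\chi(A)$ with $\mathfrak{M}\vDash\chi[\theta]$ whenever $A\in\mathcal{S}$ is defined by $\theta$: the arithmetical comprehension instances in $\Gamma$ hold because substituting the defining formulas of the set parameters turns an arithmetical formula with parameters from $\mathcal{S}$ back into a $\boldsymbol\Pi^0_\infty$ formula; the induction axiom holds because $\mathfrak{M}\vDash\mathsf{PA}(X)$ furnishes induction for all such formulas; $\forall X\,\varphi(X)$ holds because each instance $\varphi[\theta]$ is an axiom of $T_0$; and $\exists X\,\neg\psi(X)$ holds as witnessed by $X^{\mathfrak{M}}$, the substitution lemma identifying $(\mathfrak{M},\mathcal{S})\vDash\neg\psi(X^{\mathfrak{M}})$ with the axiom $\neg\psi(X)$ of $T_0$.

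Having produced this weak model, \cite[Theorem~II.8.10]{simpson2009subsystems} yields the consistency of $\Gamma+\forall X\,\varphi(X)+\exists X\,\neg\psi(X)$, contradicting the fact that $d$ is a proof of $\forall X\,\psi(X)$ from $\Gamma+\forall X\,\varphi(X)$. The main obstacle, and the only part requiring genuine care, is the formalization in $\mathsf{ACA}_0$ of the second-order satisfaction relation for $(\mathfrak{M},\mathcal{S})$: one must check that restricting to the finitely many subformulas coming from $d$ keeps the quantifier alternation bounded, so that eliminating the set quantifiers in favor of number quantifiers over formula codes and parameters yields a genuinely arithmetical definition to which arithmetical comprehension applies. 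Obtaining the first-order model $\mathfrak{M}$ with a full satisfaction relation from mere consistency, and the substitution lemma, are then standard.
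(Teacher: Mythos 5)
Your proposal is correct and takes essentially the same route as the paper's own proof: both pass to the contrapositive (or its refutation form), obtain by the completeness theorem a first-order model $\mathfrak{M}$ of $\mathsf{PA}(X)+\{\varphi[\theta]:\theta\in\boldsymbol\Pi^0_\infty\}+\neg\psi(X)$ with a full satisfaction relation in the sense of Simpson \cite[Section~II.8]{simpson2009subsystems}, enrich it with the family $\mathcal{S}$ of $\boldsymbol\Pi^0_\infty$-definable sets, and formalize the argument in $\mathsf{ACA}_0$ via weak models, using arithmetical comprehension to build the partial satisfaction relation by trading set quantifiers over $\mathcal{S}$ for number quantifiers over codes of defining formulas and parameters, concluding via \cite[Theorem~II.8.10]{simpson2009subsystems}. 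The only (harmless) difference is bookkeeping: you restrict the partial satisfaction relation to Boolean combinations of subformulas of a finite fragment extracted from the given derivation, whereas the paper extends it to all formulas obtained from $\boldsymbol\Pi^0_\infty$ formulas with at most $n$ set-quantifier alternations for each externally fixed $n$ (with $n=2$ sufficing), which covers every axiom of $\mathsf{ACA}_0+\forall X\,\varphi(X)+\neg\forall X\,\psi(X)$ at once since the comprehension scheme has uniformly bounded set-quantifier complexity.
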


\begin{proof}
We reason in $\mathsf{ACA}_0$.  We denote by $U$ the theory $\mathsf{PA}(X) + \{ \varphi[\theta] : \theta  \textrm{ is } \boldsymbol\Pi^0_\infty\}$. Let us consider any $\psi(X)$ such that $U\nvdash\psi(X)$. To prove the lemma we need to show that $\mathsf{ACA}_0+\forall X\;\varphi(X)\nvdash \forall X \;\psi(X)$.

  There is a model $\mathfrak{M}$ of $U + \neg \psi(X)$. Note that here $X$ is just a unary predicate. We enrich $\mathfrak{M}$ by adding, as the family $\mathcal{S}$ of second-order objects, all the sets defined in  $\mathfrak{M}$ by $\boldsymbol \Pi^0_{\infty}$ formulas that may contain additional parameters from the model.

  Let us first show how we could finish the proof without ensuring that our argument could be formalized in $\mathsf{ACA}_0$ and only then indicate how to carry out the formalization. Indeed, it is easy to see that the second-order structure $(\mathfrak{M},\mathcal{S})$ satisfies $\mathsf{ACA}_0+\forall X\;\varphi(X)$: the presence of the full induction schema in $U$ guarantees that $(\mathfrak{M},\mathcal{S})$ satisfies set induction, our definition of $\mathcal{S}$ guarantees that arithmetical comprehension holds in $(\mathfrak{M},\mathcal{S})$, and the fact that we had axioms $\{ \varphi[\theta] : \theta  \textrm{ is } \boldsymbol\Pi^0_\infty\}$ in $U$ guarantees that $\forall X\;\varphi(X)$ holds in  $(\mathfrak{M},\mathcal{S})$. And since $\psi(X)$ failed in $\mathfrak{M}$, the sentence $\forall X \;\psi(X)$ fails in $(\mathfrak{M},\mathcal{S})$. Therefore, $\mathsf{ACA}_0 + \forall X\;\varphi(X)\nvdash\forall X\;\psi(X)$.

  % Now let us show how to make the formalization of the latter argument in $\mathsf{ACA}_0$. We want to show that we could extend $(\mathfrak{M},\mathcal{S})$ to a weak model of $\mathsf{ACA}_0+\forall X\;\varphi(X)$. We recall that a weak model of a theory $T$ is a structure equipped with a partial truth definition that satisfies Tarski's clauses and covers all the substitutional variants of subformulas of axioms of $T$, see \cite[Definition~II.8.9]{simpson2009subsystems}. But it is easy to see that in this partial truth definition all the formulas are either $\Pi^0_{\infty}$-formulas or should be true in $(\mathfrak{M},\mathcal{S})$. Hence we could construct this truth definition as a trivial extension of the truth definition for $\mathfrak{M}$. Now reasoning the same way as in non-formalized proof we check that we indeed defined a weak model of $\mathsf{ACA}_0 + \forall X\;\varphi(X)+\neg\psi(X)$. Now by \cite[Theorem~II.8]{simpson2009subsystems} theory $\mathsf{ACA}_0 + \forall X\;\varphi(X)+\neg\psi(X)$ is consistent and thus $\mathsf{ACA}_0 + \forall X\;\varphi(X)\nvdash\psi(X)$

   Now let us show how to formalize the latter argument in $\mathsf{ACA}_0$. We want to show that we could extend $(\mathfrak{M},\mathcal{S})$ to a weak model of $\mathsf{ACA}_0+\forall X\;\varphi(X)$.  From the satisfaction relation for $\mathfrak{M}$ we can trivially construct the partial satisfaction relation for $(\mathfrak{M},\mathcal{S})$ that covers all $\Pi^0_{\infty}$ formulas with parameters from $(\mathfrak{M},\mathcal{S})$. And since we are working in $\mathsf{ACA}_0$, using arithmetical comprehension for every (externally) fixed $n$ we could expand the latter partial satisfaction relation to all the formulas constructed from $\Pi^0_{\infty}$ formulas by arbitrary use of propositional connectives and with introduction of at most $n$ quantifier alternations. For $n=2$ this expanded partial satisfaction relation covers all the axioms of $\mathsf{ACA}_0+\forall X\;\varphi(X)+\neg\forall X\; \psi(X)$. Now after we constructed this satisfaction relation we could proceed as in the paragraph above and show that in this partial satisfaction realtion all the axioms of $\mathsf{ACA}_0+\forall X\;\varphi(X)+\neg\forall X\; \psi(X)$ are true. Hence we have a weak model of $\mathsf{ACA}_0+\forall X\;\varphi(X)$. Therefore, $\mathsf{ACA}_0 + \forall X\;\varphi(X)\nvdash \forall X\; \psi(X)$.
\end{proof}

\begin{remark}Although we don't provide a proof here, we note that with some additional care it is possible to establish Lemma \ref{conservativity} in $\mathsf{EA}^+$ by appealing to the $\Pi_2$-conservativity of $\mathsf{WKL}_0^{\star}+\mbox{``super-exponentiation is total''}$ over $\mathsf{EA}^+$, see \cite{simpson1986factorization} for the $\Pi_2$-conservativity of $\mathsf{WKL}_0^{\star}$ over $\mathsf{EA}$. But it isn't possible to prove this result in $\mathsf{EA}$ since $\mathsf{ACA}_0$ enjoys non-elementary speed-up over $\mathsf{PA}$.\end{remark}

\begin{definition}
We say that a pseudo $\Pi^1_1$ theory $T(X)$ is \emph{closed under substitution} if whenever $T\vdash \varphi(X)$ then also $T\vdash \varphi[\theta(x)]$ for any $\boldsymbol\Pi^0_{\infty}$ formula $\theta$. 
\end{definition}

\begin{lemma}\label{pure logic}
If a theory $T$ proves every substitution variant of its own axioms, then $T$ is closed under substitution. 
\end{lemma}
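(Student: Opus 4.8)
The plan is to argue by induction on the length of a $T$-derivation, exploiting the fact that substitution of a formula for the set variable $X$ is a purely syntactic operation that commutes with the apparatus of first-order logic. Fix a $\boldsymbol\Pi^0_\infty$ formula $\theta(x)$, suppose $T\vdash\varphi(X)$, and let $\chi_1,\dots,\chi_k$ be a Hilbert-style $T$-derivation with $\chi_k=\varphi(X)$. I would show that $T\vdash\chi_i[\theta(x)]$ for every $i\le k$; the case $i=k$ is exactly the conclusion of the lemma.

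Before running the induction I would dispose of a bookkeeping issue. Because $\theta$ may carry free number variables, I would first rename, throughout the whole derivation, every bound variable and every variable generalized upon so that none of them occurs among the free variables of $\theta$; since renaming variables in a derivation again yields a derivation, this is harmless. After this preparation the operation $\cdot[\theta(x)]$ introduces no capture and commutes literally with the connectives and quantifiers, e.g. $(\rho\to\sigma)[\theta]=\rho[\theta]\to\sigma[\theta]$ and $(\forall y\,\rho)[\theta]=\forall y\,(\rho[\theta])$. The induction then splits into the usual cases. If $\chi_i$ is a logical axiom, then $\chi_i[\theta(x)]$ is provable in pure logic---substitution of $\theta$ for $X$ sends instances of the propositional and quantifier schemata to instances of the same schemata, and it sends the equality/congruence axioms for $t\in X$ to the provable congruence statements for $\theta$---so $T\vdash\chi_i[\theta(x)]$. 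If $\chi_i$ is an axiom of $T$, then $\chi_i[\theta(x)]$ is by definition a substitution variant of an axiom of $T$, hence $T$-provable by the hypothesis of the lemma. If $\chi_i$ comes from $\chi_j$ and $\chi_l=(\chi_j\to\chi_i)$ by modus ponens, then $\chi_l[\theta]=\chi_j[\theta]\to\chi_i[\theta]$, so $\chi_i[\theta]$ follows from the two induction hypotheses by modus ponens; and if $\chi_i=\forall y\,\chi_j$ comes by generalization, then $\chi_i[\theta]=\forall y\,(\chi_j[\theta])$, and the preliminary renaming guarantees that $y$ is not free in $\theta$ (and hence not free in the substituted axioms), so generalization still applies. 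Since $\theta$ was arbitrary, $T$ is closed under substitution.

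I expect the only genuinely delicate point to be the interaction between the free variables of $\theta$ and the eigenvariable side condition of the generalization rule; this is precisely what the preliminary renaming is designed to neutralize, after which every step is a routine verification that the chosen substitution commutes with the logical rules. Everything else is the standard substitution lemma for predicate variables, specialized here to replacing each atomic subformula $t\in X$ by $\theta(t)$, together with the single nonlogical input that the substitution variants of $T$'s axioms are $T$-provable.
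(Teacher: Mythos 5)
Your proof is correct, but it takes a more hands-on route than the paper. The paper's proof is three lines: since $T\vdash\varphi(X)$, there is a finite conjunction $A_T(X)$ of axioms of $T$ such that $A_T(X)\rightarrow\varphi(X)$ is a theorem of pure logic; pure logic is closed under substitution (cited as a standard fact), so $A_T[\theta(x)]\rightarrow\varphi[\theta(x)]$ is also a theorem of pure logic; and the hypothesis gives $T\vdash A_T[\theta(x)]$, whence $T\vdash\varphi[\theta(x)]$ by modus ponens. You instead run an induction on the length of a Hilbert-style derivation, which amounts to re-proving inline the substitution lemma for pure logic (the logical-axiom, modus ponens, and generalization cases) while interleaving the one genuinely nonlogical case, namely $T$-axioms, where your argument and the paper's use the hypothesis in exactly the same way. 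What the paper's factoring buys is brevity and a clean isolation of the logical content into a single citable fact; what your version buys is self-containedness, and it makes explicit two points the paper's argument quietly absorbs: the capture-avoiding renaming (which the paper builds into its definition of $\varphi[\theta(x)]$) and the treatment of the equality/congruence axioms for $t\in X$, which under substitution become provable congruence statements for $\theta$ rather than axiom instances. Your worry about the eigenvariable condition for generalization is legitimate in general but is largely moot here, since the axioms of the pseudo-$\Pi^1_1$ theories in question are sentences apart from the set variable $X$, and generalization concerns only number variables; your preliminary renaming handles the residual interaction with the free number variables of $\theta$ correctly in any case.
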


\begin{proof}
Suppose that $T$ proves every substitution variant of its own axioms. Let $\theta$ be a $\boldsymbol\Pi^0_{\infty}$ formula and let $\varphi(X)$ be a theorem of $T$. Since $\varphi(X)$ is a theorem of $T$, there is some finite conjunction $A_T(X)$ of axioms of $T$ such that the sentence $$A_T(X) \rightarrow \varphi(X)$$ is a theorem of pure logic. Since pure logic is closed under substitution, the sentence $$A_T[\theta(x)] \rightarrow \varphi[\theta(x)]$$ is also a theorem of pure logic. Since $T$ proves every substitution variant of its own axioms, $T$ proves $A_T[\theta(x)]$, whence $T$ proves $\varphi[\theta(x)]$.
\end{proof}

\begin{lemma}
$\mathsf{PA}(X) + \mathbf{R}^\alpha_{\mathbf{\Pi}^0_\infty}(\mathsf{PA}(X))$ is closed under substitution.
\end{lemma}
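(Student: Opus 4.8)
The plan is to apply Lemma \ref{pure logic}: it suffices to show that the theory in question proves every substitution variant of its own axioms. Writing $U_\gamma:=\mathbf{R}^\gamma_{\mathbf{\Pi}^0_\infty}(\mathsf{PA}(X))$, note that $\mathsf{PA}(X)+\mathbf{R}^\alpha_{\mathbf{\Pi}^0_\infty}(\mathsf{PA}(X))\equiv U_\alpha$ since $U_\alpha$ already contains $\mathsf{PA}(X)$. Because the surrounding results are developed in $\mathsf{EA}^+$ and the notation $\alpha$ need not be well-founded (so ordinary transfinite induction on $\alpha$ is unavailable), I would prove the \emph{internalized} closure statement $\mathrm{CS}(\gamma):=\forall\psi\,\forall\theta\,\big(\mathsf{Pr}_{U_\gamma}(\psi)\to\mathsf{Pr}_{U_\gamma}(\psi[\theta])\big)$ in $\mathsf{EA}^+$, uniformly in $\gamma$, by reflexive induction (Theorem \ref{reflexive induction}). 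The formalized version of Lemma \ref{pure logic}---whose proof is an elementary proof transformation and hence verifiable in $\mathsf{EA}$---reduces $\mathrm{CS}(\gamma)$ to the assertion that $U_\gamma$ provably proves the substitution variant of each of its axioms.

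For the reflexive induction step I would reason in $\mathsf{EA}^+$, fix $\alpha$, and assume the reflexive induction hypothesis $\mathsf{Pr}_{\mathsf{EA}^+}\big(\forall\beta\prec\alpha\;\mathrm{CS}(\beta)\big)$; since $\mathsf{EA}^+\subseteq U_\alpha$ this yields $\mathsf{Pr}_{U_\alpha}\big(\forall\beta\prec\alpha\;\mathrm{CS}(\beta)\big)$. It then remains to verify that $U_\alpha$ provably proves the substitution variants of its axioms, which splits into two cases. The $\mathsf{PA}(X)$ axioms are routine: a basic arithmetical axiom contains no occurrence of $X$ and is fixed by the substitution, while the substitution variant of an induction axiom for a formula $\rho$ is precisely the induction axiom for $\rho[\theta]$, which again belongs to $\mathsf{PA}(X)\subseteq U_\alpha$; both facts are verified uniformly in $\mathsf{EA}^+$.

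The interesting case is the reflection axioms. Here the key observation is that in an instance $\forall\vec x\,\big(\mathsf{Pr}_{U_\beta}(\chi(\dot{\vec x}))\to\chi(\vec x)\big)$ of $\mathsf{RFN}_{\mathbf{\Pi}^0_\infty}(U_\beta)$ the antecedent contains no genuine occurrence of $X$ (only the Gödel code of $\chi$ does), so its substitution variant is exactly $\forall\vec x\,\big(\mathsf{Pr}_{U_\beta}(\chi(\dot{\vec x}))\to\chi[\theta](\vec x)\big)$. To prove this in $U_\alpha$ I would chain two facts available inside $U_\alpha$: first, $\mathrm{CS}(\beta)$---which $U_\alpha$ proves by instantiating the $U_\alpha$-provable statement $\forall\beta\prec\alpha\;\mathrm{CS}(\beta)$ at the relevant $\beta\prec\alpha$---gives $\mathsf{Pr}_{U_\beta}(\chi(\dot{\vec x}))\to\mathsf{Pr}_{U_\beta}(\chi[\theta](\dot{\vec x}))$; second, the instance of $\mathsf{RFN}_{\mathbf{\Pi}^0_\infty}(U_\beta)$ for the $\mathbf{\Pi}^0_\infty$ formula $\chi[\theta]$, itself an axiom of $U_\alpha$, gives $\mathsf{Pr}_{U_\beta}(\chi[\theta](\dot{\vec x}))\to\chi[\theta](\vec x)$. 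Composing these yields the desired substitution variant, and the whole derivation is formalizable in $\mathsf{EA}^+$ uniformly in $\beta\prec\alpha$, $\chi$, and $\theta$.

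With both cases established, the internalized Lemma \ref{pure logic} delivers $\mathrm{CS}(\alpha)$, completing the induction step; Theorem \ref{reflexive induction} then gives $\mathsf{EA}^+\vdash\forall\gamma\;\mathrm{CS}(\gamma)$, and soundness of $\mathsf{EA}^+$ yields the external closure of $U_\alpha$ under substitution. I expect the main obstacle to be the bookkeeping that legitimizes the circular-looking dependence of the reflection axioms of $U_\alpha$ on the substitution-closure of the subtheories $U_\beta$: reflexive induction is exactly what breaks this circle, and the point to get right is that $\mathrm{CS}(\beta)$ must be invoked in its internalized provability-implication form so that it can be used \emph{inside} a $U_\alpha$-derivation rather than merely as an external rule.
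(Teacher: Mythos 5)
Your proof is correct and follows essentially the same route as the paper's: reflexive induction in $\mathsf{EA}^+$ on the internalized closure statement, reduction via Lemma \ref{pure logic} to proving substitution variants of the axioms, and, for the reflection axioms, chaining the $U_\alpha$-internal instance of $\mathrm{CS}(\beta)$ with the reflection instance for $\chi[\theta]$ (exploiting that $X$ occurs in the antecedent only inside a G\"odel code). The only differences are presentational: the paper factors $\mathbf{R}^\alpha_{\mathbf{\Pi}^0_\infty}(\mathsf{PA}(X))$ into the subtheories $S_\beta = \mathsf{PA}(X) + \mathsf{RFN}_{\mathbf{\Pi}^0_\infty}(\mathbf{R}^\beta_{\mathbf{\Pi}^0_\infty}(\mathsf{PA}(X)))$ and argues for each $S_\beta$, whereas you work with $U_\alpha$ directly and spell out explicitly the internalized form $\mathrm{CS}(\gamma)$ that the paper's phrase ``provably closed under substitution'' leaves implicit.
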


\begin{proof}
We prove the claim by reflexive induction. We reason within $\mathsf{EA}^+$ and assume the reflexive induction hypothesis: provably in $\mathsf{EA}^+$, for all $\beta\prec\alpha$, $\mathsf{PA}(X) + \mathbf{R}^\beta_{\mathbf{\Pi}^0_\infty}(\mathsf{PA}(X))$ is closed under substitution. First we note that
$$\mathbf{R}^\alpha_{\mathbf{\Pi}^0_\infty}\big(\mathsf{PA}(X)\big) \equiv \mathsf{PA}(X) + \bigcup_{\beta\prec\alpha}\mathsf{RFN}_{\mathbf{\Pi}^0_\infty}\Big(\mathbf{R}^\beta_{\mathbf{\Pi}^0_\infty}\big(\mathsf{PA}(X)\big)\Big).$$
For $\beta\prec\alpha$ let us denote by $S_\beta$ the theory $$\mathsf{PA}(X) + \mathsf{RFN}_{\mathbf{\Pi}^0_\infty}\Big(\mathbf{R}^\beta_{\mathbf{\Pi}^0_\infty}\big(\mathsf{PA}(X)\big)\Big).$$
To prove that $\mathbf{R}^\alpha_{\mathbf{\Pi}^0_\infty}\big(\mathsf{PA}(X)\big)$ is closed under substitution it suffices to prove that, for every $\beta\prec\alpha$, $S_{\beta}$ is closed under substitution.

By Lemma \ref{pure logic}, to prove that $S_\beta$ is closed under substitution, it suffices to show that $S_\beta$ proves every substitution-variant of its own axioms.  Let us use the name $U_{\beta}$ to denote the theory $\mathbf{R}^\beta_{\mathbf{\Pi}^0_\infty}\big(\mathsf{PA}(X)\big)$. An axiom of the theory $S_\beta$ is either an axiom of $\mathsf{PA}(X)$ or is a sentence of the form $\forall\vec{y}\big(Pr_{U_{\beta}}\big(\varphi(X,\vec{y})\big)\rightarrow\varphi(X,\vec{y})\big)$. Already the theory $\mathsf{PA}(X)$ proves every substitutional instance of its own axioms. By the reflexive induction hypothesis, $U_{\beta}$ is provably closed under substitution. So $S_\beta$ proves $\forall\vec{y}\big(Pr_{U_{\beta}}\big(\varphi(X,\vec{y})\big)\rightarrow\varphi(\theta,\vec{y})\big)$ for any formula $\theta$. This is to say that $S_\beta$ proves every substitution instance of its axioms.
\end{proof}

\begin{remark}
\label{reduction remark}
It follows from the lemma that the theories $\mathsf{PA}(X) + \{ \mathbf{R}^\alpha_{\mathbf{\Pi}^0_\infty}(\mathsf{PA}(X))[\theta]:\theta\in\mathbf{\Pi}^0_\infty\}$ and $\mathsf{PA}(X) + \mathbf{R}^\alpha_{\mathbf{\Pi}^0_\infty}(\mathsf{PA}(X))$ are equivalent. We will make use of this observation in the proof of Lemma \ref{key conservation lemma}.
\end{remark}

Most of the work towards proving the main theorem of this section is contained in the proof of the following key lemma.

\begin{lemma}
\label{key conservation lemma}
$\mathbf{R}^\alpha_{\Pi^1_1}(\mathsf{ACA}_0)$ is $\boldsymbol\Pi^0_\infty$ conservative over $\mathbf{R}^\alpha_{\mathbf{\Pi}^0_\infty}(\mathsf{PA}(X)).$
\end{lemma}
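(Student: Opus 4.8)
The plan is to argue by reflexive induction (Theorem~\ref{reflexive induction}), reasoning throughout in $\mathsf{EA}^+$, mimicking the structure of the proof of Theorem~\ref{rca theorem}. The reflexive induction hypothesis is: provably in $\mathsf{EA}^+$, for every $\beta\prec\alpha$, the theory $\mathbf{R}^\beta_{\Pi^1_1}(\mathsf{ACA}_0)$ is $\boldsymbol\Pi^0_\infty$ conservative over $\mathbf{R}^\beta_{\mathbf{\Pi}^0_\infty}(\mathsf{PA}(X))$. Suppose $\mathbf{R}^\alpha_{\Pi^1_1}(\mathsf{ACA}_0)\vdash\forall X\,\psi(X)$ with $\psi\in\boldsymbol\Pi^0_\infty$. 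Unfolding the definition of iterated reflection and applying compactness, I would fix $\beta_1,\ldots,\beta_k\prec\alpha$ with $\mathsf{ACA}_0+\bigwedge_{i\le k}\mathsf{RFN}_{\Pi^1_1}(\mathbf{R}^{\beta_i}_{\Pi^1_1}(\mathsf{ACA}_0))\vdash\forall X\,\psi(X)$.

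The first step is to put the hypothesis into the shape required by Lemma~\ref{conservativity}. Since $\mathsf{ACA}_0$ carries a $\Pi^1_1$ truth definition, each $\mathsf{RFN}_{\Pi^1_1}(T)$ is, over $\mathsf{ACA}_0$, equivalent to a single $\Pi^1_1$ sentence $\forall X\,\varphi_T(X)$ with $\varphi_T\in\boldsymbol\Pi^0_\infty$ obtained elementarily from the index of $T$; its matrix $\varphi_T(X)$ asserts ``for every $c$, if $c$ codes a $T$-provable $\Pi^1_1$ sentence $\forall Z\,\rho(Z)$ then the matrix $\rho$ is true of $X$.'' Contracting the finite conjunction over $i$ into one $\boldsymbol\Pi^0_\infty$ formula $\varphi(X)$ by pairing the set variable, I obtain $\mathsf{ACA}_0+\forall X\,\varphi(X)\vdash\forall X\,\psi(X)$, and Lemma~\ref{conservativity} then yields $\mathsf{PA}(X)+\{\varphi[\theta]:\theta\textrm{ is }\boldsymbol\Pi^0_\infty\}\vdash\psi(X)$.

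The crux is to show $\mathbf{R}^\alpha_{\mathbf{\Pi}^0_\infty}(\mathsf{PA}(X))\vdash\varphi(X)$. I would reason inside $\mathbf{R}^\alpha_{\mathbf{\Pi}^0_\infty}(\mathsf{PA}(X))$, which contains $\mathsf{EA}^+$ and $\mathsf{RFN}_{\mathbf{\Pi}^0_\infty}(\mathbf{R}^{\beta_i}_{\mathbf{\Pi}^0_\infty}(\mathsf{PA}(X)))$ for each $i$. Given $c$ coding a $\Pi^1_1$ theorem $\forall Z\,\rho(Z)$ of $\mathbf{R}^{\beta_i}_{\Pi^1_1}(\mathsf{ACA}_0)$, I apply the formalized induction hypothesis (available since $\beta_i\prec\alpha$ and $\mathbf{R}^\alpha_{\mathbf{\Pi}^0_\infty}(\mathsf{PA}(X))\sqsupseteq\mathsf{EA}^+$) to get $\mathbf{R}^{\beta_i}_{\mathbf{\Pi}^0_\infty}(\mathsf{PA}(X))\vdash\rho$; then $\mathbf{\Pi}^0_\infty$ reflection over $\mathbf{R}^{\beta_i}_{\mathbf{\Pi}^0_\infty}(\mathsf{PA}(X))$ makes $\rho$ true of the ambient $X$, which is exactly what the matrix demands. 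To accommodate the pairing that contracted the conjunction, where the $i$-th matrix must be evaluated at a component $(X)_i$ rather than at $X$ itself, I invoke closure under substitution of $\mathbf{R}^{\beta_i}_{\mathbf{\Pi}^0_\infty}(\mathsf{PA}(X))$ (the lemma preceding Remark~\ref{reduction remark}) to relativize $\rho$ to the $\boldsymbol\Delta^0_1$ definition of $(X)_i$ before reflecting. This establishes $\mathbf{R}^\alpha_{\mathbf{\Pi}^0_\infty}(\mathsf{PA}(X))\vdash\varphi(X)$.

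To conclude, since $\mathbf{R}^\alpha_{\mathbf{\Pi}^0_\infty}(\mathsf{PA}(X))$ is itself closed under substitution, from $\vdash\varphi(X)$ I get $\vdash\varphi[\theta]$ for each of the finitely many relevant $\boldsymbol\Pi^0_\infty$ formulas $\theta$; together with $\mathsf{PA}(X)+\{\varphi[\theta]\}\vdash\psi(X)$ and $\mathbf{R}^\alpha_{\mathbf{\Pi}^0_\infty}(\mathsf{PA}(X))\sqsupseteq\mathsf{PA}(X)$, this gives $\mathbf{R}^\alpha_{\mathbf{\Pi}^0_\infty}(\mathsf{PA}(X))\vdash\psi(X)$, completing the reflexive induction. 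I expect the main obstacle to be the reduction of second-order $\Pi^1_1$ reflection to $\boldsymbol\Pi^0_\infty$ reflection in the pseudo-$\Pi^1_1$ world: one must verify that the single-sentence form of $\mathsf{RFN}_{\Pi^1_1}(T)$ has a matrix whose truth at an arbitrary set is captured by $\boldsymbol\Pi^0_\infty$ reflection over the $\beta$-conservative pseudo theory, and that closure under the substitution rule correctly bridges the unique free set variable with both the genuine $\Pi^1_1$ set quantifier and the contraction pairing, all while remaining formalizable in $\mathsf{EA}^+$ so that Theorem~\ref{reflexive induction} applies.
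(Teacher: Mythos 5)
Your overall scaffolding---reflexive induction, Lemma~\ref{conservativity}, and the substitution-closure lemma via Remark~\ref{reduction remark}---matches the paper's, but your crux step has a genuine gap. Inside $\mathbf{R}^\alpha_{\mathbf{\Pi}^0_\infty}(\mathsf{PA}(X))$, the code $c$ of the provable $\Pi^1_1$ sentence $\forall Z\,\rho(Z)$ is an internally quantified variable, so $\rho$ is a formula of unbounded (possibly nonstandard) logical complexity. But $\mathsf{RFN}_{\mathbf{\Pi}^0_\infty}\big(\mathbf{R}^{\beta_i}_{\mathbf{\Pi}^0_\infty}(\mathsf{PA}(X))\big)$ is a schema whose instances are reflection statements for externally fixed formulas, and there is no truth definition for the whole class $\mathbf{\Pi}^0_\infty$ in the pseudo-$\Pi^1_1$ language (only the partial predicates $\mathsf{Tr}_{\mathbf{\Pi}_n}$ for each fixed $n$). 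So the move ``then $\mathbf{\Pi}^0_\infty$ reflection makes $\rho$ true of the ambient $X$, which is exactly what the matrix demands'' is not a legal inference: you cannot instantiate schema reflection at a quantified code, and you cannot even express ``$\rho$ is true of $X$'' except through the specific normal-form matrix built from $\mathsf{Tr}_{\Pi^0_1}$ that your $\varphi_T(X)$ actually consists of. A repair exists---transfer the internal fact $\mathsf{Pr}(\rho(X))$ to $\mathsf{Pr}$ of the single fixed normal-form formula $\chi(c)$, using formalized, uniformly provable Simpson normal-form equivalences together with internal substitution closure, and then apply the one schema instance of reflection for the fixed $\chi$---but that repair is exactly the content you defer to as ``the main obstacle,'' so as written the argument is incomplete at its central point.

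The paper's proof is engineered precisely to avoid this code-evaluation problem: it never compresses $\mathsf{RFN}_{\Pi^1_1}(T)$ into a single sentence via a truth definition and then re-evaluates it. Instead it first replaces, over $\mathsf{ACA}_0$ and up to $\boldsymbol\Pi^0_\infty$ conservativity, each $\mathsf{RFN}_{\Pi^1_1}\big(\mathbf{R}^\beta_{\Pi^1_1}(\mathsf{ACA}_0)\big)$ by $\mathsf{RFN}_{\mathbf{\Pi}^0_\infty}$ of the \emph{same} second-order theory, and then uses the reflexive induction hypothesis only through the internalized implication $\mathsf{Pr}_T(\sigma)\to\mathsf{Pr}_U(\sigma)$ between theories: for each externally fixed formula, the reflection instance over one theory follows from the reflection instance for the same formula over the other (this is the content of (\ref{mainconservation})). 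Everything stays at the level of fixed-formula schema instances, so no truth predicate for $\mathbf{\Pi}^0_\infty$ is ever needed. A smaller point: you run the reflexive induction in $\mathsf{EA}^+$, but your induction step invokes Lemma~\ref{conservativity}, which the paper establishes only over $\mathsf{ACA}_0$ (the accompanying remark notes that an $\mathsf{EA}^+$ version requires additional care and that $\mathsf{EA}$ is impossible, because of the speed-up of $\mathsf{ACA}_0$ over $\mathsf{PA}$). Since the lemma carries no metatheory tag, nothing is lost---and the paper's own proof does exactly this---by running the reflexive induction in $\mathsf{ACA}_0$ instead.
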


\begin{proof}
We prove the claim by reflexive induction. We reason within $\mathsf{ACA}_0$ and assume the reflexive induction hypothesis: provably in $\mathsf{ACA}_0$, for all $\beta\prec\alpha$, $\mathbf{R}^\beta_{\Pi^1_1}(\mathsf{ACA}_0)$ is $\boldsymbol\Pi^0_\infty$ conservative over $\mathbf{R}^\beta_{\mathbf{\Pi}^0_\infty}(\mathsf{PA}(X)).$ This means that, provably in $\mathsf{ACA}_0$, for any $\beta\prec\alpha$, $\boldsymbol\Pi^0_\infty$ reflection over $\mathbf{R}^\beta_{\Pi^1_1}(\mathsf{ACA}_0)$ is equivalent to $\boldsymbol\Pi^0_\infty$ reflection over $\mathbf{R}^\beta_{\mathbf{\Pi}^0_\infty}(\mathsf{PA}(X))$. That is:
\begin{equation}\label{mainconservation}
\mathsf{ACA}_0 \vdash \forall \beta\prec\alpha \Big( \mathsf{RFN}_{\boldsymbol\Pi^0_\infty} \big(\mathbf{R}^\beta_{\Pi^1_1}(\mathsf{ACA}_0) \big) \leftrightarrow  \mathsf{RFN}_{\boldsymbol\Pi^0_\infty} \big(\mathbf{R}^\beta_{\mathbf{\Pi}^0_\infty}(\mathsf{PA}(X)) \big) \Big)
\end{equation}

We reason as follows.
\begin{flalign*}
  \mathbf{R}^\alpha_{\Pi^1_1}(\mathsf{ACA}_0) &\equiv \mathsf{ACA}_0 + \bigcup_{\beta\prec\alpha} \mathsf{RFN}_{\Pi^1_1}\big(\mathbf{R}^\beta_{\Pi^1_1}(\mathsf{ACA}_0)\big) \textrm{ by definition.} \\
&\equiv_{\mathbf{\Pi}^0_\infty} \mathsf{ACA}_0 + \bigcup_{\beta\prec\alpha} \mathsf{RFN}_{\mathbf{\Pi}^0_\infty}\big(\mathbf{R}^\beta_{\Pi^1_1}(\mathsf{ACA}_0)\big) \\
%& \equiv_{\Pi^1_1} \mathsf{ACA}_0 + \bigcup_{\beta\prec\alpha} \mathsf{RFN}_{\Pi^1_1}\Big(\mathsf{ACA}_0+\mathbf{R}^\beta_{\mathbf{\Pi}^0_\infty}\big(\mathsf{PA}(X)\big)\Big) \textrm{ by reflexive induction.} \\
%& \equiv_{\Pi^1_1} \mathsf{ACA}_0 + \bigcup_{\beta\prec\alpha} \mathsf{RFN}_{\mathbf{\Pi}^0_\infty}\Big(\mathsf{PA}(X) + \{ \mathbf{R}^\beta_{\mathbf{\Pi}^0_\infty}\big(\mathsf{PA}(X)\big)[\theta]:\theta \in \mathbf{\Pi}^0_\infty \big) \} \Big) \textrm{ by Lemma \ref{conservativity}.} \\
  & \equiv \mathsf{ACA}_0 + \bigcup_{\beta\prec\alpha} \mathsf{RFN}_{\mathbf{\Pi}^0_\infty} \Big(\mathbf{R}^\beta_{\mathbf{\Pi}^0_\infty}\big( \mathsf{PA}(X)\big) \Big) \textrm{ by (\ref{mainconservation}).} \\
  & \equiv \mathsf{ACA}_0 + \bigcup_{\beta\prec\alpha} \mathsf{RFN}_{\mathbf{\Pi}^0_\infty} \Big(\mathsf{PA}(X) + \mathbf{R}^\beta_{\mathbf{\Pi}^0_\infty}\big( \mathsf{PA}(X)\big) \Big) \textrm{ by definition.} \\
  & \equiv_{\boldsymbol\Pi^0_\infty} \mathsf{PA}(X) + \bigcup_{\beta\prec\alpha\;\theta\in\boldsymbol \Pi^0_\infty} \mathsf{RFN}_{\mathbf{\Pi}^0_\infty} \Big(\mathsf{PA}(X) + \mathbf{R}^\beta_{\mathbf{\Pi}^0_\infty} \big(\mathsf{PA}(X)\big) \Big)[\theta] \textrm{ by Lemma \ref{conservativity}.}\\
  & \equiv_{\boldsymbol\Pi^0_\infty} \mathsf{PA}(X) + \bigcup_{\beta\prec\alpha} \mathsf{RFN}_{\mathbf{\Pi}^0_\infty} \Big(\mathsf{PA}(X) + \mathbf{R}^\beta_{\mathbf{\Pi}^0_\infty} \big(\mathsf{PA}(X)\big) \Big) \textrm{ by Remark \ref{reduction remark}.}\\
  & \equiv_{\boldsymbol\Pi^0_\infty} \mathbf{R}^\alpha_{\mathbf{\Pi}^0_\infty}(\mathsf{PA}(X)) \textrm{ by definition.}
\end{flalign*}
This concludes the proof.
\end{proof}

The proof of the the main theorem of this section is now straightforward, given Theorem \ref{rca theorem} and Lemma \ref{key conservation lemma}.

\begin{theorem}
\label{schmerl_ACA_0_RCA_0}
$\mathbf{R}^\alpha_{\Pi^1_1}(\mathsf{ACA}_0)$ is $\Pi^1_1(\Pi^0_3)$ conservative over $\mathbf{R}^{\varepsilon_\alpha}_{\Pi^1_1(\Pi^0_3)}(\mathsf{RCA}_0)$. 
\end{theorem}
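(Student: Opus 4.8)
The plan is to compose the three conservation/equivalence results already established in this section and then read the conclusion back across the identification of pseudo-$\Pi^1_1$ sentences with their second-order translations. Since Lemma \ref{key conservation lemma} connects $\mathbf{R}^\alpha_{\Pi^1_1}(\mathsf{ACA}_0)$ to $\mathbf{R}^\alpha_{\mathbf{\Pi}^0_\infty}(\mathsf{PA}(X))$, Schmerl's formula (Theorem \ref{generalized Schmerl}) connects the latter to an iterated $\mathbf{\Pi}^0_3$ reflection progression over $\mathsf{EA}^+(X)$, and Theorem \ref{rca theorem} connects such progressions back to $\mathbf{R}^{\cdot}_{\Pi^1_1(\Pi^0_3)}(\mathsf{RCA}_0)$, the desired statement should fall out by gluing these together at the shared class $\mathbf{\Pi}^0_3$.

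Concretely, I would assemble the chain
\begin{align*}
\mathbf{R}^\alpha_{\Pi^1_1}(\mathsf{ACA}_0) &\equiv_{\mathbf{\Pi}^0_\infty} \mathbf{R}^\alpha_{\mathbf{\Pi}^0_\infty}(\mathsf{PA}(X)) \textrm{ by Lemma~\ref{key conservation lemma},}\\
&\equiv_{\mathbf{\Pi}^0_3} \mathbf{R}^{\varepsilon_\alpha}_{\mathbf{\Pi}^0_3}(\mathsf{EA}^+(X)) \textrm{ by Theorem~\ref{generalized Schmerl} with $n=3$,}\\
&\equiv \mathbf{R}^{1+\varepsilon_\alpha}_{\mathbf{\Pi}^0_3}(\mathsf{EA}^+(X)) \textrm{ since $1+\varepsilon_\alpha=\varepsilon_\alpha$,}\\
&\equiv_{\mathbf{\Pi}^0_\infty} \mathbf{R}^{\varepsilon_\alpha}_{\Pi^1_1(\Pi^0_3)}(\mathsf{RCA}_0) \textrm{ by Theorem~\ref{rca theorem} (read at $\beta=\varepsilon_\alpha$).}
\end{align*}
Reading each link only in the direction in which $\mathbf{\Pi}^0_3$-theorems pass from $\mathbf{R}^\alpha_{\Pi^1_1}(\mathsf{ACA}_0)$ toward $\mathbf{R}^{\varepsilon_\alpha}_{\Pi^1_1(\Pi^0_3)}(\mathsf{RCA}_0)$, even the one-directional conservativity of Lemma \ref{key conservation lemma} suffices, and the coarsest class involved is $\mathbf{\Pi}^0_3 \subseteq \mathbf{\Pi}^0_\infty$. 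To finish, I would translate back: a $\Pi^1_1(\Pi^0_3)$ theorem of $\mathbf{R}^\alpha_{\Pi^1_1}(\mathsf{ACA}_0)$ has the form $\forall X\,F(X)$ with $F\in\Pi^0_3$, and proving it is interderivable with proving the pseudo-$\Pi^1_1$ sentence $F(X)$ (with $X$ free), which lies in $\mathbf{\Pi}^0_3$; the chain sends this $\mathbf{\Pi}^0_3$ consequence down to $\mathbf{R}^{\varepsilon_\alpha}_{\Pi^1_1(\Pi^0_3)}(\mathsf{RCA}_0)$, and re-generalizing on $X$ recovers $\forall X\,F(X)$ there. This is precisely the asserted $\Pi^1_1(\Pi^0_3)$ conservativity.

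Neither remaining point is a genuine obstacle—the substantive work already resides in the lemmas above—but two bits of bookkeeping deserve care. First, the epsilon-number absorption $1+\varepsilon_\alpha=\varepsilon_\alpha$ must be available at the level of the notation systems, and $\mathsf{EA}^+$-provably, so that $\mathbf{R}^{1+\varepsilon_\alpha}_{\mathbf{\Pi}^0_3}(\mathsf{EA}^+(X))$ and $\mathbf{R}^{\varepsilon_\alpha}_{\mathbf{\Pi}^0_3}(\mathsf{EA}^+(X))$ define deductively equivalent progressions; this holds because $\varepsilon_\alpha$ is additively closed from below, so its lower cone is $\mathsf{EA}^+$-provably isomorphic to that of $1+\varepsilon_\alpha$ (cf.\ \textsection\ref{ordinal_notations}, where $\varepsilon_{\alpha}$ is given by nested Cantor normal forms). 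Second, one must track the complexity classes across the three invocations and confirm that composing a $\mathbf{\Pi}^0_\infty$-conservativity with a $\mathbf{\Pi}^0_3$-equivalence and another $\mathbf{\Pi}^0_\infty$-equivalence yields exactly a statement about $\mathbf{\Pi}^0_3$ consequences, which is the pseudo-$\Pi^1_1$ image of the target class $\Pi^1_1(\Pi^0_3)$. Once these are checked the argument is, as advertised, immediate.
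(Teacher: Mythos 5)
Your proof is correct and follows essentially the same route as the paper's: it composes Lemma \ref{key conservation lemma}, Theorem \ref{generalized Schmerl}, and Theorem \ref{rca theorem} in the same order, and concludes by translating each $\Pi^1_1(\Pi^0_3)$ sentence into an $\mathsf{RCA}_0$-provably equivalent $\mathbf{\Pi}^0_3$ pseudo-$\Pi^1_1$ sentence. The only difference is cosmetic: you make explicit the absorption $1+\varepsilon_\alpha=\varepsilon_\alpha$ needed to apply Theorem \ref{rca theorem} at $\varepsilon_\alpha$, a bookkeeping step the paper leaves implicit.
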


\begin{proof}
We reason as follows.
\begin{flalign*}
\mathbf{R}^\alpha_{\Pi^1_1}(\mathsf{ACA}_0) &\equiv_{\boldsymbol \Pi^0_\infty}\mathbf{R}^\alpha_{\mathbf{\Pi}^0_\infty}(\mathsf{PA}(X)) \textrm{ by Lemma \ref{key conservation lemma}.} \\
&\equiv_{\boldsymbol \Pi^0_3}\mathbf{R}^{\varepsilon_\alpha}_{\mathbf{\Pi}^0_3}(\mathsf{EA}^+(X)) \textrm{ by Theorem \ref{generalized Schmerl}.} \\
&\equiv_{\boldsymbol \Pi^0_3}\mathbf{R}^{\varepsilon_\alpha}_{\Pi^1_1(\Pi^0_3)}(\mathsf{RCA}_0) \textrm{ by Theorem \ref{rca theorem}.}
\end{flalign*}
Note for each $\Pi^1_1(\Pi^0_3)$ sentence $\varphi$ we could find a $\boldsymbol \Pi^0_3$ sentence $\varphi'$ such that $\mathsf{RCA}_0$ proves the equivalence of $\varphi$ and (the translation into the second order language of) $\varphi'$. Thus moreover we have
$$\mathbf{R}^\alpha_{\Pi^1_1}(\mathsf{ACA}_0)\equiv_{\Pi^1_1(\Pi^0_3)} \mathbf{R}^{\varepsilon_\alpha}_{\Pi^1_1(\Pi^0_3)}(\mathsf{RCA}_0).$$
This completes the proof of the theorem.
\end{proof}

%%% Local Variables:
%%% TeX-master: "well-foundedness"
%%% End:

\section{Reflection ranks and proof-theoretic ordinals}
\label{proof-theoretic_ordinal}

In this section we introduce the notion of \emph{reflection rank}. We then use the results from the previous section to establish connections between reflection ranks and proof-theoretic ordinals.

\subsection{Reflection ranks}

Recall that the reflection order $\prec_{\Pi^1_1}$ on r.e. extensions of $\mathsf{ACA}_0$ is:
$$\mathit{T}_1 \prec_{\Pi^1_1} \mathit{T}_2\stackrel{\mbox{\footnotesize \textrm{def}}}{\iff} \mathit{T}_2\vdash \mathsf{RFN}_{\Pi^1_1}(\mathit{T}_1).$$
For a theory $\mathit{T}\sqsupseteq \mathsf{ACA}_0$ we define the \emph{reflection rank} $|\mathit{T}|_{\mathsf{ACA}_0}\in \mathbf{On}\cup\{\boldsymbol\infty\}$ as the rank of $\mathit{T}$ in the order $\prec_{\Pi^1_1}$.

\begin{remark} We recall that as usual the rank function $\rho\colon A \to \mathbf{On}\cup\{\boldsymbol\infty\}$ for a binary relation $(A,\triangleleft)$ is the only function such that
  $\rho(a)=\sup \{\rho(b)+1\mid b\mathrel{\triangleleft} a\}$. Here the linear order $<$ on ordinals is extended to the class $\mathbf{On}\cup\{\boldsymbol\infty\}$ by puting $\boldsymbol\alpha<\boldsymbol\infty$, for all $\boldsymbol\alpha\in \mathbf{On}$. The operation $\boldsymbol \alpha\mapsto \boldsymbol\alpha+1$ is extended to the class $\mathbf{On}\cup\{\boldsymbol\infty\}$ and puting $\boldsymbol\infty+1=\boldsymbol\infty$. Note that $\rho(a)\in \mathbf{On}$ iff the cone $\{b\mid b\mathrel{\triangleleft} a\}$ is well-founded with respect to $\triangleleft$.
\end{remark}

Recall that Theorem \ref{well-foundedness_reflection} states that $|\mathit{T}|_{\mathsf{ACA}_0}\in \mathbf{On}$, for $\Pi^1_1$-sound $\mathit{T}$.

We will also consider the more general notion of reflection rank with respect to some other base theories.  For second-order theories $\mathit{U}\sqsupseteq \mathsf{RCA}_0$ we consider the reflection order $\prec_{\Pi^1_1(\Pi^0_3)}$:
$$\mathit{U}_1 \prec_{\Pi^1_1(\Pi^0_3)} \mathit{U}_2\stackrel{\mbox{\footnotesize \textrm{def}}}{\iff} \mathit{U}_2\vdash \mathsf{RFN}_{\Pi^1_1(\Pi^0_3)}(\mathit{U}_1).$$
Let us consider some base theory $\mathit{T}_0\sqsupseteq \mathsf{RCA}_0$. We define the set $\mathcal{E}\mbox{-}\mathit{T}_0$ of all theories $\mathit{U}$ such that $\mathsf{EA}$ proves that $\mathit{U}\sqsupseteq \mathit{T}_0$. For $\mathit{U}\in \mathcal{E}\mbox{-}\mathit{T}_0$ we denote  by $|\mathit{U}|_{\mathit{T}_0}$ the rank of $\mathit{U}$ in the order $(\mathcal{E}\mbox{-}\mathit{T}_0,\prec_{\Pi^1_1(\Pi^0_3)})$. Note that $\Pi^1_1(\Pi^0_3)$-sound extensions of $T_0$ have a well-founded rank in this ordering by Theorem \ref{RCA_0_reflection}.

\begin{remark} For a theory $\mathit{T}_0$ given by a finite list of axioms the set  $\mathcal{E}\mbox{-}\mathit{T}_0$ coincides with the set of all $U$ such that $\mathit{U}\sqsupseteq \mathit{T}_0$. Indeed, for any $T_0$ given by a finite list of axioms we have a $\Sigma_1$ formula in $\mathsf{EA}$ that expresses $\mathit{U}\sqsupseteq \mathit{T}_0$  with $\mathit{U}$ as a parameter (the $\Sigma_1$ formula states that there is a $U$-proof of the conjunction of all the axioms of $T_0$). 
\end{remark}

\begin{remark} The definition of the rank $|\mathit{T}|_{\mathsf{ACA}_0}$ given in the beginning of the section coincides with the more general definition of rank, since in $\mathsf{ACA}_0$ each $\Pi^1_1$ formula is equivalent to a $\Pi^1_1(\Pi^0_3)$-formula and hence for any $\mathit{T}\sqsupseteq \mathsf{ACA}_0$,
$$\mathsf{ACA}_0\vdash \mathsf{RFN}_{\Pi^1_1}(\mathit{T})\mathrel{\leftrightarrow}\mathsf{RFN}_{\Pi^1_1(\Pi^0_3)}(\mathit{T}).$$
\end{remark}

Straightforwardly from Theorem \ref{RCA_0_reflection} we get the following.
\begin{corollary} \label{Pi^1_1-sound_rank}If $\mathit{U}\sqsupseteq \mathsf{RCA}_0$ is  $\Pi^1_1(\Pi^0_3)$-sound, then  the rank $|\mathit{U}|_{\mathsf{RCA}_0}\in \mathbf{On}$. Hence for each $T_0\sqsupseteq \mathsf{RCA}_0$ and  $\Pi^1_1(\Pi^0_3)$-sound theory $U\in \mathcal{E}\mbox{-}T_0$ we have $|U|_{T_0}\in \mathbf{On}$.
\end{corollary}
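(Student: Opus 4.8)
The plan is to deduce that the lower cone of $U$ in the order $(\mathcal{E}\mbox{-}\mathsf{RCA}_0,\prec_{\Pi^1_1(\Pi^0_3)})$ is well-founded; by the characterization of the rank function recalled above (namely $\rho(a)\in\mathbf{On}$ iff the cone $\{b\mid b\mathrel{\triangleleft}a\}$ is well-founded), this yields $|U|_{\mathsf{RCA}_0}\in\mathbf{On}$ at once. The whole argument rests on Theorem \ref{RCA_0_reflection}, whose external content is that there is no infinite $\prec_{\Pi^1_1(\Pi^0_3)}$-descending sequence of $\Pi^1_1(\Pi^0_3)$-sound r.e.\ extensions of $\mathsf{RCA}_0$.

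The key step is a soundness-propagation lemma: if $V\prec_{\Pi^1_1(\Pi^0_3)}U$ and $U$ is $\Pi^1_1(\Pi^0_3)$-sound, then $V$ is $\Pi^1_1(\Pi^0_3)$-sound. To prove it, I would first use the partial truth definition for $\Pi^1_1(\Pi^0_3)$ over $\mathsf{RCA}_0$ to rewrite $\mathsf{RFN}_{\Pi^1_1(\Pi^0_3)}(V)$ as the single sentence $\forall\varphi\in\Pi^1_1(\Pi^0_3)\,\big(\mathsf{Pr}_V(\varphi)\to\mathsf{Tr}_{\Pi^1_1(\Pi^0_3)}(\varphi)\big)$, and then check that this sentence is itself of complexity $\Pi^1_1(\Pi^0_3)$: its matrix $\mathsf{Pr}_V(\varphi)\to\mathsf{Tr}_{\Pi^1_1(\Pi^0_3)}(\varphi)$ is a disjunction of a $\Pi^0_1$ formula with a $\Pi^1_1(\Pi^0_3)$ formula and hence $\Pi^1_1(\Pi^0_3)$, while the outer number quantifier $\forall\varphi$ commutes past the leading set quantifier and is absorbed into the $\Pi^0_3$ matrix. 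Now $V\prec_{\Pi^1_1(\Pi^0_3)}U$ means $U\vdash\mathsf{RFN}_{\Pi^1_1(\Pi^0_3)}(V)$; since $U$ is $\Pi^1_1(\Pi^0_3)$-sound and $\mathsf{RFN}_{\Pi^1_1(\Pi^0_3)}(V)$ is a $\Pi^1_1(\Pi^0_3)$ theorem of $U$, it is true, and the truth of this sentence is by definition the $\Pi^1_1(\Pi^0_3)$-soundness of $V$.

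With the lemma in hand, a straightforward induction shows that every theory occurring in a finite $\prec_{\Pi^1_1(\Pi^0_3)}$-descending chain issuing from $U$ is $\Pi^1_1(\Pi^0_3)$-sound, the base case being the soundness of $U$ and the inductive step the lemma. Hence an infinite descending chain below $U$ would furnish an infinite $\prec_{\Pi^1_1(\Pi^0_3)}$-descending sequence of $\Pi^1_1(\Pi^0_3)$-sound extensions of $\mathsf{RCA}_0$, contradicting Theorem \ref{RCA_0_reflection}. Thus no such chain exists, the lower cone of $U$ is well-founded, and $|U|_{\mathsf{RCA}_0}\in\mathbf{On}$. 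For the final assertion, observe that $T_0\sqsupseteq\mathsf{RCA}_0$ gives $\mathcal{E}\mbox{-}T_0\subseteq\mathcal{E}\mbox{-}\mathsf{RCA}_0$, so for $\Pi^1_1(\Pi^0_3)$-sound $U\in\mathcal{E}\mbox{-}T_0$ the lower cone of $U$ inside $\mathcal{E}\mbox{-}T_0$ is contained in its lower cone inside $\mathcal{E}\mbox{-}\mathsf{RCA}_0$; the latter is well-founded by the first part, hence so is the former, yielding $|U|_{T_0}\in\mathbf{On}$.

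The only step requiring any care is the complexity computation confirming that $\mathsf{RFN}_{\Pi^1_1(\Pi^0_3)}(V)$ is genuinely a $\Pi^1_1(\Pi^0_3)$ sentence, since this is precisely what lets the soundness of $U$ transfer to $V$; everything else is routine bookkeeping with well-founded ranks, which is presumably why the corollary is advertised as following straightforwardly from Theorem \ref{RCA_0_reflection}.
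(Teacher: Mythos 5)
Your proposal is correct and follows exactly the route the paper intends: the paper states the corollary as following ``straightforwardly'' from Theorem \ref{RCA_0_reflection}, and the intended content is precisely your two ingredients --- that $\mathsf{RFN}_{\Pi^1_1(\Pi^0_3)}(V)$ is itself (equivalent over $\mathsf{RCA}_0$ to) a $\Pi^1_1(\Pi^0_3)$ sentence via the partial truth definition, so soundness propagates down any $\prec_{\Pi^1_1(\Pi^0_3)}$-descending chain issuing from $U$, which would contradict the theorem if infinite. The only detail worth flagging is that $\mathcal{E}\mbox{-}T_0\subseteq\mathcal{E}\mbox{-}\mathsf{RCA}_0$ uses the finite axiomatizability of $\mathsf{RCA}_0$ together with the $\Sigma_1$-completeness of $\mathsf{EA}$ (as in the paper's remark on finitely axiomatized base theories), a trivially fillable point.
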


\begin{remark} \label{rank_not_Con} The converse of Corollary \ref{Pi^1_1-sound_rank} is not true, there are $\Pi^1_1(\Pi^0_3)$ unsound theories whose rank is an ordinal. In particular, for each consistent theory $T_0\sqsupseteq \mathsf{RCA}_0$, we have $|T_0+\lnot \mathsf{Con}(T_0)|_{T_0}=0$. Indeed, assume $T_0+\lnot \mathsf{Con}(T_0)\vdash \mathsf{RFN}_{\Pi^1_1}(U)$, for some $U\in \mathcal{E}\mbox{-}T_0$. Then
  $$\begin{aligned}T_0+\lnot \mathsf{Con}(T_0) & \vdash \mathsf{RFN}_{\Pi^1_1}(T_0)\\
                                              &  \vdash \mathsf{Con}(T_0)\\
                                              &  \vdash \bot.                                              
  \end{aligned}$$
  But by G\"odel's Second Incompleteness Theorem  $T_0+\lnot \mathsf{Con}(T_0) $ is consistent. This is to say that, though $T_0+\neg\mathsf{Con}(T_0)$ is not $\Pi^1_1$ sound, $|T_0+\neg\mathsf{Con}(T_0)|_{T_0}\in\mathbf{On}$.

  Note that later we will introduce a notion of robust reflection rank that enjoys much better behavior and, in particular, satisfies the converse of Corollary \ref{Pi^1_1-sound_rank}.
\end{remark}

%\textcolor{blue}{We need to specify how exactly we work with presentation of ordinals  in arithmetic $\alpha$. I propose to treatment $\alpha$ as a pair $\langle \varphi(x,y), n\rangle$, where $\varphi(x,y)$ is a $\Sigma_1$ formula. Then we define the order on presentations
%  \begin{center}$\langle \varphi(x,y), n\rangle\prec \langle \psi(x,y), m\rangle\stackrel{\mbox{\footnotesize \textrm{def}}}{\iff} \exists a_0,\ldots,a_k(n=a_0\land m=a_k\land \forall 0\le i<k(\mathsf{Tr}_{\Sigma_1}(\varphi(a_i,a_{i+1}))) \mbox{ and formulas }\varphi\mbox{, }\psi\mbox{ coincide}.$\end{center}
%This is good for several reasons. Namely, this way we will not care about fixing some arithmetical presentation but at the same time any $\Sigma_1$ presentation of a linear order is naturaly embeddable in this notation, it naturaly covers the reflection orders. The issues that I see are that the orders here are not necessarily linear, hence we will need to define $\varepsilon_{\alpha}$ for this more general case, wich of course is done exactly the same way as for linear case but little more care and the most important under this notations it would be better to reproduce the proof of Schmerl formula from reduction property since the original proof were for linear case.}

Recall that for an ordinal notation $\alpha$ we denote by $|\alpha|\in \mathbf{On}\cup\{\boldsymbol \infty\}$ the rank of the ordinal notation $\alpha$ in the order $\prec$.

The main proposition proved in this subsection is the following:

\begin{proposition} \label{rank_of_iteration} For each $\Pi^1_2(\Pi^0_2)$-sound theory $\mathit{T}_0$ and ordinal notation $\alpha$:
  $$|\mathbf{R}_{\Pi^1_1(\Pi^0_3)}^{\alpha}(\mathit{T}_0)|_{\mathit{T}_0}=|\alpha|.$$
\end{proposition}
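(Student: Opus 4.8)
The plan is to prove the equality by transfinite induction on the ordinal $|\alpha|$, reasoning externally so that reflection ranks are genuine ordinals. As a preliminary I would record that all of the theories $\mathbf{R}^\gamma_{\Pi^1_1(\Pi^0_3)}(\mathit{T}_0)$ are $\Pi^1_1(\Pi^0_3)$-sound; since $\Pi^1_1(\Pi^0_3)\subseteq\Pi^1_2(\Pi^0_2)$ up to provable equivalence (Skolemize the existential number quantifier inside $\Pi^0_3$), this is where the $\Pi^1_2(\Pi^0_2)$-soundness hypothesis on $\mathit{T}_0$ is used, and it guarantees via Corollary \ref{Pi^1_1-sound_rank} that each such rank lies in $\mathbf{On}$, so the statement makes sense. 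The induction hypothesis is that $|\mathbf{R}^\beta_{\Pi^1_1(\Pi^0_3)}(\mathit{T}_0)|_{\mathit{T}_0}=|\beta|$ for all notations $\beta$ with $|\beta|<|\alpha|$, in particular for all $\beta\prec\alpha$; it then suffices to prove the two inequalities $|\mathbf{R}^\alpha_{\Pi^1_1(\Pi^0_3)}(\mathit{T}_0)|_{\mathit{T}_0}\geq|\alpha|$ and $\leq|\alpha|$.

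The lower bound is immediate: by the definition of the iteration, $\mathbf{R}^\alpha_{\Pi^1_1(\Pi^0_3)}(\mathit{T}_0)\vdash\mathsf{RFN}_{\Pi^1_1(\Pi^0_3)}(\mathbf{R}^\beta_{\Pi^1_1(\Pi^0_3)}(\mathit{T}_0))$ for every $\beta\prec\alpha$, so $\mathbf{R}^\beta_{\Pi^1_1(\Pi^0_3)}(\mathit{T}_0)\prec_{\Pi^1_1(\Pi^0_3)}\mathbf{R}^\alpha_{\Pi^1_1(\Pi^0_3)}(\mathit{T}_0)$, and with the induction hypothesis and the defining equation of the rank function I get $|\mathbf{R}^\alpha_{\Pi^1_1(\Pi^0_3)}(\mathit{T}_0)|_{\mathit{T}_0}\geq\sup_{\beta\prec\alpha}(|\beta|+1)=|\alpha|$.

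For the upper bound I would show that every $V\in\mathcal{E}\mbox{-}\mathit{T}_0$ with $V\prec_{\Pi^1_1(\Pi^0_3)}\mathbf{R}^\alpha_{\Pi^1_1(\Pi^0_3)}(\mathit{T}_0)$ has rank $<|\alpha|$. A proof of $\mathsf{RFN}_{\Pi^1_1(\Pi^0_3)}(V)$ inside $\mathbf{R}^\alpha_{\Pi^1_1(\Pi^0_3)}(\mathit{T}_0)$ uses only finitely many reflection axioms indexed by $\beta_1,\dots,\beta_n\prec\alpha$, and since the lower cone of $\alpha$ is linearly ordered I may set $\beta^\ast=\max_i\beta_i\prec\alpha$ and $S:=\mathbf{R}^{\beta^\ast}_{\Pi^1_1(\Pi^0_3)}(\mathit{T}_0)$; then all those axioms belong to $S+\mathsf{RFN}_{\Pi^1_1(\Pi^0_3)}(S)$, so $S+\mathsf{RFN}_{\Pi^1_1(\Pi^0_3)}(S)\vdash\mathsf{RFN}_{\Pi^1_1(\Pi^0_3)}(V)$. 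As $|S|_{\mathit{T}_0}=|\beta^\ast|<|\alpha|$ by the induction hypothesis, the upper bound reduces to the following \emph{one-step claim}:
\[
S+\mathsf{RFN}_{\Pi^1_1(\Pi^0_3)}(S)\vdash\mathsf{RFN}_{\Pi^1_1(\Pi^0_3)}(V)\ \Longrightarrow\ |V|_{\mathit{T}_0}\leq|S|_{\mathit{T}_0}.
\]
It is essential that this be the tight bound and not merely $|V|_{\mathit{T}_0}\leq|S|_{\mathit{T}_0}+1$: in the successor case $|\beta^\ast|+1=|\alpha|$ the tight bound is exactly what yields $|V|_{\mathit{T}_0}<|\alpha|$.

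The one-step claim is the heart of the matter and the step I expect to be the main obstacle, precisely because reflection rank is not an arithmetical invariant and so cannot be transported naively along the provability relation. The key algebraic input is a reduction lemma: since $\mathsf{RFN}_{\Pi^1_1(\Pi^0_3)}(W)$ is (provably equivalent to) a $\Pi^1_1(\Pi^0_3)$ sentence, any $W$ with $V\vdash\mathsf{RFN}_{\Pi^1_1(\Pi^0_3)}(W)$ already satisfies $S+\mathsf{RFN}_{\Pi^1_1(\Pi^0_3)}(S)\vdash\mathsf{RFN}_{\Pi^1_1(\Pi^0_3)}(W)$; indeed $\mathsf{Pr}_V(\mathsf{RFN}_{\Pi^1_1(\Pi^0_3)}(W))$ is a true $\Sigma_1$ fact hence available already in $S$, and instantiating the reflection schema $\mathsf{RFN}_{\Pi^1_1(\Pi^0_3)}(V)$ at the $\Pi^1_1(\Pi^0_3)$ sentence $\mathsf{RFN}_{\Pi^1_1(\Pi^0_3)}(W)$ delivers $\mathsf{RFN}_{\Pi^1_1(\Pi^0_3)}(W)$. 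I would then prove the one-step claim by a Friedman--Steel style self-referential argument in the spirit of Theorem \ref{well-foundedness_reflection}: assuming $|V|_{\mathit{T}_0}>|S|_{\mathit{T}_0}$, the reduction lemma shows that the hypothetical rank excess persists through the $\prec_{\Pi^1_1(\Pi^0_3)}$-predecessors of $V$, all of which remain provably reflected by $S+\mathsf{RFN}_{\Pi^1_1(\Pi^0_3)}(S)$; packaging this persistence as a $\Sigma^1_1$ statement $\mathsf{F}$, one argues exactly as in the proof of Theorem \ref{well-foundedness_reflection} that the theory asserting $\mathsf{F}$ together with the relevant reflection proves its own consistency, contradicting G\"odel's second incompleteness theorem. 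The delicate point, where the bulk of the work lies, is making the rank bookkeeping precise inside this $\Sigma^1_1$ encoding so that the boundary case $|V|_{\mathit{T}_0}=|S|_{\mathit{T}_0}+1$ is excluded rather than merely $|V|_{\mathit{T}_0}\geq|S|_{\mathit{T}_0}+2$. Combining the two bounds closes the induction and completes the proof.
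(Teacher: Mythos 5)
Your lower bound and overall frame match the paper, but the ``one-step claim'' --- the part you yourself flag as the heart of the matter --- has a genuine gap, and the sketch you give for it cannot be completed as stated. The Friedman--Steel self-referential argument of Theorem \ref{well-foundedness_reflection} refutes the existence of an \emph{infinite descending sequence}; it is structurally blind to the difference between $|V|_{T_0}=|S|_{T_0}+1$ and $|V|_{T_0}\geq|S|_{T_0}+2$, so no amount of bookkeeping inside a $\Sigma^1_1$ sentence $\mathsf{F}$ of that shape will exclude the boundary case. Worse, the hypothesis ``the rank excess persists'' is not $\Sigma^1_1$-packagable in the way a descending sequence is: the reflection rank of a theory is not an arithmetical (or even analytical) datum attached to its index, and your reduction lemma --- which is correct --- only shows that the lower $\prec_{\Pi^1_1(\Pi^0_3)}$-cone of $V$ is reflected by $S+\mathsf{RFN}_{\Pi^1_1(\Pi^0_3)}(S)$. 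That information flows the wrong way: it can only push the rank of $S+\mathsf{RFN}_{\Pi^1_1(\Pi^0_3)}(S)$ \emph{up}, and yields no bound on $|V|_{T_0}$.

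What is actually needed, and what the paper supplies as Lemma \ref{reflection_of_iteration}, is a conversion of the semantic rank hypothesis into a provability fact: if $|U|_{T_0}>|\gamma|$, then there is a \emph{true} $\Sigma^1_1(\Pi^0_2)$ sentence $\varphi$ with $U+\varphi\vdash\mathsf{RFN}_{\Pi^1_1(\Pi^0_3)}(\mathbf{R}^{\gamma}_{\Pi^1_1(\Pi^0_3)}(T_0))$. This is proved by external transfinite induction on $|\gamma|$, and its engine is an amalgamation trick your sketch lacks: the $\prec\gamma$-indexed family of true witnesses $\varphi_\beta$ produced by the induction hypothesis is compressed into a single true $\Sigma^1_1(\Pi^0_2)$ sentence by existentially quantifying over a sequence of $\Pi^0_2$ formulas $\langle\psi_\beta\mid\beta\prec\gamma\rangle$ together with a sequence of sets $\langle S_\beta\mid\beta\prec\gamma\rangle$ witnessing them. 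With that lemma in hand the upper bound needs none of your successor/limit compactness maneuvering: one assumes $|\mathbf{R}^{\alpha}_{\Pi^1_1(\Pi^0_3)}(T_0)|_{T_0}>|\alpha|$, applies the lemma with $U=\mathbf{R}^{\alpha}_{\Pi^1_1(\Pi^0_3)}(T_0)$ and $\gamma=\alpha$ to obtain a theory proving its own $\Pi^1_1(\Pi^0_3)$ reflection (hence its own consistency) modulo a true $\Sigma^1_1(\Pi^0_2)$ sentence, and contradicts G\"odel's second incompleteness theorem, the consistency side being furnished by the two soundness lemmas (Lemmas \ref{first principle} and \ref{second principle}) --- which is precisely where the $\Pi^1_2(\Pi^0_2)$-soundness of $T_0$ is spent, rather than in the class-inclusion remark you open with. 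Your one-step claim is in fact true, but it is equivalent to the successor instance of the proposition itself, so it cannot serve as a lemma unless proved by exactly this kind of rank-to-provability conversion; as written, your proposal reduces the theorem to an unproved restatement of it.
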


In order to prove the proposition we first prove the following lemma.

\begin{lemma} \label{reflection_of_iteration} If $|U|_{T_0}>|\alpha|$ then  there is a true $\Sigma^1_1(\Pi^0_2)$ sentence $\varphi$ such that \begin{equation}\label{rank_of_iteration_eq1}U+\varphi\vdash \mathsf{RFN}_{\Pi^1_1(\Pi^0_3)}(\mathbf{R}^{\alpha}_{\Pi^1_1(\Pi^0_3)}(T_0)).\end{equation}
\end{lemma}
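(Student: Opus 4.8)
The plan is to turn the rank inequality into an actual reflection proof by first extracting, from the gap $|U|_{T_0}>|\alpha|$, a single auxiliary theory that $U$ reflects over, and then inducting on $|\alpha|$. Since $|U|_{T_0}=\sup\{|V|_{T_0}+1 : V\prec_{\Pi^1_1(\Pi^0_3)}U,\ V\in\mathcal{E}\mbox{-}T_0\}$ exceeds $|\alpha|$, I would fix $b\in\mathcal{E}\mbox{-}T_0$ with $U\vdash\mathsf{RFN}_{\Pi^1_1(\Pi^0_3)}(b)$ and $|b|_{T_0}\geq|\alpha|$; such a $b$ exists even when $|U|_{T_0}=\boldsymbol\infty$, for otherwise every $V\prec_{\Pi^1_1(\Pi^0_3)}U$ would have $|V|_{T_0}<|\alpha|$ and then $|U|_{T_0}\leq|\alpha|$. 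The induction is on $|\alpha|$, with induction hypothesis the statement of the lemma itself (for all theories in $\mathcal{E}\mbox{-}T_0$ of smaller rank); at each step I apply the hypothesis to this fixed $b$. The guiding idea is that passing through $b$ is exactly what converts ``$U$ has larger rank'' into ``$U$ provably reflects'': $U$ cannot reflect over itself, but it does reflect over $b$.

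The transfer step rests on one complexity observation and one absorption fact. If $\varphi\in\Sigma^1_1(\Pi^0_2)$ and $\psi\in\Pi^1_1(\Pi^0_3)$, then $\varphi\rightarrow\psi$ is (over $\mathsf{RCA}_0$) again equivalent to a $\Pi^1_1(\Pi^0_3)$ sentence, since $\neg\varphi\in\Pi^1_1(\Sigma^0_2)$, the prenexing $\forall X\,A\vee\forall Y\,B\leftrightarrow\forall X\forall Y(A\vee B)$ is classically valid, and $\Sigma^0_2\subseteq\Pi^0_3$ with $\Pi^0_3$ closed under disjunction. Using the $\Pi^1_1(\Pi^0_3)$ truth-definition from \textsection 2, this yields
\[ \mathsf{RCA}_0 \vdash \mathsf{RFN}_{\Pi^1_1(\Pi^0_3)}(b) \wedge \mathsf{Tr}_{\Sigma^1_1(\Pi^0_2)}(\varphi) \rightarrow \mathsf{RFN}_{\Pi^1_1(\Pi^0_3)}(b+\varphi), \]
because any $\Pi^1_1(\Pi^0_3)$ theorem $\psi$ of $b+\varphi$ satisfies $\mathsf{Pr}_b(\varphi\rightarrow\psi)$ with $\varphi\rightarrow\psi\in\Pi^1_1(\Pi^0_3)$, so $\mathsf{RFN}(b)$ forces $\varphi\rightarrow\psi$ true and the truth of $\varphi$ then forces $\psi$ true. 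Now $U+\varphi$ proves $\mathsf{RFN}_{\Pi^1_1(\Pi^0_3)}(b)$ (from $U$) and $\mathsf{Tr}_{\Sigma^1_1(\Pi^0_2)}(\varphi)$ (since $\varphi$ is an axiom and $\mathsf{RCA}_0\vdash\varphi\leftrightarrow\mathsf{Tr}(\varphi)$), hence $\mathsf{RFN}_{\Pi^1_1(\Pi^0_3)}(b+\varphi)$. Thus it suffices to produce a true $\Sigma^1_1(\Pi^0_2)$ sentence $\varphi$ with $b+\varphi\sqsupseteq\mathbf{R}^\alpha_{\Pi^1_1(\Pi^0_3)}(T_0)$: downward monotonicity of reflection under $\sqsupseteq$ then gives $U+\varphi\vdash\mathsf{RFN}_{\Pi^1_1(\Pi^0_3)}(\mathbf{R}^\alpha_{\Pi^1_1(\Pi^0_3)}(T_0))$, which is \eqref{rank_of_iteration_eq1}.

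It remains to run the induction producing $\varphi$ with $b+\varphi\sqsupseteq\mathbf{R}^\alpha_{\Pi^1_1(\Pi^0_3)}(T_0)$, i.e.\ with $b+\varphi$ proving every axiom of $\mathbf{R}^\alpha_{\Pi^1_1(\Pi^0_3)}(T_0)$. For $|\alpha|=0$ we have $\mathbf{R}^0=T_0\sqsubseteq b$, so $\varphi:=\top$ works. When $\alpha$ has an immediate predecessor $\delta$, from $|b|_{T_0}\geq|\alpha|>|\delta|$ the induction hypothesis supplies a true $\Sigma^1_1(\Pi^0_2)$ sentence $\varphi$ with $b+\varphi\vdash\mathsf{RFN}_{\Pi^1_1(\Pi^0_3)}(\mathbf{R}^\delta)$; since $\mathbf{R}^\gamma\sqsubseteq\mathbf{R}^\delta$ for all $\gamma\preceq\delta$, monotonicity gives $b+\varphi\vdash\mathsf{RFN}_{\Pi^1_1(\Pi^0_3)}(\mathbf{R}^\gamma)$ for each such $\gamma$, so $b+\varphi$ proves all axioms of $\mathbf{R}^\alpha=T_0+\{\mathsf{RFN}(\mathbf{R}^\gamma):\gamma\preceq\delta\}$; here a single witness suffices. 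The limit case is the crux: for each $\beta\prec\alpha$ the induction hypothesis (applied to $b$, using $|b|_{T_0}>|\beta|$) gives a true $\Sigma^1_1(\Pi^0_2)$ witness $\varphi_\beta$ with $b+\varphi_\beta\vdash\mathsf{RFN}(\mathbf{R}^\beta)$, and the whole family must be folded into one true $\Sigma^1_1(\Pi^0_2)$ sentence. Writing the witnesses uniformly as $\varphi_\bullet(\beta)=\exists Z\,\mathsf{G}(\beta,Z)$ with $\mathsf{G}\in\Pi^0_2$, I would set $\varphi:=\exists W\,\forall\beta\prec\alpha\,\mathsf{G}(\beta,(W)_\beta)$, which is again $\Sigma^1_1(\Pi^0_2)$ (the universal number quantifier stays inside $\Pi^0_2$) and is true because the individual witnesses collect into one set $W$; from $\varphi$ one recovers $\forall\beta\prec\alpha\,\varphi_\bullet(\beta)$, whence $b+\varphi\vdash\forall\beta\prec\alpha\,\mathsf{RFN}(\mathbf{R}^\beta)$ and so $b+\varphi\sqsupseteq\mathbf{R}^\alpha$.

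The main obstacle is precisely this limit-stage uniformity. To obtain a \emph{single} formula $\mathsf{G}$ and a \emph{single} $b$-proof of $\forall\beta\prec\alpha\,\big(\varphi_\bullet(\beta)\rightarrow\mathsf{RFN}(\mathbf{R}^\beta)\big)$, the induction cannot be run pointwise: one must strengthen the statement to a uniform version in which the witness formula is defined by a fixed-point recursion mirroring the definition of $\mathbf{R}^{(\cdot)}$, and the internal implications are established by one (reflexive) induction; keeping every intermediate formula inside $\Sigma^1_1(\Pi^0_2)$ and $\Pi^1_1(\Pi^0_3)$ is the attendant bookkeeping. I expect the truth of $\varphi$ to be delivered by the induction itself (each assembled witness is true by hypothesis, and a set collecting them exists), so the $\Pi^1_2(\Pi^0_2)$-soundness of $T_0$ is not essential to this lemma as stated; rather, I anticipate that hypothesis being used when the lemma is applied with $U=\mathbf{R}^\alpha_{\Pi^1_1(\Pi^0_3)}(T_0)$ to establish Proposition \ref{rank_of_iteration}, where one needs $\mathbf{R}^\alpha_{\Pi^1_1(\Pi^0_3)}(T_0)+\varphi$ to remain sound so that a self-reflection contradiction can be extracted.
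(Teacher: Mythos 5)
Your overall skeleton matches the paper's proof: fix $b$ (the paper's $V$) with $U\vdash \mathsf{RFN}_{\Pi^1_1(\Pi^0_3)}(b)$ and $|b|_{T_0}\geq|\alpha|$, induct externally on $|\alpha|$ applying the hypothesis to $b$, and use the absorption fact that adding a witnessed true $\Sigma^1_1(\Pi^0_2)$ sentence preserves $\Pi^1_1(\Pi^0_3)$-soundness (your complexity computation for $\varphi\to\psi$ is correct, and this is essentially an internalized Lemma \ref{second principle}). But the limit stage contains a genuine gap, which you flag yourself without closing. First, the witnesses $\varphi_\beta$ delivered by pointwise applications of the induction hypothesis need not have the common shape $\exists Z\,\mathsf{G}(\beta,Z)$ for a single arithmetical $\mathsf{G}$, so your amalgamated sentence $\exists W\,\forall\beta\prec\alpha\,\mathsf{G}(\beta,(W)_\beta)$ is not available. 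Second, your final step needs the containment $\mathbf{R}^{\alpha}_{\Pi^1_1(\Pi^0_3)}(T_0)\sqsubseteq b+\varphi$ to be verifiable \emph{inside} $U+\varphi$ in order to invoke antitonicity of $\mathsf{RFN}$; this is a $\Pi^0_2$ assertion (``for every $\beta\prec\alpha$ there is a $b+\varphi$-proof of $\mathsf{RFN}(\mathbf{R}^\beta)$'') and does not follow from its mere external, pointwise truth---it requires precisely the single $b$-proof of $\forall\beta\prec\alpha\,(\varphi_\bullet(\beta)\to\mathsf{RFN}(\mathbf{R}^\beta))$ you identify as the obstacle. Your proposed repair is not carried out and is doubtful as described: reflexive induction proves internal theorems of the form $T\vdash\forall\alpha\,\varphi(\alpha)$, whereas the statement being inducted on here is external and semantic---it speaks of the true rank $|U|_{T_0}$ in the actual reflection order and of the truth of $\varphi$---so it is not of the shape to which Theorem \ref{reflexive induction} applies.

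The paper closes exactly this gap by a different device: no uniformization at all. At stage $\alpha$ it takes $\varphi$ to be the $\Sigma^1_1(\Pi^0_2)$ sentence asserting the existence of a sequence of $\Pi^0_2$ formulas $\langle\psi_\beta(Y)\mid\beta\prec\alpha\rangle$ and a sequence of sets $\langle S_\beta\mid\beta\prec\alpha\rangle$ such that, for all $\beta$, $\psi_\beta(S_\beta)$ holds \emph{and} $V+\exists Y\,\psi_\beta(Y)\vdash\mathsf{RFN}_{\Pi^1_1(\Pi^0_3)}(\mathbf{R}^{\beta}_{\Pi^1_1(\Pi^0_3)}(T_0))$. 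The formula codes are existentially quantified \emph{data} rather than instances of a fixed schema, so the truth of $\varphi$ follows pointwise from the induction hypothesis with no uniformity demand (the universal number quantifier over $\beta$, the $\Pi^0_2$ truth conditions, and the $\Sigma^0_1$ provability assertions together stay within $\Pi^0_2$ matrix complexity); and because the provability facts are internal to $\varphi$, one can reason in $U+\varphi$ directly: $V$ is $\Pi^1_1(\Pi^0_3)$-sound by $\mathsf{RFN}(V)$, each $V+\exists Y\,\psi_\beta(Y)$ is sound since the added sentence is witnessed by $S_\beta$, hence each $\mathsf{RFN}(\mathbf{R}^\beta)$ is true, and soundness of every finite subtheory $T_0+\mathsf{RFN}(\mathbf{R}^\beta)$ yields $\mathsf{RFN}(\mathbf{R}^\alpha)$. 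This bypasses your internal-containment and monotonicity step entirely. Your closing remark---that the $\Pi^1_2(\Pi^0_2)$-soundness of $T_0$ is not needed in the lemma itself but only when it is applied in Proposition \ref{rank_of_iteration}---is correct and agrees with the paper.
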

\begin{proof} We prove the lemma by transfinite induction on $|\alpha|$. Since $|U|_{T_0}>|\alpha|$, there is a $V\in \mathcal{E}\mbox{-}T_0$ such that $U\vdash \mathsf{RFN}_{\Pi^1_1(\Pi^0_3)}(V)$ and $|V|_{T_0}\ge |\alpha|$. By the induction hypothesis there are true $\Sigma^1_1(\Pi^0_2)$ sentences $\varphi_{\beta}$, for all $\beta\prec \alpha$, such that
  $$V+\varphi_{\beta}\vdash \mathsf{RFN}_{\Pi^1_1(\Pi^0_3)}(\mathbf{R}^{\beta}_{\Pi^1_1(\Pi^0_3)}(T_0)).$$
  
  We now formalize the latter fact by a single $\Sigma^1_1(\Pi^0_2)$ sentence $\varphi$, which states that there is a sequence of $\Pi^0_2$ formulas $\langle \psi_{\beta}(Y)\mid \beta\prec \alpha\rangle$ without free variables other that $Y$ and sequence of sets $\langle S_{\beta}\mid \beta\prec \alpha\rangle$ such that
  \begin{itemize}
  \item for all $\beta$, the formula $\psi_{\beta}(Y)$ holds on $Y=S_{\beta}$;
  \item for all $\beta$, we have $V +\exists Y\;\psi_{\beta}(Y)\vdash \mathsf{RFN}_{\Pi^1_1(\Pi^0_3)}(\mathbf{R}^{\beta}_{\Pi^1_1(\Pi^0_3)}(T_0))$.
  \end{itemize}
  It is easy to see that indeed we could form a $\Sigma^1_1(\Pi^0_2)$ sentence $\varphi$ constituting the desired formalization.

  Now let us show that $\varphi$ is true. Without loss of generality, we may assume that each $\varphi_{\beta}$ is of the form $\exists Y \theta_{\beta}(Y)$, where all $\theta_{\beta}(Y)$ are $\Pi^0_2$-formulas. We put each $\psi_{\beta}$ to be $\theta_{\beta}$ and for each $\beta\prec \alpha$ we choose $S_{\beta}$ so that $\theta_{\beta}(Y)$ holds on $Y=S_{\beta}$. Thus we see that $\varphi$ is true.

%$$\begin{aligned} \exists X \; \forall \beta\prec \alpha\;\bigg(&\exists! \psi(Y)\in \Pi^0_2 \; \langle \langle \beta,0\rangle,\psi\rangle \in X  \land\\ & \forall \psi(Y)\in \Pi^0_2 \;\Big(\langle \langle \beta,0\rangle,\psi\rangle \in X \to\\ &\;\;\;\;\;\;\;\big( \mathsf{Tr}_{\boldsymbol\Pi^0_2}(\psi(X\upharpoonright \langle \beta,1\rangle))\land \\  &\;\;\;\;\;\;\;\;\;\, \mathsf{Pr}_{\mathsf{V}}(\exists Y\; \psi(Y)\to \mathsf{RFN}_{\Pi^1_1(\Pi^0_3)}(\mathbf{R}^{\beta}_{\Pi^1_1(\Pi^0_3)}(T_0)))\big)\Big) \bigg).\end{aligned}$$
%Note that without loss of generality we could assume that each $\varphi_{\beta}$ is of the form $\exists X \; \psi_{\beta}(X)$, for some $\Pi^0_2$ formula $\psi_{\beta}$. We fix sets $C_{\beta}$ such that each $\psi_{\beta}(C_{\beta})$ is true. 
%It is easy to see that $\varphi$ is a true $\Sigma^1_1(\Pi^0_2)$ sentence witnessed by the set $$\{\langle \langle \beta,0\rangle,\psi_{\beta}\rangle\mid \beta\prec \alpha\}\cup\{\langle \langle \beta,1\rangle,a\rangle\mid \beta\prec\alpha, a\in C_{\beta}\}.$$

We establish  (\ref{rank_of_iteration_eq1}) by reasoning in $U+\varphi$ and showing that the theory $\mathbf{R}^{\alpha}_{\Pi^1_1(\Pi^0_3)}(T_0)$ is $\Pi^1_1(\Pi^0_3)$-sound. It is enough for us to establish the $\Pi^1_1(\Pi^0_3)$-soundness of each finite subtheory of $\mathbf{R}^{\alpha}_{\Pi^1_1(\Pi^0_3)}(T_0)$, i.e., each theory
$$T_0+ \mathsf{RFN}_{\Pi^1_1(\Pi^0_3)}(\mathbf{R}^{\beta}_{\Pi^1_1(\Pi^0_3)}(T_0)),$$
for $\beta \prec \alpha$. We know (from $U$) that $V$ is $\Pi^1_1(\Pi^0_3)$-sound. And also (from $\varphi$) we have a $\Pi^0_2$-formula $\psi_{\beta}(Y)$ such that
$$V+\exists Y\; \psi_{\beta}(Y)\vdash \mathsf{RFN}_{\Pi^1_1(\Pi^0_3)}(\mathbf{R}^{\beta}_{\Pi^1_1(\Pi^0_3)}(T_0))$$
and a set $S_{\beta}$ such that $\psi_{\beta}(S_{\beta})$ holds. From the $\Pi^1_1(\Pi^0_3)$-soundness of $V$ we infer the $\Pi^1_1(\Pi^0_3)$-soundness of $V+\exists Y\; \psi_{\beta}(Y)$. Therefore $T_0+ \mathsf{RFN}_{\Pi^1_1(\Pi^0_3)}(\mathbf{R}^{\beta}_{\Pi^1_1(\Pi^0_3)}(T_0))$ is $\Pi^1_1(\Pi^0_3)$-sound. 
\end{proof}

We are nearly in a position to prove Proposition \ref{rank_of_iteration}. Before doing so, we pause to state two lemmas, the truth of which may easily be verified.

\begin{lemma}[$\mathsf{RCA}_0$]
\label{first principle}
\item If $T$ is $\Pi^1_2(\Pi^0_2)$ sound and $\alpha$ is a well-ordering, then $\mathbf{R}^\alpha_{\Pi^1_1(\Pi^0_3)}(T)$ is $\Pi^1_1(\Pi^0_3)$ sound.
\end{lemma}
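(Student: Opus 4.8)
The plan is to argue by transfinite induction on the well-order $\alpha$, showing that every $\Pi^1_1(\Pi^0_3)$ theorem of $\mathbf{R}^\alpha_{\Pi^1_1(\Pi^0_3)}(T)$ is true. Assume the claim for all $\beta\prec\alpha$, and let $\varphi\equiv\forall Z\,\chi(Z)$ with $\chi\in\Pi^0_3$ be a $\Pi^1_1(\Pi^0_3)$ theorem of $\mathbf{R}^\alpha_{\Pi^1_1(\Pi^0_3)}(T)$. Since a proof uses only finitely many axioms, there are $\beta_1,\dots,\beta_k\prec\alpha$ with $T+\mathsf{RFN}_{\Pi^1_1(\Pi^0_3)}(\mathbf{R}^{\beta_1}_{\Pi^1_1(\Pi^0_3)}(T))+\dots+\mathsf{RFN}_{\Pi^1_1(\Pi^0_3)}(\mathbf{R}^{\beta_k}_{\Pi^1_1(\Pi^0_3)}(T))\vdash\varphi$, and by the induction hypothesis each $\mathbf{R}^{\beta_i}_{\Pi^1_1(\Pi^0_3)}(T)$ is $\Pi^1_1(\Pi^0_3)$ sound, so each reflection instance is a \emph{true} $\Pi^1_1(\Pi^0_3)$ sentence.

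I would first record two elementary facts, both formalizable in $\mathsf{RCA}_0$ via the partial truth definitions of Section \ref{definitions}. \textbf{(a)} A $\Pi^1_2(\Pi^0_2)$-sound $T\sqsupseteq\mathsf{RCA}_0$ is already $\Pi^1_1(\Pi^0_3)$ sound: every $\Pi^1_1(\Sigma^0_2)$ sentence $\forall X\exists a\,\rho$ (with $\rho\in\Pi^0_1$) is, coding the number witness $a$ into a set, $\mathsf{RCA}_0$-equivalent to a $\Pi^1_2(\Pi^0_1)\subseteq\Pi^1_2(\Pi^0_2)$ sentence, so $T$ is $\Pi^1_1(\Sigma^0_2)$ sound, i.e.\ consistent with every true $\Sigma^1_1(\Pi^0_2)$ sentence; a false $\Pi^1_1(\Pi^0_3)$ theorem $\varphi$ would contradict this, since $\neg\varphi$—coding the leading number-existential of its $\Sigma^0_3$ matrix into the set quantifier—is a true $\Sigma^1_1(\Pi^0_2)$ sentence inconsistent with $T$. \textbf{(b)} If $S$ is $\Pi^1_1(\Pi^0_3)$ sound and $\sigma$ is a true $\Sigma^1_1(\Pi^0_2)$ sentence, then $S+\sigma$ is $\Pi^1_1(\Pi^0_3)$ sound: from $S+\sigma\vdash\psi$ we get $S\vdash\sigma\to\psi$, and $\sigma\to\psi$ is again $\Pi^1_1(\Pi^0_3)$ (the existential set quantifier of $\sigma$ turns universal and merges with that of $\psi$, while $\neg(\Pi^0_2)\in\Sigma^0_2\subseteq\Pi^0_3$), so $\sigma\to\psi$ is true and hence so is $\psi$.

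The crux is to feed the reflection instances into fact (b). One cannot simply add the true $\Pi^1_1(\Pi^0_3)$ sentences $\mathsf{RFN}_{\Pi^1_1(\Pi^0_3)}(\mathbf{R}^{\beta_i}_{\Pi^1_1(\Pi^0_3)}(T))$ to $T$: because $\mathsf{RFN}_{\Pi^1_1(\Pi^0_3)}$ is genuinely $\Pi^0_3$ in its matrix, the implication $\mathsf{RFN}_{\Pi^1_1(\Pi^0_3)}(\mathbf{R}^{\beta_i}_{\Pi^1_1(\Pi^0_3)}(T))\to\varphi$ lies in $\Pi^1_2(\Sigma^0_3)$, strictly above the available $\Pi^1_2(\Pi^0_2)$-soundness, and indeed adding an \emph{arbitrary} true $\Pi^1_1(\Pi^0_3)$ sentence can destroy $\Pi^1_1(\Pi^0_3)$ soundness of a merely $\Pi^1_2(\Pi^0_2)$-sound theory. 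The plan is instead to \emph{witness} each reflection principle, exactly as in the proof of Lemma \ref{reflection_of_iteration}: produce true $\Sigma^1_1(\Pi^0_2)$ sentences $\sigma_i$—asserting the existence of sets realizing the $\Pi^0_2$ conditions that encode the soundness of $\mathbf{R}^{\beta_i}_{\Pi^1_1(\Pi^0_3)}(T)$—with $T+\sigma_i\vdash\mathsf{RFN}_{\Pi^1_1(\Pi^0_3)}(\mathbf{R}^{\beta_i}_{\Pi^1_1(\Pi^0_3)}(T))$. Setting $\sigma:=\sigma_1\wedge\dots\wedge\sigma_k$ (still true and $\Sigma^1_1(\Pi^0_2)$) gives $T+\sigma\vdash\varphi$, and facts (a) and (b) then yield that $\varphi$ is true, closing the induction. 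The main obstacle is precisely the construction and propagation of these $\Sigma^1_1(\Pi^0_2)$ witnesses through the transfinite recursion—so that the soundness secured at each lower stage is delivered in the low-complexity form demanded by (b)—and it is exactly here that the hypothesis of $\Pi^1_2(\Pi^0_2)$ soundness (rather than mere $\Pi^1_1(\Pi^0_3)$ soundness) of $T$ is essential.
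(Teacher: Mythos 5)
Your facts (a) and (b) are correct (indeed (b) is precisely Lemma \ref{second principle}), and your observation that one cannot simply add the true reflection axioms to a merely $\Pi^1_2(\Pi^0_2)$-sound $T$ is well taken. But the crux of your plan is not just an obstacle---it is impossible. There are \emph{no} true $\Sigma^1_1(\Pi^0_2)$ sentences $\sigma_i$ with $T+\sigma_i\vdash\mathsf{RFN}_{\Pi^1_1(\Pi^0_3)}\big(\mathbf{R}^{\beta_i}_{\Pi^1_1(\Pi^0_3)}(T)\big)$, not even for $\beta_i=0$. Since $\mathsf{EA}$ proves $T\sqsubseteq\mathbf{R}^{\beta_i}_{\Pi^1_1(\Pi^0_3)}(T)$, such a $\sigma_i$ would yield $T+\sigma_i\vdash\mathsf{RFN}_{\Pi^1_1(\Pi^0_3)}(T)$; and because $\mathsf{Pr}_{T+\sigma_i}(\psi)$ provably reduces to $\mathsf{Pr}_T(\sigma_i\to\psi)$, with $\sigma_i\to\psi$ again $\Pi^1_1(\Pi^0_3)$ (your own fact (b) computation), this gives $T+\sigma_i\vdash\mathsf{RFN}_{\Pi^1_1(\Pi^0_3)}(T+\sigma_i)$, hence $T+\sigma_i\vdash\mathsf{Con}(T+\sigma_i)$, so $T+\sigma_i$ is inconsistent by G\"odel's second incompleteness theorem---contradicting your fact (a), which makes $T$ consistent with every true $\Sigma^1_1(\Pi^0_2)$ sentence. (This G\"odelian argument is exactly how the paper proves Proposition \ref{rank_of_iteration}.) Your appeal to Lemma \ref{reflection_of_iteration} misreads where that lemma gets its strength: the hypothesis $|U|_{T_0}>|\alpha|$ supplies an intermediate theory $V$ with $U\vdash\mathsf{RFN}_{\Pi^1_1(\Pi^0_3)}(V)$ and $|V|_{T_0}\ge|\alpha|$; the true $\Sigma^1_1(\Pi^0_2)$ sentence there only transports witnessing data, while the soundness engine is $V$'s reflection, provable in $U$ precisely because $U$ sits strictly above the iterated theory in rank. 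In Lemma \ref{first principle} the reflecting theory would have to be $T$ itself, at the bottom of the hierarchy being built, so no such engine is available and the witnesses you need cannot exist.

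The lemma is nevertheless true, and the correct easy proof dispenses with witnessing sentences entirely: the soundness of each stage is secured \emph{semantically} by transfinite induction along $\alpha$, and the hypothesis of $\Pi^1_2(\Pi^0_2)$ soundness plays a different role from the one you assign it---not to propagate witnesses, but because the \emph{negation} of a reflection statement is itself equivalent to a $\Sigma^1_1(\Pi^0_2)$, hence (with a dummy universal set quantifier) $\Pi^1_2(\Pi^0_2)$, sentence. First upgrade your (b) to the level of the hypothesis: if $T$ is $\Pi^1_2(\Pi^0_2)$ sound and $\sigma$ is a true $\Sigma^1_1(\Pi^0_2)$ sentence, then $T+\sigma$ is $\Pi^1_2(\Pi^0_2)$ sound, since in $\sigma\to\forall X\exists Y\,\pi$ the existential number quantifier arising from $\neg(\Pi^0_2)$ can be absorbed into $\exists Y$, keeping the matrix $\Pi^0_2$. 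Now for the induction step: suppose every $\mathbf{R}^{\gamma}_{\Pi^1_1(\Pi^0_3)}(T)$ with $\gamma\prec\beta$ is sound, and $\mathbf{R}^{\beta}_{\Pi^1_1(\Pi^0_3)}(T)\vdash\varphi$ with $\varphi\in\Pi^1_1(\Pi^0_3)$ false. Using provable monotonicity of the reflection axioms in $\gamma$, take $\gamma\prec\beta$ maximal among the finitely many axioms used, so that $T\vdash\neg\varphi\to\neg\mathsf{RFN}_{\Pi^1_1(\Pi^0_3)}\big(\mathbf{R}^{\gamma}_{\Pi^1_1(\Pi^0_3)}(T)\big)$. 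Then $\neg\varphi$ is a true $\Sigma^1_1(\Pi^0_2)$-equivalent sentence, so $T+\neg\varphi$ is $\Pi^1_2(\Pi^0_2)$ sound by the upgraded (b); but it proves $\neg\mathsf{RFN}_{\Pi^1_1(\Pi^0_3)}\big(\mathbf{R}^{\gamma}_{\Pi^1_1(\Pi^0_3)}(T)\big)$, a $\Pi^1_2(\Pi^0_2)$-equivalent sentence, which would therefore be true---contradicting the induction hypothesis. This is where $\Pi^1_2(\Pi^0_2)$ soundness (rather than $\Pi^1_1(\Pi^0_3)$ soundness) is genuinely needed, exactly neutralizing the counterexamples you correctly worried about.
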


\begin{lemma}[$\mathsf{RCA}_0$]
\label{second principle}
If $T$ is $\Pi^1_1(\Pi^0_3)$ sound and $\varphi$ is a true $\Sigma^1_1(\Pi^0_2)$ formula, then $T+\varphi$ is $\Pi^1_1(\Pi^0_3)$ sound.
\end{lemma}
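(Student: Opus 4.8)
The plan is to argue directly inside $\mathsf{RCA}_0$, using the formalized deduction theorem, the truth-predicate form of the reflection schema, and the closure of the relevant analytic complexity classes under the implication connective. Assume $\mathsf{RFN}_{\Pi^1_1(\Pi^0_3)}(T)$ together with $\mathsf{Tr}(\varphi)$, where $\varphi$ is (the code of) a $\Sigma^1_1(\Pi^0_2)$ sentence, say $\varphi\equiv\exists X\,\sigma(X)$ with $\sigma\in\Pi^0_2$. To establish $\mathsf{RFN}_{\Pi^1_1(\Pi^0_3)}(T+\varphi)$, I would fix an arbitrary $\Pi^1_1(\Pi^0_3)$ sentence $\pi\equiv\forall Y\,\rho(Y)$, $\rho\in\Pi^0_3$, and suppose $\mathsf{Pr}_{T+\varphi}(\pi)$. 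By the formalized deduction theorem (available already in $\mathsf{EA}$) this yields $\mathsf{Pr}_T(\varphi\to\pi)$.

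The crux is to see that $\varphi\to\pi$ is, provably in $\mathsf{RCA}_0$ and uniformly in the codes of $\varphi$ and $\pi$, equivalent to a $\Pi^1_1(\Pi^0_3)$ sentence. Pure predicate logic rewrites $\varphi\to\pi$ as $\forall X\,\forall Y\,(\lnot\sigma(X)\lor\rho(Y))$; here $\lnot\sigma$ is $\Sigma^0_2$ and hence, by adding a dummy quantifier, $\Pi^0_3$, and a disjunction of two $\Pi^0_3$ matrices prenexes to a single $\Pi^0_3$ matrix via the identities $\forall a\,P\lor\forall a'\,Q\leftrightarrow\forall a\,\forall a'\,(P\lor Q)$ and $\exists b\,P\lor\exists b'\,Q\leftrightarrow\exists b\,\exists b'\,(P\lor Q)$ and contracting like quantifiers by pairing. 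The two outer set quantifiers then contract to a single $\forall Z$, giving a $\Pi^1_1(\Pi^0_3)$ sentence $\chi$. Since this manipulation is Kalmar elementary, the map $\langle\varphi,\pi\rangle\mapsto\chi$ is elementary, $\mathsf{EA}$ verifies $\mathsf{Pr}_{\mathsf{RCA}_0}\big((\varphi\to\pi)\leftrightarrow\chi\big)$, and therefore $\mathsf{Pr}_T(\chi)$; moreover the partial truth predicates cohere on shared subformulas so that $\mathsf{Tr}(\chi)$ is equivalent to $\mathsf{Tr}(\varphi)\to\mathsf{Tr}(\pi)$.

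The conclusion is then immediate: applying reflection to $\mathsf{Pr}_T(\chi)$ gives $\mathsf{Tr}(\chi)$, i.e.\ $\mathsf{Tr}(\varphi)\to\mathsf{Tr}(\pi)$, and since $\mathsf{Tr}(\varphi)$ holds by hypothesis we obtain $\mathsf{Tr}(\pi)$. As $\pi$ was an arbitrary $\Pi^1_1(\Pi^0_3)$ theorem of $T+\varphi$, this establishes $\mathsf{RFN}_{\Pi^1_1(\Pi^0_3)}(T+\varphi)$.

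I expect the middle paragraph to demand the most care: one must check that the class $\Sigma^1_1(\Pi^0_2)\to\Pi^1_1(\Pi^0_3)$ lands back inside $\Pi^1_1(\Pi^0_3)$ and that the prenexing is performed by an elementary function whose correctness (including the compatibility of the several truth predicates) is verifiable in $\mathsf{EA}$, so that the argument is genuinely formalizable in $\mathsf{RCA}_0$ and not merely semantically valid. One can sidestep this syntactic bookkeeping by instead invoking the characterization, provable in $\mathsf{RCA}_0$, that a theory is $\Pi^1_1(\Pi^0_3)$ sound iff it is consistent with every true $\Sigma^1_1(\Sigma^0_3)$ sentence: it then suffices to observe that $\varphi\land\psi$ is again a true $\Sigma^1_1(\Sigma^0_3)$ sentence for any true such $\psi$, since $\Sigma^1_1(\Pi^0_2)\subseteq\Sigma^1_1(\Sigma^0_3)$ and this class is closed under conjunction.
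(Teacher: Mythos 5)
The paper offers no proof to compare against: this lemma is stated, together with Lemma \ref{first principle}, only with the remark that its ``truth may easily be verified,'' so your task was to supply the verification, and you have in fact supplied two correct ones. Your first route (formalized deduction theorem, prenexing $\varphi\to\pi$ back into $\Pi^1_1(\Pi^0_3)$, then applying the truth-definition form of $\mathsf{RFN}_{\Pi^1_1(\Pi^0_3)}(T)$) is sound: the quantifier-distribution identities you cite are classically valid, $\Sigma^0_2\lor\Pi^0_3$ does collapse to $\Pi^0_3$, the two outer set quantifiers contract by the even/odd pairing available via $\Delta^0_1$ comprehension, and since the map $\langle\varphi,\pi\rangle\mapsto\chi$ is Kalmar elementary and the paper's partial truth definitions (built from the $\mathbf{\Pi}_n$, $\mathbf{\Sigma}_n$ truth definitions over $\mathsf{EA}(X)$) provably commute with such syntactic transformations, the argument goes through uniformly inside $\mathsf{RCA}_0$, which matters because the lemma is quantified over codes there. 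Your second route is almost certainly what the authors had in mind: they explicitly recall before Theorem \ref{well-foundedness_reflection} that, provably in $\mathsf{ACA}_0$, $\Pi^1_1$ soundness is equivalent to consistency with every true $\Sigma^1_1$ sentence, and they invoke the $\mathsf{RCA}_0$ analogue for Theorem \ref{RCA_0_reflection}; granted that characterization, the lemma reduces to your closure observation that $\Sigma^1_1(\Pi^0_2)\subseteq\Sigma^1_1(\Sigma^0_3)$ and the latter class is provably closed under conjunction (merging the two existential set quantifiers by the same pairing). Two small caveats, neither a gap: (i) you tacitly use $T\sqsupseteq\mathsf{RCA}_0$, both to import the provable equivalence $(\varphi\to\pi)\leftrightarrow\chi$ into $T$ and for the characterization itself; this is harmless since the lemma is only ever applied to such $T$, but it should be stated; (ii) the second route does not wholly sidestep the syntactic bookkeeping---the backward direction of the iff-characterization (consistency with all true duals implies soundness) needs exactly the dual class $\Sigma^1_1(\Sigma^0_3)$ that you use, plus the same provable prenexing and truth-commutation facts, whereas the paper's $\mathsf{RCA}_0$ variant is stated with $\Sigma^1_1(\Pi^0_2)$, which suffices only for the forward direction. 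So the bookkeeping is repackaged into the characterization rather than eliminated, but the proof is correct either way.
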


\begin{proof} First let us notice that $|\mathbf{R}_{\Pi^1_1(\Pi^0_3)}^{\alpha}(\mathit{T}_0)|_{\mathit{T}_0}\ge|\alpha|$. Indeed this inequality holds since there is a homomorphism $\beta\mapsto \mathbf{R}_{\Pi^1_1(\Pi^0_3)}^{\beta}(\mathit{T}_0)$ of the low $\prec$-cone of $\alpha$ (the order $(\{\beta\mid \beta\preceq\alpha\},\prec)$) to the low $\prec_{\Pi^1_1(\Pi^0_3)}$-cone of $\mathbf{R}_{\Pi^1_1(\Pi^0_3)}^{\alpha}(\mathit{T}_0)$ in $\mathcal{E}\mbox{-}\mathit{T}_0$.% (the set $\{\mathbf{R}_{\Pi^1_1(\Pi^0_3)}^{\alpha}(\mathit{T}_0)\}\cup\{\mathit{U}\prec_{\Pi^1_1(\Pi^0_3)}\mathbf{R}_{\Pi^1_1(\Pi^0_3)}^{\alpha}(\mathit{T}_0)\mid \mathit{U}\in \mathcal{E}\mbox{-}\mathit{T}_0\}$ with the order $\prec_{\Pi^1_1(\Pi^0_3)}$).

  Now assume for a contradiction that $|\mathbf{R}_{\Pi^1_1(\Pi^0_3)}^{\alpha}(\mathit{T}_0)|_{\mathit{T}_0}>|\alpha|$. In this case by Lemma \ref{reflection_of_iteration} we have $$\mathbf{R}_{\Pi^1_1(\Pi^0_3)}^{\alpha}(\mathit{T}_0)+\varphi\vdash \mathsf{RFN}_{\Pi^1_1(\Pi^0_3)}(\mathbf{R}_{\Pi^1_1(\Pi^0_3)}^{\alpha}(\mathit{T}_0)),$$ for some true $\Sigma^1_1(\Pi^0_2)$ sentence $\varphi$. We derive
  $$\begin{aligned} \mathbf{R}_{\Pi^1_1(\Pi^0_3)}^{\alpha}(\mathit{T}_0)+\varphi&\vdash \mathsf{RFN}_{\Pi^1_1(\Pi^0_3)}(\mathbf{R}_{\Pi^1_1(\Pi^0_3)}^{\alpha}(\mathit{T}_0)+\varphi)\\
    & \vdash \mathsf{Con}(\mathbf{R}_{\Pi^1_1(\Pi^0_3)}^{\alpha}(\mathit{T}_0)+\varphi).\end{aligned}$$
 So $\mathbf{R}_{\Pi^1_1(\Pi^0_3)}^{\alpha}(\mathit{T}_0)+\varphi$ is inconsistent by G\"odel's Second Incompleteness Theorem.
 Yet by Lemma \ref{first principle}, since $T_0$ is $\Pi^1_2(\Pi^0_2)$ sound, $\mathbf{R}_{\Pi^1_1(\Pi^0_3)}^{\alpha}(\mathit{T}_0)$ is $\Pi^1_1(\Pi^0_3)$ sound. Thus, by Lemma \ref{second principle}, $\mathbf{R}_{\Pi^1_1(\Pi^0_3)}^{\alpha}(\mathit{T}_0)+\varphi$ is consistent. This is a contradiction.
\end{proof}

\subsection{Proof-theoretic ordinals}

For a theory $T\sqsupseteq \mathsf{RCA}_0$ we write $|T|_{\mathsf{WO}}$ to denote the \emph{ proof-theoretic ordinal} of $T$, which we define as the supremum of the ranks $|\alpha|$ of ordinal notations $\alpha$ such that $T\vdash \mathsf{WO}(\alpha)$. The formula $\mathsf{WO}(\alpha)$ is $$\forall X((\exists \beta\prec \alpha)\;\beta\in X\to (\exists \beta\prec\alpha)(\beta\in X\land (\forall \gamma\prec \beta)\gamma\not\in X)).$$

\begin{remark} One may also define $|T|_{\mathsf{WO}}$ for second-order theories  in terms of primitive recursive well-orders (alternatively recursive well-orders), i.e., $|T|_{\mathsf{WO}}$ then would be defined as  the supremum of order types of primitive recursive ($T$-provably recursive) binary relations $\triangleright$ such that $T\vdash \mathsf{WO}(\triangleright)$. If $T$ proves the well-orderedness of an ill-founded relation then this supremum by definition is $\boldsymbol \infty$. We note that our definition coincides with the definitions above for $T\sqsupseteq \mathsf{RCA}_0$. The connection between presentations of ordinals of various degrees of ``niceness'' is extensively discussed in M.~Rathjen's survey \cite{rathjen1999realm}, and the equivalence under consideration could be proved by a slight extension of the proof of \cite[Proposition~2.19(i)]{rathjen1999realm}.\footnote{The proof of \cite[Proposition~2.19(i)]{rathjen1999realm} implicitly uses $\Sigma_1$-collection inside the theory $T$, although the claim is stated for all $T$ containing $\mathsf{PRA}$. But this issue doesn't affect the theories that we are interested in since $\mathsf{RCA}_0\vdash \mathsf{B\Sigma}_1$}
  \end{remark}

\begin{theorem} \label{ACA_0_ordinal_analysis}$|\mathbf{R}^{\alpha}_{\Pi^1_1}(\mathsf{ACA}_0)|_{\mathsf{WO}}=|\varepsilon_{\alpha}|$.
\end{theorem}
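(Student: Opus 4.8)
The plan is to transfer the whole computation to the theory $\mathbf{R}^{\varepsilon_\alpha}_{\Pi^1_1(\Pi^0_3)}(\mathsf{RCA}_0)$ and then bound its proof-theoretic ordinal from both sides. The key first move is to observe that each $\mathsf{WO}(\beta)$ has the form $\forall X\,\theta$ with $\theta$ equivalent to a $\Sigma^0_2$, hence $\Pi^0_3$, formula, so that $\mathsf{WO}(\beta)$ is a $\Pi^1_1(\Pi^0_3)$ sentence. Therefore Theorem~\ref{main tool} guarantees that $\mathbf{R}^\alpha_{\Pi^1_1}(\mathsf{ACA}_0)$ and $\mathbf{R}^{\varepsilon_\alpha}_{\Pi^1_1(\Pi^0_3)}(\mathsf{RCA}_0)$ prove exactly the same sentences $\mathsf{WO}(\beta)$, whence $|\mathbf{R}^\alpha_{\Pi^1_1}(\mathsf{ACA}_0)|_{\mathsf{WO}}=|\mathbf{R}^{\varepsilon_\alpha}_{\Pi^1_1(\Pi^0_3)}(\mathsf{RCA}_0)|_{\mathsf{WO}}$. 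It is precisely this step that replaces the iteration length $\alpha$ by $\varepsilon_\alpha$, and essentially all of the $\varepsilon$ in the statement is created here.

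For the lower bound set $\gamma=\varepsilon_\alpha$ and fix any notation $\beta\prec\gamma$; I would show $\mathbf{R}^\gamma_{\Pi^1_1(\Pi^0_3)}(\mathsf{RCA}_0)\vdash\mathsf{WO}(\beta)$. Reasoning inside this theory, assume $\lnot\mathsf{WO}(\beta)$ and extract (taking least codes, so that no choice principle is needed) a set-coded infinite $\prec_\beta$-descending sequence $b_0\succ b_1\succ\cdots$ of elements below $\beta$. Consider the set-coded sequence of theories $T_i:=\mathbf{R}^{b_i}_{\Pi^1_1(\Pi^0_3)}(\mathsf{RCA}_0)$. Then each $T_i\sqsupseteq\mathsf{RCA}_0$; since $b_{i+1}\prec b_i$ we get $T_i\vdash\mathsf{RFN}_{\Pi^1_1(\Pi^0_3)}(T_{i+1})$ directly from the definition of iterated reflection; and since $b_0\prec\gamma$ the axiom $\mathsf{RFN}_{\Pi^1_1(\Pi^0_3)}(\mathbf{R}^{b_0}_{\Pi^1_1(\Pi^0_3)}(\mathsf{RCA}_0))$, i.e.\ the $\Pi^1_1(\Pi^0_3)$-soundness of $T_0$, is literally one of the axioms of $\mathbf{R}^\gamma_{\Pi^1_1(\Pi^0_3)}(\mathsf{RCA}_0)$. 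This is exactly a witness to the statement $\mathsf{DS}$ that the proof of Theorem~\ref{RCA_0_reflection} refutes over $\mathsf{RCA}_0$, a contradiction. As $\beta$ ranges over notations below $\gamma$ the order types $|\beta|$ are cofinal in $|\gamma|=\varepsilon_\alpha$ (every $\varepsilon$-number is a limit), so this yields $|\mathbf{R}^{\varepsilon_\alpha}_{\Pi^1_1(\Pi^0_3)}(\mathsf{RCA}_0)|_{\mathsf{WO}}\geq\varepsilon_\alpha$.

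For the upper bound I would instead route through the first-order side. By Lemma~\ref{key conservation lemma} together with Theorem~\ref{generalized Schmerl}, $\mathbf{R}^\alpha_{\Pi^1_1}(\mathsf{ACA}_0)$ is $\mathbf{\Pi}^0_3$-conservative over the pseudo-$\Pi^1_1$ theory $\mathbf{R}^{\varepsilon_\alpha}_{\mathbf{\Pi}^0_3}(\mathsf{EA}^+(X))$. Writing $\mathsf{WO}(\delta)$ as $\forall X\,\psi_\delta(X)$ with $\psi_\delta\in\mathbf{\Pi}^0_3$, provability of $\mathsf{WO}(\delta)$ in the second-order theory is equivalent to provability of the free-set-variable transfinite-induction statement $\psi_\delta(X)$ in $\mathbf{R}^{\varepsilon_\alpha}_{\mathbf{\Pi}^0_3}(\mathsf{EA}^+(X))$. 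The classical ordinal-assignment (cut-elimination) bound underlying Schmerl's analysis shows that this theory proves $\psi_\delta(X)$ only when the order type $|\delta|$ is $<\varepsilon_\alpha$; because this is a bound on order types it is insensitive to the chosen notation for $\delta$. Hence $|\mathbf{R}^\alpha_{\Pi^1_1}(\mathsf{ACA}_0)|_{\mathsf{WO}}\leq\varepsilon_\alpha$, and combining with the lower bound gives the equality $|\mathbf{R}^\alpha_{\Pi^1_1}(\mathsf{ACA}_0)|_{\mathsf{WO}}=|\varepsilon_\alpha|$.

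The main obstacle is the upper bound: one must know, notation-independently, that iterated $\mathbf{\Pi}^0_3$-reflection over $\mathsf{EA}^+(X)$ caps provable free-variable transfinite induction at exactly $\varepsilon_\alpha$, and this is where the classical proof-theoretic machinery (rather than the reflection-rank arguments of this section) does the work. By contrast the lower bound is the clean, conceptually new half; but it genuinely requires the passage to $\mathsf{RCA}_0$. Indeed, the reflection rank of $\mathbf{R}^\alpha_{\Pi^1_1}(\mathsf{ACA}_0)$ over $\mathsf{ACA}_0$ is only $\alpha$ by Proposition~\ref{rank_of_iteration}, so running the descending-sequence argument directly over $\mathsf{ACA}_0$ would prove merely $|T|_{\mathsf{WO}}\geq\alpha$; it is the inflation to iteration length $\varepsilon_\alpha$ over $\mathsf{RCA}_0$ that supplies the missing well-orderings. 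Finally, in the degenerate case where $\alpha$ is ill-founded, $\varepsilon_\alpha$ is ill-founded as well and both sides equal $\boldsymbol\infty$, so the equality still holds.
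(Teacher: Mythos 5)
Your reduction via Theorem~\ref{main tool} is exactly the paper's first move, and your lower bound, while correct in spirit, takes a genuinely different route: the paper proves Lemma~\ref{it_ref_to_wo} by reflexive induction, namely $\mathsf{RCA}_0\vdash\forall\gamma\,\bigl(\mathsf{RFN}_{\Pi^1_1(\Pi^0_3)}(\mathbf{R}^{\gamma}_{\Pi^1_1(\Pi^0_3)}(\mathsf{RCA}_0))\to\mathsf{WO}(\gamma)\bigr)$, whereas you internalize the descending-sequence refutation of Theorem~\ref{RCA_0_reflection}. Your version has one slip that needs repair: for the internally extracted sequence $(b_i)$, the sentence $\mathsf{RFN}_{\Pi^1_1(\Pi^0_3)}(\mathbf{R}^{b_0}_{\Pi^1_1(\Pi^0_3)}(\mathsf{RCA}_0))$ is \emph{not} ``literally one of the axioms'' of $\mathbf{R}^{\varepsilon_\alpha}_{\Pi^1_1(\Pi^0_3)}(\mathsf{RCA}_0)$ --- the axioms are indexed by standard notations, while $b_0$ is a quantified internal object, possibly nonstandard. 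You must instead invoke the axiom $\mathsf{RFN}_{\Pi^1_1(\Pi^0_3)}(\mathbf{R}^{\beta}_{\Pi^1_1(\Pi^0_3)}(\mathsf{RCA}_0))$ for your fixed standard $\beta\prec\varepsilon_\alpha$ and then use the $\mathsf{EA}$-provable, uniform-in-$b$ inclusion $\mathbf{R}^{b}_{\Pi^1_1(\Pi^0_3)}(\mathsf{RCA}_0)\sqsubseteq\mathbf{R}^{\beta}_{\Pi^1_1(\Pi^0_3)}(\mathsf{RCA}_0)$ for $b\prec\beta$, which transfers reflection downward to the internal $T_0$. With that fix your lower bound goes through.

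The genuine gap is the upper bound. You assert that ``the classical ordinal-assignment (cut-elimination) bound underlying Schmerl's analysis'' shows $\mathbf{R}^{\varepsilon_\alpha}_{\mathbf{\Pi}^0_3}(\mathsf{EA}^+(X))$ proves $\psi_\delta(X)$ only when $|\delta|<|\varepsilon_\alpha|$, and that this ``is insensitive to the chosen notation for $\delta$.'' But notation-independence is precisely the point at issue, not something the classical results supply: Schmerl-style theorems are conservation and lower-bound results along a \emph{fixed natural} notation system (the paper explicitly contrasts its notation-independent computations with Schmerl's notation-dependent ones), and no boundedness theorem over arbitrary notations $\delta$ --- let alone for iterations along an arbitrary, possibly ill-founded, notation $\varepsilon_\alpha$ --- is proved in the paper or available off the shelf; making it precise would require developing height-bounded infinitary proof theory for these iterated-reflection theories, the very machinery the paper deliberately avoids. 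The paper's actual upper bound is purely rank-theoretic: if $\mathbf{R}^{\alpha}_{\Pi^1_1}(\mathsf{ACA}_0)\vdash\mathsf{WO}(\beta)$, then by Lemma~\ref{wo_to_iterated_special_case} it proves $\mathsf{RFN}_{\Pi^1_1(\Pi^0_3)}(\mathbf{R}^{\beta}_{\Pi^1_1(\Pi^0_3)}(\mathsf{RCA}_0))$, whence $|\beta|<|\mathbf{R}^{\alpha}_{\Pi^1_1}(\mathsf{ACA}_0)|_{\mathsf{RCA}_0}=|\mathbf{R}^{\varepsilon_\alpha}_{\Pi^1_1(\Pi^0_3)}(\mathsf{RCA}_0)|_{\mathsf{RCA}_0}=|\varepsilon_\alpha|$ by Proposition~\ref{rank_of_iteration}. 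Your closing remark --- that the rank of $\mathbf{R}^{\alpha}_{\Pi^1_1}(\mathsf{ACA}_0)$ over $\mathsf{ACA}_0$ is only $\alpha$, so rank arguments cannot do the upper-bound work --- misses exactly this move: computing the rank over the \emph{weaker} base $\mathsf{RCA}_0$, where it equals $|\varepsilon_\alpha|$, is what lets the reflection-rank machinery deliver the notation-independent upper bound with no cut-elimination-based ordinal analysis at all.
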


In order to prove the theorem we first establish the following lemma:
\begin{lemma} \label{wo_to_iterated_special_case} For each $\alpha$
  \begin{enumerate}
  \item the theory $\mathsf{ACA}_0$ proves $\mathsf{WO}(\alpha)\to \mathsf{RFN}_{\Pi^1_1(\Pi^0_3)}(\mathbf{R}^{\alpha}_{\Pi^1_1(\Pi^0_3)}(\mathsf{RCA}_0))$;
  \item the theory $\mathsf{ACA}_0^{+}$ proves $\mathsf{WO}(\alpha)\to \mathsf{RFN}_{\Pi^1_1}(\mathbf{R}^{\alpha}_{\Pi^1_1}(\mathsf{ACA}_0))$.
  \end{enumerate}
  
\end{lemma}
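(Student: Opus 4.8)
The plan is to reduce both parts to Lemma \ref{first principle}, whose hypotheses are that the base theory is $\Pi^1_2(\Pi^0_2)$ sound and that $\alpha$ is a well-ordering. Since $\mathsf{WO}(\alpha)$ is exactly the assertion that $\prec_\alpha$ is a well-ordering, the only substantive work is to verify, inside the relevant meta-theory, that the base theory in question is $\Pi^1_2(\Pi^0_2)$ sound; everything else is bookkeeping with the reflection operator. The mechanism for establishing $\Pi^1_2(\Pi^0_2)$ soundness will be $\omega$-model reflection: if every set $A$ can be placed inside a coded $\omega$-model of the base theory, then soundness for $\Pi^1_2(\Pi^0_2)$ sentences follows, because the matrix of such a sentence is arithmetical and hence absolute between an $\omega$-model and the ambient second-order universe. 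This is precisely where the two parts differ, and where $\mathsf{ACA}_0^+$ enters in part (2).

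For part (1) I would first argue in $\mathsf{ACA}_0$ that $\mathsf{RCA}_0$ is $\Pi^1_2(\Pi^0_2)$ sound. Given any set $A$, arithmetical comprehension (using that $\mathsf{ACA}_0$ proves each finite jump $A^{(n)}$ exists) yields the coded $\omega$-model of $A$-recursive sets, which is a model of $\mathsf{RCA}_0$ containing $A$. Hence if $\mathsf{RCA}_0\vdash\forall X\exists Y\,\theta(X,Y)$ with $\theta\in\Pi^0_2$, then this model satisfies $\exists Y\,\theta(A,Y)$, and by absoluteness of $\theta$ the witness works in the real universe, so $\exists Y\,\theta(A,Y)$ holds; as $A$ was arbitrary, $\forall X\exists Y\,\theta(X,Y)$ is true. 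With the $\Pi^1_2(\Pi^0_2)$ soundness of $\mathsf{RCA}_0$ in hand, Lemma \ref{first principle} applied to $T=\mathsf{RCA}_0$ under the hypothesis $\mathsf{WO}(\alpha)$ gives $\mathsf{RFN}_{\Pi^1_1(\Pi^0_3)}(\mathbf{R}^\alpha_{\Pi^1_1(\Pi^0_3)}(\mathsf{RCA}_0))$, which is the claim.

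For part (2) the same strategy applies with $T=\mathsf{ACA}_0$, but now the coded $\omega$-model must model $\mathsf{ACA}_0$. Given $A$, the sets arithmetical in $A$ form such a model; to obtain it as a single set I would invoke the defining axiom of $\mathsf{ACA}_0^+$, that the $\omega$-th jump $A^{(\omega)}$ exists, from which the coded $\omega$-model of arithmetical-in-$A$ sets is readily built. The absoluteness argument then gives $\Pi^1_2(\Pi^0_2)$ soundness of $\mathsf{ACA}_0$, provably in $\mathsf{ACA}_0^+$, and Lemma \ref{first principle} with $T=\mathsf{ACA}_0$ delivers $\mathsf{RFN}_{\Pi^1_1(\Pi^0_3)}(\mathbf{R}^\alpha_{\Pi^1_1(\Pi^0_3)}(\mathsf{ACA}_0))$ from $\mathsf{WO}(\alpha)$. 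It remains to pass to full $\Pi^1_1$ reflection: since every stage of the iteration extends $\mathsf{ACA}_0$, a routine reflexive induction shows $\mathbf{R}^\alpha_{\Pi^1_1}(\mathsf{ACA}_0)\equiv\mathbf{R}^\alpha_{\Pi^1_1(\Pi^0_3)}(\mathsf{ACA}_0)$ over $\mathsf{ACA}_0$, and the observation recorded earlier that $\mathsf{ACA}_0\vdash\mathsf{RFN}_{\Pi^1_1}(U)\leftrightarrow\mathsf{RFN}_{\Pi^1_1(\Pi^0_3)}(U)$ for $U\sqsupseteq\mathsf{ACA}_0$ converts the conclusion into $\mathsf{RFN}_{\Pi^1_1}(\mathbf{R}^\alpha_{\Pi^1_1}(\mathsf{ACA}_0))$.

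The main obstacle is the careful formalization of $\omega$-model soundness in these weak meta-theories: constructing the coded $\omega$-model as an actual set (which is exactly what forces the use of iterated jumps in $\mathsf{ACA}_0$ and of the full $\omega$-jump in $\mathsf{ACA}_0^+$) and verifying the absoluteness of arithmetical matrices between the coded model and the ambient structure. Once this soundness input is secured, both parts follow from Lemma \ref{first principle} together with the elementary $\Pi^1_1$/$\Pi^1_1(\Pi^0_3)$ interchange.
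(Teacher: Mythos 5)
Your proposal has a genuine gap at its load-bearing step: the application of Lemma \ref{first principle} \emph{inside} $\mathsf{ACA}_0$ (resp.\ $\mathsf{ACA}_0^+$). You read that lemma as an $\mathsf{RCA}_0$-provable implication, uniform in $\alpha$, of the form $\mathsf{RFN}_{\Pi^1_2(\Pi^0_2)}(T)\wedge\mathsf{WO}(\alpha)\to\mathsf{RFN}_{\Pi^1_1(\Pi^0_3)}(\mathbf{R}^{\alpha}_{\Pi^1_1(\Pi^0_3)}(T))$, so that once the meta-theory proves the soundness of the base theory, ``everything else is bookkeeping.'' But the natural proof of that implication runs a transfinite induction along $\alpha$ on the predicate ``$\mathbf{R}^{\beta}_{\Pi^1_1(\Pi^0_3)}(T)$ is $\Pi^1_1(\Pi^0_3)$-sound,'' which is itself of $\Pi^1_1(\Pi^0_3)$ complexity; over $\mathsf{ACA}_0$ the hypothesis $\mathsf{WO}(\alpha)$ yields transfinite induction only for arithmetically definable classes (via comprehension and the least-element principle for sets), and even ordinary induction along $\omega$ for $\Pi^1_1$ formulas is unprovable in $\mathsf{ACA}_0$. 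Indeed, the internalized form you need is outright refutable for $T=\mathsf{ACA}_0$: let $S:=\mathsf{ACA}_0+\mathsf{RFN}_{\Pi^1_2(\Pi^0_2)}(\mathsf{ACA}_0)$. By the Kreisel--L\'evy characterization $S$ is a subtheory of $\mathsf{ACA}$ up to provability, so $|S|_{\mathsf{WO}}\le|\mathsf{ACA}|_{\mathsf{WO}}=\boldsymbol\varepsilon_{\boldsymbol\omega}$; on the other hand $S$ proves $\mathsf{WO}(\varepsilon_0)$ (since $\mathsf{ACA}_0$ provably proves $\mathsf{WO}(\beta)$ for each $\beta\prec\varepsilon_0$, and $\Pi^1_1$ reflection converts this into $\forall\beta\prec\varepsilon_0\,\mathsf{WO}(\beta)$), so if your internal lemma were available, $S$ would prove $\mathsf{RFN}_{\Pi^1_1}(\mathbf{R}^{\varepsilon_0}_{\Pi^1_1}(\mathsf{ACA}_0))$ and hence, by Lemma \ref{it_ref_to_wo} and Theorem \ref{ACA_0_ordinal_analysis}, $\mathsf{WO}(\beta)$ for every $\beta\prec\varepsilon_{\varepsilon_0}$, giving $|S|_{\mathsf{WO}}\ge\boldsymbol\varepsilon_{\boldsymbol\varepsilon_0}>\boldsymbol\varepsilon_{\boldsymbol\omega}$, a contradiction. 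This is consistent with how the paper actually uses Lemma \ref{first principle}: only externally (e.g.\ in Proposition \ref{rank_of_iteration}), applied to genuinely sound theories and genuinely well-founded notations, where the induction is carried out in the metatheory; it is never invoked under an internal $\mathsf{WO}(\alpha)$ hypothesis, which is exactly the weight you are placing on its ``$(\mathsf{RCA}_0)$'' tag.

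The correct mechanism --- and the actual content of the paper's proof, via Lemma \ref{wo_to_iterated_general} --- is to use the coded $\omega$-models not once, to certify $\Pi^1_2(\Pi^0_2)$ soundness of the base, but at every stage of the iteration, precisely in order to \emph{lower the complexity of the induction predicate}. Working in $T_0^+$, one fixes a coded $\omega$-model $\mathfrak{M}$ of $T_0$ containing a putative counterexample set, and proves by transfinite induction on $\beta\preceq\alpha$ that $\mathfrak{M}\models\mathsf{RFN}_{\Pi^1_1(\Pi^0_3)}(\mathbf{R}^{\beta}_{\Pi^1_1(\Pi^0_3)}(T_0))$; since satisfaction in a fixed coded $\omega$-model of formulas of bounded class $\Pi^1_n(\Pi^0_m)$ is arithmetical in the code of $\mathfrak{M}$, the induction statement is $\Pi^0_k$ for a fixed $k$, so $\mathsf{WO}(\alpha)$ together with arithmetical comprehension does license it; object proofs are handled by cut-elimination (available since the ambient theory proves superexponentiation total) followed by an induction along the cut-free proof using a partial satisfaction relation. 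Your identification of where $\mathsf{ACA}_0$ versus $\mathsf{ACA}_0^+$ enters is correct and matches the paper's $T_0^+$ scheme ($\mathrm{REC}(A)$ built from a jump of $A$ for $T_0=\mathsf{RCA}_0$; the arithmetical-in-$A$ sets built from $A^{(\omega)}$ for $T_0=\mathsf{ACA}_0$), and your final $\Pi^1_1$/$\Pi^1_1(\Pi^0_3)$ interchange for part (2) is fine; but the reduction to Lemma \ref{first principle} must be replaced by this in-model transfinite induction, since the black-box lemma is not (and cannot be) available in the object theories in the uniform internal form your plan requires.
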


We will derive Lemma \ref{wo_to_iterated_special_case} from the more general Lemma \ref{wo_to_iterated_general}.

We will follow Simpson's formalization of countable coded models of the language of second-order arithmetic \cite[Definition~VII.2.1]{simpson2009subsystems}. Under this definition a countable coded $\omega$-model $\mathfrak{M}$ is a code for a countable family $W_0,W_1,\ldots$ of subsetes of $\mathbb{N}$, where $\{W_0,W_1,\ldots\}$ is the $\mathfrak{M}$-domain for sets of naturals.  We note that the property ``$\mathfrak{M}$ is a countable coded $\omega$-model'' is arithmetical. The expression $X\in \mathfrak{M}$ denotes the natural $\Sigma^0_2$ formula that expresses the fact that the set $X$ is coded in a model $\mathfrak{M}$ (i.e. it is one of $X=W_i$, for some $i$). For each fixed second-order formula $\varphi(X_1,\ldots,X_n,x_1,\ldots,x_n)$ the expression $\mathfrak{M}\models \varphi(X_1,\ldots,X_n,x_1,\ldots,x_n)$ denotes the natural second-order formula that expresses that $\mathfrak{M}$ is a countable coded $\omega$-model, sets $X_1,\ldots,X_n$ are coded in $\mathfrak{M}$, and $\varphi(X_1,\ldots,X_n,x_1,\ldots,x_n)$ is true in $\mathfrak{M}$. We express the fact that that $\varphi(X_1,\ldots,X_n,x_1,\ldots,x_n)$ is true in $\mathfrak{M}$ by relativizing second-order quatifiers $\forall X$ and $\exists X$ to $\forall X\in \mathfrak{M}$ and $\exists X\in\mathfrak{M}$. Note that the latter quantifiers are in fact just first-order quantifiers. Hence $\mathfrak{M}\models \varphi(\vec{X},\vec{x})$ is equivalent to a $\Pi^0_m$-formula, where $m$ depends only on the depth of quantifier alternations in $\varphi$. For a fixed theory $T$ given by a finite list of axioms, by $\mathfrak{M}\models T$ we mean the formula $\mathfrak{M}\models \varphi$, where $\varphi$ is the conjunction of all the axioms of $T$.

For each theory $T_0\sqsupseteq \mathsf{RCA}_0$ given by a finite list of axioms we denote by $T_0^{+}$ the theory $T_0+\mathsf{ACA}_0+\mbox{``every set is contained in an $\omega$-model of $T_0$.''}$ We use this notation by analogy with $\mathsf{ACA}_0^+$. We note that for $T_0=\mathsf{ACA}_0$ the theory $T_0^{+}$ is just $\mathsf{ACA}_0^+$ and for $T_0=\mathsf{RCA}_0$ the theory $T_0^{+}$ is just $\mathsf{ACA}_0$.

\begin{lemma}\label{wo_to_iterated_general}  For each $T_0$ given by a finite list of axioms $$T_0^+\vdash\forall \alpha \; \Big( \mathsf{WO}(\alpha)\to \mathsf{RFN}_{\Pi^1_1(\Pi^0_3)}\big(\mathbf{R}^{\alpha}_{\Pi^1_1(\Pi^0_3)}(T_0)\big)\Big).$$
\end{lemma}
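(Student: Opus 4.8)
The plan is to reason inside $T_0^+$ and use the $\omega$-model axiom to reduce the (genuinely $\Pi^1_1$) assertion of $\Pi^1_1(\Pi^0_3)$-soundness to an \emph{arithmetic} statement about a single countable coded $\omega$-model, which can then be pushed through a transfinite induction justified by $\mathsf{WO}(\alpha)$. Concretely, fix a $\Pi^1_1(\Pi^0_3)$ sentence $\sigma=\forall X\,\mathsf{F}$ (with $\mathsf{F}\in\Pi^0_3$) provable in $\mathbf{R}^\alpha_{\Pi^1_1(\Pi^0_3)}(T_0)$, and fix an arbitrary set $Y$; it suffices to prove $\mathsf{F}(Y)$. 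Using the axiom ``every set is contained in an $\omega$-model of $T_0$,'' I would choose a countable coded $\omega$-model $\mathfrak{M}$ of $T_0$ with $Y\in\mathfrak{M}$. Since $\mathfrak{M}$ is an $\omega$-model and $\mathsf{F}$ is arithmetic, $\mathfrak{M}\models\mathsf{F}(Y)$ is equivalent to $\mathsf{F}(Y)$, so it is enough to show $\mathfrak{M}\models\sigma$.

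The enabling observation is that, using the partial truth definition $\mathsf{Tr}_{\Pi^1_1(\Pi^0_3)}$ available over $\mathsf{RCA}_0$, each reflection axiom $\rho_\beta:=\mathsf{RFN}_{\Pi^1_1(\Pi^0_3)}(\mathbf{R}^\beta_{\Pi^1_1(\Pi^0_3)}(T_0))$ is a \emph{single} sentence of \emph{fixed} syntactic complexity, so that ``$\mathfrak{M}\models\rho_\beta$'' is one arithmetic formula $\Psi(\mathfrak{M},\beta)$ in the parameters $\mathfrak{M}$ and $\beta$. I would then prove $\forall\beta\prec\alpha\,\Psi(\mathfrak{M},\beta)$ by transfinite induction along $\prec$ below $\alpha$: since $\Psi(\mathfrak{M},\cdot)$ is arithmetic, arithmetical comprehension forms the set of $\prec$-counterexamples, and $\mathsf{WO}(\alpha)$ supplies a $\prec$-minimal one unless it is empty, which is exactly arithmetic transfinite induction. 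For the step at $\gamma\prec\alpha$ I assume $\Psi(\mathfrak{M},\delta)$ for all $\delta\prec\gamma$ and must show $\mathfrak{M}\models\rho_\gamma$, i.e.\ that $\mathfrak{M}\models\varphi$ for every $\Pi^1_1(\Pi^0_3)$ sentence $\varphi$ provable in $\mathbf{R}^\gamma_{\Pi^1_1(\Pi^0_3)}(T_0)$. A proof of such a $\varphi$ uses only axioms of $T_0$ (satisfied by $\mathfrak{M}$) and finitely many $\rho_\delta$ with $\delta\prec\gamma$ (satisfied by the inductive hypothesis), and one concludes $\mathfrak{M}\models\varphi$ by a formalized soundness theorem for $\omega$-models. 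Applying the same reasoning one final time at level $\alpha$ gives $\mathfrak{M}\models\mathbf{R}^\alpha_{\Pi^1_1(\Pi^0_3)}(T_0)$, hence $\mathfrak{M}\models\sigma$ and therefore $\mathsf{F}(Y)$; as $Y$ was arbitrary, $\sigma$ is true, which is the desired instance of $\mathsf{RFN}_{\Pi^1_1(\Pi^0_3)}(\mathbf{R}^\alpha_{\Pi^1_1(\Pi^0_3)}(T_0))$.

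The step I expect to be the main obstacle is the formalized soundness invoked in the induction step: inside $\mathsf{ACA}_0$ there is no uniform satisfaction relation for a countable coded $\omega$-model, so I cannot simply induct on proof length through formulas of unbounded complexity. The way I would resolve this exploits two features of the situation. First, $T_0$ is finitely axiomatized and every $\rho_\delta$ has fixed complexity, so all axioms that can appear in a proof of a $\Pi^1_1(\Pi^0_3)$ sentence lie in a fixed complexity class. Second, $T_0^+$ contains $\mathsf{ACA}_0$, which proves the totality of superexponentiation and hence cut-elimination; eliminating cuts from the (two-sorted first-order) proof of $\varphi$ yields a proof all of whose formulas are subformulas of the finitely many axioms used and of $\varphi$, hence of bounded complexity. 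For a proof of bounded complexity the relevant satisfaction relation on $\mathfrak{M}$ is one of the partial truth definitions available in $\mathsf{ACA}_0$, and soundness is then a routine induction on the cut-free proof. I would isolate this bounded-complexity soundness statement as a preliminary lemma; with it in hand the transfinite induction and the concluding argument go through as above, and the special cases recorded in Lemma~\ref{wo_to_iterated_special_case} follow by specializing $T_0$ to $\mathsf{RCA}_0$ and to $\mathsf{ACA}_0$.
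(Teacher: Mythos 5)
Your proposal is correct and takes essentially the same route as the paper's own proof: you reduce the $\Pi^1_1(\Pi^0_3)$ reflection statement to its truth in countable coded $\omega$-models of $T_0$ supplied by the $\omega$-model axiom, run a transfinite induction along $\prec$ on the arithmetic statements $\mathfrak{M}\models\rho_\beta$ (justified by $\mathsf{WO}(\alpha)$ plus arithmetical comprehension), and discharge the soundness step by cut-elimination, which confines all formulas in the proof to a fixed class $\Pi^1_n(\Pi^0_m)$ whose satisfaction relation in $\mathfrak{M}$ is arithmetical. The bounded-complexity soundness lemma you isolate as the main obstacle is precisely the cut-elimination step the paper uses, so the two arguments coincide in all essentials.
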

\begin{proof}  We reason in $T_0^+$. We assume $\mathsf{WO}(\alpha)$ and claim $\mathsf{RFN}_{\Pi^1_1(\Pi^0_3)}(\mathbf{R}^{\alpha}_{\Pi^1_1(\Pi^0_3)}(T_0))$.

  Note that it suffices to show that $\mathsf{RFN}_{\Pi^1_1(\Pi^0_3)}(\mathbf{R}^{\alpha}_{\Pi^1_1(\Pi^0_3)}(T_0))$ is true in all the $\omega$-models of $T_0$. Indeed, since $\mathsf{RFN}_{\Pi^1_1(\Pi^0_3)}(\mathbf{R}^{\alpha}_{\Pi^1_1(\Pi^0_3)}(T_0))$ is a $\Pi^1_1(\Pi^0_3)$ sentence, if it fails, this fact is witnessed by some set $X$ and hence $\mathsf{RFN}_{\Pi^1_1(\Pi^0_3)}(\mathbf{R}^{\alpha}_{\Pi^1_1(\Pi^0_3)}(T_0))$ fails in all the $\omega$-models of $T_0$ containing $X$.

  Now let us consider an $\omega$-model $\mathfrak{M}$ of $T_0$ and show $\mathsf{RFN}_{\Pi^1_1(\Pi^0_3)}(\mathbf{R}^{\alpha}_{\Pi^1_1(\Pi^0_3)}(T_0))$.  We note that, for some fixed $k$, all the facts of the form $\mathfrak{M}\models \mathsf{RFN}_{\Pi^1_1(\Pi^0_3)}(\mathbf{R}^{\beta}_{\Pi^1_1(\Pi^0_3)}(T_0))$ are $\Pi^0_k$. In order to finish the proof it suffices to show $\mathfrak{M}\models\mathsf{RFN}_{\Pi^1_1(\Pi^0_3)}(\mathbf{R}^{\beta}_{\Pi^1_1(\Pi^0_3)}(T_0))$, for all $\beta\preceq \alpha$ by transfinite induction on $\beta\preceq \alpha$. By the induction hypothesis we know that $\mathfrak{M}$ is a model of $\mathbf{R}^{\beta}_{\Pi^1_1(\Pi^0_3)}(T_0)$. Since $\mathfrak{M}$ is an $\omega$-model we need to show that for all the (standard) proofs $p$ of a $\Pi^1_1(\Pi^0_3)$-sentence $\varphi$ in $\mathbf{R}^{\beta}_{\Pi^1_1(\Pi^0_3)}(T_0)$ the sentence $\varphi$ is true in $\mathfrak{M}$. We consider some proof $p$ of this form and apply the cut-elimination theorem for predicate calculus to make sure that all the intermediate formulas in the proof are of the complexity  $\Pi^1_n(\Pi^0_m)$ for some externally fixed $n$ and $m$ (depending only on the complexity of the axioms of $T_0$). We proceed by showing by induction on formulas in the proof that all of them are true in the model $\mathfrak{M}$; we can do this since the satisfaction relation for $\Pi^1_n(\Pi^0_m)$-formulas in $\mathfrak{M}$ is arithmetical.\end{proof}

\begin{lemma} \label{it_ref_to_wo} $$\mathsf{RCA}_0\vdash \forall \alpha\;\Big(\mathsf{RFN}_{\Pi^1_1(\Pi^0_3)}\big(\mathbf{R}_{\Pi^1_1(\Pi^0_3)}^{\alpha}(\mathsf{RCA}_0)\big)\to \mathsf{WO}(\alpha)\Big)$$
\end{lemma}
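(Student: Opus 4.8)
The plan is to deduce $\mathsf{WO}(\alpha)$ from the reflection hypothesis by reducing a failure of well-foundedness of the $\prec$-cone below $\alpha$ to the non-existence of descending sequences in the $\prec_{\Pi^1_1(\Pi^0_3)}$ order, which is exactly Theorem \ref{RCA_0_reflection}. I would argue entirely inside $\mathsf{RCA}_0$, fixing an ordinal notation $\alpha$ and assuming $\mathsf{RFN}_{\Pi^1_1(\Pi^0_3)}\big(\mathbf{R}^\alpha_{\Pi^1_1(\Pi^0_3)}(\mathsf{RCA}_0)\big)$; since the argument is uniform in $\alpha$ it formalizes as a single proof of the $\forall\alpha$ statement. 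The crucial structural observation is that $\gamma\mapsto\mathbf{R}^\gamma_{\Pi^1_1(\Pi^0_3)}(\mathsf{RCA}_0)$ sends the $\prec$-cone below $\alpha$ into the $\prec_{\Pi^1_1(\Pi^0_3)}$ order: by the fixed-point definition of the progression, whenever $\delta\prec\gamma$ the sentence $\mathsf{RFN}_{\Pi^1_1(\Pi^0_3)}(\mathbf{R}^\delta_{\Pi^1_1(\Pi^0_3)}(\mathsf{RCA}_0))$ is literally an axiom of $\mathbf{R}^\gamma_{\Pi^1_1(\Pi^0_3)}(\mathsf{RCA}_0)$, so that $\mathbf{R}^\gamma \vdash \mathsf{RFN}_{\Pi^1_1(\Pi^0_3)}(\mathbf{R}^\delta)$ and hence $\mathbf{R}^\delta\prec_{\Pi^1_1(\Pi^0_3)}\mathbf{R}^\gamma$.

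Next I would assume, toward a contradiction, that $\neg\mathsf{WO}(\alpha)$ holds, so there is a set $X$ with $(\exists\beta\prec\alpha)\,\beta\in X$ and no $\prec$-minimal element. Replacing $X$ by $X\cap\{\gamma:\gamma\prec\alpha\}$ (a $\boldsymbol\Delta^0_1$-definable set, available in $\mathsf{RCA}_0$), I may assume every element of $X$ is $\prec\alpha$ and has a $\prec$-smaller element of $X$. I would then build a $\prec$-descending sequence $\gamma_0\succ\gamma_1\succ\cdots$ as a \emph{set} by primitive recursion: let $\gamma_0$ be the $\mathbb{N}$-least element of $X$ and let $\gamma_{n+1}$ be the $\mathbb{N}$-least $\delta\in X$ with $\delta\prec\gamma_n$, which exists precisely because $X$ has no $\prec$-minimal element. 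This recursion is primitive in $X$ and in the elementary relation $\prec$, so $\mathsf{RCA}_0$ (using $\Sigma^0_1$-induction and recursive comprehension) proves that the function $n\mapsto\gamma_n$ exists as a set $S$; from $S$ one computes a set $E$ coding the sequence of theories $T_n:=\mathbf{R}^{\gamma_n}_{\Pi^1_1(\Pi^0_3)}(\mathsf{RCA}_0)$, since each $\mathsf{Ax}_{T_n}$ is obtained uniformly from $\gamma_n$.

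Finally I would verify that $E$ witnesses the descending-sequence statement $\mathsf{DS}$ of Theorem \ref{RCA_0_reflection}. Each $T_n\sqsupseteq\mathsf{RCA}_0$ by construction; for each $x$ we have $\gamma_{x+1}\prec\gamma_x$, so $\mathsf{RFN}_{\Pi^1_1(\Pi^0_3)}(T_{x+1})$ is an axiom of $T_x$ and therefore $\mathsf{Pr}_{T_x}(\mathsf{RFN}_{\Pi^1_1(\Pi^0_3)}(T_{x+1}))$ holds, giving the descending conjunct uniformly. For the first conjunct $\mathsf{RFN}_{\Pi^1_1(\Pi^0_3)}(T_0)$: from the assumption $\mathsf{RFN}_{\Pi^1_1(\Pi^0_3)}(\mathbf{R}^\alpha)$ the theory $\mathbf{R}^\alpha$ is $\Pi^1_1(\Pi^0_3)$-sound, and since $\gamma_0\prec\alpha$ the sentence $\mathsf{RFN}_{\Pi^1_1(\Pi^0_3)}(T_0)$ is a $\Pi^1_1(\Pi^0_3)$-theorem of $\mathbf{R}^\alpha$, hence true. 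Thus $\mathsf{DS}$ holds, contradicting Theorem \ref{RCA_0_reflection} (which $\mathsf{RCA}_0$-provably refutes $\mathsf{DS}$). This contradiction discharges the assumption $\neg\mathsf{WO}(\alpha)$ and yields $\mathsf{WO}(\alpha)$.

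The main obstacle, and the only genuinely technical point, is step two: extracting from a merely \emph{set}-theoretic failure of well-foundedness (a nonempty set with no $\prec$-minimal element) an honestly \emph{coded} infinite descending sequence, all within $\mathsf{RCA}_0$, where unrestricted dependent choice is unavailable. The fix is to make the choices canonical by always taking the $\mathbb{N}$-least admissible witness, turning the construction into a parameter-primitive recursion that $\mathsf{RCA}_0$ can carry out; everything else is bookkeeping about the axiomhood of reflection instances and the propagation of $\Pi^1_1(\Pi^0_3)$-soundness down the sequence.
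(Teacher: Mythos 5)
Your proof is correct, but it takes a genuinely different route from the paper's. The paper proves Lemma \ref{it_ref_to_wo} by a short reflexive induction (Theorem \ref{reflexive induction}): assuming $\mathsf{Pr}_{\mathsf{RCA}_0}$ of the claim for all $\beta\prec\alpha$, the theory $\mathbf{R}^{\alpha}_{\Pi^1_1(\Pi^0_3)}(\mathsf{RCA}_0)$ proves $\mathsf{WO}(\beta)$ for each $\beta\prec\alpha$ (it has the reflection axioms for $\mathbf{R}^{\beta}_{\Pi^1_1(\Pi^0_3)}(\mathsf{RCA}_0)$ and, by the reflexive induction hypothesis, the implications), so the assumed reflection for $\mathbf{R}^{\alpha}_{\Pi^1_1(\Pi^0_3)}(\mathsf{RCA}_0)$ --- applied uniformly in the parameter $\beta$, since $\mathsf{WO}(\beta)$ is $\Pi^1_1(\Pi^0_3)$ --- yields $\forall\beta\prec\alpha\;\mathsf{WO}(\beta)$ and hence $\mathsf{WO}(\alpha)$; this needs nothing beyond L\"ob's theorem and the fixed-point definition of the progression, and in particular never invokes Theorem \ref{RCA_0_reflection}. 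You instead contrapose: from $\neg\mathsf{WO}(\alpha)$ you extract a coded $\prec$-descending sequence $(\gamma_n)$ by canonical ($\mathbb{N}$-least) selection, using $\Sigma^0_1$-induction plus $\Delta^0_1$-comprehension in place of dependent choice --- this is legitimate, and the relevant restriction of $\prec$ is indeed $\Delta^0_1$ in $\mathsf{RCA}_0$ thanks to the $\mathsf{EA}$-verified trichotomy --- then map it to the theories $T_n=\mathbf{R}^{\gamma_n}_{\Pi^1_1(\Pi^0_3)}(\mathsf{RCA}_0)$ and contradict Theorem \ref{RCA_0_reflection}, whose proof does give $\mathsf{RCA}_0\vdash\neg\mathsf{DS}$; your derivation of the first conjunct of $\mathsf{DS}$ is also sound, since $\mathsf{RFN}_{\Pi^1_1(\Pi^0_3)}(\mathbf{R}^{\gamma_0}_{\Pi^1_1(\Pi^0_3)}(\mathsf{RCA}_0))$ is itself a $\Pi^1_1(\Pi^0_3)$ formula uniformly in the parameter $\gamma_0$. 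One cosmetic wrinkle: under the paper's official formalization via $\mathsf{RFN}\mbox{-}\mathsf{Inst}$, the axioms of $T_x$ are the schema instances, so the single truth-definition-based sentence $\mathsf{RFN}_{\Pi^1_1(\Pi^0_3)}(T_{x+1})$ occurring in $\mathsf{DS}$ is not \emph{literally} an axiom of $T_x$ as you claim, but it is uniformly provable from those instances together with the Tarski biconditionals for $\mathsf{Tr}_{\Pi^1_1(\Pi^0_3)}$ over $\mathsf{RCA}_0$, which repairs the step. As for what each approach buys: the paper's argument is shorter and self-contained within \textsection 5, while yours makes the conceptual content explicit --- ill-foundedness of the notation below $\alpha$ transfers to an ill-founded chain in the $\prec_{\Pi^1_1(\Pi^0_3)}$ order --- at the cost of extra arithmetization bookkeeping; note that the two are cousins at bottom, since reflexive induction is proved via L\"ob's theorem while Theorem \ref{RCA_0_reflection} rests on G\"odel's second incompleteness theorem.
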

\begin{proof} We prove the lemma by reflexive induction on $\alpha$ in $\mathsf{RCA}_0$. We reason in $\mathsf{RCA}_0$ and assume the reflexive induction hypothesis $$\forall \beta\prec \alpha\; \mathsf{Pr}_{\mathsf{RCA}_0}\Big(\mathsf{RFN}_{\Pi^1_1(\Pi^0_3)}\big(\mathbf{R}_{\Pi^1_1(\Pi^0_3)}^{\beta}(\mathsf{RCA}_0)\big)\to \mathsf{WO}(\beta)\Big).$$ We need to show that:
\begin{equation} \label{conditional}
\mathsf{RFN}_{\Pi^1_1(\Pi^0_3)}\big(\mathbf{R}_{\Pi^1_1(\Pi^0_3)}^{\alpha}(\mathsf{RCA}_0)\big)\to \mathsf{WO}(\alpha)
\end{equation}
So assume the antecedent of (\ref{conditional}). From the reflexive induction hypothesis we see that for each individual  $\beta\prec \alpha$ the theory $\mathbf{R}_{\Pi^1_1(\Pi^0_3)}^{\alpha}(\mathsf{RCA}_0)$ proves $\mathsf{WO}(\beta)$. Since $\mathsf{WO}(\beta)$ is a $\Pi^1_1(\Pi^0_3)$-formula, we infer from the antecedent of (\ref{conditional}) that $\forall \beta\prec \alpha \; \mathsf{WO}(\beta)$. Thus $\mathsf{WO}(\alpha)$.
\end{proof}

Now we are ready to prove Theorem \ref{ACA_0_ordinal_analysis}
\begin{proof} From Theorem \ref{schmerl_ACA_0_RCA_0} we know that
  $$\mathbf{R}^{\alpha}_{\Pi^1_1}(\mathsf{ACA}_0)\equiv_{\Pi^1_1(\Pi^0_3)} \mathbf{R}^{\varepsilon_{\alpha}}_{\Pi^1_1(\Pi^0_3)}(\mathsf{RCA}_0).$$

  From Lemma \ref{it_ref_to_wo} we see that $\mathbf{R}^{\varepsilon_{\alpha}}_{\Pi^1_1(\Pi^0_3)}(\mathsf{RCA}_0)$ proves $\mathsf{WO}(\beta)$ for each $\beta\prec \varepsilon_\alpha$ and thus $|\mathbf{R}^{\alpha}_{\Pi^1_1}(\mathsf{ACA}_0)|_{\mathsf{WO}}\ge|\varepsilon_{\alpha}|$.

  In order to prove $|\mathbf{R}^{\alpha}_{\Pi^1_1}(\mathsf{ACA}_0)|\le|\varepsilon_{\alpha}|$ let us assume that for some $\beta$ the theory $\mathbf{R}^{\alpha}_{\Pi^1_1}(\mathsf{ACA}_0)$ proves $\mathsf{WO}(\beta)$ and then show that $|\beta|< |\varepsilon_{\alpha}|$. Indeed, by Lemma \ref{wo_to_iterated_special_case} the theory $\mathbf{R}^{\alpha}_{\Pi^1_1}(\mathsf{ACA}_0)$  proves $\mathsf{RFN}_{\Pi^1_1(\Pi^0_3)}(\mathbf{R}^{\beta}_{\Pi^1_1(\Pi^0_3)}(\mathsf{RCA}_0))$.  Hence  $$|\beta|<|\mathbf{R}^{\alpha}_{\Pi^1_1}(\mathsf{ACA}_0)|_{\mathsf{RCA}_0}=|\mathbf{R}^{\varepsilon_{\alpha}}_{\Pi^1_1(\Pi^0_3)}(\mathsf{RCA}_0)|_{\mathsf{RCA}_0}.$$ And  Proposition \ref{rank_of_iteration} gives us
  $$|\varepsilon_{\alpha}|=|\mathbf{R}^{\varepsilon_{\alpha}}_{\Pi^1_1(\Pi^0_3)}(\mathsf{RCA}_0)|_{\mathsf{RCA}_0}>|\beta|.$$
  This completes the proof.
\end{proof}

\subsection{Extensions of $\mathsf{ACA}_0^+$}

It is usually attributed to Kreisel that for extensions $T\sqsupseteq\mathsf{ACA}_0$ the proof-theoretic ordinal $|T|_{\mathsf{WO}}=|T+\varphi|_{\mathsf{WO}}$, for any true $\Sigma^1_1$-sentence $\varphi$ (see \cite[Theorem~6.7.4,6.7.5]{pohlers2008proof}). We note that our notion of reflection rank $|T|_{\mathsf{ACA}_0}$ does not enjoy the same property.
\begin{remark} Let us consider an ordinal notation system $\alpha$ for some large recursive ordinal, for example the Bachmann-Howard ordinal. 
  Now we modify $\alpha$ to define pathological ordinal notation $\alpha'$. The order $\prec_{\alpha'}$ is the restriction of $\prec_{\alpha}$ to numbers $m$ such that $\forall x\le m\;\lnot\mathsf{Prf}_{\mathsf{ACA}_0}(x,0=1)$. And $\alpha'$ corresponds to the same element of the domain of $\prec_{\alpha}$ as $\alpha$ (note that since $\mathsf{ACA}_0$ is consistent this element is in the domain of $\prec_{\alpha'}$ as well). We see externally that $\alpha'$ is isomorphic to $\alpha$, since $\mathsf{ACA}_0$ is consistent. Let us denote by $\mathsf{Iso}$ the true $\Sigma^1_1$-sentence that expresses the fact that $\alpha$ and $\alpha'$ are isomorphic. Clearly, $$\mathsf{ACA}_0+\mathsf{WO}(\alpha')+\mathsf{Iso}\sqsupseteq \mathsf{ACA}_0+\mathsf{WO}(\alpha),$$ $$|\mathsf{ACA}_0+\mathsf{WO}(\alpha')+\mathsf{Iso}|_{\mathsf{ACA}_0}\ge|\mathsf{ACA}_0+\mathsf{WO}(\alpha)|_{\mathsf{ACA}_0}$$ and under our choice of $\alpha$ the rank $|\mathsf{ACA}_0+\mathsf{WO}(\alpha)|_{\mathsf{ACA}_0}$ will be equal to the Bachmann-Howard ordinal. At the same time, the theory $\mathsf{ACA}_0+\lnot\mathsf{Con}(\mathsf{ACA}_0)$ proves that $\alpha'$ is isomorphic to some finite order and hence $$\mathsf{ACA}_0+\lnot\mathsf{Con}(\mathsf{ACA}_0)\vdash \mathsf{WO}(\alpha').$$ Hence $$|\mathsf{ACA}_0+\mathsf{WO}(\alpha')|_{\mathsf{ACA}_0}\le |\mathsf{ACA}_0+\lnot \mathsf{Con}(\mathsf{ACA}_0)|_{\mathsf{ACA}_0}=0,$$ the latter equality follows from Remark \ref{rank_not_Con}. And thus $$|\mathsf{ACA}_0+\mathsf{WO}(\alpha')|_{\mathsf{WO}}<|\mathsf{ACA}_0+\mathsf{WO}(\alpha')+\mathsf{Iso}|_{\mathsf{WO}}.$$ Accordingly, $\mathsf{Iso}$ is a true $\Sigma^1_1$ sentence that alters the reflection rank of the theory $\mathsf{ACA}_0+\mathsf{WO}(\alpha')$.
  
%The upshot is that there are ($\Sigma^1_1$ unsound) theories whose proof-theoretic ordinals can be arbitrarily high but whose reflection rank is 0.

%Of course, this example demonstrates that, unlike proof-theoretic ordinals, reflection ranks are not invariant under the addition of true $\Sigma^1_1$ sentences. Indeed, $\mathsf{ACA}_0+\lnot \mathsf{Con}(\mathsf{ACA}_0) + \mathsf{Con}(\mathsf{ACA}_0)$ is inconsistent and hence proves the $\Pi^1_1$ soundness of \emph{every} theory, so even though
%$$ |\mathsf{ACA}_0+\lnot \mathsf{Con}(\mathsf{ACA}_0)|_{\mathsf{ACA}_0}=0,$$
%we also have that
%$$ |\mathsf{ACA}_0+\lnot \mathsf{Con}(\mathsf{ACA}_0) + \mathsf{Con}(\mathsf{ACA}_0)|_{\mathsf{ACA}_0}> 0.$$
\end{remark}

We address this problem with two different results. First in Theorem \ref{omega_model_reflection_rank} we show that for any extension $T\sqsupseteq \mathsf{ACA}_0^+$, $|T|_{\mathsf{ACA}_0}=|T|_{\mathsf{WO}}$. Second we introduce the notion of robust reflection rank $|\cdot|_{\mathsf{ACA}_0}^{\star}$ that enjoys a number of nice properties and at the same time coincides with reflection rank $|\cdot|_{\mathsf{ACA}_0}$, for many natural theories $T$ (in particular, for any any $T$ such that $T\equiv_{\Pi^1_1}\mathbf{R}_{\Pi^1_1}^{\alpha}(\mathsf{ACA}_0)$, for some ordinal notation $\alpha$).

\begin{theorem} \label{omega_model_reflection_rank}Suppose $T\sqsupseteq \mathsf{ACA}_0^{+}$ then  $$|T|_{\mathsf{WO}}=|T|_{\mathsf{ACA}_0}.$$
\end{theorem}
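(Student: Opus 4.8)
The plan is to establish the two inequalities $|T|_{\mathsf{WO}}\le|T|_{\mathsf{ACA}_0}$ and $|T|_{\mathsf{ACA}_0}\le|T|_{\mathsf{WO}}$ separately. Only the first will use the hypothesis $T\sqsupseteq\mathsf{ACA}_0^+$; the second will hold for every $\Pi^1_1$-sound $T\sqsupseteq\mathsf{ACA}_0$. I will treat the $\Pi^1_1$-sound case (which is the content of Theorem~\ref{second}); if $T$ is $\Pi^1_1$-unsound then it proves $\mathsf{WO}(\alpha)$ for an ill-founded $\alpha$, and the arguments below force both ranks to be $\boldsymbol\infty$, so equality persists. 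Write $\boldsymbol\gamma:=|T|_{\mathsf{WO}}$; for $\Pi^1_1$-sound $T$ this is a genuine recursive ordinal, and $|T|_{\mathsf{ACA}_0}\in\mathbf{On}$ by Theorem~\ref{well-foundedness_reflection}. Note also that $T\in\mathcal{E}\mbox{-}\mathsf{ACA}_0$, since $\mathsf{ACA}_0$ is finitely axiomatized.

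For $|T|_{\mathsf{WO}}\le|T|_{\mathsf{ACA}_0}$ I argue pointwise over provable well-orders. Fix any $\alpha$ with $T\vdash\mathsf{WO}(\alpha)$. As $T\sqsupseteq\mathsf{ACA}_0^+$, Lemma~\ref{wo_to_iterated_special_case}(2) yields $T\vdash\mathsf{RFN}_{\Pi^1_1}(\mathbf{R}^\alpha_{\Pi^1_1}(\mathsf{ACA}_0))$, i.e.\ $\mathbf{R}^\alpha_{\Pi^1_1}(\mathsf{ACA}_0)\prec_{\Pi^1_1}T$. Since $T$ is $\Pi^1_1$-sound, $\alpha$ is well-founded, so $\mathbf{R}^\alpha_{\Pi^1_1}(\mathsf{ACA}_0)$ is itself a $\Pi^1_1$-sound extension of $\mathsf{ACA}_0$ (Lemma~\ref{first principle}), and by Proposition~\ref{rank_of_iteration} with base theory $\mathsf{ACA}_0$ (using that $\mathsf{RFN}_{\Pi^1_1}$ and $\mathsf{RFN}_{\Pi^1_1(\Pi^0_3)}$ coincide over $\mathsf{ACA}_0$) its reflection rank is exactly $|\alpha|$. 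Hence $|\alpha|=|\mathbf{R}^\alpha_{\Pi^1_1}(\mathsf{ACA}_0)|_{\mathsf{ACA}_0}<|T|_{\mathsf{ACA}_0}$. Taking the supremum over all such $\alpha$ gives $|T|_{\mathsf{WO}}\le|T|_{\mathsf{ACA}_0}$.

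For $|T|_{\mathsf{ACA}_0}\le|T|_{\mathsf{WO}}$, which needs no appeal to $\mathsf{ACA}_0^+$, I argue by contradiction. Suppose $|T|_{\mathsf{ACA}_0}>\boldsymbol\gamma$ and fix a genuinely well-founded notation $\alpha$ with $|\alpha|=\boldsymbol\gamma$. Since $|T|_{\mathsf{ACA}_0}>|\alpha|$, Lemma~\ref{reflection_of_iteration}, applied with base theory $\mathsf{ACA}_0$, provides a true $\Sigma^1_1(\Pi^0_2)$ sentence $\varphi$ with $T+\varphi\vdash\mathsf{RFN}_{\Pi^1_1(\Pi^0_3)}(\mathbf{R}^\alpha_{\Pi^1_1}(\mathsf{ACA}_0))$ (over $\mathsf{ACA}_0$ the $\Pi^1_1$- and $\Pi^1_1(\Pi^0_3)$-iterates coincide). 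By Theorem~\ref{schmerl_ACA_0_RCA_0} the theories $\mathbf{R}^\alpha_{\Pi^1_1}(\mathsf{ACA}_0)$ and $\mathbf{R}^{\varepsilon_\alpha}_{\Pi^1_1(\Pi^0_3)}(\mathsf{RCA}_0)$ are provably $\Pi^1_1(\Pi^0_3)$-equivalent, so their reflection principles agree over $\mathsf{RCA}_0$, and thus $T+\varphi\vdash\mathsf{RFN}_{\Pi^1_1(\Pi^0_3)}(\mathbf{R}^{\varepsilon_\alpha}_{\Pi^1_1(\Pi^0_3)}(\mathsf{RCA}_0))$; Lemma~\ref{it_ref_to_wo} then gives $T+\varphi\vdash\mathsf{WO}(\varepsilon_\alpha)$.

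The subtle point---and what I expect to be the main obstacle---is that $|\varepsilon_\alpha|=\varepsilon_{\boldsymbol\gamma}$ may coincide with $\boldsymbol\gamma$ when $\boldsymbol\gamma$ is a fixed point of $\xi\mapsto\varepsilon_\xi$, so that $\mathsf{WO}(\varepsilon_\alpha)$ alone yields only $|T+\varphi|_{\mathsf{WO}}\ge\boldsymbol\gamma$ and not the strict increase required. This gap is closed by the observation that a proof-theoretic ordinal is never attained: appending a top point preserves well-foundedness provably in $\mathsf{RCA}_0$, so $T+\varphi\vdash\mathsf{WO}(\varepsilon_\alpha)$ gives $T+\varphi\vdash\mathsf{WO}(\varepsilon_\alpha+1)$ and hence $|T+\varphi|_{\mathsf{WO}}\ge|\varepsilon_\alpha|+1\ge\boldsymbol\gamma+1$. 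Finally, Kreisel's invariance of the proof-theoretic ordinal under adjunction of true $\Sigma^1_1$ sentences (recalled at the start of this subsection, applicable as $\varphi$ is $\Sigma^1_1$) gives $|T|_{\mathsf{WO}}=|T+\varphi|_{\mathsf{WO}}\ge\boldsymbol\gamma+1$, contradicting $|T|_{\mathsf{WO}}=\boldsymbol\gamma$. Therefore $|T|_{\mathsf{ACA}_0}\le|T|_{\mathsf{WO}}$, and together with the previous paragraph this proves $|T|_{\mathsf{WO}}=|T|_{\mathsf{ACA}_0}$.
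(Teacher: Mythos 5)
Your proof is correct, and its first half coincides with the paper's: the paper actually proves the more general statement $|U|_{\mathsf{WO}}=|U|_{T_0}$ for any finitely axiomatized $\Pi^1_2(\Pi^0_2)$-sound $T_0$ and any $U\sqsupseteq T_0^{+}$, obtaining $|U|_{\mathsf{WO}}\le|U|_{T_0}$ exactly as you do, from Lemma \ref{wo_to_iterated_general} (your Lemma \ref{wo_to_iterated_special_case}(2)) combined with Proposition \ref{rank_of_iteration}; your soundness/well-foundedness aside there is superfluous, since Proposition \ref{rank_of_iteration} applies to arbitrary notations, which is also precisely what makes your treatment of the $\Pi^1_1$-unsound case go through. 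In the converse direction your route genuinely differs in two respects. First, the paper argues pointwise rather than by contradiction: for each $\boldsymbol\alpha<|U|_{T_0}$ it applies Lemma \ref{reflection_of_iteration} to get $U+\varphi\vdash\mathsf{RFN}_{\Pi^1_1(\Pi^0_3)}\big(\mathbf{R}^{\alpha}_{\Pi^1_1(\Pi^0_3)}(T_0)\big)$ and then passes down to $\mathbf{R}^{\alpha}_{\Pi^1_1(\Pi^0_3)}(\mathsf{RCA}_0)$ simply by monotonicity of the progression in the base theory (since $T_0\sqsupseteq\mathsf{RCA}_0$), so that Lemma \ref{it_ref_to_wo} yields $U+\varphi\vdash\mathsf{WO}(\alpha)$ and hence $\boldsymbol\alpha<|U+\varphi|_{\mathsf{WO}}=|U|_{\mathsf{WO}}$. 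You instead descend to $\mathsf{RCA}_0$ via the conservation theorem \ref{schmerl_ACA_0_RCA_0}, landing at $\mathsf{WO}(\varepsilon_\alpha)$; this is valid (the paper performs the same provable-equivalence step inside the proof of Theorem \ref{robust_rank_theorem}), but overshooting from $\alpha$ to $\varepsilon_\alpha$ is exactly what creates the $\varepsilon$-fixed-point worry you then must defuse, and your contradiction framing forces you to realize the ordinal $\boldsymbol\gamma=|T|_{\mathsf{WO}}$ by an actual notation (hence the appeal to $|T|_{\mathsf{WO}}<\omega_1^{CK}$), whereas the paper only ever needs notations strictly below the rank. On the other hand, your explicit top-point argument that the proof-theoretic ordinal is never attained, together with the explicit invocation of Kreisel's theorem, spells out exactly what the paper leaves implicit in its strict inequality $|\alpha|<|U+\varphi|_{\mathsf{WO}}=|U|_{\mathsf{WO}}$: both proofs rely on these two facts, and yours has the merit of naming them.
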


We prove the following general theorem
\begin{theorem} Suppose $\Pi^1_2(\Pi^0_2)$-sound theory $T_0$ is given by a finite list of axioms. Then for each $U\sqsupseteq T_0^+$ we have
  $$|U|_{\mathsf{WO}}=|U|_{T_0}.$$
\end{theorem}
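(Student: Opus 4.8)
The plan is to route everything through a single biconditional linking $U$-provable well-foundedness with the reflection order, and then to read off the two inequalities $|U|_{\mathsf{WO}}\le|U|_{T_0}$ and $|U|_{T_0}\le|U|_{\mathsf{WO}}$ uniformly in $\mathbf{On}\cup\{\boldsymbol\infty\}$. First I would record the biconditional. Lemma~\ref{wo_to_iterated_general} already gives, for $U\sqsupseteq T_0^+$ and any notation $\alpha$, the implication $U\vdash\mathsf{WO}(\alpha)\Rightarrow U\vdash\mathsf{RFN}_{\Pi^1_1(\Pi^0_3)}(\mathbf{R}^\alpha_{\Pi^1_1(\Pi^0_3)}(T_0))$. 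For the converse I would generalize Lemma~\ref{it_ref_to_wo} from $\mathsf{RCA}_0$ to $T_0$: its reflexive-induction proof goes through verbatim, since it uses only $T_0\sqsupseteq\mathsf{RCA}_0$ and that $\mathsf{WO}(\beta)$ is $\Pi^1_1(\Pi^0_3)$, giving $T_0\vdash\forall\alpha\,(\mathsf{RFN}_{\Pi^1_1(\Pi^0_3)}(\mathbf{R}^\alpha_{\Pi^1_1(\Pi^0_3)}(T_0))\to\mathsf{WO}(\alpha))$. Together these yield, for $U\sqsupseteq T_0^+$, that $U\vdash\mathsf{WO}(\alpha)$ if and only if $\mathbf{R}^\alpha_{\Pi^1_1(\Pi^0_3)}(T_0)\prec_{\Pi^1_1(\Pi^0_3)}U$.

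With Proposition~\ref{rank_of_iteration} ($|\mathbf{R}^\alpha_{\Pi^1_1(\Pi^0_3)}(T_0)|_{T_0}=|\alpha|$) the easy inequality is immediate: if $U\vdash\mathsf{WO}(\alpha)$ then $\mathbf{R}^\alpha_{\Pi^1_1(\Pi^0_3)}(T_0)\prec_{\Pi^1_1(\Pi^0_3)}U$, so $|\alpha|=|\mathbf{R}^\alpha_{\Pi^1_1(\Pi^0_3)}(T_0)|_{T_0}<|U|_{T_0}$, and taking the supremum over all such $\alpha$ gives $|U|_{\mathsf{WO}}\le|U|_{T_0}$. This also makes the $\boldsymbol\infty$ cases correspond: if $U$ proves $\mathsf{WO}(\gamma)$ for an ill-founded $\gamma$, then $\mathbf{R}^\gamma_{\Pi^1_1(\Pi^0_3)}(T_0)\prec_{\Pi^1_1(\Pi^0_3)}U$ has rank $\boldsymbol\infty$, so $|U|_{T_0}=\boldsymbol\infty=|U|_{\mathsf{WO}}$. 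I would also note the elementary structural fact that the set $\{|\alpha|:U\vdash\mathsf{WO}(\alpha)\}$ is downward closed and closed under $\alpha\mapsto\alpha+1$ (provably $\mathsf{WO}(\alpha)\to\mathsf{WO}(\alpha+1)$ and $\mathsf{WO}$ passes to initial segments), so it equals $[0,|U|_{\mathsf{WO}})$; in particular $U\vdash\mathsf{WO}(\alpha)$ forces $|\alpha|<|U|_{\mathsf{WO}}$ strictly.

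For the reverse inequality it then suffices to prove the key claim: for every notation $\alpha$ with $|\alpha|<|U|_{T_0}$ one has $U\vdash\mathsf{WO}(\alpha)$. Indeed, by the initial-segment fact this gives $|\alpha|<|U|_{\mathsf{WO}}$ for every $|\alpha|<|U|_{T_0}$, which forces $|U|_{T_0}\le|U|_{\mathsf{WO}}$ outright, with no case distinction on whether $|U|_{T_0}$ is a successor. To prove the key claim I would apply Lemma~\ref{reflection_of_iteration} to $U$: from $|\alpha|<|U|_{T_0}$ it produces a \emph{true} $\Sigma^1_1(\Pi^0_2)$ sentence $\varphi$ with $U+\varphi\vdash\mathsf{RFN}_{\Pi^1_1(\Pi^0_3)}(\mathbf{R}^\alpha_{\Pi^1_1(\Pi^0_3)}(T_0))$, whence $U+\varphi\vdash\mathsf{WO}(\alpha)$ by the generalized Lemma~\ref{it_ref_to_wo}. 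The remaining task is to discharge $\varphi$ and conclude $U\vdash\mathsf{WO}(\alpha)$ honestly, and this is exactly where $U\sqsupseteq T_0^+$ is used. Reasoning in $U$, it is enough to verify $\mathsf{WO}(\alpha)$ in every countable coded $\omega$-model of $T_0$, since a counterexample set to $\mathsf{WO}(\alpha)$ would, by the $\omega$-model clause of $T_0^+$, sit inside such a model. The clean enabling observation is that every $\Pi^1_1(\Pi^0_3)$ theorem of $U$ persists to these $\omega$-models: writing $\mathsf{RFN}_{\Pi^1_1(\Pi^0_3)}(V)$ as $\forall Z\,\rho(Z)$ with $\rho\in\Pi^0_3$, an $\omega$-model evaluates the arithmetical matrix $\rho$ correctly, and $U$ proves $\rho$ of every set; hence any $\omega$-model $\mathfrak{M}\models T_0$ automatically satisfies all of $U$'s reflection theorems, and one recovers the witness content of $\varphi$ inside $\mathfrak{M}$ from the iterated-reflection data guaranteed there.

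The hard part will be precisely this last discharging step: converting the \emph{external} information $|\alpha|<|U|_{T_0}$—equivalently, the existence of the true $\Sigma^1_1$ witness $\varphi$—into a genuine $U$-proof of $\mathsf{WO}(\alpha)$, which amounts to relating an \emph{arbitrary} reflection predecessor of rank $|\alpha|$ to the canonical iterated reflection $\mathbf{R}^\alpha_{\Pi^1_1(\Pi^0_3)}(T_0)$ inside the $\omega$-models supplied by $T_0^+$. It is essential here that $U$ proves $T_0^+$ rather than merely extending $T_0$: this is exactly the feature absent from the pathological theory $\mathsf{ACA}_0+\mathsf{WO}(\alpha')$ in the remark preceding the theorem, where $|\cdot|_{T_0}$ and $|\cdot|_{\mathsf{WO}}$ come apart, so any correct argument must break down without it. Once the key claim is established, combining it with the easy inequality and the initial-segment fact yields $|U|_{\mathsf{WO}}=|U|_{T_0}$, and specializing $T_0=\mathsf{ACA}_0$ (so $T_0^+=\mathsf{ACA}_0^+$) recovers Theorem~\ref{omega_model_reflection_rank}.
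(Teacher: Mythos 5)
Your skeleton up to the last step coincides with the paper's proof: the inequality $|U|_{\mathsf{WO}}\le|U|_{T_0}$ comes from Lemma \ref{wo_to_iterated_general} together with Proposition \ref{rank_of_iteration}, and for the converse one applies Lemma \ref{reflection_of_iteration} to get a true $\Sigma^1_1(\Pi^0_2)$ sentence $\varphi$ with $U+\varphi\vdash\mathsf{RFN}_{\Pi^1_1(\Pi^0_3)}\big(\mathbf{R}^{\alpha}_{\Pi^1_1(\Pi^0_3)}(T_0)\big)$, whence $U+\varphi\vdash\mathsf{WO}(\alpha)$ (the paper weakens to $\mathbf{R}^{\alpha}_{\Pi^1_1(\Pi^0_3)}(\mathsf{RCA}_0)$ and uses Lemma \ref{it_ref_to_wo} as stated; your direct generalization of that lemma to base $T_0$ would also work). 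The genuine gap is your final ``discharging'' step. Your key claim --- that $|\alpha|<|U|_{T_0}$ forces $U\vdash\mathsf{WO}(\alpha)$ for \emph{every} such notation $\alpha$ --- is false, and no use of $U\sqsupseteq T_0^+$ can repair it. Take a primitive recursive notation $\beta$ of order type $\omega$ rigged so that $\mathsf{RCA}_0\vdash\mathsf{WO}(\beta)\to\mathsf{Con}(U)$ (enumerate $\mathbb{N}$ in its usual order until a $U$-proof of $0=1$ appears, then descend). Then $|\beta|=\omega<|U|_{T_0}$, so Lemma \ref{reflection_of_iteration} applies to $\beta$ and produces a true $\varphi$ with $U+\varphi\vdash\mathsf{WO}(\beta)$; yet $U\nvdash\mathsf{WO}(\beta)$ by G\"odel's second incompleteness theorem. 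So the passage from $U+\varphi\vdash\mathsf{WO}(\alpha)$ to $U\vdash\mathsf{WO}(\alpha)$ is not merely the ``hard part'': it is impossible in general. The sentence $\varphi$ packages the true $\Sigma^1_1(\Pi^0_2)$ soundness witnesses along the reflection chain below $U$, and its content can exceed the consistency strength of $U$ itself; persistence of $U$'s $\Pi^1_1(\Pi^0_3)$ theorems to coded $\omega$-models of $T_0$ gives you no handle on those witnesses.

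The paper's move is to not discharge $\varphi$ at all. Since $U\sqsupseteq T_0^+\sqsupseteq\mathsf{ACA}_0$ and $\varphi$ is a true $\Sigma^1_1$ sentence, the observation attributed to Kreisel and recalled at the opening of the subsection on extensions of $\mathsf{ACA}_0^+$ (see \cite{pohlers2008proof}) gives $|U+\varphi|_{\mathsf{WO}}=|U|_{\mathsf{WO}}$: a true $\Sigma^1_1$ hypothesis can be absorbed into the notation (replace $\alpha$ by a notation whose well-foundedness is provably equivalent to $\varphi\to\mathsf{WO}(\alpha)$; the truth of $\varphi$ keeps the external order type equal to $|\alpha|$), so the supremum defining $|U|_{\mathsf{WO}}$ is unchanged. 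Hence $\boldsymbol\alpha=|\alpha|<|U+\varphi|_{\mathsf{WO}}=|U|_{\mathsf{WO}}$ for each $\boldsymbol\alpha<|U|_{T_0}$, which is all the second inequality needs --- $|U|_{\mathsf{WO}}$ is a supremum over notations, not a statement about any fixed one, and your own successor-closure remark already gives the needed strictness. This also corrects your diagnosis of where the hypotheses act: $U\sqsupseteq T_0^+$ and the $\Pi^1_2(\Pi^0_2)$-soundness of $T_0$ are consumed by the easy inequality (Lemma \ref{wo_to_iterated_general} and Proposition \ref{rank_of_iteration}), while the hard direction needs only Lemma \ref{reflection_of_iteration}, Lemma \ref{it_ref_to_wo}, and $U\sqsupseteq\mathsf{ACA}_0$ for Kreisel's theorem.
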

\begin{proof} Combining Lemma \ref{wo_to_iterated_general} and Proposition \ref{rank_of_iteration}  we see that $|U|_{\mathsf{WO}}\le |U|_{T_0}$. In order to show that $|U|_{\mathsf{WO}}\ge |U|_{T_0}$ we prove that for each $\boldsymbol \alpha<|U|_{T_0}$ we have $\boldsymbol \alpha<|U|_{\mathsf{WO}}$. We consider $\boldsymbol \alpha<|U|_{T_0}$. From Lemma \ref{reflection_of_iteration} we see that there is an ordinal notation $\alpha$ and a true $\Sigma^1_1(\Pi^0_2)$-sentence $\varphi$ such that $|\alpha|=\boldsymbol \alpha$ and $$U+\varphi\vdash \mathsf{RFN}_{\Pi^1_1(\Pi^0_3)}(\mathbf{R}^{\alpha}_{\Pi^1_1(\Pi^0_3)}(T_0)).$$ Since $T_0\sqsupseteq \mathsf{RCA}_0$, we have
  $$U+\varphi\vdash \mathsf{RFN}_{\Pi^1_1(\Pi^0_3)}(\mathbf{R}^{\alpha}_{\Pi^1_1(\Pi^0_3)}(\mathsf{RCA}_0)).$$
  And hence by Lemma \ref{it_ref_to_wo} we have $U+\varphi\vdash \mathsf{WO}(\alpha)$. Thus $$\boldsymbol\alpha=|\alpha|< |U+\varphi|_{\mathsf{WO}}=|U|_{\mathsf{WO}}.$$
  This completes the proof of the theorem.
\end{proof}

\subsection{Robust reflection rank}

The \emph{robust reflection rank} $|U|^{\star}_{T_0}$ of a theory $U\in \mathcal{E}\mbox{-}T_0$ over a theory $T_0\sqsupseteq \mathsf{RCA}_0$ is defined as follows:
$$|U|^{\star}_{T_0}=\sup\{|U+\varphi|_{T_0}\colon \varphi\mbox{ is a true $\Sigma^1_1(\Pi^0_2)$-sentence}\}.$$

\begin{proposition} \label{robust_rank_and_soundness}For theories $T_0\sqsupseteq\mathsf{RCA}_0$ and $U\in \mathcal{E}\mbox{-}T_0$ the robust reflection rank $|U|^{\star}_{T_0}$ is an ordinal iff $U$ is $\Pi^1_1(\Pi^0_3)$-sound.
\end{proposition}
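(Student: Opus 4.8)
The plan is to prove the two directions of the equivalence separately. For the forward (soundness $\Rightarrow$ ordinal) direction I would use Lemma \ref{second principle} together with Corollary \ref{Pi^1_1-sound_rank}, and for the converse I would use an inconsistency construction. So suppose first that $U$ is $\Pi^1_1(\Pi^0_3)$-sound. For every true $\Sigma^1_1(\Pi^0_2)$ sentence $\varphi$, Lemma \ref{second principle} yields that $U+\varphi$ is again $\Pi^1_1(\Pi^0_3)$-sound, and $U+\varphi\in\mathcal{E}\mbox{-}T_0$ since $U+\varphi\sqsupseteq U\sqsupseteq T_0$ provably in $\mathsf{EA}$. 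Hence Corollary \ref{Pi^1_1-sound_rank} gives $|U+\varphi|_{T_0}\in\mathbf{On}$ for every such $\varphi$. Since there are only countably many sentences $\varphi$, the collection $\{|U+\varphi|_{T_0} : \varphi\text{ true }\Sigma^1_1(\Pi^0_2)\}$ is a set of ordinals, and therefore its supremum $|U|^{\star}_{T_0}$ is an ordinal.

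For the converse I would argue contrapositively: assuming $U$ is $\Pi^1_1(\Pi^0_3)$-unsound, I would produce a single true $\Sigma^1_1(\Pi^0_2)$ sentence $\varphi$ with $|U+\varphi|_{T_0}=\boldsymbol\infty$, which forces $|U|^{\star}_{T_0}=\boldsymbol\infty$. By unsoundness, $U$ proves some false $\Pi^1_1(\Pi^0_3)$ sentence $\chi=\forall X\,F(X)$ with $F\in\Pi^0_3$. Writing $\neg F(X)$ in the form $\exists u\,\pi(X,u)$ with $\pi\in\Pi^0_2$, the negation $\neg\chi=\exists X\exists u\,\pi(X,u)$ can be rewritten, by absorbing the leading number quantifier $\exists u$ into the set quantifier (coding the pair $(X,u)$ into a single set and decoding by $\boldsymbol\Delta^0_1$ operations), as a $\Sigma^1_1(\Pi^0_2)$ sentence $\varphi$ with $\mathsf{RCA}_0\vdash\varphi\leftrightarrow\neg\chi$. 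This $\varphi$ is true because $\chi$ is false, and since $U\sqsupseteq\mathsf{RCA}_0$ and $U\vdash\chi$ we obtain $U\vdash\neg\varphi$, so $U+\varphi$ is inconsistent.

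It then remains to verify that an inconsistent $W\in\mathcal{E}\mbox{-}T_0$ has rank $\boldsymbol\infty$. Such a $W$ proves $\mathsf{RFN}_{\Pi^1_1(\Pi^0_3)}(V)$ for every $V\in\mathcal{E}\mbox{-}T_0$, so every member of $\mathcal{E}\mbox{-}T_0$ lies $\prec_{\Pi^1_1(\Pi^0_3)}$-below $W$; choosing infinitely many intensionally distinct inconsistent theories in $\mathcal{E}\mbox{-}T_0$ (each proving $\mathsf{RFN}_{\Pi^1_1(\Pi^0_3)}$ of the next) exhibits an infinite descending chain below $W$, whence the cone below $W$ is ill-founded and $|W|_{T_0}=\boldsymbol\infty$. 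Applying this to $W=U+\varphi$ gives $|U|^{\star}_{T_0}\geq|U+\varphi|_{T_0}=\boldsymbol\infty$, completing the converse.

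The main obstacle is the complexity bookkeeping in the converse: the negation of the false $\Pi^1_1(\Pi^0_3)$ theorem $\chi$ is a priori only $\Sigma^1_1(\Sigma^0_3)$, whereas the robust rank quantifies only over true $\Sigma^1_1(\Pi^0_2)$ sentences. The key observation is that $\neg F(X)$ has the shape $\exists u\,\pi(X,u)$ with $\pi\in\Pi^0_2$, so the single leading existential number quantifier in $\neg\chi$ can be merged with the existential set quantifier, leaving only a $\Pi^0_2$ matrix beneath a single set quantifier. This compression is routine coding provable in $\mathsf{RCA}_0$, but it is exactly the step that must be checked to land inside $\Sigma^1_1(\Pi^0_2)$ rather than $\Sigma^1_1(\Sigma^0_3)$, and it is what makes $U+\varphi$ outright inconsistent rather than merely unsound.
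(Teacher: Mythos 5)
Your proof is correct and takes essentially the same route as the paper: the forward direction via Lemma \ref{second principle} and Corollary \ref{Pi^1_1-sound_rank}, and the converse via a true $\Sigma^1_1(\Pi^0_2)$ sentence $\mathsf{RCA}_0$-provably equivalent to the negation of a false $\Pi^1_1(\Pi^0_3)$ theorem of $U$, making $U+\varphi$ inconsistent (your explicit quantifier-merging just fills in a step the paper asserts without detail). Your final chain construction is an unnecessary detour: since the inconsistent theory $U+\varphi$ proves everything, in particular $\mathsf{RFN}_{\Pi^1_1(\Pi^0_3)}(U+\varphi)$ itself, the self-loop $U+\varphi\prec_{\Pi^1_1(\Pi^0_3)}U+\varphi$ already makes the cone below it ill-founded and yields $|U+\varphi|_{T_0}=\boldsymbol\infty$ in one line, which is how the paper concludes.
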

\begin{proof} If $U$ is $\Pi^1_1(\Pi^0_3)$-sound then for any true $\Sigma^1_1(\Pi^0_2)$-sentence $\varphi$ the theory $U+\varphi$ is $\Pi^1_1(\Pi^0_3)$-sound. Thus, by Corollary \ref{Pi^1_1-sound_rank} each rank $|U+\varphi|_{T_0}\in\mathbf{On}$ and so  $|U|^\star_{T_0}\in\mathbf{On}$.

  If $U$ is not $\Pi^1_1(\Pi^0_3)$-sound then there is a false $\Pi^1_1(\Pi^0_3)$ sentence $\varphi$ that $U$ proves. Let $\psi$ be a true $\Sigma^1_1(\Pi^0_2)$-sentence that is $\mathsf{RCA}_0$-provably equivalent to $\lnot \varphi$. Clearly, $U+\psi$ is inconsistent, so $U+\psi \prec_{\Pi^1_1(\Pi^0_3)}U+\psi$ and hence $\boldsymbol\infty=|U+\psi|_{T_0}=|U|^{\star}_{T_0}$.\end{proof}

\begin{proposition} Suppose $T_0\sqsupseteq\mathsf{RCA}_0$ is $\Pi^1_2(\Pi^0_3)$-sound, $U\in \mathcal{E}\mbox{-}T_0$, and for some ordinal notation $\alpha$ we have $U\equiv_{\Pi^1_1(\Pi^0_3)}\mathbf{R}_{\Pi^1_1(\Pi^0_3)}^{\alpha}(T_0)$. Then $$|U|^{\star}_{T_0}=|U|_{T_0}=|\alpha|.$$
\end{proposition}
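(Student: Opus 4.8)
The plan is to show that all three quantities equal $|\alpha|$ by exploiting the fact that both $|\cdot|_{T_0}$ and $|\cdot|^{\star}_{T_0}$ are invariants of the $\Pi^1_1(\Pi^0_3)$-theory, and then reducing the robust-rank computation to the case $U=\mathbf{R}^{\alpha}_{\Pi^1_1(\Pi^0_3)}(T_0)$ that is essentially handled by Proposition \ref{rank_of_iteration}. Throughout I use that $T_0$, being $\Pi^1_2(\Pi^0_3)$-sound, is in particular $\Pi^1_2(\Pi^0_2)$-sound, so that Proposition \ref{rank_of_iteration} and Lemma \ref{first principle} apply.

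First I would record the key invariance observation: for each $V\in\mathcal{E}\mbox{-}T_0$ the sentence $\mathsf{RFN}_{\Pi^1_1(\Pi^0_3)}(V)$ is itself $\Pi^1_1(\Pi^0_3)$, so whether $V\prec_{\Pi^1_1(\Pi^0_3)}W$ holds depends only on the set of $\Pi^1_1(\Pi^0_3)$-theorems of $W$. Consequently, if $W,W'\in\mathcal{E}\mbox{-}T_0$ satisfy $W\equiv_{\Pi^1_1(\Pi^0_3)}W'$, then they have identical $\prec_{\Pi^1_1(\Pi^0_3)}$-predecessor sets and hence the same rank, $|W|_{T_0}=|W'|_{T_0}$. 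Applying this to the hypothesis $U\equiv_{\Pi^1_1(\Pi^0_3)}\mathbf{R}^{\alpha}_{\Pi^1_1(\Pi^0_3)}(T_0)$ and invoking Proposition \ref{rank_of_iteration} gives at once
$$|U|_{T_0}=|\mathbf{R}^{\alpha}_{\Pi^1_1(\Pi^0_3)}(T_0)|_{T_0}=|\alpha|,$$
which settles the equality $|U|_{T_0}=|\alpha|$.

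For the robust rank I would first extend the invariance so as to absorb the added sentence. Given a true $\Sigma^1_1(\Pi^0_2)$ sentence $\varphi$ and any $\theta\in\Pi^1_1(\Pi^0_3)$, the implication $\varphi\to\theta$ is $\mathsf{RCA}_0$-provably equivalent to a $\Pi^1_1(\Pi^0_3)$ sentence (the negated antecedent is $\Pi^1_1(\Sigma^0_2)\subseteq\Pi^1_1(\Pi^0_3)$, and the resulting disjunction of two $\Pi^1_1(\Pi^0_3)$ sentences collapses to a single universal set quantifier by pairing). Hence $U+\varphi\equiv_{\Pi^1_1(\Pi^0_3)}\mathbf{R}^{\alpha}_{\Pi^1_1(\Pi^0_3)}(T_0)+\varphi$, and by the invariance observation $|U+\varphi|_{T_0}=|\mathbf{R}^{\alpha}_{\Pi^1_1(\Pi^0_3)}(T_0)+\varphi|_{T_0}$. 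Taking the supremum over all true $\Sigma^1_1(\Pi^0_2)$ sentences $\varphi$ reduces the task to proving $|\mathbf{R}^{\alpha}_{\Pi^1_1(\Pi^0_3)}(T_0)|^{\star}_{T_0}=|\alpha|$. The lower bound $\geq|\alpha|$ is immediate, since a trivially true sentence lies in the class quantified over, so $|U|^{\star}_{T_0}\geq|U|_{T_0}=|\alpha|$.

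The main work, and the main obstacle, is the upper bound $|\mathbf{R}^{\alpha}_{\Pi^1_1(\Pi^0_3)}(T_0)+\varphi|_{T_0}\leq|\alpha|$ for every true $\Sigma^1_1(\Pi^0_2)$ sentence $\varphi$, where I recapitulate the Gödelian argument behind Proposition \ref{rank_of_iteration}. Abbreviate $\mathbf{R}:=\mathbf{R}^{\alpha}_{\Pi^1_1(\Pi^0_3)}(T_0)$ and suppose for contradiction $|\mathbf{R}+\varphi|_{T_0}>|\alpha|$; note this forces $|\alpha|\in\mathbf{On}$, so $\alpha$ is genuinely well-founded and $\mathsf{WO}(\alpha)$ holds. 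Since $\mathbf{R}+\varphi\in\mathcal{E}\mbox{-}T_0$, Lemma \ref{reflection_of_iteration} supplies a true $\Sigma^1_1(\Pi^0_2)$ sentence $\psi$ with $\mathbf{R}+\varphi+\psi\vdash\mathsf{RFN}_{\Pi^1_1(\Pi^0_3)}(\mathbf{R})$; writing $\chi:=\varphi\wedge\psi$ (again true and $\Sigma^1_1(\Pi^0_2)$) we obtain $\mathbf{R}+\chi\vdash\mathsf{RFN}_{\Pi^1_1(\Pi^0_3)}(\mathbf{R})$. Reasoning inside $\mathbf{R}+\chi$ as in Proposition \ref{rank_of_iteration}, any $\Pi^1_1(\Pi^0_3)$-theorem $\theta$ of $\mathbf{R}+\chi$ satisfies $\mathbf{R}\vdash\chi\to\theta$ with $\chi\to\theta$ equivalent to a $\Pi^1_1(\Pi^0_3)$ sentence, so $\mathsf{RFN}_{\Pi^1_1(\Pi^0_3)}(\mathbf{R})$ yields its truth and the truth of $\chi$ then yields that of $\theta$; hence $\mathbf{R}+\chi\vdash\mathsf{RFN}_{\Pi^1_1(\Pi^0_3)}(\mathbf{R}+\chi)$ and therefore $\mathbf{R}+\chi\vdash\mathsf{Con}(\mathbf{R}+\chi)$. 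But $\mathbf{R}$ is $\Pi^1_1(\Pi^0_3)$-sound by Lemma \ref{first principle} (using $\Pi^1_2(\Pi^0_2)$-soundness of $T_0$ and $\mathsf{WO}(\alpha)$), so $\mathbf{R}+\chi$ is $\Pi^1_1(\Pi^0_3)$-sound, hence consistent, by Lemma \ref{second principle}; this contradicts Gödel's second incompleteness theorem. Thus $|\mathbf{R}+\varphi|_{T_0}\leq|\alpha|$ for all such $\varphi$, giving $|U|^{\star}_{T_0}=|\mathbf{R}|^{\star}_{T_0}=|\alpha|$ and completing the proof that $|U|^{\star}_{T_0}=|U|_{T_0}=|\alpha|$.
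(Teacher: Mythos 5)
Your proof is correct and takes essentially the same route as the paper's: the lower bound via Proposition \ref{rank_of_iteration}, and the upper bound via the identical G\"odelian contradiction built from Lemma \ref{reflection_of_iteration}, the collapse of $\varphi\to\theta$ to a $\Pi^1_1(\Pi^0_3)$ sentence, the formalized soundness argument corresponding to Lemma \ref{second principle}, and external consistency via Lemmas \ref{first principle} and \ref{second principle}. Your packaging through an explicit rank-invariance observation (reducing everything to $U=\mathbf{R}^{\alpha}_{\Pi^1_1(\Pi^0_3)}(T_0)$, where the paper instead transfers the single implication $\varphi\to\mathsf{RFN}_{\Pi^1_1(\Pi^0_3)}(\mathbf{R}^{\alpha}_{\Pi^1_1(\Pi^0_3)}(T_0))$ across the $\Pi^1_1(\Pi^0_3)$-equivalence) is only a reorganization, though it has the merit of making explicit two points the paper glosses: that $|\alpha|\in\mathbf{On}$ is forced, so the well-ordering hypothesis of Lemma \ref{first principle} is satisfied, and that extracting a witness from the supremum defining $|U|^{\star}_{T_0}$ requires applying Lemma \ref{reflection_of_iteration} to $U+\varphi$ and conjoining the two true $\Sigma^1_1(\Pi^0_2)$ sentences.
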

\begin{proof}We use Proposition \ref{rank_of_iteration} and see that
  $$|U|^{\star}_{T_0}\ge|U|_{T_0}=|\alpha|.$$
  Let us assume for a contradiction that $|U|^{\star}_{T_0}>|\alpha|$. In this case from Lemma \ref{reflection_of_iteration} there is a true $\Sigma^1_1(\Pi^0_2)$ sentence $\varphi$ such that
  $$U+\varphi\vdash\mathsf{RFN}_{\Pi^1_1(\Pi^0_3)}(\mathbf{R}^{\alpha}_{\Pi^1_1(\Pi^0_3)}(U)).$$
  Of course, this implies that
    $$U+\vdash \varphi \rightarrow \mathsf{RFN}_{\Pi^1_1(\Pi^0_3)}(\mathbf{R}^{\alpha}_{\Pi^1_1(\Pi^0_3)}(U)).$$
Note that $\varphi \rightarrow \mathsf{RFN}_{\Pi^1_1(\Pi^0_3)}(\mathbf{R}^{\alpha}_{\Pi^1_1(\Pi^0_3)}(U))$ is a $\Pi^1_1(\Pi^0_3)$ sentence. Thus, from the assumption that $U\equiv_{\Pi^1_1(\Pi^0_3)}\mathbf{R}_{\Pi^1_1(\Pi^0_3)}^{\alpha}(T_0)$, it follows that:
  \begin{flalign*}
  \mathbf{R}_{\Pi^1_1(\Pi^0_3)}^{\alpha}(T_0) &\vdash \varphi \rightarrow \mathsf{RFN}_{\Pi^1_1(\Pi^0_3)}(\mathbf{R}^{\alpha}_{\Pi^1_1(\Pi^0_3)}(U))\\
 \mathbf{R}^\alpha_{\Pi^1_1(\Pi^0_3)}(T_0)+\varphi & \vdash \mathsf{RFN}_{\Pi^1_1(\Pi^0_3)}(\mathbf{R}^\alpha_{\Pi^1_1(\Pi^0_3)}(U))\\
 &  \vdash \mathsf{RFN}_{\Pi^1_1(\Pi^0_3)}(\mathbf{R}^\alpha_{\Pi^1_1(\Pi^0_3)}(U)+\varphi) \textrm{ by Lemma \ref{second principle}.}\\
 &  \vdash \mathsf{Con}(\mathbf{R}^\alpha_{\Pi^1_1(\Pi^0_3)}(U)+\varphi))
\end{flalign*}
Thus, $ \mathbf{R}^\alpha_{\Pi^1_1(\Pi^0_3)}(T_0)+\varphi $ is inconsistent by G\"{o}del's Second Incompleteness Theorem.
On the other hand, since $T_0$ is $\Pi^1_2(\Pi^0_3)$ sound, $\mathbf{R}_{\Pi^1_1(\Pi^0_3)}^{\alpha}(T_0)$ is $\Pi^1_1(\Pi^0_3)$ sound by Lemma \ref{first principle}. Thus,  $ \mathbf{R}^\alpha_{\Pi^1_1(\Pi^0_3)}(T_0)+\varphi $ is consistent by Lemma \ref{second principle}. This is a contradiction.
\end{proof}

Finally we connect the notions of robust reflection rank $|\cdot|_{\mathsf{ACA}_0}^{\star}$ and proof-theoretic ordinal $|\cdot|_{\mathsf{WO}}$:
\begin{theorem} \label{robust_rank_theorem} For any theory $T\in \mathcal{E}\mbox{-}\mathsf{ACA}_0$ with robust reflection rank $|T|^{\star}_{\mathsf{ACA}_0}=\boldsymbol\alpha$ we have $|T|_{\mathsf{WO}}=\boldsymbol\varepsilon_{\boldsymbol \alpha}$ (here by definition we put $\boldsymbol\varepsilon_{\boldsymbol \infty}=\boldsymbol \infty$).
\end{theorem}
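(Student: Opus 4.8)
The plan is to set $\boldsymbol\alpha=|T|^{\star}_{\mathsf{ACA}_0}$ and prove the two inequalities $|T|_{\mathsf{WO}}\geq\boldsymbol\varepsilon_{\boldsymbol\alpha}$ and $|T|_{\mathsf{WO}}\leq\boldsymbol\varepsilon_{\boldsymbol\alpha}$ separately. The two facts I would lean on hardest are Kreisel's invariance (adding a true $\Sigma^1_1$ sentence never changes the proof-theoretic ordinal) and the fact that, over $\mathsf{ACA}_0$, one converts between the $\Pi^1_1$ and $\Pi^1_1(\Pi^0_3)$ worlds via Schmerl's formula $\mathbf{R}^\delta_{\Pi^1_1}(\mathsf{ACA}_0)\equiv_{\Pi^1_1(\Pi^0_3)}\mathbf{R}^{\varepsilon_\delta}_{\Pi^1_1(\Pi^0_3)}(\mathsf{RCA}_0)$ of Theorem~\ref{schmerl_ACA_0_RCA_0}. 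The essential difficulty is that $T$ is only assumed to extend $\mathsf{ACA}_0$, not $\mathsf{ACA}_0^+$, so I must avoid Lemma~\ref{wo_to_iterated_special_case}(2) entirely and route everything through part~(1) together with the robustness afforded by true $\Sigma^1_1(\Pi^0_2)$ sentences.

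For the lower bound I would fix a notation $\alpha''$ with $|\alpha''|<\boldsymbol\alpha$. Since $\boldsymbol\alpha$ is the supremum defining the robust rank, there is a true $\Sigma^1_1(\Pi^0_2)$ sentence $\psi$ with $|T+\psi|_{\mathsf{ACA}_0}>|\alpha''|$, and Lemma~\ref{reflection_of_iteration} (with base $\mathsf{ACA}_0$) then yields a further true $\Sigma^1_1(\Pi^0_2)$ sentence $\varphi$ with $T+\psi+\varphi\vdash\mathsf{RFN}_{\Pi^1_1}(\mathbf{R}^{\alpha''}_{\Pi^1_1}(\mathsf{ACA}_0))$, using that $\Pi^1_1$ and $\Pi^1_1(\Pi^0_3)$ reflection coincide over $\mathsf{ACA}_0$. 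By monotonicity of reflection this makes $T+\psi+\varphi$ an extension of $\mathbf{R}^{\alpha''+1}_{\Pi^1_1}(\mathsf{ACA}_0)$, so Theorem~\ref{ACA_0_ordinal_analysis} gives it proof-theoretic ordinal at least $\boldsymbol\varepsilon_{|\alpha''|+1}$; Kreisel's invariance then yields $|T|_{\mathsf{WO}}\geq\boldsymbol\varepsilon_{|\alpha''|+1}$. Taking the supremum over all such $\alpha''$ and using $\sup\{\boldsymbol\varepsilon_{\boldsymbol\beta+1}:\boldsymbol\beta<\boldsymbol\alpha\}=\boldsymbol\varepsilon_{\boldsymbol\alpha}$, which handles the successor case of $\boldsymbol\alpha$ automatically, gives $|T|_{\mathsf{WO}}\geq\boldsymbol\varepsilon_{\boldsymbol\alpha}$.

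For the upper bound, suppose $T\vdash\mathsf{WO}(\gamma)$ and assume toward a contradiction that $|\gamma|\geq\boldsymbol\varepsilon_{\boldsymbol\alpha}$; then $\boldsymbol\varepsilon_{\boldsymbol\alpha}$, hence $\boldsymbol\alpha$, is a recursive ordinal, so I may fix a notation $\delta$ with $|\delta|=\boldsymbol\alpha$, giving $|\varepsilon_\delta|=\boldsymbol\varepsilon_{\boldsymbol\alpha}\leq|\gamma|$. The key move is that ``$\varepsilon_\delta$ is isomorphic to an initial segment of $\gamma$'' is a true $\Sigma^1_1(\Pi^0_2)$ sentence $\chi$: the isomorphism is existentially quantified and the only $\Pi^0_2$ clause is downward-closedness of the range. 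Over $\mathsf{RCA}_0$ this gives $T+\chi\vdash\mathsf{WO}(\varepsilon_\delta)$, so by Lemma~\ref{wo_to_iterated_special_case}(1) and Theorem~\ref{schmerl_ACA_0_RCA_0} I obtain $T+\chi\vdash\mathsf{RFN}_{\Pi^1_1}(\mathbf{R}^\delta_{\Pi^1_1}(\mathsf{ACA}_0))$. By Proposition~\ref{rank_of_iteration} this forces $|T+\chi|_{\mathsf{ACA}_0}>|\delta|=\boldsymbol\alpha$, and since $\chi$ is a true $\Sigma^1_1(\Pi^0_2)$ sentence this contradicts $|T|^{\star}_{\mathsf{ACA}_0}=\boldsymbol\alpha$. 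Hence $|\gamma|<\boldsymbol\varepsilon_{\boldsymbol\alpha}$ for every $T$-provable $\mathsf{WO}(\gamma)$, i.e.\ $|T|_{\mathsf{WO}}\leq\boldsymbol\varepsilon_{\boldsymbol\alpha}$.

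The degenerate case $\boldsymbol\alpha=\boldsymbol\infty$ is handled separately: by Proposition~\ref{robust_rank_and_soundness} it means $T$ is $\Pi^1_1(\Pi^0_3)$ unsound, so $T$ proves a false $\Pi^1_1$ sentence, and putting this in $\Pi^1_1$ normal form over $\mathsf{ACA}_0$ exhibits an ill-founded $\gamma$ with $T\vdash\mathsf{WO}(\gamma)$, whence $|T|_{\mathsf{WO}}=\boldsymbol\infty=\boldsymbol\varepsilon_{\boldsymbol\infty}$. I expect the upper bound to be the main obstacle, precisely because it requires converting a bare $\mathsf{WO}(\gamma)$ into reflection strength measured at the $\mathsf{ACA}_0$ level \emph{without} the $\omega$-model reflection of $\mathsf{ACA}_0^+$, while arranging that the cost of the conversion is absorbed by a true $\Sigma^1_1(\Pi^0_2)$ sentence so that it registers in the robust rank rather than in the fragile rank $|\cdot|_{\mathsf{ACA}_0}$. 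The embedding sentence $\chi$ together with the part-(1) form of Lemma~\ref{wo_to_iterated_special_case} is exactly what makes these two requirements compatible, and the points needing the most care are verifying that $\chi$ is genuinely $\Sigma^1_1(\Pi^0_2)$ and that the relevant reflection principles transfer across Schmerl's equivalence inside $\mathsf{ACA}_0$.
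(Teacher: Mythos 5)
Your proposal is correct and takes essentially the same route as the paper's own proof: the same split into $\boldsymbol\alpha=\boldsymbol\infty$ (handled via tree normal form and the Kleene--Brouwer order) versus $\boldsymbol\alpha\in\mathbf{On}$, the same lower bound via robust-rank witnesses fed through Lemma \ref{reflection_of_iteration}, the ordinal analysis of iterated reflection, and Kreisel's invariance under true $\Sigma^1_1$ sentences, and the same upper bound via a true $\Sigma^1_1(\Pi^0_2)$ isomorphism-type sentence pushed through Lemma \ref{wo_to_iterated_special_case}(1), Theorem \ref{schmerl_ACA_0_RCA_0}, and Proposition \ref{rank_of_iteration} to contradict $|T|^{\star}_{\mathsf{ACA}_0}=\boldsymbol\alpha$. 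The only deviations are cosmetic: you use an initial-segment embedding where the paper trims to an exact isomorphism $\mathsf{Iso}$ (both transfers happen inside $T+\chi\sqsupseteq\mathsf{ACA}_0$, so comprehension makes them unproblematic), you cite Theorem \ref{ACA_0_ordinal_analysis} as a black box where the paper reruns its reasoning, and your sup identity degenerates at $\boldsymbol\alpha=0$ (where one instead notes $|T|_{\mathsf{WO}}\ge\boldsymbol\varepsilon_0$ for any $T\sqsupseteq\mathsf{ACA}_0$), an edge the paper's own choice of $\gamma$ with $|\gamma|<|T+\varphi|_{\mathsf{ACA}_0}$ glosses equally.
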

\begin{proof} First let us show that $|T|_{\mathsf{WO}}\ge \boldsymbol\varepsilon_{\boldsymbol \alpha}$. We break into cases based on whether $\boldsymbol \alpha=\infty$ or $\boldsymbol \alpha \in \mathbf{On}$

  Assume $\boldsymbol \alpha=\infty$. Then by Proposition \ref{robust_rank_and_soundness} there is false $\Pi^1_1$ sentence $\varphi$ that is provable in $T$. Now we could construct an ordinal notation $\alpha$ such that $\mathsf{WO}(\alpha)$ is $\mathsf{ACA}_0$-provably equivalent to $\varphi$: we put $\varphi$ in the tree normal form \cite[Lemma~V.1.4]{simpson2009subsystems} and take $\alpha$ to be the Kleene-Brouwer order on the tree. Clearly, $T\vdash \mathsf{WO}(\alpha)$ and $|\alpha|=\infty$. Thus $|T|_{\mathsf{WO}}=\infty=\boldsymbol\varepsilon_{\boldsymbol \alpha}$.

Now assume that $\boldsymbol \alpha\in \mathbf{On}$. Let us consider some $\boldsymbol \beta< \boldsymbol\varepsilon_{\boldsymbol \alpha}$ and show that $|T|_{\mathsf{WO}}>\boldsymbol \beta$. From the definition of robust reflection rank it is easy to see that we could find some true $\Sigma^1_1(\Pi^0_2)$ sentence $\varphi$ such that  $\boldsymbol \beta<\boldsymbol \varepsilon_{\;|T+\varphi|_{\mathsf{ACA}_0}}$. Since $|T+\varphi|_{\mathsf{ACA}_0}$ is the rank of a $\Sigma^0_1$ binary relation,  $|T+\varphi|_{\mathsf{ACA}_0}<\boldsymbol{\omega_1^{CK}}$. Thus we could choose an ordinal notation $\gamma$ such that $|\gamma|<|T+\varphi|_{\mathsf{ACA}_0}$ but $\boldsymbol \beta<\boldsymbol \varepsilon_{|\gamma|+1}$. From  Lemma \ref{reflection_of_iteration} we infer that there is a true $\Sigma^1_1(\Pi^0_2)$-sentence $\varphi'$ such that $T+\varphi+\varphi'\vdash \mathsf{RFN}_{\Pi^1_1}(\mathbf{R}^{\gamma}_{\Pi^1_1}(\mathsf{ACA}_0))$. We find a $\beta\prec \varepsilon_{\gamma+1}$ such that $|\beta|=\boldsymbol \beta$. By the same reasoning as in the proof of Theorem \ref{ACA_0_ordinal_analysis}  we infer that $\mathbf{R}^{\gamma+1}_{\Pi^1_1}(\mathsf{ACA}_0)\vdash \mathsf{WO}(\beta)$. Thus $T+\varphi+\varphi'\vdash \mathsf{WO}(\beta)$. Hence $|T+\varphi+\varphi'|_{\mathsf{WO}}>\boldsymbol\beta$. From Kreisel's Theorem about proof-theoretic ordinals of extensions of $\mathsf{ACA}_0$ we infer that $|T|_{\mathsf{WO}}=|T+\varphi+\varphi'|_{\mathsf{WO}}>\boldsymbol\beta$. 

  Now let us show that $|T|_{\mathsf{WO}}\le \boldsymbol\varepsilon_{\boldsymbol \alpha}$. Assume, for the sake of contradiction, that $|T|_{\mathsf{WO}} > \boldsymbol\varepsilon_{\boldsymbol \alpha}$. Then there is an ordinal notation $\beta$ with  $|\beta|=\boldsymbol\varepsilon_{\boldsymbol \alpha}$ such that $|T|_{\mathsf{WO}}\vdash \mathsf{WO}(\beta)$. Let us fix some ordinal notation $\alpha$ such that $|\alpha|=\boldsymbol \alpha$. Clearly, there is an isomorphism between $\beta$ and $\varepsilon_{\alpha}$. Let us denote by $\mathsf{Iso}$ the natural $\Sigma^1_1(\Pi^0_2)$-sentence expressing the latter fact. We see that $T+\mathsf{Iso}\vdash \mathsf{WO}(\varepsilon_{\alpha})$. Thus by Lemma \ref{wo_to_iterated_special_case} we see that $$T+\mathsf{Iso}\vdash \mathsf{RFN}_{\Pi^1_1(\Pi^0_3)}(\mathbf{R}^{\varepsilon_{\alpha}}_{\Pi^1_1(\Pi^0_3)}(\mathsf{RCA}_0)).$$ From Theorem \ref{schmerl_ACA_0_RCA_0} we conclude that $$T+\mathsf{Iso}\vdash \mathsf{RFN}_{\Pi^1_1(\Pi^0_3)}(\mathbf{R}^{\alpha}_{\Pi^1_1(\Pi^0_3)}(\mathsf{ACA}_0)).$$ Since over $\mathsf{ACA}_0$ every $\Pi^1_1$-formula is equivalent to a $\Pi^1_1(\Pi^0_3)$-formula, $$T+\mathsf{Iso}\vdash \mathsf{RFN}_{\Pi^1_1}(\mathbf{R}^{\alpha}_{\Pi^1_1}(\mathsf{ACA}_0)).$$ Therefore $$|T|_{\mathsf{ACA}_0}^{\star}\ge |T+\mathsf{Iso}|_{\mathsf{ACA}_0}>|\mathbf{R}^{\alpha}_{\Pi^1_1}(\mathsf{ACA}_0)|_{\mathsf{ACA}_0}=|\alpha|=\boldsymbol\alpha,$$
but $|T|_{\mathsf{ACA}_0}^{\star}= \boldsymbol\alpha$, a contradiction.\end{proof}

%%% Local Variables:
%%% TeX-master: "well-foundedness"
%%% End:

\section{Ordinal notation systems based on reflection principles}
\label{ref_not_sect}

In this section we turn to ordinal notation systems based on reflection principles, like the one Beklemishev introduced in \cite{beklemishev2004provability}. We will formally describe such a notation system momentarily, but, roughly, the elements of such notation systems are theories axiomatized by reflection principles and the ordering on them is given by consistency strength. Beklemishev endorsed the use of such notation systems as an approach to the well-known \emph{canonicity problem} of ordinal notation systems. Since then, such notation systems have been intensively studied; see \cite{fernandez2016worms} for a survey of these notation systems and their properties.

%In \textsection{\ref{well-foundedness_rc}} we present an application of Theorem \ref{well-foundedness_reflection} and Theorem \ref{RCA_0_reflection} to ordinal analysis. In particular, we present a straightforward well-foundedness proof for Beklemishev's $\mathsf{RC}^0$ notation system for $\varepsilon_0$.

%In \textsection{\ref{schmerl_sect}} we prove Theorem \ref{generalized Schmerl}, a generalization of Schmerl's formula. Recall that we stated Theorem \ref{generalized Schmerl} in \textsection{\ref{conservativity_section}} but deferred the proof until \textsection{\ref{schmerl_sect}}. We have deferred the proof until this section because we believe that the use of ordinal notation systems based on reflection principles clarifies the proof.

%\subsection{Well-foundedness of ordinal notation systems}
%\label{well-foundedness_rc}
We will consider ordinal notation systems based on the calculus $\mathsf{RC}^0$ due to Beklemishev \cite{beklemishev2012calibrating}. In earlier works, e.g. \cite{beklemishev2004provability} on modal logic based ordinal analysis, ordinal notation systems arose from fragments of the polymodal provability logic $\mathsf{GLP}$. However, this application of polymodal provability logic didn't required the full expressive power of $\mathsf{GLP}$. Thus, starting from a work of Dashkov \cite{dashkov2012positive}, strictly positive modal logics have been isolated that yield the same ordinal notation system as the logic $\mathsf{GLP}$, but are much simpler from a technical point of view.

The set of formulas of $\mathsf{RC}^0$ is given by the following inductive definition:
$$F::= \top \;|\; F\land F \;|\; \Diamond_n F\mbox{, where $n$ ranges over $\mathbb{N}$.}$$
An $\mathsf{RC}^0$ sequent is an expression $A\vdash B$, where $A$ and $B$ are $\mathsf{RC}^0$-formulas. The axioms and rules of inference of $\mathsf{RC}^0$ are:
\begin{enumerate}
\item $A\vdash A$; $A\vdash \top$; if $A\vdash B$ and $B\vdash C$ then $A\vdash C$;
\item $A\land B\vdash A$; $A\land B\vdash B$; if $A\vdash B$ and $A\vdash C$ then $A\vdash B\land C$;
\item if $A\vdash B$ then $\Diamond_n A\vdash \Diamond_n B$, for all $n\in\mathbb{N}$;
\item $\Diamond_n\Diamond_nA\vdash \Diamond_nA$, for every $n\in\mathbb{N}$;
\item $\Diamond_n A\vdash \Diamond_m A$, for all $n>m$;
\item \label{RC_Ax6}$\Diamond_n A \land \Diamond_m B \vdash \Diamond_n (A\land \Diamond_m B)$, for all $n>m$.  
\end{enumerate}

Let us describe the intended interpretation of $\mathsf{RC}^0$-formulas in $\mathcal{L}_1$-sentences. The interpretation $\top^{*}$ of $\top$ is $0=0$. The interpretation $(A\land B)^{*}$  is $A^{*}\land B^{*}$. The interpretation $(\Diamond_nA)^{*}$  is $\Ref{\Sigma_{n}}{\mathsf{ACA}_0}(A^{*})$.
% The interpretation of a sequent $A\vdash B$ is $\mathsf{EA}+A^{*}\vdash B^{*}$.
A routine check by induction on the length of $\mathsf{RC}^0$-derivations shows that if $A\vdash B$ then $\mathsf{EA}+A^*\vdash B^*$, for any $\mathsf{RC}^0$-formulas $A$ and $B$. 

For a more extensive coverage of positive provability logic see \cite{beklemishev2014positive}.

We denote by $\mathcal{W}$ the set of all $\mathsf{RC}^0$ formulas. The binary relation $<_n$, and the natural equivalence relation $\sim$ are given by
$$A<_nB \stackrel{\mbox{\footnotesize \textrm{def}}}{\iff} B\vdash \Diamond_n A,\;\;\;\;\; A\sim B \stackrel{\mbox{\footnotesize \textrm{def}}}{\iff} B\vdash A\mbox{ and }A\vdash B.$$
The Beklemishev ordinal notation system for $\varepsilon_0$ is the structure $(\mathcal{W}/{\sim},<_0)$.

The following result is due to Beklemishev (see \cite{beklemishev2005veblen,beklemishev2012calibrating}):
\begin{theorem}\label{Beklemishev_ordinal_notation}  $(\mathcal{W}/{\sim},<_0)$ is a well-ordering with the order type $\varepsilon_0$.
\end{theorem}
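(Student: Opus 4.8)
The plan is to separate the statement into three parts---linearity of $<_0$, well-foundedness of $<_0$, and the determination of the order type---and to obtain the well-foundedness (the only part not already furnished by the syntactic theory of $\mathsf{RC}^0$) from Theorem \ref{well-foundedness_reflection}. Linearity of $(\mathcal{W}/{\sim},<_0)$ is a purely proof-theoretic property of the calculus $\mathsf{RC}^0$, established by Dashkov and Beklemishev via the relational semantics of strictly positive logics; it uses neither the arithmetical interpretation nor soundness, so I would cite it. The real work is to rule out infinite $<_0$-descending chains and to show the resulting well-order has length $\varepsilon_0$.

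For well-foundedness the idea is to convert an infinite $<_0$-descending chain of worms into an infinite $\prec_{\Pi^1_1}$-descending chain of $\Pi^1_1$-sound extensions of $\mathsf{ACA}_0$, which is impossible by Theorem \ref{well-foundedness_reflection}. The naive route through the interpretation $(\Diamond_n A)^*=\mathsf{RFN}_{\Sigma_n}(\cdots)$ of the excerpt fails: those theories extend $\mathsf{ACA}_0$ only by true arithmetical sentences, which cannot witness any $\Pi^1_1$ reflection, so they all collapse to $\prec_{\Pi^1_1}$-rank $0$. Instead I would introduce a second, genuinely $\Pi^1_1$, interpretation $\circ$ of $\mathsf{RC}^0$ into extensions of $\mathsf{ACA}_0$ in which $\Diamond_0$ is one step of $\Pi^1_1$ reflection, $(\Diamond_0 A)^\circ=\mathsf{RFN}_{\Pi^1_1}(\mathsf{ACA}_0+A^\circ)$, and the higher modalities $\Diamond_n$ are interpreted by a hierarchy of progressively stronger (iterated $\Pi^1_1$) reflection operators whose strength grows $\omega$-exponentially with $n$, arranged so that the $\mathsf{GLP}$-style axioms---monotonicity $\Diamond_n A\vdash\Diamond_m A$ for $n>m$, $\Diamond_n\Diamond_n A\vdash\Diamond_n A$, and the interaction axiom $\Diamond_n A\land\Diamond_m B\vdash\Diamond_n(A\land\Diamond_m B)$---all become provable over $\mathsf{EA}$. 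Granting this, a routine induction on $\mathsf{RC}^0$-derivations gives soundness of $\circ$ (if $A\vdash B$ then $\mathsf{EA}+A^\circ\vdash B^\circ$), and an induction on the build-up of $A$ shows each $\mathsf{ACA}_0+A^\circ$ is true in the standard model---hence $\Pi^1_1$ sound---since the inductive step only ever adjoins reflection principles of already-sound theories. Now $A<_0 B$ means $B\vdash\Diamond_0 A$, so by soundness $\mathsf{ACA}_0+B^\circ\vdash(\Diamond_0 A)^\circ=\mathsf{RFN}_{\Pi^1_1}(\mathsf{ACA}_0+A^\circ)$, that is $\mathsf{ACA}_0+A^\circ\prec_{\Pi^1_1}\mathsf{ACA}_0+B^\circ$. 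An infinite $<_0$-descending chain thus yields an infinite $\prec_{\Pi^1_1}$-descending chain of $\Pi^1_1$-sound extensions of $\mathsf{ACA}_0$, contradicting Theorem \ref{well-foundedness_reflection}.

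With linearity and well-foundedness in hand, $(\mathcal{W}/{\sim},<_0)$ is a well-order, and it remains to identify its length as $\varepsilon_0$. The bound $\le\varepsilon_0$ is again syntactic: the standard assignment of Cantor-normal-form ordinals to worms is $<_0$-order-preserving with values below $\varepsilon_0$. For the reverse inequality I would invoke Theorem \ref{iterated}: since $A<_0 B$ implies $\mathsf{ACA}_0+A^\circ\prec_{\Pi^1_1}\mathsf{ACA}_0+B^\circ$ and hence $|\mathsf{ACA}_0+A^\circ|_{\mathsf{ACA}_0}<|\mathsf{ACA}_0+B^\circ|_{\mathsf{ACA}_0}$, the reflection rank is a $<_0$-monotone map on worms; and by Theorem \ref{iterated} the worms of growing modal depth realize reflection ranks cofinal in $\varepsilon_0$ (for instance $\mathsf{ACA}_0+(\Diamond_0^{k}\top)^\circ\equiv\mathbf{R}^{k}_{\Pi^1_1}(\mathsf{ACA}_0)$, and the higher single worms climb cofinally toward $\varepsilon_0$). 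Hence $<_0$ admits strictly increasing chains of every length below $\varepsilon_0$, giving order type exactly $\varepsilon_0$.

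I expect the main obstacle to be the construction and verification of the $\Pi^1_1$ interpretation $\circ$: one must fix the reflection operators interpreting the higher $\Diamond_n$ and prove that they validate the interaction axiom $\Diamond_n A\land\Diamond_m B\vdash\Diamond_n(A\land\Diamond_m B)$, which carries the genuinely polymodal content of $\mathsf{RC}^0$ and is the analogue of Beklemishev's arithmetical completeness of $\mathsf{GLP}$. This is precisely where the conservation machinery of \textsection\ref{conservativity_section}---the Schmerl-style Theorem \ref{main tool} relating iterated $\Pi^1_1$ reflection to iterated first-order reflection---is needed, both to compare the operators at different modal levels and to certify the $\omega$-exponential gaps that make $\circ$ sound while keeping $\Diamond_0$ at the level of a single $\Pi^1_1$ reflection.
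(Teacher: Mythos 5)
Your overall strategy---replace the intended first-order reading of the modalities by a genuinely second-order one, check each interpreted worm is true (hence $\Pi^1_1$-sound), and turn a $<_0$-descending chain into a $\prec_{\Pi^1_1}$-descending chain killed by Theorem \ref{well-foundedness_reflection}---is exactly the paper's, and your diagnosis that the $\mathsf{RFN}_{\Sigma_n}$ interpretation is useless here is correct. But your proposal has a genuine gap at the decisive point: you interpret the higher modalities by \emph{iterated} $\Pi^1_1$ reflection operators with $\omega$-exponentially growing iteration lengths, and you defer the verification of the interaction axiom (axiom (\ref{RC_Ax6})) to ``Schmerl-style'' machinery, citing Theorem \ref{main tool}. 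That theorem cannot do this job: it relates $\mathbf{R}^\alpha_{\Pi^1_1}(\mathsf{ACA}_0)$ to $\mathbf{R}^{\varepsilon_\alpha}_{\Pi^1_1(\Pi^0_3)}(\mathsf{RCA}_0)$, i.e., it trades the base theory at the \emph{bottom} reflection level; validating $\Diamond_n A\land \Diamond_m B\vdash \Diamond_n(A\land\Diamond_m B)$ on your reading would require reduction-property/conservation theorems comparing distinct iteration scales (the second-order analogue of Schmerl's formula at every level), none of which are established in the paper. Since the same unconstructed machinery underlies your order-type lower bound (your explicit computation $\mathsf{ACA}_0+(\Diamond_0^k\top)^\circ\equiv\mathbf{R}^k_{\Pi^1_1}(\mathsf{ACA}_0)$ only yields ranks cofinal in $\omega$; getting cofinally below $\varepsilon_0$ needs the higher worms, hence the missing reduction property before Theorem \ref{iterated} can be applied), the proof as proposed does not close.

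The paper's actual move is much simpler and sidesteps all of this: interpret $\Diamond_n A$ as a \emph{single} step of reflection one level up the \emph{analytical} hierarchy, $(\Diamond_n A)^*=\mathsf{RFN}_{\Pi^1_{n+1}}(\mathsf{ACA}_0+A^*)$, rather than iterating $\Pi^1_1$ reflection. With this choice the soundness of all six $\mathsf{RC}^0$ axioms, including (\ref{RC_Ax6}), is the same routine induction as in the classical first-order case: for $m<n$, the sentence $\mathsf{RFN}_{\Pi^1_{m+1}}(\mathsf{ACA}_0+B^*)$ is $\mathsf{ACA}_0$-provably equivalent (via the partial truth definitions of \textsection\ref{definitions}) to a $\Pi^1_{m+1}$, hence $\Sigma^1_{n+1}\cap\Pi^1_{n+1}$, sentence and can be absorbed under $\mathsf{RFN}_{\Pi^1_{n+1}}$; no conservation theorems are needed anywhere. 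Each $A^*$ is true outright, so the descending-chain argument goes through verbatim. Note also that the paper does not reprove linearity or the order type $\varepsilon_0$---those are cited from Beklemishev, with only well-foundedness given the new proof---so your plan to re-derive the length $\varepsilon_0$ from reflection ranks, while attractive, goes beyond what the paper does and is precisely where your missing lemmas would be needed.
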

The transitivity of $(\mathcal{W}/{\sim},<_0)$ is trivial. The linearity of $(\mathcal{W}/{\sim},<_0)$ is provable by a purely syntactical argument within the system $\mathsf{RC}^0$. But Beklemishev's proof of the well-foundedness of $(\mathcal{W}/{\sim},<_0)$ was based on the construction of an isomorphism with Cantor's ordinal notation system for $\varepsilon_0$, i.e., Cantor normal forms.

Here we will give a proof of the well-foundedness part of Theorem \ref{Beklemishev_ordinal_notation} by providing an alternative interpretation of the $\Diamond_n$'s by reflection principles in \emph{second}-order arithmetic and then applying the results of \textsection{\ref{dssection}} to derive well-foundedness.

\begin{theorem} $(\mathcal{W},<_0)$ is a well-founded relation.
\end{theorem}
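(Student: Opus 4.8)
The plan is to transfer the well-foundedness of $\prec_{\Pi^1_1}$ (Theorem~\ref{well-foundedness_reflection}) to $(\mathcal{W},<_0)$ via an order-preserving map obtained from a \emph{second-order} reinterpretation of $\mathsf{RC}^0$. Define, by recursion on $\mathsf{RC}^0$-formulas, a second-order sentence $A^{\circ}$ and the theory $T_A:=\mathsf{ACA}_0+A^{\circ}$, by
\begin{align*}
\top^{\circ} &:= (0=0), \\
(A\land B)^{\circ} &:= A^{\circ}\land B^{\circ}, \\
(\Diamond_n A)^{\circ} &:= \mathsf{RFN}_{\Pi^1_{n+1}}\big(\mathsf{ACA}_0+A^{\circ}\big).
\end{align*}
This is the analogue in second-order arithmetic of the arithmetical interpretation above, now reading $\Diamond_n$ as $\Pi^1_{n+1}$-reflection; crucially $\Diamond_0$ becomes the $\Pi^1_1$-reflection governed by Theorem~\ref{well-foundedness_reflection}. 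Each $T_A$ is an r.e.\ extension of $\mathsf{ACA}_0$, and, using the partial truth definitions for $\Pi^1_k$ over $\mathsf{ACA}_0$ from \textsection\ref{definitions}, each $\mathsf{RFN}_{\Pi^1_{n+1}}(U)$ is $\mathsf{ACA}_0$-provably a $\Pi^1_{n+1}$ sentence.

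First I would prove the \textbf{soundness lemma}: if $A\vdash B$ in $\mathsf{RC}^0$ then $T_A\vdash B^{\circ}$, by induction on $\mathsf{RC}^0$-derivations. Axioms (1)--(2) are immediate from $T_A\sqsupseteq\mathsf{ACA}_0$ and the reading of $\land$. Rule (3) uses the formalised monotonicity of reflection: the induction hypothesis gives $T_A\vdash B^{\circ}$, hence $\mathsf{EA}\vdash\mathsf{Pr}_{T_A}(B^{\circ})$, so $\mathsf{ACA}_0+\mathsf{RFN}_{\Pi^1_{n+1}}(T_A)\vdash\mathsf{RFN}_{\Pi^1_{n+1}}(T_B)$. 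Axiom (5) is the class inclusion $\Pi^1_{m+1}\subseteq\Pi^1_{n+1}$ for $m<n$, and axiom (4) uses that $\mathsf{RFN}_{\Pi^1_{n+1}}(T_A)$ is itself a $\Pi^1_{n+1}$ axiom of $T_{\Diamond_n A}$, so that $\mathsf{RFN}_{\Pi^1_{n+1}}(T_{\Diamond_n A})$ reflects it back. The main obstacle is axiom (6): I must show, for $m<n$ and $\chi:=(\Diamond_m B)^{\circ}$, that over $\mathsf{ACA}_0$
$$\mathsf{RFN}_{\Pi^1_{n+1}}\big(\mathsf{ACA}_0+A^{\circ}\big)\land\chi\;\vdash\;\mathsf{RFN}_{\Pi^1_{n+1}}\big(\mathsf{ACA}_0+A^{\circ}+\chi\big).$$
The point is that $\chi$ is a $\Pi^1_{m+1}$ sentence with $m+1\le n$, hence $\mathsf{ACA}_0$-provably $\Delta^1_{n+1}$; so for any $\Pi^1_{n+1}$ sentence $\varphi$ with $\mathsf{Pr}_{\mathsf{ACA}_0+A^{\circ}+\chi}(\varphi)$, the formalised deduction theorem gives $\mathsf{Pr}_{\mathsf{ACA}_0+A^{\circ}}(\chi\to\varphi)$, where $\chi\to\varphi$ is again (equivalent to) a $\Pi^1_{n+1}$ sentence. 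Applying $\mathsf{RFN}_{\Pi^1_{n+1}}(\mathsf{ACA}_0+A^{\circ})$ yields $\chi\to\varphi$, and $\chi$ then yields $\varphi$. This complexity bookkeeping is the only delicate part of the argument.

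Finally I would verify that each $A^{\circ}$ is \emph{true} in the standard model, by induction: $(\Diamond_n A)^{\circ}$ is true because, $A^{\circ}$ being true, $\mathsf{ACA}_0+A^{\circ}$ is a sound theory and hence $\Pi^1_{n+1}$-sound. In particular every $T_A$ is a $\Pi^1_1$-sound r.e.\ extension of $\mathsf{ACA}_0$. Now suppose, toward a contradiction, that $(\mathcal{W},<_0)$ is ill-founded, say $B_0>_0 B_1>_0\cdots$; then $B_i\vdash\Diamond_0 B_{i+1}$ in $\mathsf{RC}^0$, so the soundness lemma gives $T_{B_i}\vdash\mathsf{RFN}_{\Pi^1_1}(T_{B_{i+1}})$, that is $T_{B_{i+1}}\prec_{\Pi^1_1}T_{B_i}$. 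This is an infinite $\prec_{\Pi^1_1}$-descending sequence of $\Pi^1_1$-sound r.e.\ extensions of $\mathsf{ACA}_0$, contradicting Theorem~\ref{well-foundedness_reflection}. Hence $(\mathcal{W},<_0)$ is well-founded.
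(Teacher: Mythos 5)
Your proposal is correct and follows essentially the same route as the paper's proof: the same reinterpretation of $\mathsf{RC}^0$ sending $\Diamond_n$ to $\mathsf{RFN}_{\Pi^1_{n+1}}$ over $\mathsf{ACA}_0$, the same soundness induction on $\mathsf{RC}^0$-derivations (which the paper declares a ``straightforward induction'' and you carry out in detail, including the only delicate case, axiom (6), via the deduction theorem and the observation that $\chi\to\varphi$ is $\mathsf{ACA}_0$-provably $\Pi^1_{n+1}$), the same verification that each interpreted sentence is true and hence each $T_A$ is $\Pi^1_1$-sound, and the same contradiction with Theorem~\ref{well-foundedness_reflection}.
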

\begin{proof}
  We prove that the set $\mathcal{W}$ of $\mathsf{RC}^0$-formulas is well-founded with respect to $<_0$. 
  
  We give an alternative interpretation of $\mathsf{RC}^0$. According to this interpretation, the image $\top^{*}$ of $\top$ is $0=0$, $(A\land B)^{*}$  is $A^{*}\land B^{*}$, and $(\Diamond_nA)^{*}$ is $\Ref{\Pi^1_{n+1}}{\mathsf{ACA}_0}(\mathsf{ACA}_0+A^{*})$. 

  We note that if $A\vdash B$ is a derivable $\mathsf{RC}^0$-sequent then $\mathsf{ACA}_0+A^*\vdash B^*$. This can be checked by a straightforward  induction on $\mathsf{RC}^0$-derivations. Also from the definition it is clear that for any $A$ the theory $\mathsf{ACA}_0+A^{*}$ is $\Pi^1_1$-sound (and in fact true $A^*$ is true).

  Now assume for a contradiction that there is an infinite descending chain  $A_0>_0 A_1>_0\ldots$ of $\mathsf{RC}^0$-formulas. Then $A_0^{*},A_1^{*},\ldots$ is an infinite sequence of sentences such that $\mathsf{ACA}_0+A_i^{*}\vdash \mathsf{RFN}_{\Pi^1_{1}}(\mathsf{ACA}_0+A_{i+1}^{*})$. Henceforth we have a $\prec_{\Pi^1_1}$-descending chain of $\Pi^1_1$-sound extensions of $\mathsf{ACA}_0$, contradicting Theorem \ref{well-foundedness_reflection}.%\footnote{We see that $A_i^{*}$ follows from some $(\Diamond_n \top)^{*}$ by the following argument. We take $n$ here to be any number such that $n>k$ for all $\Diamond_k$ in $A_i$. Next by a straightforward induction on the length of an $\mathsf{RC}^0$-formula $A$ we show that if all $\Diamond_k$ in $A$ are such that $k<n$, then $\Diamond_n \top \vdash A$. Finally, we apply soundness of $\mathsf{RC}^0$, which we have already established.} Then it follows that all the theories $A_i^{*}$ are $\Pi^1_1$-sound, which contradicts Theorem \ref{well-foundedness_reflection}. 
\end{proof}

The key fact that we have used in this proof is that all the theories $A_i^{*}$ are $\Pi^1_1$-sound. In fact all the theories under consideration are subtheories of $\mathsf{ACA}$ and hence the proof is naturally formalizable in $\mathsf{ACA}_0+\mathsf{RFN}_{\Pi^1_1}(\mathsf{ACA}).$\footnote{The fact that $\mathsf{ACA}\equiv_{\Pi^1_\infty}\mathsf{RFN}_{\Pi^1_\infty}(\mathsf{ACA}_0)$ could be proved by a standard technique going back to Kreisel and L{\'e}vy \cite{kreisel1968reflection}. A study of the exact correspondence between restrictions of the schemes of reflection and induction in the setting of second order arithmetic has been recently performed by Frittaion \cite{frittaion2019uniform}.}

Now we show that the same kind of argument could be carried in $\mathsf{ACA}_0$ itself.
\begin{theorem} For each $A\in \mathcal{W}$, the theory $\mathsf{ACA}_0$ proves that $(\{B\in\mathcal{W}\mid B<_0 A\},<_0)$ is well-founded. 
\end{theorem}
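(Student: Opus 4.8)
The plan is to re-run the reflection argument from the proof of the preceding theorem, but \emph{localized} to the fixed formula $A$, so that the appeal to $\mathsf{RFN}_{\Pi^1_1}(\mathsf{ACA})$ gets replaced by an appeal to Theorem~\ref{well-foundedness_reflection} carried out inside $\mathsf{ACA}_0$. I would keep the second-order interpretation $\top^{*}=(0{=}0)$, $(C\land D)^{*}=C^{*}\land D^{*}$, and $(\Diamond_m C)^{*}=\mathsf{RFN}_{\Pi^1_{m+1}}(\mathsf{ACA}_0+C^{*})$. As before, a routine induction on $\mathsf{RC}^0$-derivations shows, provably in $\mathsf{EA}$, that $C\vdash D$ implies $\mathsf{ACA}_0+C^{*}\vdash D^{*}$; in particular $D<_0 C$ (that is, $C\vdash\Diamond_0 D$) implies $\mathsf{ACA}_0+C^{*}\vdash\mathsf{RFN}_{\Pi^1_1}(\mathsf{ACA}_0+D^{*})$, so that along any $<_0$-descending chain the associated theories $T_i:=\mathsf{ACA}_0+B_i^{*}$ form a $\prec_{\Pi^1_1}$-descending chain, all extending $\mathsf{ACA}_0$. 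The one genuinely new structural input I would use is that, since $A$ is a fixed standard formula, it has a largest modality index $n$, and a short $\mathsf{RC}^0$ closure argument shows that every $B$ with $B\le_0 A$ mentions only modalities $\le n$; consequently every theory $\mathsf{ACA}_0+B^{*}$ that can occur in the cone below $A$ is, uniformly and $\mathsf{EA}$-provably, a subtheory of $\mathsf{ACA}$.

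Next I would formalize, in the exact style of Theorem~\ref{well-foundedness_reflection}, the statement
\[
\mathsf{DS}_A:=\exists E\colon\langle B_i\mid i\in\mathbb{N}\rangle\ \big(B_0<_0 A\ \land\ \forall i\,(B_{i+1}<_0 B_i)\ \land\ \forall i\,(B_i\in\mathcal{W})\big),
\]
asserting a $<_0$-descending sequence below $A$, and show that $\mathsf{ACA}_0+\mathsf{DS}_A$ proves its own consistency, whence it is inconsistent by Gödel's second incompleteness theorem and $\mathsf{ACA}_0\vdash\neg\mathsf{DS}_A$. Working in $\mathsf{ACA}_0+\mathsf{DS}_A$ with a witnessing sequence $\langle B_i\rangle$, the shift trick applies verbatim: let $\mathsf{F}$ be the true $\Sigma^1_1$ sentence ``there is a $<_0$-descending sequence below $B_0$'' (witnessed by the tail $\langle B_{i+1}\rangle$). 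Given the $\Pi^1_1$-soundness of $T_0$ one obtains $\mathsf{Con}(T_0+\mathsf{F})$; then from $\mathsf{Pr}_{T_0}(\mathsf{RFN}_{\Pi^1_1}(T_1))$ and $T_0\sqsupseteq\mathsf{ACA}_0$ one gets $\mathsf{Con}(\mathsf{ACA}_0+\mathsf{RFN}_{\Pi^1_1}(T_1)+\mathsf{F})$, and finally $\mathsf{RFN}_{\Pi^1_1}(T_1)+\mathsf{F}\vdash_{\mathsf{ACA}_0}\mathsf{DS}_A$ (prepend $B_0$, using the available $\Sigma_1$-fact $B_0<_0 A$), yielding $\mathsf{Con}(\mathsf{ACA}_0+\mathsf{DS}_A)$. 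Everything here is copied from Theorem~\ref{well-foundedness_reflection}.

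The delicate step---and the step I expect to be the main obstacle---is precisely the one that Theorem~\ref{well-foundedness_reflection} covered by listing $\mathsf{RFN}_{\Pi^1_1}(T_0)$ among the conjuncts of its $\mathsf{DS}$: I must supply the $\Pi^1_1$-soundness of the top theory $T_0=\mathsf{ACA}_0+B_0^{*}$ \emph{without} assuming the truth of $A^{*}$, since $\mathsf{ACA}_0$ cannot prove $\mathsf{RFN}_{\Pi^1_1}(\mathsf{ACA}_0+A^{*})$ outright (that would exceed what Gödel's theorem permits for the stronger-than-$\mathsf{ACA}_0$ theory $\mathsf{ACA}_0+A^{*}$). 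This is exactly what the restriction to a fixed $A$ is for: because every relevant $B^{*}$ uses reflection only up to level $\Pi^1_{n+1}$ and the theories stay inside $\mathsf{ACA}$, the needed soundness is confined to a fixed, bounded fragment. The most promising route to closing the gap is an external induction on the maximal modality $n$, in which the outermost $\Diamond_0$ along the chain is handled by Theorem~\ref{well-foundedness_reflection} while the inner modalities $\Diamond_1,\dots,\Diamond_n$ sitting inside each $B_i^{*}$ are controlled by the inductive hypothesis (well-foundedness of cones below modality-$\le n-1$ formulas) together with the conservation and transfinite-induction machinery of \textsection\ref{conservativity_section}. Verifying that this bookkeeping genuinely discharges the top-soundness hypothesis---so that the self-consistency computation above runs in $\mathsf{ACA}_0$ rather than merely in $\mathsf{ACA}_0+\mathsf{RFN}_{\Pi^1_1}(\mathsf{ACA})$---is where essentially all of the work lies.
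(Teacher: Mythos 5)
Your proposal correctly reproduces the easy parts of the argument (the interpretation, the induction on $\mathsf{RC}^0$-derivations, the reduction to a fixed maximal modality $n$) and, to your credit, it pinpoints exactly the right obstacle: $\mathsf{ACA}_0$ cannot supply $\mathsf{RFN}_{\Pi^1_1}(\mathsf{ACA}_0+B_0^{*})$ for the top theory of the chain, and without that conjunct the G\"odel-II self-consistency computation for $\mathsf{DS}_A$ does not start (note also that your $\mathsf{DS}_A$ as written omits the soundness conjunct, so $\neg\mathsf{DS}_A$ would in any case only exclude descending chains with a sound top theory, which is weaker than well-foundedness of the cone). But the route you sketch for closing this gap---external induction on $n$ with the conservation machinery of \textsection\ref{conservativity_section} discharging the inner modalities---is left entirely undeveloped, and you concede that ``essentially all of the work lies'' there. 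So this is a genuine gap, not a finished proof: the one new idea the theorem needs is missing.

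The paper closes the gap by a model-theoretic relativization rather than by proving any soundness outright in $\mathsf{ACA}_0$. Two changes to your setup are essential. First, the interpretation is down-shifted: $(\Diamond_k C)^{*}$ becomes $\mathsf{RFN}_{\Pi^1_{k+1}(\Pi^0_3)}(\mathsf{RCA}_0+C^{*})$, i.e., reflection over $\mathsf{RCA}_0$ in the restricted classes $\Pi^1_{k+1}(\Pi^0_3)$, not over $\mathsf{ACA}_0$. Second, reasoning in $\mathsf{ACA}_0$ with a hypothetical chain $\Diamond_n\top >_0 A_0 >_0 A_1 >_0 \cdots$, one builds a countably coded $\omega$-model $\mathfrak{M}$ of $\mathsf{RCA}_0$ containing the chain---possible because $\mathsf{ACA}_0$ is exactly $\mathsf{RCA}_0^{+}$---and verifies $\mathfrak{M}\models\mathsf{RFN}_{\Pi^1_{n+1}(\Pi^0_3)}(\mathsf{RCA}_0)$, i.e., $\mathfrak{M}\models(\Diamond_n\top)^{*}$, by constructing with arithmetical comprehension a partial satisfaction relation covering the fixed class $\Pi^1_{n+1}(\Pi^0_3)$ (here the boundedness of $n$, which you also isolated, is what makes the satisfaction relation arithmetically definable) and running induction along cut-free proofs. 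Since $\Diamond_n\top >_0 A_0$, inside $\mathfrak{M}$ the theories $\mathsf{RCA}_0+A_i^{*}$ form a $\prec_{\Pi^1_1(\Pi^0_3)}$-descending chain with a sound top theory, contradicting Theorem \ref{RCA_0_reflection} \emph{inside} $\mathfrak{M}$---legitimate because that theorem is provable in $\mathsf{RCA}_0$ and $\mathfrak{M}$ is an $\omega$-model of $\mathsf{RCA}_0$. Note that your insistence on keeping the base $\mathsf{ACA}_0$ in the interpretation blocks this fix: $\mathsf{ACA}_0$ does not prove the existence of $\omega$-models of $\mathsf{ACA}_0$ (that is $\mathsf{ACA}_0^{+}$), so the down-shift to $\mathsf{RCA}_0$ is not cosmetic but the crux of making the top-soundness available where it is needed.
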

\begin{proof} Note that in $\mathsf{RC}^0$ any formula $A$ follows from  formulas $\Diamond_n\top$ such that, for all $\Diamond_m$ that occur in $A$, $m<n$; this fact could be proved by a straightforward induction on length of $A$. Clearly, for any such $n$, the set $\{B\in\mathcal{W}\mid B<_0 A\}$ is a subset of $\{B\in\mathcal{W}\mid B<_0 \Diamond_n\top\}$. Thus, without loss of generality, we may conside only the case of $A$ being of the form $\Diamond_n\top$.
  
  Now we reason in $\mathsf{ACA}_0$. We  assume for a contradiction that there is an infinite descending chain  $\Diamond_n\top>_0A_0>_0 A_1>_0\ldots$ of $\mathsf{RC}^0$-formulas.

  We construct a countably-coded $\omega$-model $\mathfrak{M}$ of $\mathsf{RCA}_0$ that contains this chain. Note that using arithmetical comprehension we could construct a (set encoding) partial satisfaction relation for $\mathfrak{M}$ that the sentence $\mathsf{RCA}_0$ (conjunction of all axioms from some natural finite axiomatization of $\mathsf{RCA}_0$) and all $\Pi^1_{n+1}(\Pi^0_3)$ formulas. We want to show that if $\mathsf{RCA}_0$ proves some $\Pi^1_{n+1}(\Pi^0_3)$ sentence $\varphi$ then $\varphi$ is true in $\mathfrak{M}$. For this we consider any cut-free proof $p$ of the sequent $\lnot\mathsf{RCA}_0,\varphi$. And next by induction on subproofs of $p$ show that all sequents in $p$ are valid in $\mathfrak{M}$ (according to the partial satisfaction relation that we constructed above).  Hence the principle $\mathsf{RFN}_{\Pi^1_{n+1}(\Pi^0_3)}(\mathsf{RCA}_0)$ holds in $\mathfrak{M}$.

We again define an alternative interpretation of $\mathsf{RC}^0$. The interpretation $\top^*$ is $0=0$, the interpretations $(A\land B)^*$ are $A^*\land B^*$, and the intepretations $(\Diamond_k A_i)^*$ are $\mathsf{RFN}_{\Pi^1_{k+1}(\Pi^0_3)}(\mathsf{RCA}_0 + A_i^{*})$. From the previous paragraph we see that $\mathfrak{M}\models (\Diamond_n\top)^{*}$. And since $\Diamond_n\top >_0 A_0$, we have $\mathfrak{M}\models (\Diamond_0 A_0)^*$, i.e., $\mathfrak{M}\models \mathsf{RFN}_{\Pi^1_1(\Pi^0_3)}(\mathsf{RCA}_0+A_0^*)$. Thus in $\mathfrak{M}$ there is an infinite sequence of theories $\mathsf{RCA}_0+A_0^{*}, \mathsf{RCA}_0+A_1^{*},\ldots$ such that $\mathsf{RCA}_0+A_i^{*}\vdash \mathsf{RFN}_{\Pi^1_{1}(\Pi^0_3)}(\mathsf{RCA}_0+A_{i+1}^{*})$ and $\mathsf{RFN}_{\Pi^1_1(\Pi^0_3)}(\mathsf{RCA}_0+A_0^{*})$.  Since $\mathfrak{M}$ is a model of $\mathsf{RCA}_0$, by Theorem \ref{RCA_0_reflection} we reach a contradiction.
\end{proof}

%%% Local Variables:
%%% TeX-master: "well-foundedness"
%%% End:

\bibliographystyle{plain}
\bibliography{bibliography}{}

\end{document}